\newcommand{\Tspace}{ {\rule{0pt}{2.6ex} } }
\newcommand{\Bspace}{ {\rule[-1.2ex]{0pt}{0pt} } }
\newcommand{\pd}[2]{\frac{\partial#1}{\partial #2}}
\newcommand{\pdtwo}[2]{\frac{\partial^2#1}{\partial #2^2}}
\newcommand{\deriv}[2]{\frac{d#1}{d#2}}
\newcommand{\R}{\mathbb{R}}
\newcommand{\C}{\mathbb{C}}
\newcommand{\Z}{\mathbb{Z}}
\newcommand{\N}{\mathbb{N}}
\newcommand{\U}{\mathcal{U}}
\newcommand{\DD}{\mathcal{D}}
\newcommand{\lp}{\left(}
\newcommand{\rp}{\right)}
\newcommand{\dd}{\, \mathrm{d} }
\newcommand{\eps}{\epsilon}
\newcommand{\bigo}[1]{\mathcal{O} \lp #1 \rp   }
\newcommand{\tbyt}[4]{\begin{pmatrix} #1 & #2 \\ #3 & #4 \end{pmatrix}}
\newcommand{\triu}[1]{\begin{pmatrix} 1 & #1 \\ 0 & 1 \end{pmatrix}}
\newcommand{\tril}[1]{\begin{pmatrix} 1 & 0 \\ #1 & 1 \end{pmatrix}}
\newcommand{\diag}[2]{\begin{pmatrix} #1 & 0 \\ 0 & #2 \end{pmatrix}}
\newcommand{\offdiag}[2]{\begin{pmatrix} 0 & #1 \\ #2 & 0 \end{pmatrix}}
\newcommand{\sig}{{\sigma_3}}
\newcommand{\vect}[2]{ \begin{pmatrix} #1 \\ #2 \end{pmatrix} }
\newcommand{\PCsolnUp}[4]{\begin{pmatrix}
            e^{-\frac{3\pi#1}{4}} D_{\scriptscriptstyle i#1} \lp e^{\frac{-3i\pi}{4}} #2 \rp &
            \frac{\partial_{#2}-\frac{i}{2}#2}{#4} e^{\frac{\pi#1}{4}} D_{\scriptscriptstyle -i#1} \lp
            e^{-\frac{i\pi}{4}}#2 \rp \\
            \frac{\partial_{#2}+\frac{i}{2}#2}{#3} e^{\frac{-3\pi#1}{4}} D_{\scriptscriptstyle i#1} \lp
            e^{\frac{-3i\pi}{4}}#2 \rp &
            e^{\frac{\pi#1}{4}} D_{\scriptscriptstyle -i#1} \lp e^{\frac{-i\pi}{4}}#2 \rp
\end{pmatrix}}
\newcommand{\PCsolnDown}[4]{\begin{pmatrix}
            e^{\frac{\pi#1}{4}} D_{\scriptscriptstyle i#1} \lp e^{\frac{i\pi}{4}} #2 \rp &
            \frac{\partial_{#2}-\frac{i}{2}#2}{#4} e^{\frac{-3\pi#1}{4}} D_{\scriptscriptstyle -i#1} \lp e^{\frac{3i\pi}{4}}#2 \rp \\
            \frac{\partial_{#2}+\frac{i}{2}#2}{#3} e^{\frac{\pi#1}{4}} D_{\scriptscriptstyle i#1} \lp e^{\frac{i\pi}{4}} #2
            \rp &
            e^{\frac{-3\pi#1}{4}} D_{\scriptscriptstyle -i#1} \lp e^{\frac{3i\pi}{4}}#2 \rp
\end{pmatrix}}
\newcommand{\piece}[1]{ \left\{ \begin{array} {l@{\quad:\quad}l} #1 \end{array} \right.}
\DeclareMathOperator*{\res}{Res}
\DeclareMathOperator{\sech}{sech}
\DeclareMathOperator{\sgn}{sgn}
\DeclareMathOperator{\ad}{ad}
\DeclareMathOperator{\imag}{Im}
\DeclareMathOperator{\re}{Re}
\DeclareMathOperator{\Ai}{ {Ai} }
\DeclareMathOperator{\Jac}{Jac}
\DeclareMathOperator{\interior}{Int}
\DeclareMathOperator{\Li}{Li}
\theoremstyle{plain}
\newtheorem{thm}{Theorem}
\newtheorem{lem}[thm]{Lemma}
\newtheorem{prop}[thm]{Proposition}
\theoremstyle{definition}
\theoremstyle{remark}
\newtheorem*{rem}{Remark}
\newtheoremstyle{RHP}% name
  {3pt}%      Space above
  {3pt}%      Space below
  {\itshape}%         Body font
  {}%         Indent amount (empty = no indent, \parindent = para indent)
  {\bfseries}% Thm head font
  {}%        Punctuation after thm head
  {.5em}%     Space after thm head: " " = normal interword space;
\theoremstyle{RHP}
\newtheorem{rhp}{}[section]
\begin{document}
\normalem
\title{The semi-classical limit of focusing NLS for a family of non-analytic initial data}

\author{Robert Jenkins}
\address{Department of Mathematics, University of Arizona, Tucson, AZ, 85721, USA}
\curraddr{Department of Mathematics, University of Michigan, Ann Arbor, MI, 48109, USA} 
\email{ \texttt{rmjenkin@umich.edu}} 

\author{Kenneth D. T-R McLaughlin}
\address{Department of Mathematics, University of Arizona, Tucson, AZ, 85721, USA} 
\email{\texttt{mcl@math.arizona.edu}}  

\date{\today}                                           % Activate to display a given date or no date

\begin{abstract}
The small dispersion limit of the focusing nonlinear Schro\"odinger equation (NLS) exhibits a rich structure of sharply separated regions exhibiting disparate rapid oscillations at microscopic scales. The non self-adjoint scattering problem and ill-posed limiting Whitham equations associated to focusing NLS make rigorous asymptotic results difficult. Previous studies \cite{KMM03, TVZ04, TVZ06} have focused on special classes of analytic initial data for which the limiting elliptic Whitham equations are well-posed. In this paper we consider another exactly solvable family of initial data, the family of square barriers, $\psi_0(x) = q\chi_{[-L,L]}$ for real amplitudes $q$. Using Riemann-Hilbert techniques we obtain rigorous pointwise asymptotics for the semiclassical limit of focusing NLS globally in space and up to an $\mathcal{O}(1)$ maximal time. In particular, we show that the discontinuities in our initial data regularize by the immediate generation of genus one oscillations emitted into the support of the initial data. To the best of our knowledge, this is the first case in which the genus structure of the semiclassical asymptotics for fNLS have been calculated for non-analytic initial data.
\end{abstract}

\maketitle

\section{Introduction and background}
In this paper we study the semi-classical limit of the Cauchy problem for the focusing nonlinear Schr\"{o}dinger equation (NLS)  
\begin{equation}\label{nls}
 	i \eps \psi_t + \frac{\eps^2}{2} \psi_{xx} + \psi |\psi |^2 = 0, \qquad \psi(x, 0) = \psi_0(x),
\end{equation}
for the family of square barrier initial data
\begin{equation}\label{square barrier}
	\psi_0(x) = \begin{cases} q & |x| \leq L \\ 0 & |x| > L \end{cases}	
\end{equation}	
for any choice of constants $q, L >0.$ In this scaling $\eps$ is a dispersion parameter which we take to be small, $0 < \eps \ll 1$. Specifically, our goal in studying the semiclassical limit is to derive a uniform asymptotic description of the limiting behavior of the $\eps$-parameterized family of solutions $\psi(x,t; \eps)$ of the above Cauchy problem as we let $\eps \downarrow 0$ for $(x,t)$ in compact sets.  

The focusing NLS equation is of course very well known, describing phenomena in a diverse variety of fields, from the theory of deep water waves \cite{Benney} to the study of Langmuir turbulence in plasmas \cite{Langmuir}.  In fact, under suitable assumptions the NLS equation arrises generically in he modeling of any weakly nonlinear, nearly monochromatic, dispersive wave propagation. In particular, it has been used extensively in nonlinear optics where it has been used to describe the evolution of the electric field envelope of picosecond pulses in monomodal fibers \cite{hasegawa}. In this last setting, the small dispersion limit is an appropriate scaling to model the expanding use of dispersion-shifted fiber in applications.
   
An important feature of the focusing NLS equation is its modulational instability. The instability can be understood in a number of ways, we present here the nonlinear perspective supported by Whitham \cite{Whitham99}. Without loss of generality we can write the solution of \eqref{nls} in the form $\psi(x,t) = A(x,t)e^{i S(x,t)/ \eps}$ with $A(x,t)$ and $S(x,t)$ real. If we assume that $A(x,t)$ and $S(x,t)$ evolve slowly with $\eps$, then the solution $q(x,t)$ looks locally like a plane wave with amplitude $A(x,t)$, wavenumber $\partial_x S(x,t)$, and frequency $-\partial_t S(x,t)$. Writing $\psi(x,t)$ in this form and setting $\rho(x,t) = | \psi(x,t) |^2 = A(x,t)^2$ and $u(x,t) = \imag \partial_x \log \psi(x,t) = S_x(x,t)$ we get the following coupled system of equations which is completely equivalent to NLS:
\begin{equation}\label{fluid equations}
	\begin{split}
	\pd{ \rho}{t}  + u\pd{\rho}{x} + \rho \pd{u}{x}  &= 0 \\
	\pd{u}{t} - \pd{\rho}{x} + u\pd{u}{x} &= \frac{\eps^2}{2} \lp \frac{1}{2\rho} \pdtwo{\rho}{x} - \lp \frac{1}{2\rho} \pd{\rho}{x} \rp^2 \rp
	\end{split}
\end{equation}
Assuming that the wave parameters evolve slowly with $\eps$ we can regard the right-hand side as a perturbative correction which we may formally drop. Doing so we arrive at the (genus zero) modulation equations associated with \eqref{nls},
\begin{equation}\label{modulation equations}
	\begin{split}
	\begin{bmatrix} \rho \\ u \end{bmatrix}_t + \begin{bmatrix} u & \rho \\ -1 & u \end{bmatrix}
	\begin{bmatrix} \rho \\ u \end{bmatrix}_x = 0  
	\end{split}
 \end{equation}
 This is a quasilinear system of partial differential equations and it is easy to check that its characteristic speeds are given by $u \pm i \sqrt{\rho}$ which are complex at any point at which $\rho \neq 0$. Thus the limiting system of equations is elliptic for which the Cauchy problem is ill-posed. In fact, the Cauchy problem for an elliptic systems is solvable (by the Cauchy-Kovalevskaya method) only if we insist that the initial data $\rho(x,0)$ and $u(x,0)$ are analytic functions of $x$. This is an overly restrictive condition for a physical system, and one which our square barrier initial data \eqref{square barrier}, with jump discontinuities, does not satisfy. The ill-posedness of the limiting equations makes the semiclassical limit a very delicate question in general and particularly delicate in the case of interest where the initial data is discontinuous. In fact, our results imply that even away from the discontinuities the limiting behavior cannot be described by a guess of the form $\psi = Ae^{ i S/\eps}$ without admitting multi-scale expansions of $A$ and $S$ from the onset.         
 
The original observation was that the modulational instability of the NLS equation would lead to ostensibly chaotic wave-forms. However, a number of numerical studies \cite{MK98, BK99, LM07} observed that the wave forms show a large degree of order exhibiting disparate regions of regular quasiperiodic oscillatory $\eps$-scaled microstructures separated be increasingly sharp curves in the $(x,t)$-plane called caustics or breaking curves, whose scale is set by the initial data. Our main goal in studying the semiclassical limit is to analytically find the caustics and to describe the asymptotic behavior of the oscillations in the interlaying regions offset by these caustics.

Analytic results concerning the behavior of NLS are possible because of its integrable structure which allows one, in principle, to solve the Cauchy problem by the forward/inverse scattering transform using the Lax pair representation \cite{ZS72} for NLS. This procedure consists of three steps: 
\begin{enumerate}[Step 1.]
 \item From the eigenvalue problem in the Lax pair \eqref{lax 1} determine the "spectral data" produced by the given initial data $\psi_0$. This is the forward scattering step in the procedure.  
\item Use the time-flow part of the Lax pair \eqref{lax 2} to describe the spectral data's time evolution as the potential $\psi$ evolves under the NLS equation.
\item Given the scattering data at time $t$, invert the forward scattering transform of Step $1$ to find the solution $\psi(x,t)$ of the NLS equation at a later time. This step for NLS, as is often the case, may be characterized as a matrix-valued Riemann-Hilbert problem, reducing the inversion problem to a question of analytic function theory.         
\end{enumerate}
For a non-integrable problem there is no reason to believe such a procedure should exist. The amazing fact is that for integrable problems the evolution of the spectral data in step 2 is explicit and relatively simple, and moreover, the inversion of the spectral map required in step 3 is actually a well-defined map. 

The semiclassical limit greatly complicates the analysis in both Steps 1 and 3 In the scattering step one must determine both the discrete spectrum ($L^2$ eigenvalues) and the so called reflection coefficient defined on the real line (the continuous spectrum) of the Zakharov-Shabat (hereafter ZS) scattering problem \eqref{lax 1}. The focusing ZS problem, unlike the scattering problems for KdV or defocusing NLS, is not self-adjoint and this makes the determination of the scattering data highly nontrivial. Numerical evidence \cite{Bronski96} and asymptotic results \cite{Miller01} suggest that the number of eigenvalues scales as $1/\eps$ and asymptotically accumulate on a ``Y"-shaped spectral curve for a large class of initial data. The analysis of the inverse scattering step is also frustrated by the vast amount of spectral data. Among other issues, the eigenvalues of the scattering step become poles of the Riemann-Hilbert problem and one is faced with seeking a function with an ever growing number of poles. The question of how to best interpolate the poles is one of the first difficulties one has to address in the inverse scattering step. 

The situation is somewhat simplified for real data with a single maximum, in this case it has been shown \cite{KS03} that the eigenvalues are confined to the imaginary axis. However, exact determination of the eigenvalues is possible only in special cases, among them the Satsuma-Yajima data and its generalization \cite{SY74, TV00}, and piecewise constant data. The exact solvability of the scattering problem for square barrier data \eqref{square barrier} is one of the principle motivations for using it as a model of how the semiclassical evolution smooths out discontinuities. The somewhat common practice of replacing the exact scattering data with simpler approximate data is a dubious procedure for studying initial data with discontinuities. The scattering problem has been shown to be very sensitive to perturbations of the initial data \cite{Bronski01}, and its not clear how using "nearby" scattering data would effect the discontinuities of the initial data. For this reason we found it crucial to solve the true Cauchy problem; we make no approximations to either the initial or scattering data. The only approximations we make are in the inverse problem where the error can be controlled. 

Despite the numerous frustrations, in recent years there has been considerable progress on studying the semiclassical focusing NLS equation. In \cite{KMM03} the authors considered the semiclassical limit of certain soliton ensembles, that is, families of real, analytic, reflectionless initial data whose eigenvalues are given by a quantization rule. In a separate study \cite{TVZ04, TVZ06} the initial data $\psi_0(x) = \sech(x)^{1+i\mu/\eps}$ was considered to study the effect of a nontrivial complex phase on the evolution. In both cases, the authors found sharply separated regions of the $(x,t)$-plane inside which the leading order asymptotic behavior of the solution is described in terms of slowly modulated wave trains of a particular genus whose Riemann invariants' evolution is described by the Whitham equations of the corresponding genus. For each fixed $x$ and increasing $t$ one encounters these sharp transitions, or caustics, across which the genus of the asymptotic description changes. In particular, in both cases it was observed that up to some primary caustic $T(x)$, that is for  $t< T(x)$, the evolution was initial described by slowly modulated plane wave with invariants satisfying the genus zero Whitham equations \eqref{modulation equations}. 

In both the above cases the initial data considered is analytic, and this plays an important role in the analysis of the inverse problem. Additionally, we note that due to the analyticity of the initial data the limiting elliptic modulation equations are locally well-posed. For non-analytic initial data it's not clear what should happen for small (but fixed) times in the semiclassical limit; in principle, the evolution could be described by arbitrarily high genus waves from the onset of evolution. The situation for square barrier data is necessarily complicated; in the small time limit it was shown first in \cite{DiFM05} for the defocusing case and in \cite{Adrian06} for the Manakov system (of which focusing NLS is a special case) that the small time limit of the evolution with initial data given by \eqref{square barrier} is equal to the evolution of the linear problem and Gibbs phenomena occur at the discontinuities. These Gibbs phenomena generate oscillations of arbitrarily short wavelength which propagate away from the discontinuities. Of course, in the semi-classical scaling any fixed time independent of $\eps$ is not a small time in the limit, but the existence of rapid oscillations from the onset is a strong indication that something singular must be occurring near the discontinuities. One of the principal goals in this study is to understand how the semiclassical NLS evolution smooths the discontinuities in the square barrier \eqref{square barrier} initial data. 

In this paper as in all of the above mentioned studies the bulk of the work is contained in analyzing the Riemann-Hilbert problem (hereafter RHP) associated with the inverse scattering transform. The results are asymptotic in nature and rely on the nonlinear steepest-descent method developed by Deift and Zhou first in \cite{DZ93} and then generalized in \cite{DZ94, DVZ94} and in the works of many others. The method, in a nutshell, seeks to construct an explicit parametrix $P(z)$ such that the transformation $m(z) = E(z) P(z)$ maps the solution $m(z)$ of the original RHP to a new unknown $E(z)$ which satisfies a small-norm RHP whose solution and asymptotic behavior can be given in terms of certain singular integrals. In each of the semiclassical and small-time studies mentioned above, as in most cases where the steepest-descent method has been successful, the construction of the parametrix revolves around the study of a single complex phase function of the type $e^{i \theta(z)/\eps}$ which must be controlled. 

The fact that the scattering data in the previous studies can be reduced to a single complex phase is a consequence of the choice to study very `nice' initial data. By studying square barrier data we hope to gain an understanding of solutions for more generic types of initial data, i.e. data outside the analytic class for which the limiting problem \eqref{modulation equations} is ill-posed. As soon as we consider initial data outside the analytic class, we immediately encounter scattering data which cannot be reduced to a single dominant complex phase. For the square barrier data considered here the reflection coefficient takes the form of a whole series: $\sum_{k=0}^\infty r_k(z) e^{i \theta_k(z)/\eps}$ where the $r_k(z)$ are $\eps$-independent algebraically decaying weights (c.f. Sec. \ref{sec: harmonic expansion}). As we will show, as $x$ and $t$ evolve, different terms in the expansion give the dominant contribution in different parts of the spectral $z$-plane. In the course of our analysis we show that for times $t$ up to the secondary breaking the analysis is dominated by the two lowest phases $\theta_0$ and $\theta_1$. However, we see no a-priori way to rule out the possibility that at later times the higher order harmonics emerge to contribute at leading order.

In addition to being multi-phased, the reflection coefficient for rough initial data is necessarily slowly decaying; just as the Fourier transform the reflection coefficient, $r(z)$, for discontinuous initial data decays as $1/z$ for $z \in \R$. The slow decay forces us to consider the contribution of the reflection coefficient everywhere on the real line. This makes the analysis more difficult than those considered in \cite{KMM03, TVZ04} where the reflection coefficient, at worst, contributes only on a finite interval. To complete the inverse analysis we had to marry techniques for analyzing multi-soliton RHPs with those for pure reflection problems. This complication is greatest at the real stationary phase points introduced by the steepest-descent method. On the real line no phase is completely dominant and the multiphase nature of the reflection coefficient lead us to construct delicate `annular' local models in neighborhoods of these stationary phase points. This model is discussed in Section~\ref{sec: outside xi_1}; to our knowledge our annular construction is the first of its kind. 
 
\subsection{Statement of the main results}
The culmination of our work in this paper is the following theorem which describes the leading order asymptotic behavior of the solution of the Cauchy problem defined by eqs.~\eqref{nls} and \eqref{square barrier} for arbitrarily large times $t$ outside the initial support and up to a secondary caustic for $x$ inside the support of the initial data.
\begin{figure}[htb]
	\begin{center}
		\includegraphics[width=.6\textwidth]{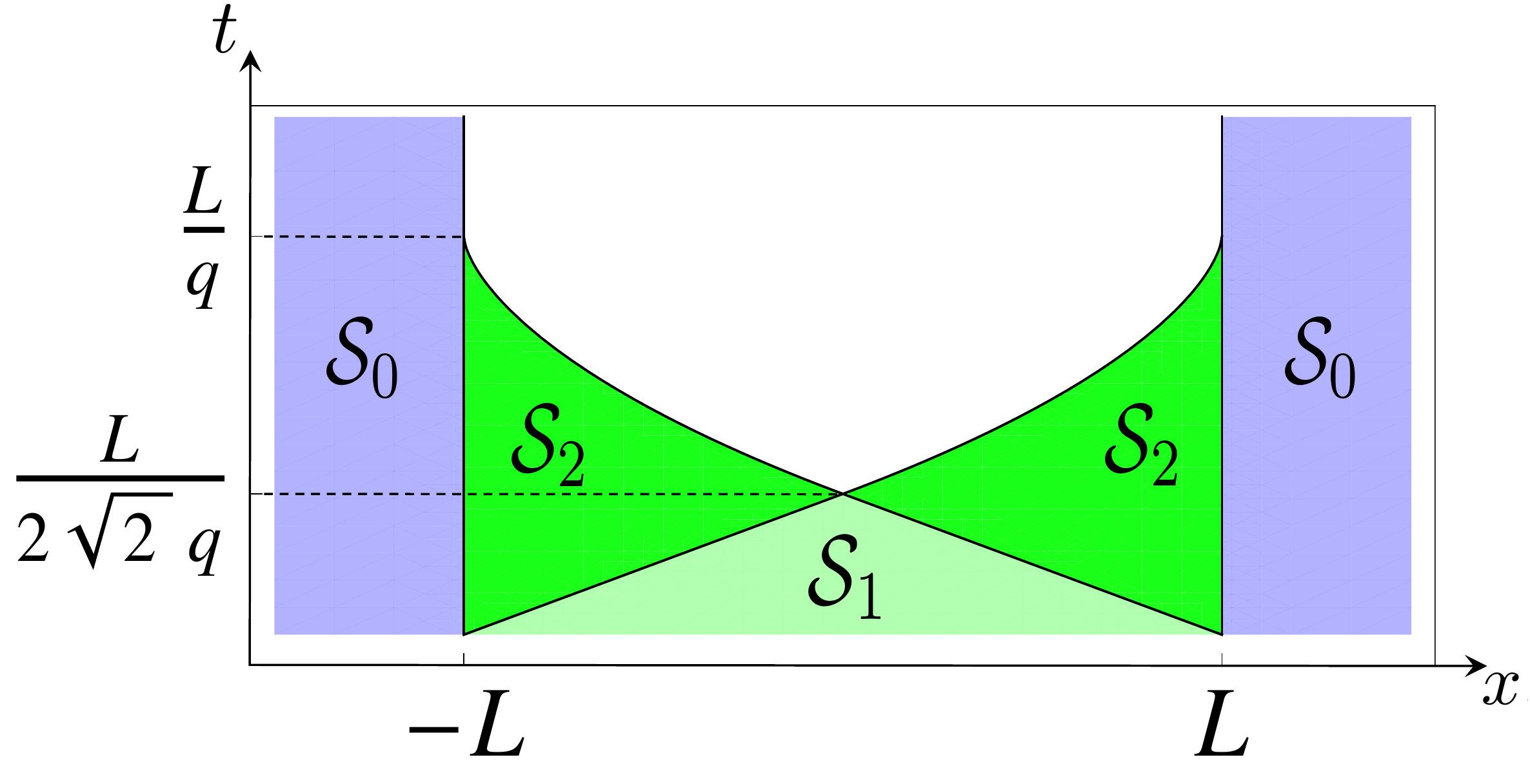}
		\caption{The regions $\mathcal{S}_k,\ k=0,1,2$ in which we have a description of the semi-classical limiting behavior for the initial data \eqref{square barrier} and the caustics $T_1$ and $T_2$ separating the regions.
		\label{fig: spacetime sets}
		}
	\end{center}
\end{figure}

\begin{thm}\label{thm: main}
Consider the curves $t = T_1(|x|)$ and $t=T_2(|x|)$ defined by \eqref{S1} and \eqref{second breaking} respectively. Label by $\mathcal{S}_k,\, k=0,1,2,$ the following subsets of $\R \times [0,\infty)$ (see Figure \ref{fig: spacetime sets}):
\begin{align*}
	\mathcal{S}_0 &= \left\{ (x,t)\, :\, |x| > L, t  \geq 0 \right\} \\
	\mathcal{S}_1 &= \left\{ (x,t)\, :\, |x| < L, 0 \leq t < T_1(x)   \right\} \\   
	\mathcal{S}_2 &= \left\{ (x,t)\, :\, |x| < L,  T_1(x) < T <  T_2(x)  \right\} 
\end{align*}	
Then for all sufficiently small $\eps$, the solution $\psi(x,t)$ of the focusing NLS equation \eqref{nls} with initial date given by \eqref{square barrier} satisfies the the following asymptotic description on any compact $K_j \subset \mathcal{S}_j,\ j=0,1,2$:
\begin{equation*}
	\left| \psi(x,t) - \psi_{asy}(x,t) \right| = \bigo{ \eps^{1/2} \log \eps },
\end{equation*}	 
where,
\begin{enumerate}[i.]
	\item For $(x,t) \in \mathcal{S}_0$,
		\begin{equation*}
			\psi_{asy}(x,t) = 0.
		\end{equation*}
	\item For $(x,t) \in \mathcal{S}_1$, the limiting behavior is described by the nearly plane wave solution
		\begin{equation*}
			\psi_{asy}(x,t) = q \exp \left[ \frac{i}{\eps} \lp q^2 t + \eps \omega(x,t) \rp \right],
		\end{equation*}
		where 		
		\begin{equation*}
			\omega(x,t) = -\frac{1}{ \pi} \lp \int_{-\infty}^{\xi_1} - \int_{\xi_0}^\infty 
			\rp \frac{ \log(1+ |r_0(\lambda)|^2) }{ \sqrt{\lambda^2 + q^2} } \dd \lambda,
		\end{equation*}
		and $r_0,\ \xi_1$, and $\xi_0$ are defined by \eqref{r_0}, \eqref{one band xi_1}, and \eqref{one band xi_0} respectively.  \\
		
	\item For $(x,t) \in \mathcal{S}_2$, the limiting behavior is described by the slowly modulating one-phase wave-train 
	\begin{equation*}
		\psi_{asy}(x,t) =  (q - \imag \alpha)) \frac{\Theta(0)}{\Theta(2A(\infty))}\frac{\Theta(2A(\infty) + i T_0 - i\eps^{-1}\Omega)}{\Theta(iT_0 - i\eps^{-1} \Omega) } e^{-2iY_0 } .
	\end{equation*}
Here, $\Theta(z)$ is the Reimann theta function, see \eqref{Theta}, for the Riemann surface associated with the function
\begin{equation*}
	\mathcal{R}(z) = \sqrt{(z-iq)(z+iq)(z-\alpha(x,t))(z-\alpha(x,t)^*)}
\end{equation*}	  
The points $\alpha(x,t)$ and $\alpha(x,t)^*$ are complex Riemann invariants described by Propostition \ref{prop: two band endpoints} and whose evolution satisfies the genus-one Whitham equation \eqref{one phase Whitham evolution}. The remaining parameters are all slowly varying functions of $(x,t)$ described in terms of certain Abelian integrals:
\begin{equation*}
	A(\infty) = -i\pi \int_{iq}^\infty \frac{\dd z}{\mathcal{R}(z)} \bigg/
	\int_{\alpha^*}^{\alpha} \frac{\dd z}{\mathcal{R}(z)} 
\end{equation*}
and $\Omega,\ T_0,$ and $Y_0$ are given by \eqref{omega def}, \eqref{T}, and \eqref{Y} respectively.
\end{enumerate} 
\end{thm}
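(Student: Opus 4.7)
The strategy is to attack the problem via the nonlinear steepest descent method of Deift and Zhou applied to the Riemann-Hilbert problem for the inverse scattering transform associated with the Zakharov-Shabat Lax pair. The first preparatory step is to solve the forward scattering problem exactly for the square-barrier potential \eqref{square barrier}; because the potential is piecewise constant, the Jost solutions are piecewise exponentials of the auxiliary variable $\sqrt{z^2 + q^2}$ and can be obtained by matching across $x = \pm L$. From this one reads off the full scattering data: a discrete spectrum confined to the imaginary axis (by Klaus-Shaw), and a reflection coefficient that expands as the convergent series $r(z) = \sum_{k \ge 0} r_k(z) e^{i \theta_k(z)/\eps}$ with algebraically decaying weights $r_k$. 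This expansion will govern the entire subsequent analysis: different terms dominate in different regions of the spectral plane as $(x,t)$ varies, and controlling which terms contribute at leading order is the central bookkeeping issue.

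With the scattering data in hand, one sets up the normalized matrix RHP for $m(z;x,t)$, performs the usual interpolation of the $\bigo{\eps^{-1}}$ discrete eigenvalues into jumps on small contours around their accumulation set, and introduces a $g$-function to reshape the oscillatory exponentials. The admissible $g$-functions are parametrized by the endpoints of the bands of an associated hyperelliptic Riemann surface, determined by moment and integral (Whitham-type) conditions. In region $\mathcal{S}_0$ the correct $g$-function is trivial (no bands), the outer model is the identity, and the reconstruction formula returns $\psi_{\text{asy}} = 0$. In $\mathcal{S}_1$ the $g$-function carries a single vertical band $[-iq, iq]$, inherited from the edges of the continuous spectrum. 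In $\mathcal{S}_2$ a second band emerges between the complex-conjugate endpoints $\alpha(x,t), \alpha^*(x,t)$, whose evolution is forced by the genus-one Whitham system of Proposition~\ref{prop: two band endpoints}. The caustic curves $T_1, T_2$ are precisely the loci at which the $N$-band ansatz ceases to be consistent, requiring a change of genus.

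Given the $g$-function, one builds a global parametrix $P(z)$ from four pieces: an outer model in terms of Riemann theta functions on $\mathcal{R}$; standard Airy parametrices at the soft branch points $\pm iq, \alpha, \alpha^*$; parabolic cylinder parametrices at those real stationary phase points where a single harmonic $\theta_k$ dominates; and, in an annular neighborhood of the stationary phase points $\xi_0, \xi_1$ where $\theta_0$ and $\theta_1$ compete on the real axis, the novel \emph{annular} parametrix of Section~\ref{sec: outside xi_1}. This annular construction is where I expect the main obstacle to lie: because no single phase is dominant on $\R$, the usual single-phase parabolic cylinder reduction must be replaced by a matrix model whose jumps simultaneously carry two distinct exponentials, and matching it to the outer theta-function model along both the inner and outer annular boundaries requires a careful reassessment of which factors are genuinely small-norm.

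The final step is the small-norm analysis of the error $E(z) = m(z) P(z)^{-1}$. Using the sign structure of $\re g$ produced by the band construction, together with the algebraic decay of the $r_k$, one shows that the jumps of $E$ on all contours are $I + \bigo{\eps^{1/2} \log \eps}$, with the logarithmic loss arising from the $1/z$ decay of $r$ which forces genuine contributions from all of $\R$ rather than from isolated stationary phase neighborhoods. Standard small-norm RHP theory then gives $E = I + \bigo{\eps^{1/2} \log \eps}$ uniformly off the contours, and the asymptotics for $\psi$ are extracted via $\psi(x,t) = 2i \lim_{z \to \infty} z\, m_{12}(z;x,t)$. The explicit formulas in parts (ii) and (iii) of the theorem then follow from evaluating the outer theta-function parametrix at infinity in the genus-zero and genus-one cases respectively, with $\omega(x,t)$ and the theta-function combinations appearing as the $1/z$ coefficients of the corresponding outer models.
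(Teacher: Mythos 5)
Your outline follows the same overall architecture as the paper (exact forward scattering, pole removal, $g$-function with genus $0\to1$ transitions at $T_1,T_2$, theta-function outer model, local parametrices, small-norm error), but two of your concrete steps would not go through as stated. First, there are \emph{no} Airy parametrices at $z=\pm iq$ in this problem, and installing them would fail. The asymptotic eigenvalue density \eqref{asymp density} \emph{blows up} like $|z\mp iq|^{-1/2}$ at the band ends $\pm iq$ rather than vanishing like a $3/2$-power, so the $g$-function is required to behave as $\bigo{(z\mp iq)^{1/2}}$ there (see condition 4 of RHP \ref{rhp: genus zero g} and \eqref{two band rho endpoints}); consequently no conformal map $\zeta\sim(z\mp iq)^{3/2}$ of Airy type exists at these points. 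The paper instead fixes $\pm iq$ as ``hard'' endpoints: the outer model's jumps on $\Gamma_\nu\cup\Gamma_\nu^*$ match those of $Q$ exactly up to the endpoints (after the scalar $s$-function absorbs the oscillatory constants $e^{\pm itq^2/\eps}$, resp.\ $e^{\pm i\eta/\eps}$), and the prescribed growth bounds \eqref{one band Q sing}--\eqref{one band O sing} (resp.\ \eqref{two band Q sing}--\eqref{two band O sing}) force the error matrix to have at worst removable singularities at $\pm iq$. Airy models appear only at the moving endpoints $\alpha,\alpha^*$, where $g=\bigo{(z-\alpha)^{3/2}}$.

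Second, your attribution of the $\log\eps$ loss to the $1/z$ decay of the reflection coefficient is not where it actually comes from. The slow decay of $r$ is what forces the outer model to carry a jump on all of $(-\infty,\xi_0]$ (hence the $\delta$-function and the phase correction $\omega$), but the $\bigo{\eps^{1/2}\log\eps}$ rate is produced locally at the stationary phase points, by replacing the factor $(z-\xi_k)^{i\kappa(z)}$ in the expansion of $\delta$ with its frozen-coefficient approximation $\bigl(\sqrt{\eps/\theta_k''(\xi_k)}\,\zeta_k\bigr)^{i\kappa(\xi_k)}$ in the parabolic cylinder models; see \eqref{delta approx}, \eqref{xi_0 ray error} and the computation \eqref{xi_1 error sample}. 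A smaller point: for compactly supported data the paper does not interpolate the eigenvalues by a discrete product/Blaschke-type construction on small contours; it uses the fact that $r$ continues meromorphically with $c_k=\res_{z_k}r$ (equation \eqref{compact connection}), so the triangular factor $R$ built from $r\,e^{i\theta/\eps}$ itself removes the poles. Your interpolation route could in principle be made to work, but the paper argues that working with the exact (non-approximated) scattering data is essential for discontinuous initial data, and the direct factorization is what makes the multi-harmonic bookkeeping tractable.
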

We make the following observations concerning our results:
\begin{enumerate}[1.]
\item The barrier does not asymptotically spread out in time, though at any time $t>0$ the solution is no longer compact supported. Our theorem establishes an asymptotically vanishing bound on the amplitude of the solution for $x$ outside the support of the initial data. However, it does not provide any information about the nature of the small amplitude oscillations that occur in the exterior region. These smaller corrections could be found by calculating the higher order terms in the expansion of the small norm RHP \ref{rhp: outside error} but we did not pursue this here. \\

\item In the regions $\mathcal{S}_0$ (trivially) and $\mathcal{S}_1$ the asymptotic limiting solution $\psi_{asy}(x,t)$ is a slowly modulating plane wave solutions of  \eqref{modulation equations}, the genus-zero Whitham equations. The most interesting feature of the solution in this region is the slowly evolving correction to the phase $\omega(x,t)$. This correction has in important consequence: the Whitham modulation theory (discussed around \eqref{fluid equations} and \eqref{modulation equations} above) fails to capture the behavior of the solution. The modulation theory cannot recover the slow phase correction $\omega(x,t)$ which constitutes an $\bigo{1}$ correction to the solution's phase; the Riemann-Hilbert analysis naturally recovers this phase. This is explained in detail in Section~\ref{sec: omega correction} where we show that this correction does not correspond to a regular correction to the geometric optics approximation predicted by WKB. \\

\item The evolution regularizes the initial shocks in the square barrier data at $x=\pm L$ by the instantaneous onset of genus-one oscillations in the growing region $\mathcal{S}_2$. This is consistent with the previous studies \cite{KMM03, TVZ04} where the mechanism for formation of the primary caustic is the formation of an elliptical shock in the solution of the genus zero modulation equations valid for times before the primary caustic.  \\

\item The fact that the reflection coefficient is NOT single phase is not merely a technical difficulty.  The second breaking time $T_2(x)$ which gives an upper bound on the genus-one region $\mathcal{S}_2$ can be characterized in terms of a critical transition related to the {\it second phase}. 
\end{enumerate}

\subsection{Outline of the rest of the paper}
The remainder of the paper is concerned with establishing the proof of Theorem \ref{thm: main}. In Section \ref{sec: scattering} we quickly review the details of the forward scattering theory pertinent to our analysis and show how one explicitly calculates the scattering data for initial data given by \eqref{square barrier}. In section \ref{sec: no band} we consider the inverse problem for $x$ outside the support of the initial data. Section \ref{sec: one band} concerns the inverse problem inside the support up the first caustic while Section \ref{sec: two band} covers the inverse problem between the first and secondary caustics. The analysis in each of the last three sections is largely self-contained though for the sake of brevity we use results and estimates from the previous sections whenever possible. In particular, the properties of the reflection coefficient and resulting jump matrix factorizations discussed in Sections~\ref{sec: harmonic expansion} and \ref{sec: matrix factorizations} are used throughout the paper.
	    
Finally, a quick note on notation. Throughout the paper we make extensive use of the spin matrices:
\begin{equation*}
	\sigma_1 = \offdiag{1}{1}, \quad \sigma_2 = \offdiag{-i}{i}, \quad  \sigma_3 = \diag{1}{-1}.
\end{equation*}
Additionally, given any nonzero scalar $\lambda$ and matrix $A$ let
\begin{equation*}
	\lambda^\sig = \diag{\lambda}{\lambda^{-1}}, \quad \text{and} \quad \lambda^{\ad \sig}A = \lambda^\sig A \lambda^{-\sig}.
\end{equation*}	    
With respect to complex variable notation, we let $z^*$ denote complex conjugate of any complex number or matrix $z$ and let $A^\dagger$ denote the hermitian conjugate of any matrix $A$. For complex valued functions $f(z)$ of a complex variable $z$ we denote by $f^*(z)$ or simply $f^*$ the Schwarz reflection: $f(z^*)^*$. In the rare case when only the function value is to be conjugated we write $f(z)^*$.

%%%%%%%%%%%%% BEGIN FORWARD SCATTERING SECTION	    
\section{Forward scattering of square barrier data \label{sec: scattering}}
We here quickly review the forward scattering theory necessary to define the Riemann-Hilbert problem associated with the inverse scattering problem. We omit proofs, preferring to get as quickly as possible to the analysis of the inverse problem. The interested reader is referred to \cite{BC84, BDT88} for a comprehensive treatment of the subject.

The NLS equation is completely integrable in the sense that it has a Lax pair structure. In \cite{ZS72} Zakharov and Shabat showed that the focusing NLS equation \eqref{nls} can be recognized as the compatibility condition for the following overdetermined pair of equations:
\begin{subequations}\label{lax pair}
\begin{align}
	\label{lax 1} \eps W_x &= \mathcal{L} W ,\qquad\qquad \mathcal{L} := -iz \sigma_3 + \offdiag{\psi}{-\psi^*}\\
	\label{lax 2} i\eps W_t &= \mathcal{B} W, \qquad\qquad \mathcal{B} := iz \mathcal{L} - \frac{1}{2} \tbyt{|\psi|^2}{\eps \psi_x}{\eps \psi^*_x}{-|\psi|^2}  
\end{align}
\end{subequations}
The Lax pair allows one, in principal, to solve the NLS equation using the forward/inverse scattering procedure. In the forward scattering step, one takes given initial data $\psi(x,t=0)= \psi_0(x)$ that decays suitable quickly and seeks a solution $W$ of the first equation in the Lax pair \eqref{lax 1}, known as the Zakharov-Shabat eigenvalue problem, in the form 
\begin{equation}\label{w scat def}
	W = m e^{-\frac{i}{\eps}xz\sig}
\end{equation}
such that,
\begin{equation}\label{w scat cond}
	\begin{split}
		&i.\ m \rightarrow I, \quad \text{as } x\rightarrow \infty \\
		&ii.\ m( \cdot ,z) \text{ is bounded and absolutely continuous}
	\end{split}   
\end{equation}
The construction of $m$ from initial data is a nontrivial step which can be calculated explicitly only in special cases. However, for any potential $\psi_0 \in L^1(\R)$, the solution $m$ must have the following properties \cite{BC84}:
\begin{enumerate}[1.]
\item There exist a discrete set $Z$ in $\C \backslash \R$ such that for each $z \in \C \backslash (\R \cup Z)$ there exist a unique solution of $W$ of $\eps W_x = \mathcal{L}W$ in the form \eqref{w scat def} satisfying \eqref{w scat cond}, 

\item $m(x,z)$ is a meromorphic function of $z$ in $\C \backslash \R$ whose poles are precisely the elements of $Z$ and on $\C \backslash \R,\  \lim_{z \rightarrow \infty} m(x,z) = I$.  

\item The function $m(x,z)$ takes continuous boundary values $m_{\pm}(x,z) := \lim_{\delta \rightarrow 0^+} m(x,z\pm i \delta)$ for each $z \in \R$ which satisfy a ``jump relation", $m_+ =m_- v(x,z)$, where: 
	\begin{equation}\label{generic jump}
		v(x,z) = e^{-\frac{i}{\eps} xz\sigma_3} \tbyt{1+|r(z)|^2}{r^*(z)}{r(z)}{1} e^{\frac{i}{\eps}xz \sigma_3}.
	\end{equation} 

\item The points in $Z$ come in complex conjugate pairs. At each simple pole (the generic case) the residues of $m$ satisfy a relation of the form  
	\begin{equation}\label{generic residues}
		\begin{split}
			&\text{for } z_k \in \C^+,\ \res_{z_k} m = \lim_{z_k} m(x,z) \offdiag{0}{ c_k e^{\frac{2izx}{\eps} }}, \\
			&\text{for } z_k^* \in \C^-,\ \res_{z_k^*} m= \lim_{z_k^*} m(x,z)\offdiag{-c_k^* e^{-\frac{2izx}{\eps} }}{0}.
		\end{split}
	\end{equation}
\end{enumerate}
The form of the jump matrix and the poles coming in conjugate pairs is a consequence of the reflection symmetry 
\begin{equation}\label{m symmetry}
	m(x,z^*) = \sigma_2\, m(x,z)^*\, \sigma_2.
\end{equation}
implied by \eqref{lax 1}.    

The collection of poles $\{ z_k \}$, the connection coefficients $\{c_k\}$, and the function $r(z)$, called the reflection coefficient, appearing in the jump matrix \eqref{generic jump} constitute the totality of the scattering data generated by the initial potential $\psi_0(x)$.

Though one is interested in building $m(x,z)$ for $z$ off the real axis, the construction of the scattering data usually begins by restricting $z$ to the real axis. For real $z$ one seeks solutions $W^{(+)}$ and $W^{(-)}$ of \eqref{lax 1} normalization such that:
\begin{equation}\label{Jost norm}
	\lim_{x \rightarrow \pm \infty} W^{(\pm)} e^{\frac{i}{\eps}xz\sig} = I.
\end{equation}  
Each of these so called Jost solution constitutes a fundamental solution of the differential equation for $z \in \R$ and thus the two solutions satisfy a linear relation of the form 
\begin{equation}\label{scattering relation}
	W^{(-)}(x,z) = W^{(+)}(x,z) S(z), \qquad z \in \R.  
\end{equation} 
The matrix $S$, called the scattering matrix, is independent of $x$ and takes the form
\begin{equation}\label{scattering matrix}
	S(z) = \tbyt{a(z)}{-b^*(z)}{b(z)}{a^*(z)}, \qquad \det S = |a(z)|^2 + |b(z)|^2 \equiv 1.
\end{equation}
The normalized solution $m$ is then constructed by examining the integral equation representations of $W^{(\pm)}$ and observing that different columns of each solution extend analytically to $\C^+$ or $\C^-$. In the process of this construction one finds that the function $a(z)$ extends analytic to $\C^+$, while $b(z)$ is, in general, defined only on the real axis. Moreover, the poles $z_k$ are precisely the zeros of $a(z)$ in $\C^+$ and the reflection coefficient $r(z)$ is defined by the ratio 
\begin{equation}\label{r scattering def}
 	r(z) := b(z)/a(z), \qquad z \in \R.
\end{equation}

The time evolution of the scattering data is remarkably simple. As $\psi$ evolves according to \eqref{nls} the coefficients in the Lax operators $\mathcal{L}$ and $\mathcal{B}$ gain time dependence which in turn is inherited by the spectral data. A basic consequence of the Lax pair structure, is that the time-flow is iso-spectral \cite{Lax68}, that is, the eigenvalues $\{ z_k \}$ and their conjugates are invariant in time. The remaining scattering data evolve as follows:
\begin{equation}\label{spectral evolution}
	r(z,t) = r(z) e^{\frac{2itz^2}{\eps}}, \quad\text{and}\quad c_k(t) = c_k e^{\frac{2itz^2}{\eps}},   
\end{equation}
where $r(z)$ and $c_k$ above are the values of each datum at time $t=0$.  

Given the time evolved spectral data, the problem of reconstructing the potential $\psi$ can be cast as a meromorphic Riemann-Hilbert problem (RHP). For each $x$ and $t$ one seeks a piecewise meromorphic matrix-valued solution $m(z;x,t)$ of the following properties: 
% Generic NLS RHP
\begin{rhp}[for focusing NLS.] Given the intitial scattering data $r(z)$, $\{z_k\}$, and $\{c_k\}$, find a function $m(z;x,t)$ such that:
\begin{enumerate} [1.]
\item $m$ is meromorphic in $\C \backslash \R$ with poles at the points $z_k$ and their complex conjugates.
\item As $z \rightarrow \infty$ in $\C \backslash \R$, $m \rightarrow I$.
\item As $z$ approaches $\R$ from above and below $m$ assumes continuous boundary values $m_\pm$ satisfying the jump relation $m_+ = m_- v$, where
\begin{equation}\label{rhp jump}
	v = e^{-\frac{i}{\eps}(tz^2+xz)} \tbyt{1+|r(z)|^2}{r^*(z)}{r(z)}{1} e^{\frac{i}{\eps}(tz^2+xz)}. 
\end{equation}
\item The poles of $m$ at each $z_k$ and $z_k^*$ are simple and satisfy the residue conditions
\begin{equation}\label{rhp residues}
	\begin{split}
	\res_{z_k} m &= \lim_{z_k} m \offdiag{0}{c_k e^{\frac{i}{\eps}(2tz^2+2xz)}} \\
	\res_{z_k^*} m &= \lim_{z_k^*} m \offdiag{-c_k^* e^{-\frac{i}{\eps}(2tz^2+2xz)}}{0}
	\end{split}
\end{equation}
\end{enumerate}
% Generic NLS RHP
If $m(z;x,t)$ is the solution of this problem for the giving scattering data generated by initial data $\psi_0(x)$ then the function defined by the limit
\begin{equation}\label{nls recovery}
	\psi(x,t) = 2i \lim_{z \rightarrow \infty} z\, m_{12}(z;x,t)
\end{equation}
exists and is a solution of the focusing NLS equation \eqref{nls}. 
\end{rhp}

\subsubsection{Scattering for compactly support data, exact solutions for square barrier data}
Here we record some additional properties of the scattering data when the initial potential $\psi_0(x)$ is compactly supported. Additionally, we find explicit formulas for the scattering data when the initial data is piecewise constant and compactly supported which includes the square barrier initial data with which we are interested.

For bounded compact potentials $\psi_0$, the Jost functions $W^{(\pm)}(x,z)$ are entire functions of $z$. The scattering relation \eqref{scattering relation} thus holds not only on the real axis, but in the entire complex plane, and the scattering coefficients $a(z)$ and $b(z)$ are both entire functions for compact initial data. Additionally, the connection coefficients $c_k$ can be expressed explicitly in terms of the scattering coefficients as 
\begin{equation} \label{compact connection}
	c_k = \frac{b(z_k)}{a'(z_k)} = \res_{z_k} r(z).
\end{equation}   
This relation between the connection coefficients and the residues of the reflection coefficient, which is false for generic $L^1$ initial data, will later play a fundamental role in our analysis of the inverse problem.

As discussed previously, calculating explicit formulae for the scattering data generated by generic initial data is intractable, particularly in light of the singular dependence of the scattering data on the dispersion parameter $\eps$. However, for initial data which is compact and piecewise constant the forward scattering problem is simple. We can assume freely that the initial data is supported on an interval $[-L,L]$, and write it as
\begin{equation*}
	\psi_0(x) = \sum_{k=1}^n  q_k 1_{[x_{k-1}, x_k)},
\end{equation*}
where $-L = x_0 < x_1 < \ldots < x_n=L$ is a partition of the interval $[-L,L]$, the $q_k$ are complex constants, and $1_{[a,b)}$ denotes the characteristic function on $[a,b)$. On each interval $[x_{k-1},x_k)$, the ZS scattering problem \eqref{lax 1} reduces to a constant coefficient ODE. The Jost solutions $W^{(\pm)}(x,z)$ are calculated by solving the ZS problem on each interval and patching the solution together by demanding continuity at the partition points. That calculation leads to the following formula for the scattering matrix:
\begin{multline*}
	S(z)= e^{izL\sigma_3/\eps} \, e^{\frac{1}{\eps}(-iz\sigma_3+Q_1)(x_1-x_0)} \times \ldots \\ \ldots\times e^{\frac{1}{\eps}(-iz\sigma_3+Q_n)(x_n-x_{n-1})}  \, e^{izL\sigma_3/\eps}, \qquad \qquad \qquad Q_k = \offdiag{q_k}{-q_k^*}.
\end{multline*}
In the simplest case, the potential consist of a single barrier $\psi_0(x) = q\,  1_{[-L,L]}$. The scattering coefficients $a(z)$ and $b(z)$ are then 
\begin{align}
	a(z) =& \frac{ \nu(z) \cos\lp \frac{2L\nu(z)}{\eps} \rp - i z \sin \lp \frac{2L\nu(z)}{\eps} \rp }{ \nu(z) } 
		e^{2Liz/\eps} ,\label{square a} \\
	b(z) &= \frac{ -q \sin \lp \frac{2L\nu(z)}{\eps} \rp }{\nu(z)} \label{square b},
\end{align}
where
\begin{equation}\label{nu}
	\nu = \sqrt{z^2 + q^2}.
\end{equation}
From the explicit formalae it is clear that both $a(z)$ and $b(z)$ are both entire functions of $z$. For real values of the constant $q$, the initial potential $\psi_0(x)$ falls in to the class of real potentials of a single maximum; as such the poles $z_k$, which are the zeros of $a(z)$ in the upper half plane, must lie on the imaginary axis \cite{KS02}. Using the explicit formula for $a(z)$ we find that the $z_k$ are confined to the imaginary interval $i[0,q)$, and asymptotically there are roughly $1/\eps$ poles in the the interval; as $\eps \downarrow 0$ they collect according to the asymptotic density 
\begin{equation}\label{asymp density}
	\rho_0 (z) = \frac{2Lq}{\pi} \frac{z}{\sqrt{z^2+q^2}},
\end{equation}
which shows that the density has integrable singularities at $z =\pm iq$. For convenience we exclude the countable set of values at which eigenvalues are `born' at the origin
\begin{equation*}
	\eps \neq \eps_n, \qquad \eps_n = \frac{4Lq}{(2n+1)\pi}, \qquad n=0,1,2,\ldots.
\end{equation*}
Making use of these explicit formulae for the scattering data we state the exact Riemann-Hilbert problem for the inverse scattering given square barrier initial data defined by \eqref{square barrier}:
%Square barrier NLS RHP
\begin{rhp}[ for focusing NLS with square barrier initial data]\label{rhp: square barrier}
	Find a matrix valued function $m(z;x,t)$ satisfying the following properties:
	\begin{enumerate}[1.]
	\item $m$ is meromorphic for $z \in \C \backslash \R$, with simple poles at each $z_k \in \C^+$ and its complex conjugate. Here, the points $z_k$ enumerate the zeros in $\C^+$ of the function $a(z)$ defined by \eqref{square a}.
	\item As $z \rightarrow \infty$, $m = I + \bigo{1/z}$.
	\item $m$ takes continuous boundary values for $z \in \R$, $m_+$ and $m_-$ satisfying the jump relation, $m_+ = m_- v$ where 
		\begin{equation} \label{square jump rhp}
			v = \tbyt {1+ |r|^2}{r^* e^{-\frac{i}{\eps} \theta}}{ r e^{\frac{i}{\eps}\theta} }{1}, \qquad \theta = 2tz^2+2xz.
		\end{equation}
	\item $m$ satisfies the residue conditions
		\begin{equation} \label{square residues}
			\begin{split}
				\res_{z= z_k} m  &=  \lim_{z \rightarrow z_k} m \offdiag{0}{\res\limits_{z=z_k} r e^{\frac{i}{\eps}\theta} }, \\
				\res_{z= z_k^*} m &=  \lim_{z \rightarrow z_k^*} m \offdiag{-\res\limits_{z=z_k^*} r^* e^{-\frac{i}{\eps}\theta } }{0}.  
			\end{split}
		\end{equation}
	\end{enumerate}
	In \eqref{square jump rhp} and \eqref{square residues}, the reflection coefficient $r(z)$ is given by
	\begin{equation}\label{square r}
		r(z) = \frac{  -q \sin \lp \frac{2L\nu(z)}{\eps} \rp  }{ \nu(z) \cos\lp \frac{2L\nu(z)}{\eps} \rp - i z \sin \lp \frac{2L\nu(z)}{\eps} \rp } e^{-2Liz/	\eps} .
	\end{equation}
	Finally, if $m(z;x,t)$ is the solution of this problem, then \eqref{nls recovery} gives the solution of focusing NLS \eqref{nls} given the square barrier initial data \eqref{square barrier}.
\end{rhp}
%Square Barrier NLS RHP

The goal now is to use the Deift-Zhou steepest-descent method to construct an approximation $P(z)$ to the solution of this RHP such that the ratio $E(z)$ defined by $m(z) = E(z) P(z)$ is uniformly accurate in the semiclassical limit as $\eps \to  0^+$. The approximation depends parametrically on $(x,t)$ via the function $\theta(x,t)$ appearing in the RHP. In the following sections we will construct such an approximation in each of the space time regions $\mathcal{S}_k,\ k=0,1,2,$ described in Theorem \ref{thm: main}. This procedure consists of four major steps. 1. We introduce a pole removing transformation $m \mapsto M$ such that the new unknown $M$ satisfies a holomorphic RHP. 2. The transformation to $M$ typically introduces contours on which the jump matrices are asymptotically unstable in the semiclassical limit; we therefore introduce a so-called $g$-function transformation  to remove the unstable jump matrices. 3. By explicit factorization of the jump matrices we deform the remaining oscillatory jumps onto contours where they are exponentially near identity. 4. We build a global approximation $A(z)$ to the remaining problem and show that the resulting error $E(z)$ satisfies a small norm RHP. Once the problem is reduced to small-norm form, the theory of small norm RHPs gives a uniform asymptotic expansion of $E(z)$. By unravelling the series of explicit transformations leading from $m$ to $E$ we get an asymptotic expansion for the solution of the original RHP \ref{rhp: square barrier} and through \eqref{nls recovery} the solution $\psi(x,t)$ of NLS.

%%%%%%%%%%%%%%%%%%%%%%%%%%%%%%%%
%												  %
%	Section 1: inverse analysis outside the initial support	  %	
%												  %
%%%%%%%%%%%%%%%%%%%%%%%%%%%%%%%%

\section{Inverse problem outside the support of the initial data \label {sec: no band} }

Here we consider the Riemann-Hilbert problem \ref{rhp: square barrier} with scattering data generated by the square barrier initial data \eqref{square barrier} for values of $x$ outside the support of the initial data. Because the NLS equation preserves even initial data we will consider only $x > L$. As will become apparent this is the simplest case in which to carry out the inverse analysis and much of what we do here will be used in Sections \ref{sec: one band} and \ref{sec: two band} where we consider the inverse analysis for $x$ inside the support of the initial data.

\subsection{Reduction to a holomorphic RHP}
To utilize the nonlinear steepest descent method we first need to remove the poles from the Riemann-Hilbert problem. In the reflectionless case this can be accomplished by introducing contours $C$ and $C^*$ enclosing the locus of pole accumulation in $\C^+$ and $\C^-$ respectively. A new unknown $M$ is constructed inside these contours from an interpolate of the connection coefficients and an explicit Blaschke factor term encoding the $z_k$ in such a way that the new unknown has no poles, see \cite{KMM03} for such a construction. For compact initial data there is a more direct construction; the reflection coefficient extends meromorphically off the real line and the relation \eqref{compact  connection} shows that the reflection coefficient naturally interpolates the poles of $m$. 
\begin{figure}[thb]
	\begin{center}
		\includegraphics[width =.4\textwidth]{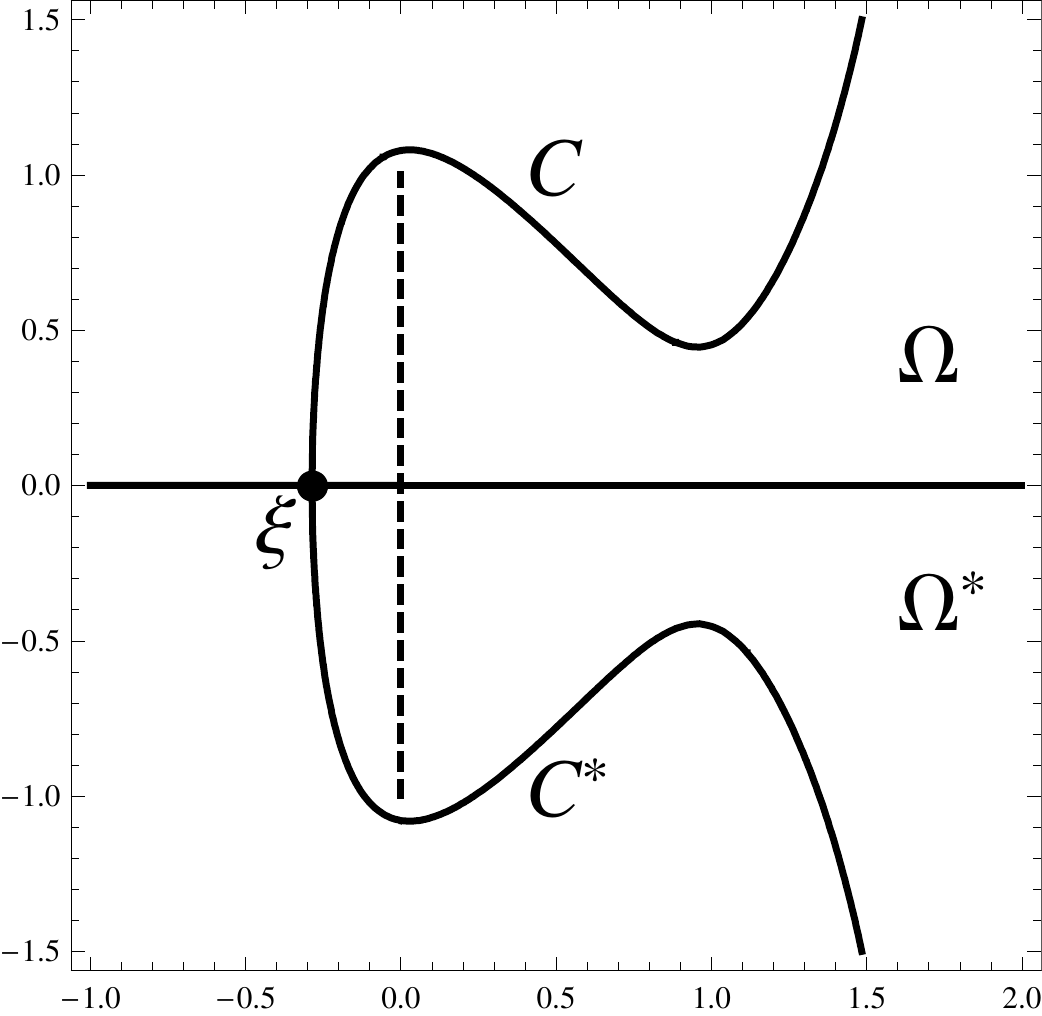}
		\caption{Schematic representation of the contours $C$ and $C^*$ and the regions $\Omega$ and $\Omega^*$ involved in the pole removing factorization. The dashed line represents the loci of pole accumulation. \label{fig: interpolant} }
	\end{center}
\end{figure}
 
Fix a point $\xi < 0$, and take $C$ a semi-infinite contour in $\C^+$ terminating at $\xi$ such that the region $\Omega$ enclosed by $C$ and the real interval $[\xi, \infty)$ contains the locus of pole accumulation $i[0,q]$, see Figure \ref{fig: interpolant}. Let $C^*$ and $\Omega^*$ denote the respective complex conjugate contour and region. Then the new unknown  
\begin{equation}\label{pole removing fact.}
	H= \piece{ 
	m \tril{- r e^{\frac{i}{\eps}\theta} }_{\Bspace} & z \in \Omega \\
	m\triu{ r^* e^{-\frac{i}{\eps}\theta} } & z \in \Omega^* \\
	m & \text{elsewhere}
	}
\end{equation}
has no poles. The price one pays to remove the poles in this way is the appearance of new jumps along the contours $C$ and $C^*$ in the resulting RHP for $H$. In order to preserve the normalization of the RHP and arrive at a problem amenable to steepest descent analysis the transformation from $m$ to $H$ needs to be asymptotically near-identity for large $z$. This condition amounts to understanding how the factor $r e^{i\theta/ \eps}$ appearing in the off-diagonal entries of the transformation behaves in the complex plane. 

\subsubsection{Multi-harmonic expansion of the reflection coefficient}\label{sec: harmonic expansion}
For $z \in \R$ the reflection coefficient \eqref{square r} is rapidly oscillatory, but once $z$ moves off the real axis any finite distance it has a simple, $\eps$ independent,  asymptotic limit. As previously noted, the values of the reflection coefficient are independent of the branch cut given to $\nu = \sqrt{z^2+q^2}$.  For convenience we take $\nu$ to be branched on some simple finite contour $\gamma$ and normalized to $\sim z$ for $z$ large. For $z \in \C^+$ outside the region enclosed by $\gamma$ and the imaginary axis (i.e. the set where $\imag \nu >0$)
\begin{equation}\label{r expansion}
	r(z) e^{i\theta/ \eps}  =\sum_{k=0}^\infty r_k(z) e^{i\theta_k / \eps},
\end{equation}
where
\begin{align}\label{r_0}
	r_0(z) &= \frac{-iq}{\nu(z) + z}  
\end{align}
and
\begin{equation}\label{theta_k def} 
	\begin{split}
	\qquad r_k(z) &= (-1)^{k-1}  r_0(z)^{2k-1} \lp 1+ |r_0(z)|^2 \rp  , \quad k \geq 1;  \\
	\theta_k &= 2tz^2  +2(x-L)z + 4kL\nu, \quad k \geq 0. 
	\end{split}
\end{equation}
In particular, the expansion is uniformly convergent for $z \in \C^+$ with $|z|$ large. The expansion highlights the central technical difficulty of this problem; in previous studies where the semi-classical inverse problem has been successfully solved \cite{KMM03}, \cite{TVZ06}, a single exponential factor emerges in the jump matrices which one must control. Here, we have an infinite sum of different harmonics---one can think of \eqref{r expansion} as a generalized Fourier series for $r$---each contributing to the analysis. A priori, we have no way of knowing which harmonics contribute for each choice of parameters $(x,t)$. However, in the course of our analysis we will see that, at least up to the second caustic, the first two harmonics $\theta_0$ and $\theta_1$ dominate the analysis.

\begin{figure}[thb]
	\centering
	\includegraphics[width = .25\textwidth]{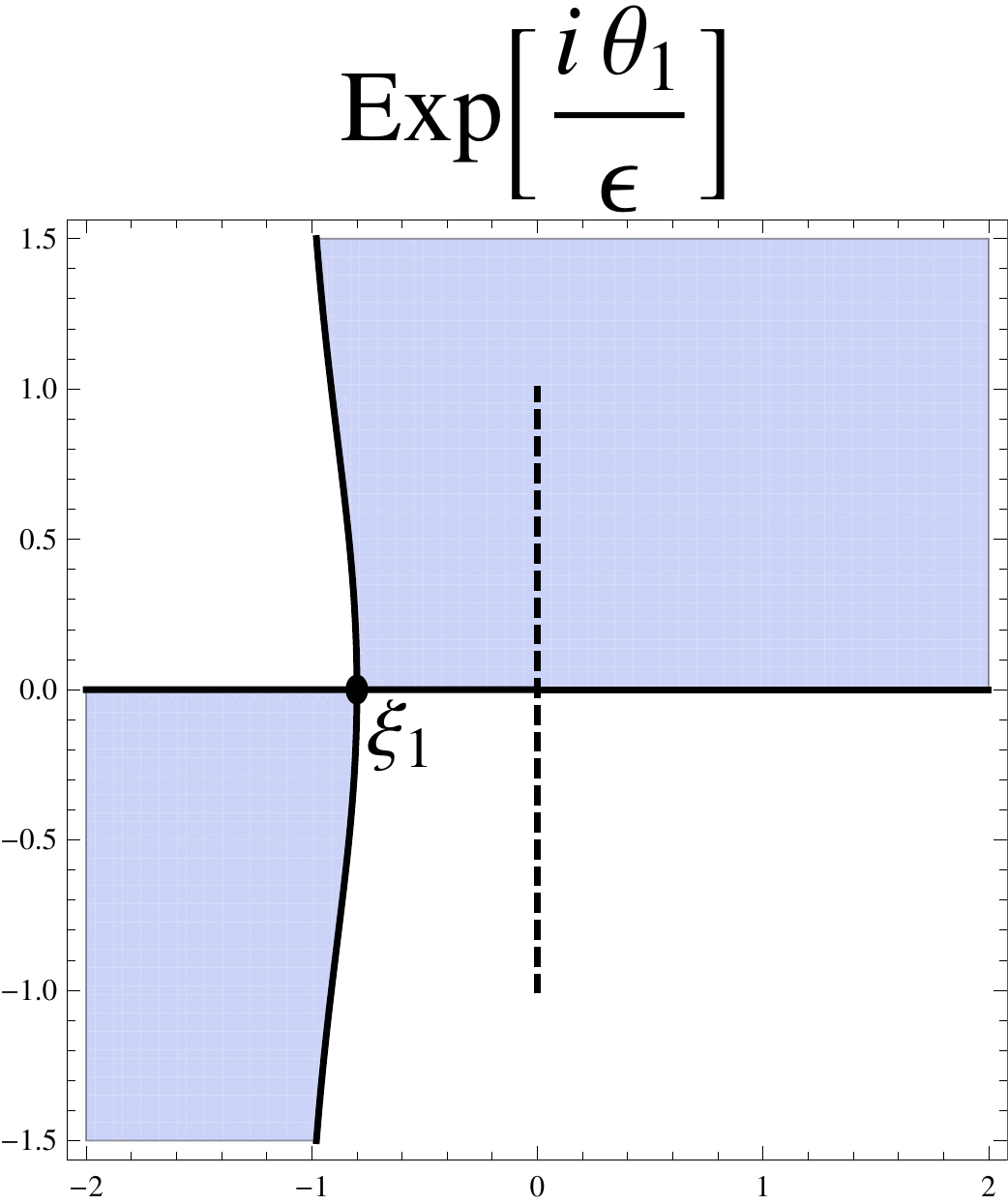}
	\hspace{.1\textwidth}
	\includegraphics[width = .25\textwidth]{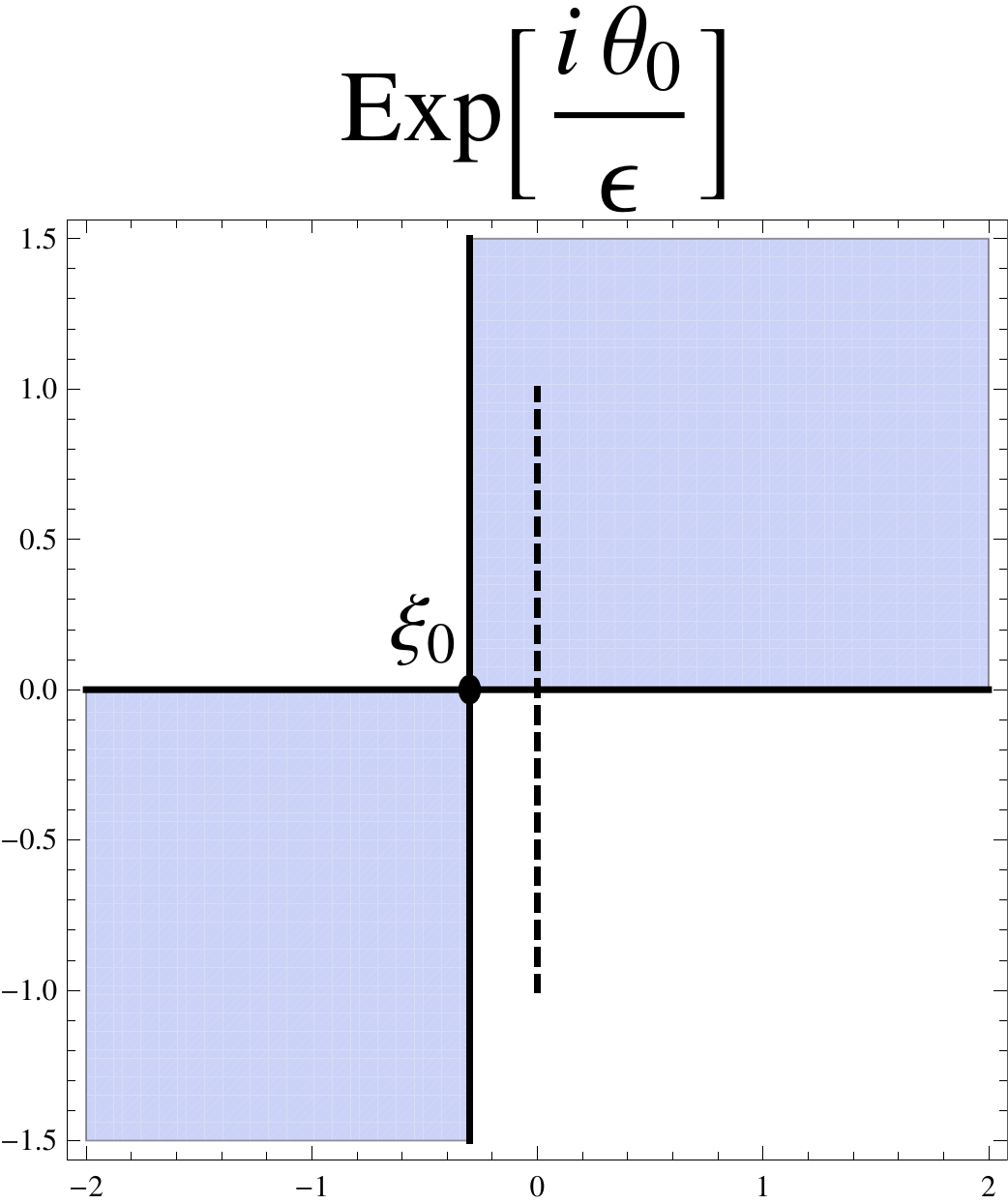}   
	\caption{The regions of growth (white) and decay (grey) of the two principal harmonics for $x>L$ and $t>0$. The dashed line represents the locus of pole accumulation for $m(z)$. \label{fig: phases outside support} }
\end{figure}

The pole removing factorization \eqref{pole removing fact.} works if the region $\Omega$, except for possibly a compact subset, lies in a region in which $r e^{i \theta / \eps}$ is exponentially decaying. This amounts to understanding for each $\theta_k$ where $\imag \theta_k$ is positive (decay) or negative (growth). For $x > L$ we fix $\gamma$, the finite branch cut of $\nu$, to lie on the imaginary axis. This choice of branch makes $\imag \nu(z) > 0$ for all $z$ in $\C^+ \backslash\, i(0, q]$ and thus $\imag \theta_k(z)$ is an increasing function of $k$ for each fixed $z$; if we let $D_k = \{ z \in \C^+\, :\, \imag \theta_k >0 \}$ then $D_{k-1} \subset D_k$ for $k \geq 0$. Furthermore, it is easily shown that each $\theta_k$ has a single, real-valued, stationary phase point $\xi_k$, defined as the unique solution of $\frac{d}{dz}  \theta_k =0$, and as $k$ increases, the $\xi_k$'s  decreases.  

The regions of exponential decay and growth of $e^{i\theta_0 / \eps}$ and $e^{i\theta_1/ \eps}$ for generic values of $x>L$ and $t>0$ are plotted in Figure~\ref{fig: phases outside support}. The $\xi_k$ depend parametrically on $x$ and $t$ and we have
\begin{equation}\label{xi0}
	\deriv{\theta_0}{z}(\xi_0) = 0 \quad \Rightarrow  \quad \xi_0 = -\frac{x-L}{2t}.
\end{equation}
For $k \geq 1$ no such simple formula for the $\xi_k$ can be given, however, for small times we have
\begin{equation}\label{xik small time}
	\xi_k = \frac{ x + (2k-1)L}{2t} + \bigo{t}.
\end{equation}
In particular, for $x>L$, $\xi_0 < 0$ and the locus of pole accumulation in the upper half plane, $i[0,q]$, lies entirely in the region of exponential decay of all the $\theta_k$.  Motivated by these observations, let $\Gamma_0$ be a semi-infinite contour in $\C^+$ emerging from $\xi_0$ such that it encloses the interval $i[0,q]$, stays completely within the region of exponential decay of $e^{i\theta_0/ \eps}$ and is oriented away from $\xi_0$; let $\Omega_0$ be the region between $\Gamma_0$ and the positive real axis; let $\Gamma_0^*$ and  $\Omega_0^*$ be their respective complex conjugates, see Figure~\ref{fig: outside lens openings}. Define 
\begin{equation}\label{M outside}
	M = \piece{
	m \tril{- r e^{\frac{i}{\eps}\theta} }_{\Bspace} & z \in \Omega_0 \\
	m\triu{ r^* e^{-\frac{i}{\eps}\theta} } & z \in \Omega^*_0 \\
	m & \text{elsewhere}
	}.
\end{equation}
As observed previously, it follows from \eqref{square residues} that the new unknown $M$ will have no poles. It follows from its definition that $M$ satisfies the following problem.
% Holomorphic RHP definition
\begin{rhp}[for $\mathbf{M}$:] \label{rhp: M outside}
Find a matrix valued function $M$ such that:
	\begin{enumerate}[1.]
	\item $M$ is holomorphic for $z \in \C \backslash \Gamma_M$ where $\Gamma_M = \Gamma_0 \cup \Gamma_0^* \cup (-\infty, \xi_0] $.
	\item As $z \rightarrow \infty$, $M = I + \bigo{1/z}$.
	\item $M$ takes continuous boundary values for $z \in \Gamma_M$, $M_+$ and $M_-$ satisfying the jump relation, $M_+ = M_- V_M$ where 
		\begin{equation} \label{square jump}
			V_M = \piece{ 
			\begin{pmatrix} 1 + |r|^2 & r^* e^{-i\theta/\eps} \\ r e^{i\theta/\eps} & 1 \end{pmatrix} 
			& z \in (-\infty, \xi_0) \\
			\tril{ r e^{i\theta/\eps} }^{\Tspace}_{\Bspace} & z \in \Gamma_0 \\
			\triu{r^* e^{-i\theta /\eps} }  & z \in \Gamma^*_0 
			}
		\end{equation}
	\end{enumerate}	
\end{rhp}
The new jumps follow from calculating $(M_-)^{-1}M_+$ on each contour. Observe that the new jumps on $\Gamma_0$ and $\Gamma_0^*$ are exponentially near identity away from the real axis. Thus for $x>L$, the poles can be removed without introducing any poorly conditioned jumps; as we will see in Sections \ref{sec: one band} and \ref{sec: two band} this is markedly different from the situation for $x \in [0, L)$ and accounts, to large extent, for the difference in the resulting asymptotic description of the solutions inside and outside the initial support.   

\subsection{Reduction to Model Problem, deformation of the jump contours \label{sec: matrix factorizations}} Now that the poles have been removed, we are left with a RHP with rapidly oscillatory jumps on $(-\infty, \xi_0]$. To employ the steepest descent method we seek by explicit transformations to deform these jumps to regions where they are exponentially decaying. Our principal tool for finding such transformations is matrix factorization. Given an oscillatory jump matrix $v$ on a contour $C$, we seek factorizations of the form $v = b_- \, \tilde{v}\, b_+^{-1}$ such that the factors $b_\pm$ are analytically extendable off $C$. If we then introduce contours $C_+$ and $C_-$, to the left and right of $C$ respectively, see Figure~\ref{fig: generic lens opening}, we can define a new unknown
\begin{equation*}
	m_{new} = \piece{ 
	m\,  b_- & z \in \interior \lp  C \cup C_-  \rp \\
	m\,  b_+ & z \in \interior \lp  C \cup C_+  \rp \\
	m & \text{elsewhere}
	}.
\end{equation*}	
\begin{figure}[hbt]
\includegraphics[width = .35\textwidth]{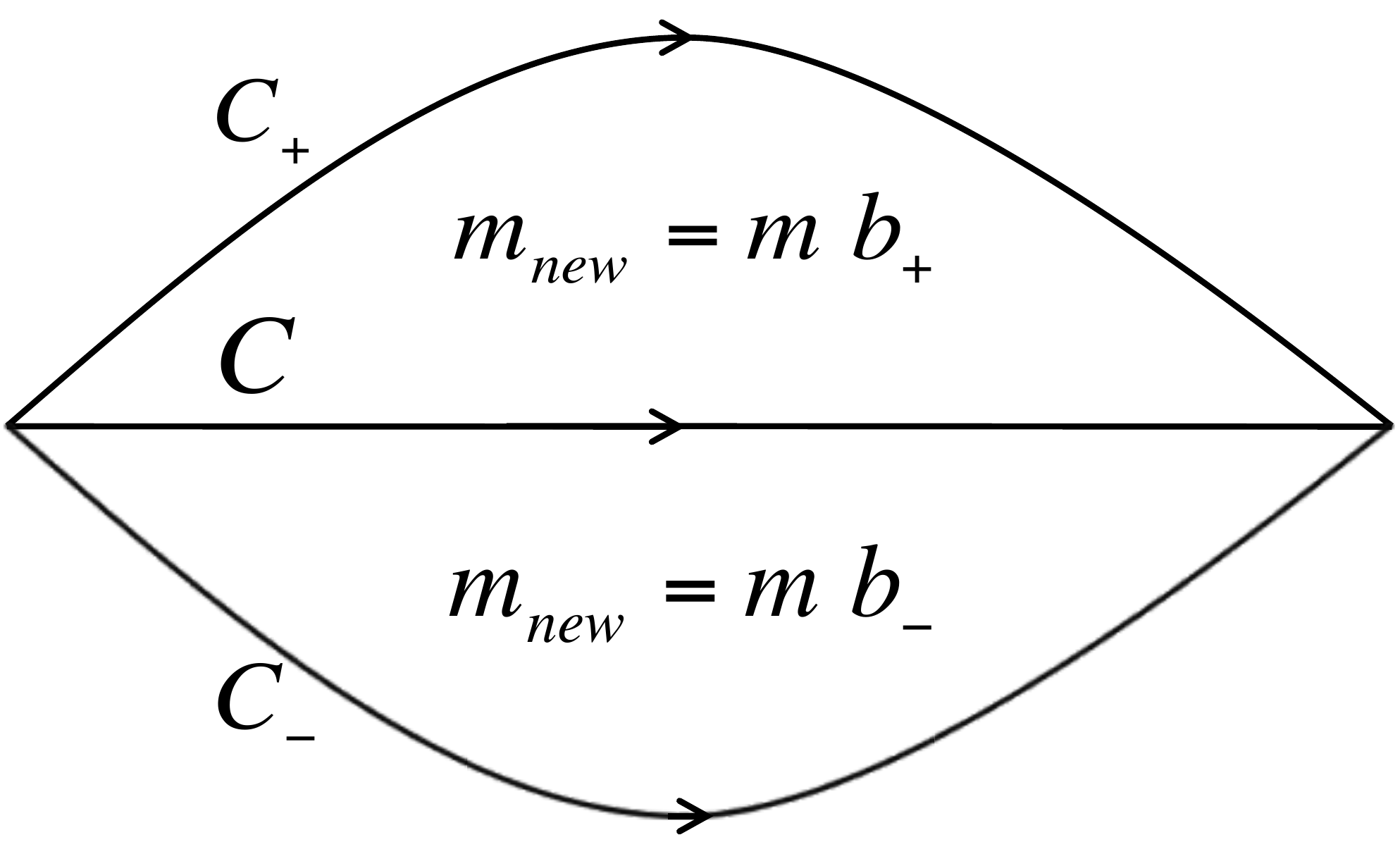}
\hspace{.05\textwidth}
\includegraphics[width = .35\textwidth]{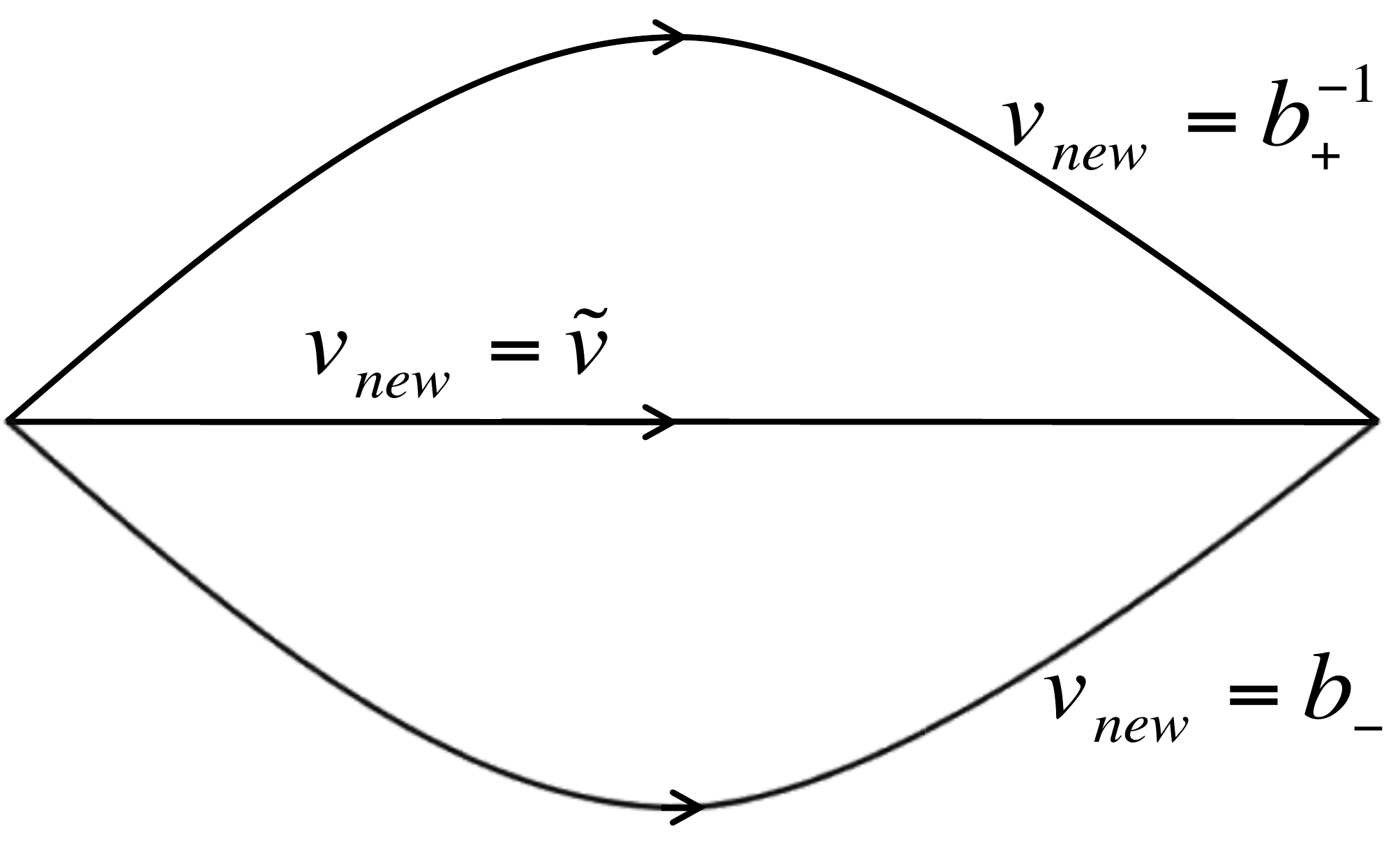}
\caption[Generic lens opening]{Given a factorization $v = b_- \, \tilde{v} \, b_+^{-1}$ of the original jump matrix on $C$, the definition of a new unknown (left) and the resulting new jump matrices (right) in a generic lens opening.
\label{fig: generic lens opening} } 
\end{figure}
The new unknown $m_{new}$ acquires new jumps $v_{new} = (m_{new})_-^{-1}(m_{new})_+$ equal to  
\begin{equation*}
	v_{new} = \piece{
		\tilde{v} & z \in C \\
		b_+^{-1} 	& z \in C_+ \\
		b_-	& z \in C_- 
		}.
\end{equation*}
Such a factorization is useful provided that $b_+$ and $b_-$ are near identity on their respective contours and the jump remaining on $C$, $\tilde{v}$, is no longer rapidly oscillatory. 

\subsubsection{Factorization to the left of $\xi_1$. \label{sec: left factorization}} The original jump matrix $v$ (cf. \eqref{square jump rhp}) has two common factorizations:  
\begin{align}
	&v  = R^\dagger R,  \label {two factor}\\
	 &v = \widehat{R} \, (1+r r^*)^{\sig} \, \widehat{R}^\dagger. \label{three factor}
\end{align}
Where
\begin{equation}\label{R}
	R:= \tril{ r e^{i\theta / \eps}} \quad \text{and} \quad \widehat{R} := \tril{\frac{r}{1+r r^*}  e^{i\theta / \eps} }. 
\end{equation}
The first factorization was used already in the factorization that removed the poles from the original RHP. For $\re z < \xi_1$ we are interested in the second of these factorizations. The rightmost factor $\widehat{R}^\dagger$ is the factor which will factor into $\C^+$, we need to understand the behavior of its off-diagonal entry in the upper half-plane. Recalling the definition \eqref{r scattering def} of the reflection coefficient and the unimodularity \eqref{scattering relation} of the scattering matrix we have
\begin{align*}
	1+r r^* &= 1 + \frac{b b^*}{ a a^* }  = \frac{1}{a a^*},  \\
	\frac{r^*}{1+r r^*} &=  b^* a.
\end{align*}   
Using \eqref{square a} and \eqref{square b} we can express $a$ and $b^*$ in the form 
\begin{align}
	&a = \frac{z+\nu}{2\nu}e^{2Li(z-\nu)/ \eps} \left[ 1 - r_0^2 e^{4Li\nu / \eps} \right], \label{a expansion} \\
	&b^* = -\frac{iq}{2\nu} e^{-2Li\nu / \eps} \left[ 1 -  e^{4Li \nu / \eps} \right], \nonumber
\end{align}
and these expressions give the off-diagonal entry in $\hat{R}^\dagger$ in the form
\begin{equation}\label{r hat behavior}
\frac{r^*}{1+r r^*} e^{-i\theta/ \eps} = \frac{ r_0^*}{ (1 + r_0 r_0^*)^2} e^{-i\theta_1 / \eps} 
 \times \left[ 1 -  e^{4Li \nu / \eps} \right]  \left[ 1 - r_0^2 e^{4Li\nu / \eps} \right].  
\end{equation}
Recalling that $\imag \nu > 0$ for each $z \in \C^+ \backslash \, i(0, q]$ it follows that $\hat R^\dagger$ is exponentially near identity in any region in which the exponential $e^{i\theta_1 / \eps}$ is growing. Thus, \eqref{three factor} is a good candidate factorization for $\re z < \xi_1$.  The only issue is that the central diagonal factor $( 1+ r r^*)^\sig$ to be left on the real axis is still rapidly oscillatory as $\eps \downarrow 0$: this term must also be factored. Using \eqref{a expansion} define
\begin{equation} 
	a_0 := \frac{z+\nu}{2\nu} e^{2Li(z-\nu)/ \eps} = \frac{1}{(1+r_0 r_0^*)}e^{2Li(z-\nu)/ \eps}
\end{equation}
which captures the leading order behavior of the function $a$ for $z \in \C^+$. Note that for $z \in \C^+\backslash (\R \cup(0,iq]) \imag \nu > 0$ so the quantity $a/a_0 = 1-r_0^2e^{4Li\nu/\eps}$ is exponentially near one. Motivated by this observation we introduce the factorization
\begin{equation}\label{left factorization}
	v = \underbrace{ \lp \widehat{R} (a^*/a^*_0)^{-\sig} \rp}_{ \text{factors into } \C^-} \,  .\,
	\underbrace{(1+r_0 r_0^*)^{2\sig}}_{\text{stays on }\R} \,  . \,  
	\underbrace{\lp (a/a_0)^{-\sig}  \widehat{R}^\dagger \rp }_{ \text{factors into } \C^+}
\end{equation}
of the original jump \eqref{square jump rhp} on the interval $(-\infty, \xi_1)$. This factorization greatly simplifies the analysis of the RHP; it reduces the problem to a consideration of only the first two harmonics $\theta_0$ and $\theta_1$ in the expansion \eqref{r expansion} of the reflection coefficient.

\subsubsection{Factorization on the interval $(\xi_1, \xi_0)$ \label{sec: middle factorization}} This interval is the boundary on the real axis of an open region of $\C^+$ in which only the zero$^{th}$ order harmonic $e^{i\theta_0/\eps}$ is large, see Figure \ref{fig: phases outside support}. Motivated by \eqref{r expansion} and the matrix factorization \eqref{two factor} we introduce the factorization
\begin{equation}\label{between factorization}
	v =\lp R^\dagger R_0^{-\dagger} \rp\, .\,  \tbyt{1+r_0 r_0^*}{ r_0^* e^{-i\theta_0/\eps} }{r_0 e^{i\theta_0/\eps}}{1}\, . \, \lp  R_0^{-1} R\rp.
\end{equation}
By $R_0$, and later $\widehat{R}_0$, we denote matrices of the same form as $R$ and $\widehat{R}$ respectively, where in the off diagonal entries $re^{i\theta/ \eps}$ is replaced by its leading order approximation $r_0 e^{i\theta_0/\eps}$:
\begin{equation}\label{R_0}
	R_0 := \tril{r_0 e^{ i \theta_0/ \eps}} \quad \text{and}
	\quad \widehat{R}_0 := \tril{ \frac{ r_0}{1+r_0 r_0^*} e^{i\theta_0 / \eps} }.
\end{equation}
The exterior factors $R R_0^{-1}$ and its hermitian conjugate in \eqref{between factorization} `subtract out' the dependence on $e^{i\theta_0 / \eps}$, in the $(2,1)$-entry we have
\begin{align}\label{r-r_0}
	r e^{i\theta/ \eps} - r_0 e^{i\theta_0/ \eps} = r_0 \left[ \frac{ 1 - e^{4Li\nu / \eps} } { 1 - r_0^2 e^{4Li\nu / \eps}} -1 \right] e^{i\theta_0/ \eps} = \frac{-r_0(1-r_0^2)}{1-r_0^2 e^{4Li\nu / \eps}} e^{i\theta_1 / \eps}
\end{align} 
which at leading order depends on $e^{i\theta_1/\eps}$ which is asymptotically small in $\C^+$ for $\re z > \xi_1$. This allows us to open lenses (see \eqref{Q def outside}) which deform these exterior factors into the regions $\Omega_1$ and $\Omega_1^*$ depicted in Figure \ref{fig: outside lens openings}. 

The remaining middle factor in \eqref{between factorization} takes the same form as the original jump matrix with the reflection coefficient replaced with its leading order behavior. In the region of the upper half-plane  bounded by $\xi_1$ and $\xi_0$ the harmonic $e^{i\theta_0/\eps}$ is exponentially large. This factor is deformed off the axis by an analog of three term factorization \eqref{three factor} (the same terms with $r e^{i \theta/ \eps} $ replaced by $r_0 e^{i\theta_0/\eps}$) yielding the final factorization
\begin{equation}\label{intermediate factorization}
  v = 
  \underbrace{ \lp R^\dagger R_0^{-\dagger} \widehat{R}_0 \rp }_{\text{factors into }\C^-} \, .\, 
  \underbrace{(1+r_0 r_0^*)^\sig}_{\text{stays on }\R} . \, 
  \underbrace{\lp \widehat{R}^\dagger_0  R_0^{-1} R \rp}_{\text{factors into }\C^+}.
\end{equation}

\subsubsection{Reduction from a holomorphic RHP to a stable model problem}

\begin{figure}[htb]
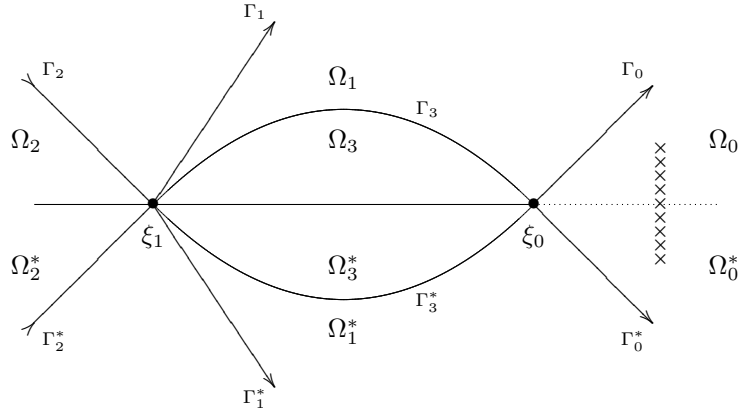

\centerline{
\begin{xymatrix}@!0{
& & & & \ar@{<-}[3,-2]+0_<{\Gamma_1}& & & & & & & \\
\ar@{>-}[2,2]+0^<{\Gamma_2} & &  & &  & \Omega_1& &  && &\ar@{<-}[2,-2]+0_<{\Gamma_0} & \\
\Omega_2 & &  & & & \Omega_3 & & & & & \ar@{{}{x}{}}[2,0]&   \Omega_0 \\
\ar@{-}[0,8]+0 & & \bullet \ar@{}[1,0]|{\displaystyle{\xi_1}}\ar@{-}@/^3pc/[0,6]+0^(.7){\Gamma_3} \ar@{-}@/_3pc/[0,6]+0_(.7){\Gamma_3^*} & & & & & & \bullet \ar@{}[1,0]|{\displaystyle{\xi_0}} \ar@{-}@/^3pc/[0,-6]+0 \ar@{-}@/_3pc/[0,-6]+0 & & & \ar@{.}[0,-3]+0 \\
\Omega_2^* & & & & & \Omega_3^* & & & & && \Omega_0^*\\
\ar@{>-}[-2,2]+0_<{\Gamma_2^*} & & & & & \Omega_1^*& & & & &\ar@{<-}[-2,-2]+0^<{\Gamma_0^*} &\\
& & & &\ar@{<-}[-3,-2]+0^<{\Gamma_1^*}& & & & & & &\\
}\end{xymatrix} 
}
\caption[lens openings for $x>L$]{ The contours $\Gamma_k$ and regions $\Omega_k$ used to define the lens opening transformation $M \mapsto Q$. Their exact shapes are not important, each is chosen to lie in regions where the corresponding factorizations given in \eqref{Q def outside} are asymptotically near identity. Note, that the contour $\Gamma_3$ is oriented from $\xi_1$ to $\xi_0$.  
\label{fig: outside lens openings}}
\end{figure} 

Using the factorizations given above we are ready to define a new unknown $Q(z)$. To define $Q$ we introduce the following set of contours and regions and their complex conjugates. Let $\Gamma_1$ and $\Gamma_2$ be rays in $\C^+$ meeting the real axis at $\xi_1$ such that away from the axis they remain completely in regions where $e^{i\theta_1/\eps}$ is decaying and growing respectively. Additionally we take $\Gamma_1$ to be bounded away from $\Gamma_0$. Let $\Gamma_3$ be a contour in $\C^+$ terminating at $\xi_0$ and $\xi_1$ such that it lies entirely in the region where $e^{i\theta_0/\eps}$ is growing and does not intersect either $\Gamma_0$ or $\Gamma_1$. Finally, let $\Omega_1$ be the region bounded between $\Gamma_0,\ \Gamma_1$, and $\Gamma_3$; similarly let $\Omega_2$ and $\Omega_3$ be the region between the real axis and $\Gamma_2$ or $\Gamma_3$ respectively. The contours $\Gamma_k$, their orientations, and the sets $\Omega_k$ are depicted in Figure \ref{fig: outside lens openings}.  Define
\begin{equation}\label{Q def outside}
Q = \begin{cases} 
M R_0 R^{-1} & z \in \Omega_1 \\
M \widehat{R}^{-\dagger} (a/a_0)^{\sig} & z \in \Omega_2 \\
M R^{-1} R_0 \widehat{R}_0^{-\dagger} & z \in \Omega_3 \\
M R^\dagger R_0^{-\dagger}\widehat{R}_0 & z \in \Omega_3^* \\
M \widehat{R} (a^*/a_0^*)^{-\sig} & z \in \Omega_2^* \\
M R^\dagger R_0^{-\dagger} & z \in \Omega_1^* \\
M & \text{elsewhere}
\end{cases}.
\end{equation} 
Let $\Gamma_Q^{0}$ denote the union of contours used to open lenses, $\Gamma_Q^{0} =  \bigcup_{k=0}^3 ( \Gamma_k \cup \Gamma_k^* )$ and let $\Gamma_Q$ denote the totality of contours on which $Q$ is non-analytic, $\Gamma_Q = \Gamma_Q^0 \cup (-\infty, \xi_0]$ . Using the factorizations \eqref{left factorization}, \eqref{between factorization}, and \eqref{intermediate factorization} it follows that $Q$ satisfies
\begin{rhp}[ for $Q(z)$:]\label{rhp: Q} Find a matrix-valued function $Q$ such that: 
\begin{enumerate}[1.]
	\item $Q$ is holomorphic for $z \in \C \backslash \Gamma_Q$. 
	\item As $z \rightarrow \infty$, $Q(z) = I + \bigo{1/z}$.
	\item $Q$ takes continuous boundary values $Q_+$ and $Q_-$ for $z \in \Gamma_Q$ which satisfy the jump relation $Q_+ = Q_- V_Q$ where
	\begin{equation}\label{Q jumps}
	V_Q = \begin{cases}
		(1+ |r_0 |^2)^{2\sig} & z \in (-\infty, \xi_1) \\
		(1+ | r_0 |^2)^\sig & z \in (\xi_1, \xi_0)  \\
		V_Q^{(0)} & z \in \Gamma_Q^0
		\end{cases}
	\end{equation}
	and
	 \begin{equation}\label{V_Q^0 jumps}
	V_Q^{(0)}=  \begin{cases}
		R_0 & z\in \Gamma_0 \\
		R_0^{-1} R & z\in \Gamma_1 \\
		(a/a_0)^{-\sig}\widehat{R}^\dagger &  z\in \Gamma_2 \\
		\widehat{R}_0^\dagger & z \in \Gamma_3 \\
		\end{cases}
	\qquad
	V_Q^{(0)} = \begin{cases}
		R_0^\dagger & z  \in \Gamma_0^* \\
		R^\dagger R_0^{-\dagger} & z \in \Gamma_1^* \\
		\widehat{R}(a^*/a_0^*)^{-\sig} & z\in \Gamma_2^* \\
		\widehat{R}_0  & z \in \Gamma_3^* 
		\end{cases}
	\end{equation}
\end{enumerate}
\end{rhp}

\subsection{Model RHP outside the support}
The resulting RHP is an asymptotically stable problem in the following sense: as $\eps \to 0$ the jumps remaining on the real axis are non-oscillatory and $\eps$ independent, while along the contours in $\Gamma_Q^0$ the jump matrices $V_Q^{(0)}$ converge to identity both for large $z$ and for fixed, nonreal, $z$ as $\eps \to 0$. The convergence in $\eps$ is uniform on any set bounded away from the two stationary phase points $\xi_0$ and $\xi_1$ where the contours return to the real axis. Motivated by these comments we consider the following global model problem which we will later prove is a uniformly valid approximation to the solution $Q$ of the RHP defined by \eqref{Q jumps}. 
Let $\U_0$ and $\U_1$ be, for now arbitrary, fixed size neighborhoods of $\xi_0$ and $\xi_1$ respectively. We construct a parametrix of the form:
\begin{equation}\label{outside parametrix}
	P(z) = \begin{cases}
		A_0(z) & z \in \U_0 \text{ (cf. \ref{sec: outside xi_0}) } \\
		A_1(z) & z \in \U_1 \text{ (cf. \ref{sec: outside xi_1}) } \\
		O(z) & \text{elsewhere} \text{ (cf. \ref{sec: outside outer}) },
		\end{cases}
\end{equation}
where the outer model $O$ and the local models $A_0$ and $A_1$ will be introduced below in the indicated subsections.

% No cut outer Model Problem
\subsubsection{The outer model problem \label{sec: outside outer} }
Away from the real axis , the jumps on $\Gamma_k \cup \Gamma_k^*$ are exponentially small perturbations of identity. By replacing the jump matrices in \eqref{Q jumps} with their point-wise limits, we are lead to the following outer model RHP.
\begin{rhp}[for the outer model $O(z)$:]\label{rhp: O outside} 
Find a $2 \times 2$ matrix-valued function $O$ such that
\begin{enumerate}[1.]
	\item $O$ is a bounded function, analytic on $\C \backslash (-\infty, \xi_0]$.
	\item $O(z) = I + \bigo{1/z}$ as $z \rightarrow \infty$.
	\item For $z \in (-\infty, \xi_0)$, $O$ takes continuous boundary values satisfying $O_+ = O_- V_O$, where
	\begin{equation}\label{outside outer jumps}
		V_O = \begin{cases}
			(1+ |r_0|^2)^{2\sig} & z \in (-\infty, \xi_1) \\
			(1+|r_0|^2)^{\sig} & z \in (\xi_1, \xi_0).
		\end{cases}
	\end{equation}
\end{enumerate}
\end{rhp}
This RHP appears in the analysis of the long time problem for NLS \cite{DZ94}.  As the jumps are diagonal and non-vanishing, one can easily check using the Plemelj formulae \cite{Musk46} that that solution is given by 
\begin{align} \label{outer model outside} 
	O(z) =  \delta(z)^{\sig} 
\end{align}
where,
\begin{align}\label{delta}
\delta(z) = \exp \left[ \frac{1}{2\pi i} \int_{-\infty}^{\xi_1}  \frac{ \log ( 1+|r_0(s)|^2)}{s-z} \, ds
		+ \frac{1}{2\pi i}  \int_{-\infty}^{\xi_0} \frac{ \log ( 1+|r_0(s)|^2)}{s-z} \, ds \right]. 
\end{align}
The jump matrices for $O$ are discontinuous at $\xi_0$ and $\xi_1$ which causes $\delta$ to behave singularly at these points. The following lemma describes the nature of the singularities.
\begin{prop}\label{prop: delta expansions}
Define 
\begin{align}\label{kappa}
	\chi(z,a) := i \int_{-\infty}^a \frac{\kappa(s)}{s-z} \, ds 
	\qquad \text{where}\qquad 
	\kappa(z):=-\frac{1}{2\pi}\log(1+|r_0(z)|^2),
\end{align}
and suppose $a$ and $b$ are fixed real numbers with $b<a$ such that $\kappa$ is analytic in a neighborhood of each point. Then near these points:
\begin{enumerate}[i.]
	\item As $z \rightarrow a$, $\chi$ has the uniform expansion,
	\begin{equation*}\label{chi endpoint}
		\chi(z,a) = i\kappa(z)\log(z-a) + \hat \chi(z,a) ,
	\end{equation*}
	where $\hat \chi$ is a bounded holomorphic function for $z$ in a neighborhood of $a$.
	\item In any suitably small, $\eps$ independent, neighborhood of $b$, $\mathcal{N}_b$, the boundary values $\chi_+$ and $\chi_-$ naturally extend as analytic functions to all of $\mathcal{N}_b$ and $\chi_+(z,a) - \chi_-(z,a) = \log(1+|r_0(z)|^2)$ at each $z \in \mathcal{N}_b$. 	
\end{enumerate}
\end{prop}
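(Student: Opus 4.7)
The plan is to treat both claims as standard facts about Cauchy-type integrals with analytic density. Since $\kappa$ is real-analytic on $\R$ away from the branch points of $\nu$ at $\pm iq$, and $a,b$ are real points at which $\kappa$ is analytic, the local behavior of $\chi(\cdot,a)$ is governed by the Plemelj--Sokhotski formulas together with contour deformation near $b$ and an endpoint extraction near $a$.

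For part (i), I would fix $\delta>0$ small enough that $\kappa$ extends holomorphically onto the disk $|z-a|<\delta$ and split
\begin{equation*}
\chi(z,a) = i\int_{-\infty}^{a-\delta}\frac{\kappa(s)}{s-z}\dd s \;+\; i\int_{a-\delta}^{a}\frac{\kappa(s)-\kappa(z)}{s-z}\dd s \;+\; i\kappa(z)\int_{a-\delta}^{a}\frac{\dd s}{s-z}.
\end{equation*}
The first integral is plainly holomorphic in $z$ for $|z-a|<\delta$. In the second, the difference quotient $(\kappa(s)-\kappa(z))/(s-z)$ has a removable singularity on the diagonal $s=z$, and hence extends to a jointly holomorphic function of $(s,z)$, making the integral holomorphic in $z$. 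The last integral evaluates to $\log(a-z)-\log(-\delta)$. Choosing the branch of $\log(z-a)$ cut along $(-\infty,a]$, the identity $\log(a-z)=\log(z-a)\pm i\pi$ extracts exactly $i\kappa(z)\log(z-a)$ together with corrections that are bounded analytic near $a$ and get absorbed into $\hat\chi$. As a consistency check, the Plemelj jump of $\chi(z,a)$ across $(-\infty,a)$ is $i\cdot 2\pi i\,\kappa(z) = -2\pi\kappa(z) = \log(1+|r_0|^2)$, which exactly matches the jump of $i\kappa(z)\log(z-a)$. Hence $\hat\chi$ has no jump on $(-\infty,a)\cap\{|z-a|<\delta\}$, is holomorphic on the punctured disk about $a$, and bounded there; Riemann's removable singularity theorem then promotes it to a holomorphic function on the full disk.

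For part (ii), I would take $\mathcal{N}_b$ to be a small open disk about $b$ on which $\kappa$ is holomorphic and which lies strictly in the interior of the integration range $(-\infty,a)$. For $z\in\mathcal{N}_b\cap\C^+$, deform the portion of the integration contour passing through $b$ downward into a short semicircular arc $\gamma_-\subset\C^-$ avoiding $z$. Because $\kappa$ is holomorphic in the region swept by this deformation and the pole $s=z$ is not crossed, the resulting integral agrees with $\chi_+(z,a)$, while being manifestly holomorphic in $z$ on all of $\mathcal{N}_b$; this provides the analytic continuation of $\chi_+(\cdot,a)$. The analogous upward deformation $\gamma_+\subset\C^+$ extends $\chi_-(\cdot,a)$. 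The difference of the two continuations, evaluated at $z\in\mathcal{N}_b$, equals $i$ times the integral of $\kappa(s)/(s-z)$ around the closed loop $\gamma_+\cup(-\gamma_-)$, which encircles $s=z$ once counterclockwise; by the residue theorem this is $i\cdot 2\pi i\,\kappa(z)=-2\pi\kappa(z)=\log(1+|r_0(z)|^2)$, as asserted.

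The one bit of genuine care is the branch bookkeeping in (i): with the principal branch of $\log(z-a)$, I must confirm that the $\pm i\pi$ terms conspire with the jump of the first two pieces in the decomposition so that $\hat\chi$ is genuinely single-valued across $(-\infty,a)\cap D(a,\delta)$, and that its singularity at $a$ itself is removable rather than merely logarithmic. Beyond that, everything reduces to classical Cauchy-integral manipulations and contour deformation, and no properties of $r_0$ are used beyond the local analyticity of $\kappa$.
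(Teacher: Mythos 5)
Your proposal is correct and follows essentially the same route as the paper: part (i) rests on the cancellation of the Plemelj jumps of $\chi$ and $i\kappa(z)\log(z-a)$ (your explicit three-term splitting just makes the paper's one-line observation rigorous and supplies the boundedness needed for removability), and part (ii) is the identical contour-deformation-plus-residue argument. The only blemishes are cosmetic: the endpoint evaluation should read $\log(a-z)-\log(a-\delta-z)$ rather than $\log(a-z)-\log(-\delta)$, and the closed loop giving $\chi_+-\chi_-$ is $\gamma_-\cup(-\gamma_+)$, neither of which affects the conclusion.
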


\begin{proof} Both results follow from the fact that the weight $\kappa$ in the Cauchy integral defining $\chi$ is analytic at the points $a$ and $b$. To prove the first part it is sufficient to observe that the difference $\chi - i\kappa(z)\log(z-a)$ has a vanishing jump for all  $z$ near $a$. To construct the analytic extensions of $\chi_\pm$ needed near $z=b$ the analyticity of $\kappa$ is used to deform the contour of integration so that individually the upper and lower boundary values extend analytically to the opposing half-plane. That the extensions satisfy the jump relation follows from the Plemelj formulae. \end{proof}

Expressed in terms of $\chi$, $\delta(z) = \exp \left[ \chi(z,\xi_0) + \chi(z, \xi_1) \right]$; it follows immediately form Prop. \ref{prop: delta expansions} that $\delta$ has the local expansions 
 \begin{equation}\label{delta expansions}
 	\begin{split}
	\delta(z) &= (z-\xi_0)^{i\kappa(z)} \delta_0^{hol}(z) \qquad \text{near } z=\xi_0, \\ 
	\delta_\pm(z) &= (z-\xi_1)^{i\kappa(z)}\delta_{1\pm}^{hol}(z) \qquad \text{near } z=\xi_1 ,
	\end{split}
\end{equation}
where $\delta_0^{hol}$ is holomorphic in a neighborhood of $\xi_0$ and $\delta_{1\pm}^{hol} $ are each holomorphic and satisfy $\delta_{1+}^{hol}/\delta_{1-}^{hol} = 1+r_0 r_0^*$ in any sufficiently small neighborhood of $\xi_1$. In particular, observe that \eqref{delta expansions} shows that $\delta$ is bounded at each singularity. The singularity manifests as rapid oscillations as $z$ approaches $\xi_0$ or $\xi_1$.

% Local PC model near $\xi_0$
\subsubsection{Local model near $z = \xi_0$.}\label{sec: outside xi_0}
\begin{figure}
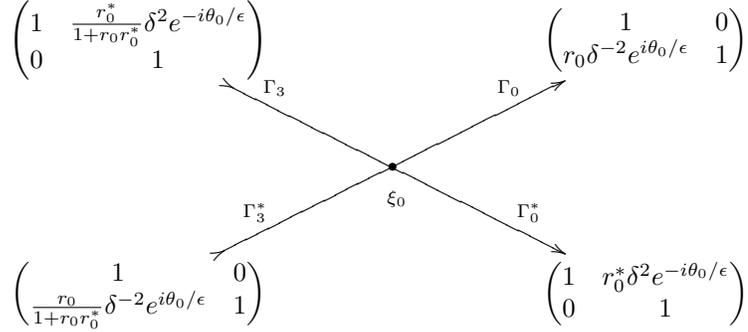

\centerline{
\begin{xymatrix}@!0{
    & & & & & & & & \\
    {\triu{\frac{r^*_0}{1+r_0r_0^*} \delta^{2}e^{-i \theta_0/ \eps} } } \ar@{>->}'[2,4]+0^{\Gamma_3}[0,8]^{\Gamma_0} & & & & & & & & {\tril{r_0\delta^{-2} e^{i \theta_0 / \eps } } } \\
    & & & & & & & &\\
    {\ar@{{}{}{*}}[0,4]+0} & & & & & & & & \\
    & & & & \ar@{}@<-2ex>[-1,0]^{\xi_0} & & & & \\
    {\tril{\frac{r_0}{1+r_0r_0^*} \delta^{-2}e^{i\theta_0/ \eps } } }  \ar@{>->}'[-2,4]+0^{\Gamma^*_3}[0,8]^{\Gamma_0^*} & & & & & & & & {\triu{r^*_0\delta^{2}e^{-i\theta_0 / \eps}}} \\
    & & & & & & & & \\
}\end{xymatrix} } \caption[Local jumps near $z_0$ outside
support]{Local (exact) jump matrices near
the stationary point $\xi_0$ after introducing $\delta$ . \label{fig: local jumps xi_0}}
\end{figure}
Near $z = \xi_0$ we seek our local model $A_0(z)$ in the form:
\begin{equation}\label{A_0 hat outside}
	A_0(z) = \hat{A}_0(z) \delta(z)^{\sig}.
\end{equation}
This has two advantages. First, it simplifies the matching condition, to match the outer model $O(z)$ the new unknown $\hat{A}_0$ needs to be asymptotically near identity on the boundary where $A_0(z)$ and $O(z)$ meet. Second, as was shown in the construction of the outer model, this factorization removes the jump along the real axis. Of course, this comes at the cost of modifying the jumps along the other contours. The exact jumps after left multiplication of $Q$ by $\delta^\sig$ are given in Figure \ref{fig: local jumps xi_0}. At any fixed distance from $\xi_0$ the jump matrices are near identity due to the decay of the exponential factors $e^{\pm i\theta_0/\eps}$ along each ray. The point $\xi_0$ is a stationary phase point of $\theta_0$, (cf. \eqref{xi0}), and thus $\theta_0$ is locally quadratic. This allows us to introduce the following locally analytic and invertible change of variables
\begin{align}\label{zeta_0 outside}
	\frac{1}{2}\zeta_0^2 := \frac{1}{\eps} \lp \theta_0(z) - \theta_0(\xi_0) \rp,  \quad \text{or} \quad \zeta_0 = \sqrt{\frac{\theta''(\xi_0)}{\eps}} (z-\xi_0). 
\end{align}
The domain $\U_0$ on which the local model $A_0$ is defined is selected as follows. We take $\U_0$ to be any suitably small fixed size neighborhood of $\xi_0$ such that $\U_0$ is bounded away from the contour $\Gamma_1$ and chosen so that under the map $\zeta = \zeta_0(z)$ its image $\zeta_0(\U_0)$ is a disk centered at the origin in the $\zeta$-plane (which necessarily has a radius $\sim \eps^{-1/2}$). Additionally, we use the freedom to deform the contours $\Gamma_0$ and $\Gamma_3$ such that the images $\zeta_0(\Gamma_k \cap \U_0),\ k=1,3$ are straight lines leaving the origin at angles $\pi/4$ and $3\pi/4$ respectively.  

The model inside $\U_0$ is constructed by approximating the function $r_0(z)$ by its value at $\xi_0$, and using \eqref{delta expansions} and \eqref{zeta_0 outside} to approximate $\delta$ by 
\begin{align}\label{delta approx}
	\delta \mapsto \zeta_0^{i\kappa(\xi_0)} \lp \frac{\eps}{\theta_0''(\xi_0) } \rp^{i\kappa(\xi_0)/2} \delta_0^{hol}(\xi_0).
\end{align}
Finally, we make the following change of variables which maps to the $\zeta$-plane and removes many of the constant factors from the jumps:
\begin{equation}\label{A hat to PC}
	\hat{A}_0 =  \left[ \lp \frac{\eps}{\theta_0''(\xi_0)} \rp^{i\kappa(\xi_0)/2} \delta_0^{hol}(z)\, e^{-i\theta_0(\xi_0)/2\eps} \right]^{\ad \sig} \Psi_{PC}(\zeta_0(z); r_0(\xi_0) ).
\end{equation}
The resulting RHP for the new unknown $\Psi_{PC}(\zeta, a)$ is one of the canonical model Riemann-Hilbert problems.  
% Psi_0 RHP, the Parabolic Cylinder problem
\begin{rhp}[for $\Psi_{PC}$ (The Parabolic Cylinder RHP):]\label{rhp: PC} Given a fixed constant $a$, find a matrix $\Psi_{PC}(\zeta; a)$ such that 
\begin{enumerate}[1.]
	\item $\Psi_{PC}$ is analytic for $\zeta \in \C \backslash \left\{ \zeta\, :\, \arg(\zeta) = \pm \pi/4, \pm3\pi/4 \right\}$.
	\item As $\zeta \rightarrow \infty,\ \Psi_{PC} =  I + \Psi^{(1)}/ \zeta + \bigo{1/\zeta^2} $.
	\item On the jump contours, the boundary values satisfy $\Psi_{PC+} = \Psi_{PC-} V_{\Psi_{PC}}$, where
	\begin{equation}\label{Psi_0 jumps}
	V_{\Psi_{PC}} = \begin{cases}
	\tril{ a \zeta^{-2i\kappa} e^{i\zeta^2/2} }_{\Bspace} & \arg(\zeta) = \pi/4 \\
	\triu{a^* \zeta^{2i\kappa}e^{-i\zeta^2 / 2} }_{\Bspace} & \arg(\zeta) = -\pi/4 \\   
	\triu{\frac{ a^*}{ 1+ a a^*}  \zeta^{2i\kappa}e^{-i\zeta^2 / 2} }_{\Bspace} & \arg(\zeta) = 3\pi/4 \\   
	\tril{\frac{a }{1+ a a^* }  \zeta^{-2i\kappa} e^{i\zeta^2 / 2} } & \arg(\zeta) = -3\pi/4,
	\end{cases}
	\end{equation}
	and $\kappa =  -\frac{1}{2\pi}\log(1+a a^*)$.
\end{enumerate}
\end{rhp}

\begin{prop}\label{prop: PCRHP solution}
The RHP for $\Psi_{PC}(\zeta; a)$ above has a unique, uniformly bounded, solution given by  
\begin{equation}\label{PCsoln}
    \Psi_{PC}(\zeta; a) = \left\{ \begin{array}{l@{\quad : \quad}l}
        P(\zeta; a) \tril{-a}_{\Bspace} e^{\frac{i}{4}\zeta^2\sig} \zeta^{-i\kappa \sig} & \arg(\zeta) \in (0, \pi/4) \\
        P(\zeta; a) \triu{\frac{-\bar{a}}{1+|a|^2}}_{\Bspace} e^{\frac{i}{4}\zeta^2\sig} \zeta^{-i\kappa \sig} & \arg(\zeta) \in(3\pi/4, \pi) \\
        P(\zeta; a) e^{\frac{i}{4}\zeta^{2}\sig}_{\Bspace} \zeta^{-i\kappa \sig} & |\arg(\zeta)| \in( \pi/4, 3\pi/4) \\
        P(\zeta; a) \tril{\frac{a}{1+|a|^2}}_{\Bspace} e^{\frac{i}{4}\zeta^2\sig} \zeta^{-i\kappa \sig} & \arg(\zeta) \in (-\pi, -3\pi/4) \\
        P(\zeta; a) \triu{\bar{a}} e^{\frac{i}{4}\zeta^2\sig} \zeta^{-i\kappa \sig} & \arg(\zeta) \in (-\pi/4, 0)
        \end{array} \right.
\end{equation}
where the function $P(\zeta,a)$, built out of the parabolic cylinder
functions $D_{\pm i\kappa}(\cdot)$, is
\begin{equation}\label{Parabolic}
    P(\zeta; a)  = \left\{ \begin{array}{l@{\ :\ }l}
                \PCsolnUp{\kappa}{\zeta}{\beta_{12}}{\beta_{21}}_{\Bspace}
                & \zeta \in \C^+ \\
                \PCsolnDown{\kappa}{\zeta}{\beta_{12}}{\beta_{21}}^{\Tspace}
                & \zeta \in \C^-
        \end{array}\right.
\end{equation}
and the constants $\kappa$, $\beta_{12}$, and $\beta_{21}$ are given
by the formulae:
\begin{align*}
    \kappa = -\frac{1}{2\pi} \log(1+a a^*)_{\Bspace}, \qquad \beta_{12} = \frac{\sqrt{2\pi} e^{i\pi/4}e^{-\pi \kappa/2}}{a \Gamma(-i\kappa)_{\Bspace}} , \quad \text{and}\quad \beta_{21} = \kappa / \beta_{12}. 
\end{align*}
\end{prop}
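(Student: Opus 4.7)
I would follow the now-standard strategy for model RHPs on a cross of rays: reduce to a problem with piecewise constant jumps, argue that a suitable matrix logarithmic derivative is entire and hence polynomial, identify the resulting linear ODE as a parabolic cylinder system, and finally pin down the free constants using the classical asymptotics and connection formulae for the Weber functions $D_{\pm i\kappa}(\cdot)$.

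First, absorb the $\zeta$-dependent factors into the unknown. For a fixed branch of $\zeta^{i\kappa}$ (say cut along $\arg\zeta = \pi$), define
\begin{equation*}
    \tilde\Psi(\zeta) := \Psi_{PC}(\zeta)\,\zeta^{i\kappa\sig} e^{-\frac{i}{4}\zeta^2\sig}.
\end{equation*}
A direct computation using \eqref{Psi_0 jumps} shows that on each of the four rays $\arg\zeta = \pm\pi/4,\pm 3\pi/4$ the jump of $\tilde\Psi$ becomes a \emph{constant} lower- or upper-triangular matrix (along $\arg\zeta=\pi$ one picks up a constant diagonal factor from the branch of $\zeta^{i\kappa}$). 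Because all jumps of $\tilde\Psi$ are constant, the logarithmic derivative $\Phi(\zeta) := \partial_\zeta\tilde\Psi(\zeta)\,\tilde\Psi(\zeta)^{-1}$ has no jumps and therefore extends to an entire function. The normalization $\Psi_{PC} = I + \Psi^{(1)}/\zeta + \bigo{\zeta^{-2}}$ gives $\tilde\Psi \sim \zeta^{i\kappa\sig}e^{-i\zeta^2/4\sig}\lp I + \bigo{1/\zeta}\rp$, so differentiating and using that $\sig$ anticommutes with off-diagonal matrices forces $\Phi$ to be linear:
\begin{equation*}
    \Phi(\zeta) = -\tfrac{i}{2}\zeta\sig + B, \qquad B = \tfrac{i}{2}[\sig,\Psi^{(1)}] = \offdiag{\beta_{12}}{\beta_{21}}.
\end{equation*}
Thus $\tilde\Psi$ solves the linear system $\tilde\Psi' = (-\tfrac{i}{2}\zeta\sig + B)\tilde\Psi$, which decouples into two second-order scalar equations. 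After the substitution $u(w) = e^{-w^2/4}f(w)$ each reduces to the parabolic cylinder equation $f'' + (\nu + \tfrac{1}{2} - w^2/4)f = 0$ with $\nu = i\kappa$ or $\nu = -i\kappa$. This identifies the columns of $P(\zeta;a) := \tilde\Psi(\zeta) \cdot (\text{piecewise constant triangular factor})^{-1}$ with linear combinations of $D_{\pm i\kappa}$ evaluated on the rotated arguments $e^{\mp i\pi/4}\zeta$ and $e^{\mp 3i\pi/4}\zeta$; the requirement of algebraic (rather than exponential) behaviour at infinity, coming from the normalization $\tilde\Psi \sim \zeta^{i\kappa\sig}e^{-i\zeta^2/4\sig}$, selects exactly one of the two independent solutions in each half-plane and yields \eqref{Parabolic}.

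Finally, I would fix the constants. From $\det\Psi_{PC}\equiv 1$ and the Wronskian identity $W[D_\nu(z),D_{-\nu-1}(iz)] = \sqrt{2\pi}/\Gamma(-\nu)$ one obtains $\beta_{12}\beta_{21} = \kappa$. The individual value of $\beta_{12}$ is then recovered by imposing the jump condition across any single ray, for instance $\arg\zeta=\pi/4$: the classical connection formula
\begin{equation*}
    D_\nu(z) = e^{-i\pi\nu}D_\nu(-z) + \frac{\sqrt{2\pi}}{\Gamma(-\nu)}e^{-i\pi(\nu+1)/2}D_{-\nu-1}(-iz)
\end{equation*}
relates $D_{i\kappa}(e^{-3i\pi/4}\zeta)$ to its analytic continuation across that ray, and matching the resulting constant-triangular jump with the prescribed $\tril{a}$ produces $\beta_{12} = \sqrt{2\pi}\,e^{i\pi/4}e^{-\pi\kappa/2}/(a\,\Gamma(-i\kappa))$. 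Uniqueness of $\Psi_{PC}$ follows from a standard Liouville argument: the ratio of any two solutions is entire, has constant determinant $1$, and tends to $I$ at infinity, hence equals $I$. Uniform boundedness in $\zeta$ is checked sector by sector from the large-$|w|$ asymptotics of $D_{\pm i\kappa}(w)$ combined with the fact that $\kappa\in\R$ makes the oscillatory exponentials $w^{\pm i\kappa}$ of unit modulus. The main obstacle—really the only non-mechanical step—is the careful bookkeeping of the branch cut of $\zeta^{i\kappa}$ together with the $\Gamma$-function identities needed to reconcile the several equivalent representations of the Weber function and arrive at precisely the stated prefactor in $\beta_{12}$.
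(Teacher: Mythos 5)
Your proposal is correct in outline, but it takes the complementary route to the one the paper (implicitly) takes. The paper does not derive the formula at all: it simply asserts that one verifies the jump and normalization conditions for the explicit expression \eqref{PCsoln} using the classical properties of $D_{\pm i\kappa}$, and refers to \cite{DZ94} and \cite{DM08} for details. You instead run the standard isomonodromy derivation: conjugate away $e^{i\zeta^2\sig/4}\zeta^{-i\kappa\sig}$ to get constant jumps, conclude that $\Phi=\partial_\zeta\tilde\Psi\,\tilde\Psi^{-1}$ is polynomial, read off Weber's equation, and fix $\beta_{12},\beta_{21}$ from the connection formula and the Wronskian. This buys you an explanation of \emph{why} the answer involves parabolic cylinder functions and where $\beta_{12}$ comes from, which the paper's verification hides; your computation $B=\tfrac{i}{2}[\sig,\Psi^{(1)}]$ and the identification $\nu=i\kappa$ via $w=e^{-3i\pi/4}\zeta$ are consistent with \eqref{Parabolic}.

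Two points need shoring up. First, the claim that $\Phi$ ``extends to an entire function'' is not automatic at $\zeta=0$, where the four rays meet and $\zeta^{i\kappa\sig}$ is branched: constancy of the jumps removes the discontinuities across each open ray, but leaves a possible isolated singularity at the origin. One must argue it is removable, e.g.\ by showing $\Phi=\bigo{\zeta^{-1}}$ there (using boundedness of $\tilde\Psi$, $\det\tilde\Psi\equiv1$, and local regularity of $\Psi_{PC}'$ near the self-intersection point) and then that the residue vanishes; this step is routinely glossed over but is the one place the Liouville argument can fail. Second, your argument establishes uniqueness of the form (any solution must be \eqref{PCsoln}) but not existence: you impose the jump condition on only one ray to extract $\beta_{12}$, whereas existence requires checking all four rays plus the $\zeta\to\infty$ asymptotics for the explicit formula --- which is precisely the content of the paper's ``verification'' and cannot be skipped if the Proposition is to assert that a solution \emph{exists}. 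With those two additions your route is a complete and arguably more informative proof.
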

To verify that \eqref{PCsoln} gives the solution of the RHP for $\Psi_0$ one uses the properties of the parabolic cylinder functions, $D_{i\kappa}(\cdot)$, (cf \cite{AS64}) to show explicitly that the above formulae satisfies the jump and normalization conditions. As this is a somewhat standard model problem we refrain from doing so here. The details of the solutions derivation can be found in \cite{DZ94} and \cite{DM08}.

Later we will need estimates of the error introduced by the local model. On the boundary $\partial \U_0$ 
\begin{equation}\label{xi_0 boundary error}
	O^{-1}(z) A_0(z) = I + \bigo{\eps^{-1/2}}.
\end{equation}
This follows from \eqref{zeta_0 outside}, the large $\zeta$ expansion of $\Psi_{PC}$ (cf. RHP \ref{rhp: PC}), and the boundedness of the outer solution $O(z)$ on $\partial \U_0$. The remaining error introduced by the model lies in the approximation of the jumps along $\Gamma_0$, $\Gamma_3$ and their conjugates. By direct calculation, one shows that the largest contribution to this error is introduced by approximation \eqref{delta approx} and consequently along each of these rays:  
\begin{equation}\label{xi_0 ray error}
	V_Q V_{A_0}^{-1} = I + \bigo{\eps^{1/2} \log \eps}. 
\end{equation}

% local model at \xi_1 %
\subsubsection{Local model near $z = \xi_1$\label{sec: outside xi_1} }   	
At $\xi_1$ the first harmonic $\theta_1$ is stationary and this forces the lens opening contours $\Gamma_1, \Gamma_2, \Gamma_3 ,$ and their conjugates to return to the real axis at this point. Just as in the local model at $\xi_0$, the locally quadratic structure of the stationary harmonic motivates the definition of a locally invertible analytic change of variables; define $\zeta = \zeta_1(z)$, through the relation
\begin{equation}\label{zeta_1 def}
	\frac{1}{2} \zeta_1^2 := \frac{1}{\eps}\lp \theta_1(z) - \theta_1(\xi_1) \rp = \frac{\theta''(\xi_1)}{2\eps}(z-\xi_1)^2 + \bigo{(z-\xi_1)^3}. 
\end{equation}
We choose the set $\U_1$ to be a fixed but sufficiently small neighborhood of $\xi_1$ bounded away from $\U_0$, such that \eqref{zeta_1 def} is analytic and invertible inside, and shaped such that the image $\zeta_1(\U_1)$ is a disk in the $\zeta$-plane. Additionally, we use our freedom to choose the rays $\Gamma_{1},\ \Gamma_2$,  and their conjugates such that the images $\zeta_1(\Gamma_k \cap \U_1),\ k=1,2$ lie on the rays $\arg \zeta = \pi/4$ and $3\pi/4$ respectively.  Inside $\U_1$ we seek a local model of the form
\begin{equation}\label{A1 model}
	A_1(z) = A_1^{(1)}(z) \delta(z)^\sig
\end{equation}
so as to remove the jumps along the real axis exactly from the local problem. The effect of \eqref{A1 model} on the exact local jumps is to replace $V_Q$ by $\delta_-^\sig V_Q \delta_+^{-\sig}$; the local contours and exact jumps after introducing this $\delta$ factorization are given in Figure \ref{fig: xi_1 local jumps}. 

\begin{figure}[ht]
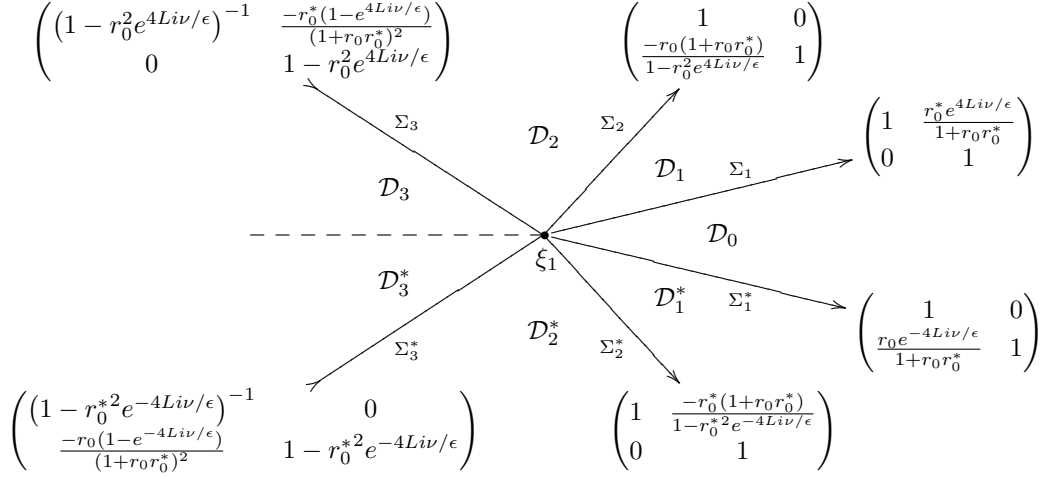

\centerline{
\begin{xymatrix}@-2.0pc{
    &
        {\begin{pmatrix}
        \lp 1 - r_0^2 e^{4Li\nu/\eps}\rp^{-1}
        & \frac{-r^*_0(1-e^{4Li\nu/\eps}) }{(1+r_0 r_0^*)^2} \\
        0 &  1 - r_0^2 e^{4Li\nu/\eps} 
        \end{pmatrix}}
    \ar@{>->}'[4,3]+0^{\Sigma_3}[0,6]^{\Sigma_2} & & & & & & {\tril{\frac{-r_0(1+r_0r_0^*)}{1-r_0^2e^{4Li\nu/\eps}}}} &  \\
    &  & & & \DD_2 & & & &  \ar@{<->}'[3,-4]_{\Sigma_1}[6,0]_{\Sigma_1^*}{\triu{ \frac{ r_0^* e^{4Li\nu / \eps}}{1+r_0 r_0^*} }} \\
    & & & & & & & \ar@{}[-1,-1]|(.4){\displaystyle \DD_1} &    \\
    & \ar@{}[0,1]^(.6){\displaystyle \DD_3} &  & & & &  & &  \\
    &\ar@{--}[0,3]+0 \ar@{{}{}{*}}[0,3]+0  & & & & & &  \DD_0 &  \\
    & \ar@{}[0,1]_(.6){\displaystyle \DD_3^*}& & & \ar@{}@<-2ex>[-1,0]^(.1){\displaystyle \xi_1} & & & & \\
    & & & & & & & \ar@{}[1,-1]|(.4){\displaystyle \DD_1^*} & \\
    & & & & \DD_2^* & & & &  {\tril{\frac{r_0 e^{-4Li \nu/\eps} }{1+r_0r_0^*} }} \\
    &
        {\begin{pmatrix} \lp 1 - {r_0^*}^2 e^{-4Li\nu/\eps}\rp^{-1} & 0 \\
        \frac{-r_0(1-e^{-4Li\nu/\eps}) }{(1+r_0 r_0^*)^2} &  1 - {r_0^*}^2 e^{-4Li\nu/\eps} 
        \end{pmatrix}}
    \ar@{>->}'[-4,3]+0_{\Sigma^*_3}[0,6]_{\Sigma_2^*} & & & & & & {\triu{\frac{-r_0^*(1+r_0 r_0^*)}{1- {r_0^*}^2 e^{-4Li\nu/\eps}} }} &
}\end{xymatrix} }
%\begin{xymatrix}@-2.5pc{
%    &
%        {\begin{pmatrix}
%        \lp 1 - r_0^2 e^{4Li\nu/\eps}\rp^{-1}
%        & \frac{-r^*_0(1-e^{4Li\nu/\eps}) }{(1+r_0 r_0^*)^2}
%            \delta^{2} e^{-i \theta_1/ \eps} \\
%        0 &  1 - r_0^2 e^{4Li\nu/\eps} 
%        \end{pmatrix}}
%    \ar@{>->}'[4,3]+0^{\Sigma_3}[0,6]^{\Sigma_2} & & & & & & {\tril{\frac{-r_0(1+r_0r_0^*)}{1-r_0^2e^{4Li\nu/\eps}} \delta^{-2}e^{i \theta_1/ \eps} }} &  \\
%    &  & & & \DD_2 & & & &  \ar@{<->}'[3,-4]_{\Sigma_1}[6,0]_{\Sigma_1^*}{\triu{ \frac{ r_0^*}{1+r_0 r_0^*} \delta^2 e^{-i \theta_0 / \eps} }} \\
%    & & & & & & & \ar@{}[-1,-1]|(.4){\displaystyle \DD_1} &    \\
%    & \ar@{}[0,1]^(.6){\displaystyle \DD_3} &  & & & &  & &  \\
%    &\ar@{--}[0,3]+0 \ar@{{}{}{*}}[0,3]+0  & & & & & &  \DD_0 &  \\
%    & \ar@{}[0,1]_(.6){\displaystyle \DD_3^*}& & & \ar@{}@<-2ex>[-1,0]^(.1){\displaystyle \xi_1} & & & & \\
%    & & & & & & & \ar@{}[1,-1]|(.4){\displaystyle \DD_1^*} & \\
%    & & & & \DD_2^* & & & &  {\tril{\frac{r_0}{1+r_0r_0^*} \delta^{-2} e^{i \theta_0/ \eps} }} \\
%    &
%        {\begin{pmatrix} \lp 1 - {r_0^*}^2 e^{-4Li\nu/\eps}\rp^{-1} & 0 \\
%        \frac{-r_0(1-e^{-4Li\nu/\eps}) }{(1+r_0 r_0^*)^2}\delta^{-2} e^{i \theta_1/ \eps}
%        &  1 - {r_0^*}^2 e^{-4Li\nu/\eps} 
%        \end{pmatrix}}
%    \ar@{>->}'[-4,3]+0_{\Sigma^*_3}[0,6]_{\Sigma_2^*} & & & & & & {\triu{\frac{-r_0^*(1+r_0 r_0^*)}{1- {r_0^*}^2 e^{-4Li\nu/\eps}} \delta^{2}e^{-i \theta_1/ \eps}  }} &
%}\end{xymatrix} }
\caption{ The exact local jumps after conjugating $V_Q$ by $\delta(z)^\sig$ are  $\left[ \delta(z) e^{-i \theta_1(z)/\eps} \right]^{\ad \sig} \widetilde{V}_Q$ where $\widetilde{V}_Q$ are the jumps given above. Inside the shrinking disk $\DD$ (see \eqref{xi_1 disk}) we relabel the contours $\Sigma_k$ and define the subsets $\DD_k, \ k=1,2,3$ in $\C^+$ and their conjugates as shown.
\label{fig: xi_1 local jumps} 
}
\end{figure}

Comparing Figures \ref{fig: local jumps xi_0} and \ref{fig: xi_1 local jumps}, the jumps near $\xi_1$ are more complicated than those near $\xi_0$. The jump matrices on $\Gamma_1$, $\Gamma_2$ and their conjugates involve the full reflection coefficient. As such, at $\xi_1$ the full multi-harmonic expansion returns to the real axis---manifesting as the factor $1-r_0^2 e^{4Li\nu/ \eps}$ appearing in the jumps in $\C^+$ and the conjugate factor appearing in $\C^-$---and we have to deal with every harmonic simultaneously. This is markedly different than the local structure near $\xi_0$ where the jump matrices contained only the single harmonic $\theta_0$. However, there are also important similarities, the $\delta$ factors in the off-diagonal entries have a power law singularity of the form $(z-\xi_k)^{i\kappa}$ and along the rays $\Gamma_1,\ \Gamma_2$, and their conjugates the quadratic decay of a stationary harmonic makes the jumps asymptotically near identity at any fixed distance from $\xi_1$. These similarities lead one to believe that the parabolic cylinder model, RHP \ref{rhp: PC}, should be involved in the construction of the local approximation. The key to the construction of our model at $\xi_1$ is the separation of length scales between the onset of asymptotic growth/decay of the locally linear harmonics, $\theta_k,\ k \neq 1$, and the locally quadratic harmonic, $\theta_1$, which we record in the following elementary proposition.  

\begin{prop}\label{prop:  xi_1 harmonic est} 
For $z \in \U_1$, the following estimates hold:
\begin{enumerate}[ i.]
	\item For $|z - \xi_1| = \bigo{\eps^{1/2} }$, the functions $e^{\pm i \theta_1/ \eps}$ are bounded independent of $\eps$.
	\item For $|z - \xi_1| = \bigo{\eps}$, the functions $e^{\pm 4Li \nu/  \eps}$ are bounded independent of $\eps$.
	\item For $\imag z \gg \eps$  $\lp -\imag z \gg \eps \rp$ the function $e^{4Li\nu/\eps}$ $\lp e^{-4Li \nu / \eps} \rp$  is small beyond all orders.   
\end{enumerate}
\end{prop}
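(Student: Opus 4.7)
The proof is essentially a collection of local Taylor expansions combined with careful tracking of which terms survive under each scaling. I would organize it as three separate short arguments.

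\emph{Part (i).} Since $\xi_1$ is defined as the stationary phase point of $\theta_1$, we have $\theta_1'(\xi_1)=0$, so expanding in $\U_1$ (where $\theta_1$ is real analytic because $\xi_1$ lies on the real axis and is bounded away from the branch cut $\gamma \subset i[0,q]$) gives
\begin{equation*}
\frac{\theta_1(z)}{\eps} = \frac{\theta_1(\xi_1)}{\eps} + \frac{\theta_1''(\xi_1)}{2\eps}(z-\xi_1)^2 + O\!\lp \frac{(z-\xi_1)^3}{\eps} \rp.
\end{equation*}
The first term is real so contributes nothing to $|e^{\pm i\theta_1/\eps}|$; under $|z-\xi_1|=O(\eps^{1/2})$ the quadratic term is $O(1)$ and the cubic remainder is $O(\eps^{1/2})$, so $\imag(\theta_1/\eps)$ is uniformly bounded and (i) follows.

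\emph{Part (ii).} The function $\nu(z)=\sqrt{z^2+q^2}$ is analytic in $\U_1$ with $\nu(\xi_1)$ real (positive), so
\begin{equation*}
\frac{4Li\nu(z)}{\eps} = \frac{4Li\nu(\xi_1)}{\eps} + \frac{4Li\nu'(\xi_1)}{\eps}(z-\xi_1) + O\!\lp \frac{(z-\xi_1)^2}{\eps} \rp.
\end{equation*}
Again the first term has vanishing imaginary part, while under $|z-\xi_1|=O(\eps)$ the linear term is $O(1)$ and the remainder is $O(\eps)$, yielding (ii).

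\emph{Part (iii).} Here I would exploit the choice of branch of $\nu$: with $\gamma$ placed on the imaginary axis, one has $\imag\nu(z)>0$ throughout $\C^+\setminus i(0,q]$. Since $\nu$ is analytic across a neighborhood of the real point $\xi_1$ and $2\nu\nu'=2z$ gives $\nu'(\xi_1)=\xi_1/\nu(\xi_1)\in\R$, the Cauchy--Riemann equations imply
\begin{equation*}
\imag \nu(z) = \frac{\xi_1}{\nu(\xi_1)}\,\imag z + O\!\lp |z-\xi_1|^2 \rp
\end{equation*}
for $z\in\U_1$. For the regime of Theorem \ref{thm: main} (where $x>L$ and $t$ is not too large) the small-time expansion \eqref{xik small time} gives $\xi_1>0$, so the leading coefficient is strictly positive and, shrinking $\U_1$ if necessary, $\imag\nu(z)\geq c\,\imag z$ for some $c>0$. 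Therefore
\begin{equation*}
|e^{4Li\nu(z)/\eps}| = e^{-4L\imag\nu(z)/\eps} \leq e^{-4Lc\,\imag z/\eps},
\end{equation*}
which is small beyond all orders whenever $\imag z\gg\eps$; the conjugate estimate for $e^{-4Li\nu/\eps}$ in $\C^-$ is identical.

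The only subtle point is part (iii): one must check that the branch convention genuinely yields $\imag \nu\gtrsim \imag z$ near $\xi_1$ with a positive constant, which reduces to the sign of $\xi_1$. Everything else is elementary Taylor expansion; the proposition is really a bookkeeping lemma isolating the two length scales ($\eps^{1/2}$ for the stationary harmonic $\theta_1$, and $\eps$ for the locally linear harmonics hidden inside $e^{\pm 4Li\nu/\eps}$) that will drive the annular construction of the local parametrix at $\xi_1$.
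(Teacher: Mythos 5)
The paper states this proposition without proof, calling it elementary, so there is no argument of the authors' to compare against; your three Taylor expansions are certainly the intended route, and parts (i) and (ii) are correct as written (modulo one sign remark below). Part (iii) is right in structure and conclusion, but two points need repair.

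First, the signs. For $x>L$ the stationary point $\xi_1$ is \emph{negative}: the surrounding text and Figure 3 place $\xi_1<\xi_0<0$, and $\theta_1'(z)\approx 4tz+2(x+L)$ for large $|z|$ gives $\xi_1\approx -(x+L)/(2t)$ (the displayed formula \eqref{xik small time} is itself missing a minus sign relative to the rest of the section). Correspondingly, with the normalization $\nu(z)\sim z$ at infinity the function $\nu$ is odd on $\C\backslash i[-q,q]$, so $\nu(\xi_1)=-\sqrt{\xi_1^2+q^2}<0$, not positive as you assert in part (ii) (realness is all that matters there). Your two sign errors cancel in the ratio: $\nu'(\xi)=\xi/\nu(\xi)>0$ for \emph{every} real $\xi\neq 0$, since $\xi$ and $\nu(\xi)$ always share a sign. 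So the ``subtle point'' you isolate --- the sign of $\xi_1$ --- is immaterial, and your reduction of (iii) to it is the wrong diagnosis even though the coefficient does come out positive.

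Second, the uniform bound $\imag\nu(z)\geq c\,\imag z$ on $\U_1\cap\C^+$ does not follow from $\imag\nu(z)=\nu'(\xi_1)\imag z+\bigo{|z-\xi_1|^2}$ alone: when $\imag z\ll|z-\xi_1|^2$ (e.g.\ $z$ at a fixed small real distance from $\xi_1$ but exponentially close to the axis) the error term swamps the leading one, so no choice of radius for $\U_1$ rescues the inequality as you have argued it. The clean fix is to use that $\imag\nu$ vanishes identically on $\R\cap\U_1$ and that, by Cauchy--Riemann, $\partial_v\,\imag\nu(u+iv)=\re\nu'(u+iv)\geq c>0$ on a sufficiently small $\U_1$ by continuity and $\nu'(\xi_1)>0$; integrating gives $\imag\nu(u+iv)=\int_0^v\re\nu'(u+is)\,\dd s\geq cv$ for all $v\geq0$ with $u+iv\in\U_1$. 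With that estimate in hand your exponential bound, and hence part (iii), goes through.
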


Proposition \ref{prop: xi_1 harmonic est} motivates the introduction of a shrinking disk inside the fixed sized disk $\U_1$. Define
\begin{equation}\label{xi_1 disk}
	\DD = \{ z\, : \, |z-\xi_1| \leq \sqrt{\eps} \}.
\end{equation} 
Inside $\DD$ relabel the jump contours by $\Sigma_k\ (\Sigma_k^*),\ k=1,2,3,$ ordered from the real axis to the right of $\xi_1$ around $\xi_1$ with positive (negative) orientation. Additionally, define the real contour $\Sigma_4 := \{ z \in \DD\, :\, z <\xi_1 \}$, oriented left to right.  Label by $\DD_0$ the subset of $\DD$ bounded by $\Sigma_1$ and $\Sigma_1^*$, and label by $\DD_k,\ 1 \leq k \leq 3,$ the other subsets of $\DD$ in $\C^+$ ordered counterclockwise; let $\DD_k^*$ denote the complex conjugate sectors. Finally, denote by $V_k\ (V_k^\dagger)$ the exact values of the jump  $\delta^\sig V_Q \delta^{-\sig}$ along $\Sigma_k\ (\Sigma_k^*)$, see Figure \ref{fig: xi_1 local jumps}. Our procedure will be to build a model from the parabolic cylinder functions as was introduced in the local model at $\xi_0$. However, the presence of the linearly harmonics requires that inside $\DD$ the jump matrices be preconditioned for approximation by the parabolic cylinder RHP. The price we will pay for this construction are near-identity jumps on $\partial \DD$ and $\Sigma_4$. The preconditioning procedure, which we describe below, is summarized in Figure \ref{fig: xi_1 model steps}. 

For $z \in \U_1 \backslash \DD$, the locally linear harmonics, those terms proportional to $e^{i\theta_0/ \eps}$ or $e^{\pm 4 i\nu/\eps}$ in the local jumps as expressed in Figure \ref{fig: xi_1 local jumps}, are small beyond all orders (Prop. \ref{prop: xi_1 harmonic est}), and they may be set to zero without introducing any appreciable error. Note that this approximation replaces the jumps on $\Gamma_0$ and its conjugate by identity. Eliminating the linear harmonics and employing \eqref{delta expansions} and \eqref{zeta_1 def} the remaining jumps are approximated as follows.  

We define the locally analytic and nonzero scalar function
\begin{equation}\label{xi_1 scalar conjugate}
h_1 = \left[  \lp \frac{\eps}{\theta''_1(\xi_1)} \rp^{i\kappa(\xi_1)} \delta_{1-}^{hol}(z) \delta_{1+}^{hol}(z) e^{-i\theta_1(\xi_1)/ \eps} \right]^{1/2}
\end{equation}
then for $z \in \U_1 \backslash \DD$ we replace the exact jump $\delta^\sig V_Q \delta^{-\sig}$ with $h_1^\sig V_{\xi_1} h_1^{-\sig}$, where,   
\begin{equation}\label{xi_1 annular approx}
V_{\xi_1} (z) =  \begin{cases}
		\tril{-r_0(\xi_1) \zeta_1(z)^{-2i\kappa(\xi_1)}  e^{i\zeta_1(z)^2/2 }}_{\Bspace} & z \in \Gamma_1  \\
		\triu{-r_0^*(\xi_1) \zeta_1(z)^{2i\kappa(\xi_1)} e^{-i\zeta_1(z)^2/ 2} }_{\Bspace} & z \in \Gamma_1^* \\
		\triu{\frac{-r_0^*(\xi_1)}{1+|r_0(\xi_1)|^2 } \zeta_1(z)^{-2i\kappa(\xi_1)} e^{-i\zeta_1(z)^2/ 2} }_{\Bspace} & z \in \Gamma_2  \\
		\tril{ \frac{-r_0(\xi_1)}{1+|r_0(\xi_1)|^2} \zeta_1(z)^{-2i\kappa(\xi_1)} e^{i\zeta_1(z)^2/ 2} } & z \in \Gamma_2^* .
\end{cases}
\end{equation}

\begin{figure}[htb]
\begin{center}
\includegraphics[width =.3\textwidth]{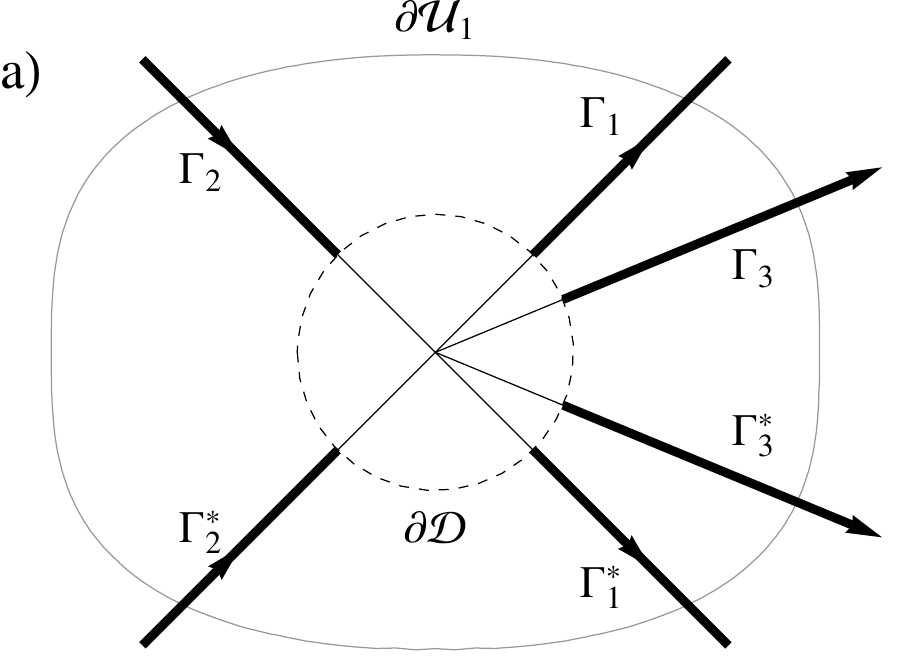}
\includegraphics[width =.3\textwidth]{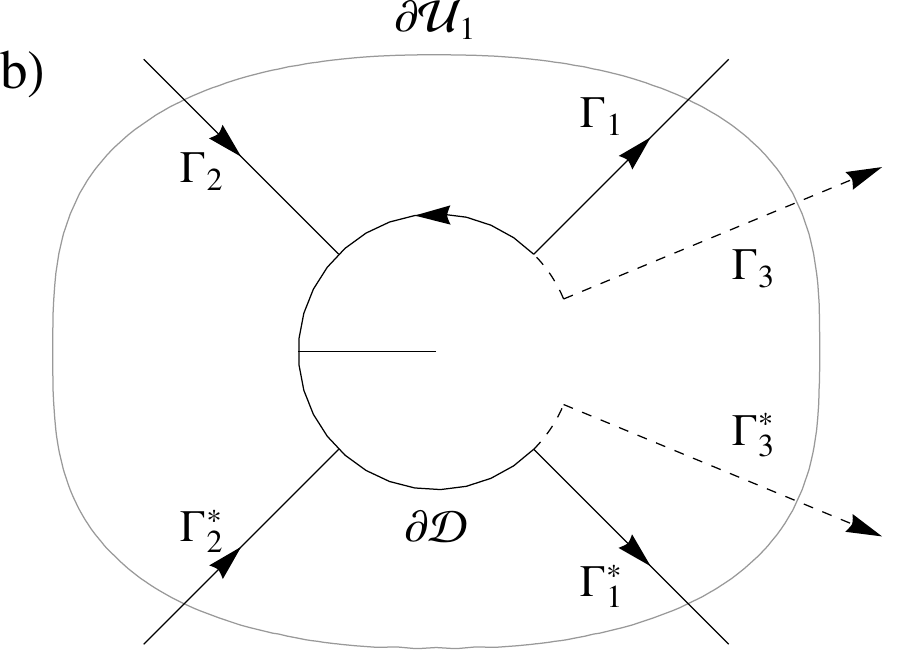}
\includegraphics[width =.3\textwidth]{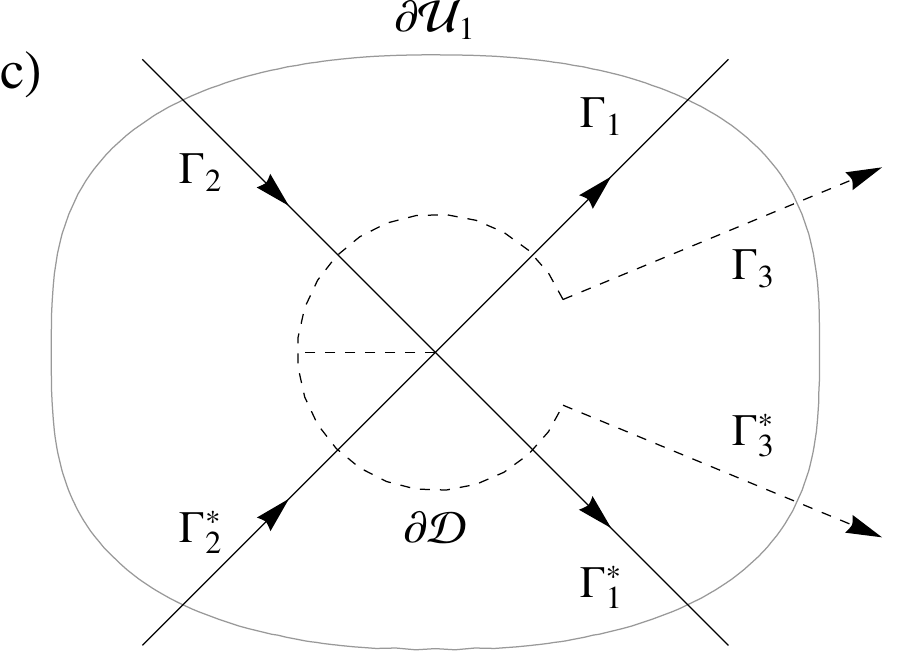}
\caption[Conditioning the local model at $\xi_1$]{The three steps of approximation in the construction of the model problem at $\xi_1$. $a)$ The jumps along the $\Gamma_k$ are replaced by the approximation \eqref{xi_1  annular approx} in the annular region $\U \backslash \DD$ (bold contours). $b)$ The exact jumps inside $\DD$ are all folded onto the real axis to the left of $\xi_1$ creating jumps on $\partial \DD$ and $\Sigma_4 = \DD\cap \{ z < \xi_1 \}$. $c)$ The annular approximation are unfolded inside $\DD$ from $\Sigma_4$. Dashed contours denote jumps which are asymptotically small after the folding and unfolding and thus ignored in the construction of the parametrix. 
\label{fig: xi_1 model steps}
}
\end{center}
\end{figure}

Inside $\DD$, the linear exponentials cannot be replaced by zero without introducing significant errors. Instead, we use the local self consistency of the jump matrices to fold away the exact jumps and then replace them with jumps matching the parabolic cylinder model. To that end we make the following change of variables folding all of the jumps inside $\DD$ onto $\Sigma_4$:  
\begin{equation}\label{folding factorization}
	A_1^{(2)} = A^{(1)}_1 F, \\
\end{equation}
where $F$ is the piecewise analytic function
\begin{equation*}
	F = \begin{cases}
		I  & z \in \DD_0 \\
		V_1^{-1}  & z \in \DD_1 \\
		V_2^{-1} V_1^{-1}  & z \in \DD_2 \\
		V_3 V_2^{-1} V_1^{-1}  & z \in \DD_3 \\
	\end{cases}
	\qquad
	F  = \begin{cases}
		I & z \in \U_1 \backslash \DD \\
		V_1^{\dagger}  & z \in \DD_1^* \\
		V_2^\dagger V_1^\dagger  & z \in \DD_2^* \\
		V_3^{-\dagger} V_2^\dagger V_1^\dagger  & z \in \DD_3^* .
	\end{cases}
\end{equation*}
The new unknown $A^{(2)}_1$ has had the jumps removed from $\Sigma_k,\ k=1,2,3,$ and their conjugates but gets new jumps on $\partial \DD$ and $\Sigma_4 $.  A simple calculation shows that the resulting jump along $\Sigma_4$ is $(1+|r_0(z)|^2)^{-\sig}$, while along $\partial \DD$ the induced jumps are given by ${A_{1-}^{(1)}}^{-1} A_{1+}^{(2)}$. This factorization is useful because it has pushed the contribution of the locally linear harmonics off of the contours $\Sigma_k$ and onto the disk boundary $\partial (\DD \backslash \DD_0)$ where it is exponentially small away from the real axis. Along the arcs $\partial \DD_3$ and $\partial \DD_3^*$, which approach the real axis, the induced jumps are independent of the linear harmonics: the jump on $\partial \DD_3$ is given by 
\begin{align*}
	F = V_3 V_2^{-1} V_1^{-1}  
	= \tbyt{ (1+r_0 r_0^*)^{-1} }{ \frac{-r_0^*}{(1+r_0 r_0*)^2} \delta^{2} e^{-i \theta_1/\eps} } 
	{r_0(1+r_0 r_0^*)\delta^{-2} e^{i \theta_1/\eps} }{1}, \text{ for } z \in \partial \DD_3.
\end{align*}
The jump on $\partial \DD_3^*$ is similar. Thus, the locally linear harmonics appear only in the jumps on $\partial (\DD_1 \cup \DD_2)$ and its conjugate where their contribution is exponentially small.  

With the exact jumps folded away, we now introduce a second factorization which unfolds onto each $\Gamma_k \cap \DD$ the approximation \eqref{xi_1 annular approx}. Define
\begin{align}\label{A3 def} 
	A_1^{(3)} &= A_1^{(2)} U, 
\end{align}
where $U$ is the piecewise analytic function
\begin{equation*}
	U = \begin{cases}
	 h_1^{\ad\sig} \tril{-r_0(\xi_1) \zeta_1(z)^{-2i\kappa(\xi_1)} e^{i\zeta_1(z)^2/ 2} }_{\Bspace} & z \in \DD_2 \\
	 h_1^{\ad\sig} \tril{-r_0(\xi_1) \zeta_1(z)^{-2i\kappa(\xi_1)} e^{i\zeta_1(z)^2/ 2} } \triu{ \frac{r_0^*(\xi_1)}{1+|r_0(\xi_1)|^2} \zeta_1(z)^{2i\kappa(\xi_1)} e^{-i\zeta_1(z)^2/ 2} }_{\Bspace} & z \in \DD_3 \\
     h_1^{\ad\sig} \triu{r_0(\xi_1)^* \zeta_1(z)^{2i\kappa(\xi_1)} & e^{-i\zeta_1(z)^2/ 2} }_{\Bspace}  & z \in \DD_2^* \\ 
	 h_1^{\ad\sig} \triu{r_0(\xi_1)^* \zeta_1(z)^{2i\kappa(\xi_1)} e^{-i\zeta_1(z)^2/ 2} } \tril{ \frac{-r_0(\xi_1)}{1+|r_0(\xi_1)|^2} \zeta_1(z)^{-2i\kappa(\xi_1)} e^{i\zeta_1(z)^2/ 2} }_{\Bspace} & z \in \DD_3^* \\
	I & \text{elsewhere}
	\end{cases}
\end{equation*}
By first folding away the exact jumps \eqref{folding factorization} and then unfolding the parabolic cylinder model jumps \eqref{A3 def}, the new problem has, by construction, the exact parabolic cylinder jumps along $(\Gamma_k \cup \Gamma_k^*) \cap \DD$, $k=1,2,3$. The sequence of factorizations induces jumps 
along the new contours $\partial \DD$ and $\Sigma_4$, which are all near identity. As an example, for $z \in  \partial \DD_2$,
\begin{equation}\label{xi_1 error sample}
\begin{split}
	FU &= \tril{\frac{r_0(1+r_0 r_0^*)}{1-r_0^2 e^{4Li\nu} } \delta^{-2} e^{i\theta_1/\eps} }
	\triu{\frac{-r_0^*}{1+r_0 r_0*} \delta^2 e^{-i\theta_0/\eps} } \times \\
	& \qquad \qquad 
	\tril{ -r_0(\xi_1)(1+r_0 r_0^*) (\delta_{1+}^{hol})^{-2} \left[ \sqrt{\frac{\eps}{\theta_1''(\xi_1)}}\zeta_1 \right]^{-2i\kappa(\xi_1)} e^{i\theta_1/\eps}} \\
	&= \lp \delta e^{\frac{-i}{2\eps} \theta_1} \rp^{\ad \sig} \tril{(1+r_0 r_0^*)\left[ r_0 -  r_0(\xi_1) \frac{\left[ \sqrt{\frac{\eps}{\theta_1''(\xi_1)}}\zeta_1\right]^{-2i\kappa(\xi_1)} }{(z-\xi_1)^{-2i\kappa}} \right]   } + \bigo{e^{-\frac{c}{\sqrt{\eps} }} } \\
	&= I+ \bigo{\sqrt{\eps}\log \eps} 
\end{split}
\end{equation}
The calculations in the other sectors are similar with the largest contributing error always being $\bigo{\sqrt{\eps}\log \eps}$ coming from the approximation of $(z-\xi_1)^{i\kappa}$ by $\lp \sqrt{\eps/ \theta_1''(\xi_1)} \zeta_1 \rp^{i\kappa(\xi_1)}$. 

Our local model is constructed by simply dropping the near identity jumps along $\DD$ and $\Sigma_4$. This leaves only the jumps on $\Gamma_1,\ \Gamma_2$ and their conjugates given by \eqref{xi_1 annular approx} which depend on $z$ only through $\zeta_1(z)$. Comparing $V_{\xi_1}$ with \eqref{Psi_0 jumps} we see that the functions are identical up to the substitution of the constant $-r_0(\xi_1)$ for $r_0(\xi_0)$. Thus, $A^{(3)}(\zeta_1(z))$ should solve RHP \ref{rhp: PC} up to replacement of the appropriate constants. To avoid repeating details, we simply state that our local model in $\U_1$ is completed by taking  
\begin{equation}\label{A3 solution}
	A^{(3)}_1(z) = h_1^\sig \, \Psi_{PC} \lp \zeta_1(z),-r_0(\xi_1) \rp \, h_1^{-\sig}
\end{equation}
where $\Psi_{PC}$ is the function defined in Prop. \ref{prop: PCRHP solution} built from the parabolic cylinder functions. 

\subsection{The error matrix, $E(z)$. Proof of Theorem \ref{thm: main}, part $1.$}
Here we prove that the parametrix $P(z)$ constructed in the previous subsection is a uniformly accurate estimate of the exact solution $Q(z)$ to RHP \ref{rhp: Q} which was derived from the original NLS RHP by explicit transformations. We prove this by considering the error matrix $E(z)$ defined as the ratio:
\begin{equation*}\label{error def}
	E(z) = Q(z) P^{-1}(z).
\end{equation*}  
Both $Q$ and the parametrix $P$ are piecewise analytic functions taking continuous boundary values on the contours that bound their respective domains of analyticity. As such, $E(z)$ also satisfies a Riemann-Hilbert problem with jump relation $E_+ = E_- V_E$, where 
\begin{equation}\label{error jump def}
V_E = P_- \lp V_Q V_P^{-1} \rp P_-^{-1}.
\end{equation}
Let $\Gamma_E$ denote the totality of contours on which $E$ has a nontrivial jump. 
Then $E$ satisfies the following Riemann-Hilbert problem:
\begin{figure}[htb]
\begin{center}
\includegraphics[width = .6\textwidth]{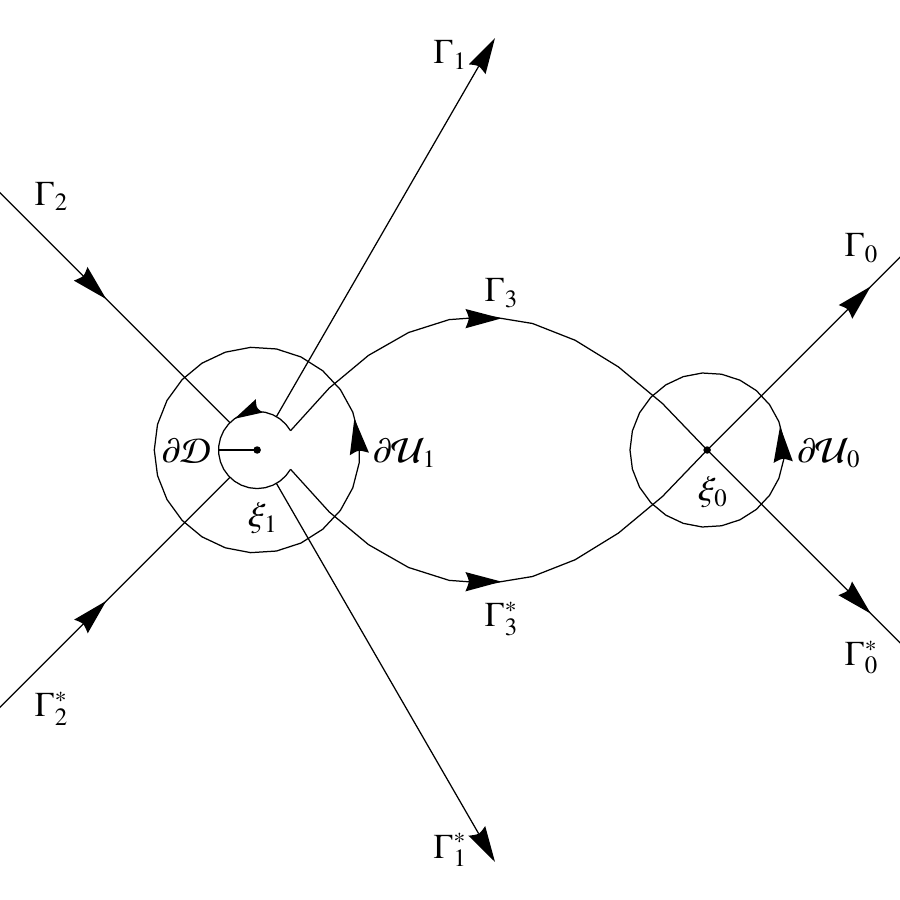}
\caption{The collection, $\Gamma_E$, of jump contours for the error matrix, E(z). \label{fig: error contours outside}
}
\end{center}
\end{figure}
% Error RHP outside support
\begin{rhp}[for the error matrix, E(z).]\label{rhp: outside error}
 Find a matrix $E(z)$ such that
\begin{enumerate}[1.]
\item $E(z)$ is analytic for $z \in \C \backslash \Gamma_E$.
\item As $z \rightarrow \infty$, $E(z) = I + \bigo{1/z}$
\item For $z \in \Gamma_E$, $E$ takes continuous boundary values satisfying $E_+ = E_- V_E$, where
	$V_E$ is defined by \eqref{error jump def},
\end{enumerate}
\end{rhp}
We now shift our perspective and think of $E$ not as defined by \eqref{error def}, but as the solution of RHP \ref{rhp: outside error} given the jump matrix $V_E$ which we can calculate explicitly from \eqref{Q jumps} and \eqref{outside parametrix}. The following lemma will allow us to establish a uniform asymptotic expansion for $E(z)$.

\begin{lem}\label{lem: outside error} For each $(x,t) \in K \subset \mathcal{S}_0$ compact and $\ell \in \N_0$ the jump matrix $V_E$ defined by \eqref{error jump def} satisfies,
\begin{equation}\label{outside error bound}
	\| z^\ell(V_E - I) \|_{L^p(\Gamma_E)} = \bigo{ \eps^{1/2} \log \eps},
\end{equation}
for each sufficiently small $\eps$ and $p=1,2, \text{ or }\infty$.
\end{lem}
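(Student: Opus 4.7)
The strategy is to decompose $\Gamma_E$ into pieces and estimate $V_E-I$ on each. By inspection of \eqref{error jump def} and the construction of $P$, the contour $\Gamma_E$ consists of: (i) the portions of $\Gamma_Q^{0}$ lying outside $\overline{\U_0}\cup \overline{\U_1}$; (ii) the circles $\partial \U_0$ and $\partial \U_1$; (iii) the portions of $\Gamma_Q^{0}$ inside $\U_0$ or $\U_1$; and (iv) the auxiliary contours $\partial \DD$ and $\Sigma_4$ inside $\U_1$ that were produced by the folding/unfolding $F,U$. On the piece $(-\infty,\xi_0]\setminus (\overline{\U_0}\cup\overline{\U_1})$ the parametrix equals the outer model $O$, whose jump matches $V_Q$ exactly, so $V_E\equiv I$ there and nothing has to be estimated.

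On piece (i) the parametrix is $O$ and $V_O\equiv I$ off $(-\infty,\xi_0]$, so $V_E=O_- V_Q^{(0)} O_-^{-1}$. By the very definition of the lens contours $\Gamma_k,\Gamma_k^*$ (chosen in regions of exponential decay of the relevant phases $\theta_0,\theta_1$ and of the factor $e^{4Li\nu/\eps}$), every entry of $V_Q^{(0)}-I$ carries an exponential $e^{-c/\eps}$ with $c>0$ uniform on $\Gamma_Q^0\setminus(\U_0\cup\U_1)$; moreover, since $\theta_k(z)$ grows quadratically as $|z|\to \infty$ away from its stationary point, we actually have $|V_Q^{(0)}-I|\le Ce^{-c|z|^2/\eps}$ on these rays. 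Multiplication by the bounded, uniformly invertible matrix $O_- = \delta_-^{\sigma_3}$ and by any polynomial weight $z^\ell$ preserves this Gaussian bound, so the contribution from (i) is $\bigo{e^{-c/\eps}}$ in $L^p(\Gamma_E)$ for $p=1,2,\infty$ and any $\ell$.

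On piece (ii) there is no $V_Q$-jump, only the mismatch $V_E=O A_j O^{-1}$ (up to orientation), and the matching estimate \eqref{xi_0 boundary error}, together with its analogue on $\partial \U_1$ obtained from the large-$\zeta_1$ expansion of $\Psi_{PC}$ and the boundedness of $O$ and of the scalar $h_1$ in \eqref{xi_1 scalar conjugate}, gives $\|V_E-I\|_{L^\infty(\partial \U_j)}=\bigo{\eps^{1/2}}$; since $\partial \U_j$ is compact and bounded away from $\infty$, the same bound holds in $L^p$ with any $z^\ell$ weight. On piece (iii) inside $\U_0$ the local estimate \eqref{xi_0 ray error} directly yields the bound $\bigo{\eps^{1/2}\log\eps}$ in $L^\infty$, and again compactness gives the $L^p$ statement. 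Inside $\U_1$, along $(\Gamma_k\cup\Gamma_k^*)\cap \DD$, the sample calculation \eqref{xi_1 error sample} is representative of all four sectors and gives the same $\bigo{\eps^{1/2}\log\eps}$ bound, the logarithm arising precisely from the replacement $(z-\xi_1)^{i\kappa}\mapsto (\sqrt{\eps/\theta_1''(\xi_1)}\,\zeta_1)^{i\kappa(\xi_1)}$ on a disk of radius $\sqrt{\eps}$. Finally, on piece (iv), the $\partial\DD$ arcs not touching the real axis inherit the exponentially small contributions of the linear harmonics $e^{\pm 4Li\nu/\eps}$ from Proposition \ref{prop: xi_1 harmonic est}(iii), while the arcs $\partial\DD_3, \partial\DD_3^*$ near the real axis, together with $\Sigma_4$, are handled by the same calculation as in \eqref{xi_1 error sample} and give $\bigo{\eps^{1/2}\log\eps}$; all of these contours are bounded, so the $L^p$ bound with weight $z^\ell$ is automatic.

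\textbf{Main obstacle and uniformity.} The central technical difficulty is the bookkeeping near $\xi_1$: one has to verify that after unfolding via $U$, the induced jumps on $\partial\DD$ and $\Sigma_4$ really are near the identity, as asserted after \eqref{xi_1 error sample}. This is where the three error sources (replacement of $r_0(z)$ by $r_0(\xi_1)$, the $(z-\xi_1)^{i\kappa}$ approximation, and dropping $e^{4Li\nu/\eps}$-type terms) all combine, and one must check that the worst of them---the $\log$-producing power term---dominates; Proposition \ref{prop: xi_1 harmonic est} and the separation of scales $|z-\xi_1|\sim\sqrt{\eps}$ vs.\ $|z-\xi_1|\sim\eps$ are the key analytic inputs. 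Finally, the uniformity over compact $K\subset \mathcal{S}_0$ follows from the continuous dependence of $\xi_0,\xi_1$, $\theta_k''(\xi_k)$, $r_0(\xi_k)$, $\delta_j^{hol}$, and $\kappa(\xi_k)$ on $(x,t)$, together with the fact that on $\mathcal{S}_0$ the two stationary points $\xi_0=-(x-L)/2t$ and $\xi_1$ stay bounded and bounded apart, so the fixed disks $\U_0,\U_1$ and lens contours can be chosen uniformly.
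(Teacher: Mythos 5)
Your proposal is correct and follows essentially the same route as the paper: the same decomposition of $\Gamma_E$ into exponentially small exterior rays, the disk boundaries handled via the large-$\zeta$ expansion of $\Psi_{PC}$, and the $\bigo{\eps^{1/2}\log\eps}$ contributions traced to the replacement of $(z-\xi_1)^{i\kappa}$ by $\bigl(\sqrt{\eps/\theta_1''(\xi_1)}\,\zeta_1\bigr)^{i\kappa(\xi_1)}$, with uniformity over $K$ coming from the separation of $\xi_0$ and $\xi_1$. One bookkeeping slip: on $(\Gamma_k\cup\Gamma_k^*)\cap\DD$ there is nothing to estimate, since the folding/unfolding by $F$ and $U$ is built precisely so that the parametrix carries the \emph{exact} jumps of $Q$ there, i.e.\ $V_P=V_Q$ inside $\DD$; the ray mismatch you describe actually lives on the annular portions $(\Gamma_k\cup\Gamma_k^*)\cap(\U_1\setminus\DD)$, where the exact jump is replaced by the annular approximation \eqref{xi_1 annular approx}. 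The estimate you invoke (and the mechanism producing the logarithm) is exactly the one needed on that annular piece, and you handle $\partial\DD$ and $\Sigma_4$ correctly via \eqref{xi_1 error sample}, so the argument goes through once the contours are relabeled.
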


\begin{proof} For each $(x,t)$ in a compact subset $K$ of $\mathcal{S}_0$, the factorizations defining the RHP for $Q$ can be successfully carried through and the distance between $\zeta_1$ and $\zeta_0$ is bounded below. For any allowable $(x,t)$, the parametrix $P$ is uniformly bonded in the plane, so it suffices to show that $\|(z^\ell(V_Q V_P^{-1} - I) \| =\bigo{ \eps^{1/2}\log \eps}$. The collection of contours $\Gamma_E$ decouples into three categories: the portions of $\Gamma_k,\ k=1,2,3$ and its conjugates lying outside the disks $\U_0$ and $\U_1$, these we denote $\Gamma_E^{out}$; the portions of each $\Gamma_k$ inside $\U_0$ and $\U_1 \backslash \DD$ together with the real segment $\Sigma_4$ we denote $\Gamma_E^{in}$; and finally the loop contours $\partial \U_0, \partial \U_1$, and $\partial \DD$ denoted by $\Gamma_E^{circ}$. We verify \eqref{outside error bound} independently on a representative contour of each subset, the derivation on the other contours being similar.  Consider first the unbounded set of contours $\Gamma_E^{out}$ as represented by $\Gamma_0^{out}$. In this region the parametrix is given by the outer model $O(z)$ which is analytic for $z$ off the real axis.  Thus,
\begin{align*}
	\| z^\ell(V_E - I) \|_{L^p(\Gamma_0^{out})} &\leq M \| z^\ell(V_Q -I) \|_{L^p(\Gamma_0^{out})} \\
	&\leq  M  \| z^\ell \cdot r_0 \delta^{-2}  e^{i\theta_0 / \eps} \|_{L^p(\Gamma_0^{out})} = \bigo{e^{- c /\eps} }.
\end{align*}
The last line follows from the boundedness of $r_0$ and $\delta$ and that, along $\Gamma_0^{out}$, $\imag \theta_0$ increases without bound from a least value $c >0$.The error in the exterior region is small beyond all orders and does not contribute to \eqref{outside error bound}.

On the remaining finite length contours proving that $\| V_Q V_P^{-1} - I \|_{L^\infty} = \bigo{ \eps^{1/2} \log \eps}$ implies the result for each norm and all moments $z^\ell$. For the contours in $\Gamma_E^{in}$ take $\Gamma_1^{in}$ as representative. For $z \in \DD$, $V_P = V_Q$ so there is no jump on the inside of $\DD$ we need only consider the segment within $\U_1 \backslash \DD$. Using Prop. \ref{prop: xi_1 harmonic est}, \eqref{Q jumps}, \eqref{xi_1 scalar conjugate}, and \eqref{xi_1 annular approx} we have
\begin{align*}
	&\| V_Q V_P^{-1} -I \|_{L^\infty(\Gamma_1^{in})} = \| R R_0^{-1} \delta^{-\sig} h_1^\sig V_{\xi_1}^{-1} h_1^{-\sig}\delta^\sig - I \|_{L^\infty(\Gamma_1^{in})} 
\end{align*}
Now let  $M := \max \left\{ \| h_1 \delta^{-1} \|_{L^\infty(\Gamma_1^{in})}, \| h_1^{-1} \delta \|_{L^\infty(\Gamma_1^{in})} \right\}$.Then,
\begin{align*}
	&\| V_Q V_P^{-1} -I \|_{L^\infty(\Gamma_1^{in})} \\
	\leq & M^2  \left\| \tril{ -r_0  e^{i\theta_1/ \eps} \sqrt{ \frac{\theta''_1(\xi_1)}{\eps} }^{-2i\kappa(\xi_1)} (z-\xi_1)^{-2i\kappa} } \tril{ r_0(\xi_1) e^{i\theta_1/ \eps} \zeta_1^{-2i\kappa} } - I \right\| 
	  + \bigo{e^{-c/ \eps}} \\
	 =& M^2\left\| \left[ -r_0 \sqrt{ \frac{\theta''_1(\xi_1)}{\eps} }^{-2i\kappa(\xi_1)} (z-\xi_1)^{-2i\kappa} - r_0(\xi_1)  \zeta_1^{-2i\kappa}\right] e^{i\theta_1/\eps} \right\|_{L^\infty(\Gamma_1^{in})} + \bigo{e^{-c/ \eps}} \\
	 =& \ \bigo{ \sqrt{\eps} \log \eps }.
\end{align*}
The final estimate above follows from \eqref{zeta_1 def} and the locally quadratic behavior of $\theta_1$ near $\xi_1$. 

Finally consider the disk boundaries $\partial \U_0,\ \partial \U_1$, and $\partial \DD$. The exact unknown $Q$ is analytic here so $V_Q=I$. On the positively oriented boundaries of the two fixed sized disk $V_P =  h_k^\sig \Psi_{PC}(\zeta_k(z); (-1)^k r_0(\xi_k) ) h_k^{-\sig}$, $k=0,1$. The $\eps^{-1/2}$ scaling in \eqref{zeta_0 outside} and \eqref{zeta_1 def}  and the large $\zeta$ asymptotics of $\Psi_{PC}$ then give
\begin{align*}
	\| V_P^{-1} - I \|_{L^\infty(\partial \U_k)} = \| h_k^\sig \left\{ I + \bigo{1/\zeta_k} \right\} h_k^{-\sig} - I \|_{L^\infty(\partial \U_k)}  \leq \bigo{\sqrt{\eps}}.
\end{align*}
The last contour to consider is the shrinking disk $\partial \DD$. Using \eqref{xi_1 scalar conjugate}-\eqref{A3 def} we have $ \| V_P^{-1} - I \|_{L^\infty(\partial \DD)} = \| \delta^{-\sig} (F U-I) \delta^{\sig} \|_{L^\infty(\partial \DD)}= \bigo{\sqrt{\eps} \log \eps}$ where the last equality was previously verified in \eqref{xi_1 error sample}.
\end{proof}

Lemma \ref{lem: outside error} establishes $E$ as a small-norm Riemann-Hilbert problem. As such, its solution is given by
\begin{equation*}
	E(z) = I + \frac{1}{2\pi i} \int_{\Gamma_E} \frac{ \mu(s) (V_E(s) - I) }{s-z} \dd s
\end{equation*}
where $\mu(s)$ is the unique solution of $(1 -  C_{V_E}) \mu = I$. Here $C_{V_E} f = C_- [ f (V_E - I)]$ where $C_-$ denotes the Cauchy projection operator. In particular, $E(z)$ has the large $z$ expansion  
\begin{equation}\label{E expansion outside}
	E(z) = I + \frac{E^{(1)} (x,t)}{z} + \ldots, \quad \text{where,} \quad \left| E^{(1)} (x,t)\right| = \bigo{\sqrt{\eps} \log \eps}.
\end{equation} 

\begin{rem} The small norm theory of RHPs as it pertains to this problem can be found in \cite{BC84} and \cite{BDT88}. In particular, we need $L^2$ bounds on the Cauchy operators over the contour $\Gamma_E$. For the $\eps$ independent analytic contours in $\Gamma_E$ this is a classical result. More care is needed on the $\eps$-dependent shrinking contour $\partial \DD$; the boundedness of the Cauchy operators on shrinking circles can be found in \cite{KMM03}.
\end{rem}

The proof of part $1$ of our main result follows immediately from \eqref{E expansion outside}. By inverting the series of explicit transformations $m \mapsto M \mapsto Q \mapsto E$ we arrive at a uniform asymptotic expansion of the solution $m$ of RHP \ref{rhp: square barrier}. Using \eqref{M outside}, \eqref{Q def outside}, and \eqref{outer model outside} we have 
\begin{equation*}
	\psi(x,t) =  \lim_{z \rightarrow \infty} 2iz\,  m_{12}(z;x,t) = \bigo{\sqrt{\eps} \log \eps }
\end{equation*}
for each fixed $(x,t) \in \mathcal{S}_0$.  

%%%%%%%%%%%%%%%%%%%%%%%%%%%%%%%%%%%%%%%%%%%%%%%%%%%%%
%
%                             Section 3, inside the support for small times
%
%%%%%%%%%%%%%%%%%%%%%%%%%%%%%%%%%%%%%%%%%%%%%%%%%%%%%

\section{Inverse problem inside the support, before breaking \label{sec: one band}}

\begin{figure}[htb]
\begin{center}
\includegraphics[width = .3\textwidth]{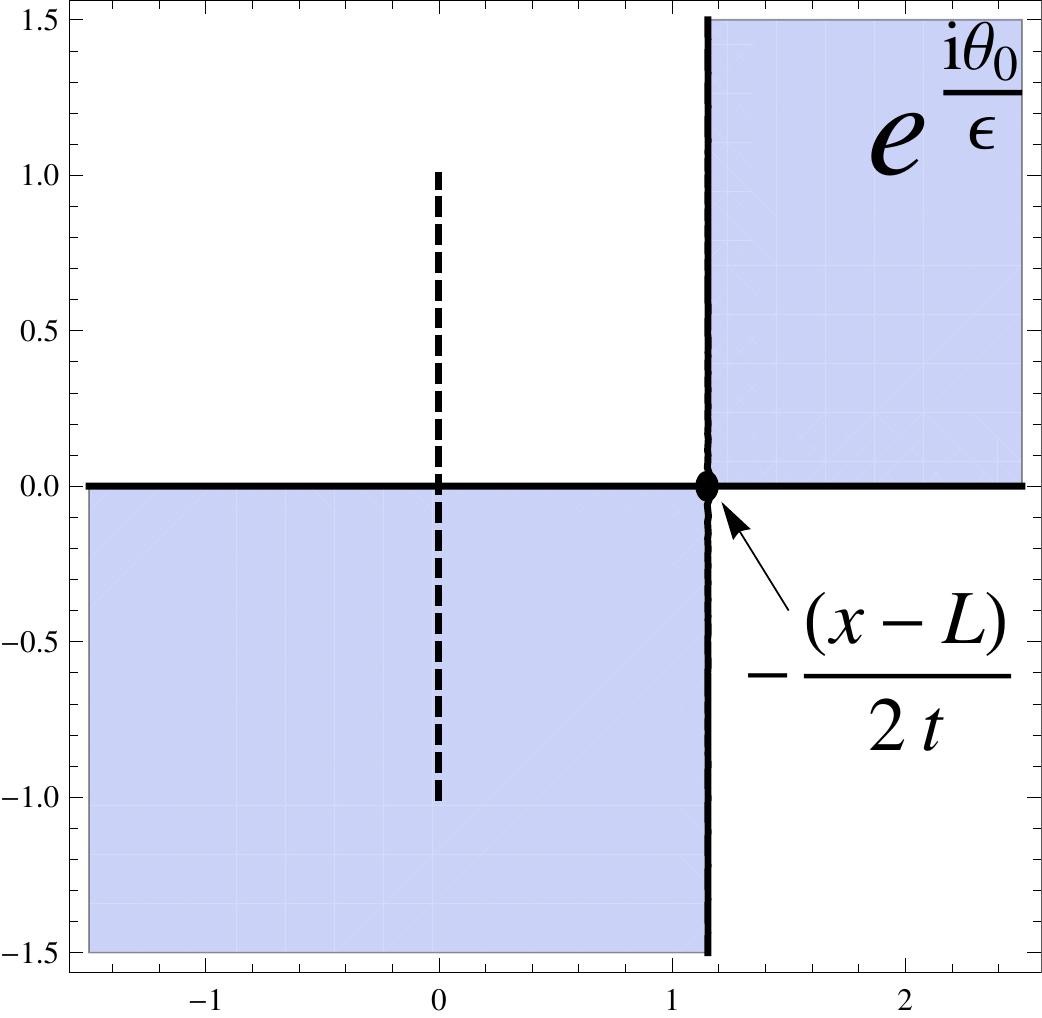}
\caption[First harmonics inside the initial support]{The regions of growth and decay of the zero harmonic $\exp\lp i \theta_0(z)/\eps \rp$ for $x \in [0, L)$ . Shaded/Unshaded regions represent regions of decay/growth. The dashed line represents the locus of pole accumulation for $m(z)$.
\label{fig: harmonics inside support}
}
\end{center}
\end{figure}
\subsection{Introducing a new pole removing factorization}
In this section we being to consider the inverse problem for those $x\in [0,L)$, that is, for $x$ inside the support of the initial data. The analysis in this regime is made more difficult by the fact that the poles of $m$ in $\C^+$ now lie in the region where $e^{i\theta_0/ \eps}$ is exponentially large, see Figure \ref{fig: harmonics inside support}. The exponential growth of $e^{i\theta_0/\eps}$ leads us to introduce a different pole removing scheme (see \eqref{M inside} below) for $x \in [0,L)$. As we will see, the new scheme introduces contours with exponentially large jumps which we control by introducing a so called $g$-function transformation which effectively ``preconditions" the problem for steepest-descent analysis. In this section we show that for each $x \in [0,L)$ there exist an order-one finite time $T_1(x)$ such that the problem is controlled by the introduction of a genus-zero $g$-function, i.e., branched on a single interval, for all $0< t< T_1(x),$ and that the corresponding solution $\psi(x,t)$ of NLS with initial data \eqref{square barrier} is asymptotically described by a slowly modulated plane wave. We consider the regime $x \in [0,L)$ and $t > T_1(x)$ in Section \ref{sec: two band}.
  
To begin our analysis, we introduce a new pole removing factorization which accounts for the exponential growth induced by the lowest order harmonic. Let $\Gamma_1$ be a semi-infinite contour in $\C^+$ leaving the real axis at a point $\xi_1$ and oriented toward infinity; denote by $\Omega_M$ the region consisting of everything to the right of $\Gamma_1$ in $\C^+$, see Figure \ref{fig: M contour inside}. For now all we demand of $\xi_1$ and $\Gamma_1$ is that the locus of pole accumulation in $\C^+$ is contained within $\Omega_M$:
\begin{equation}\label{pole removal condition}
	(0,iq] \subset \Omega_M.
\end{equation}	
In the course of our analysis additional conditions will arise which will serve to more precisely define $\xi_1$ and $\Gamma_1$.  
\begin{rem} For the inverse problem inside the support we recycle notation from the analysis outside the support. Contours and points associated with opening lenses in the RHPs are given the same names as before but new definitions, while the functions defining the various matrix factorizations, $R, \widehat{R}_0^\dagger$, etc., keep their previous definitions. 
\end{rem}   

\begin{figure}[htb]
	\begin{center}
		\includegraphics[width = .35\textwidth]{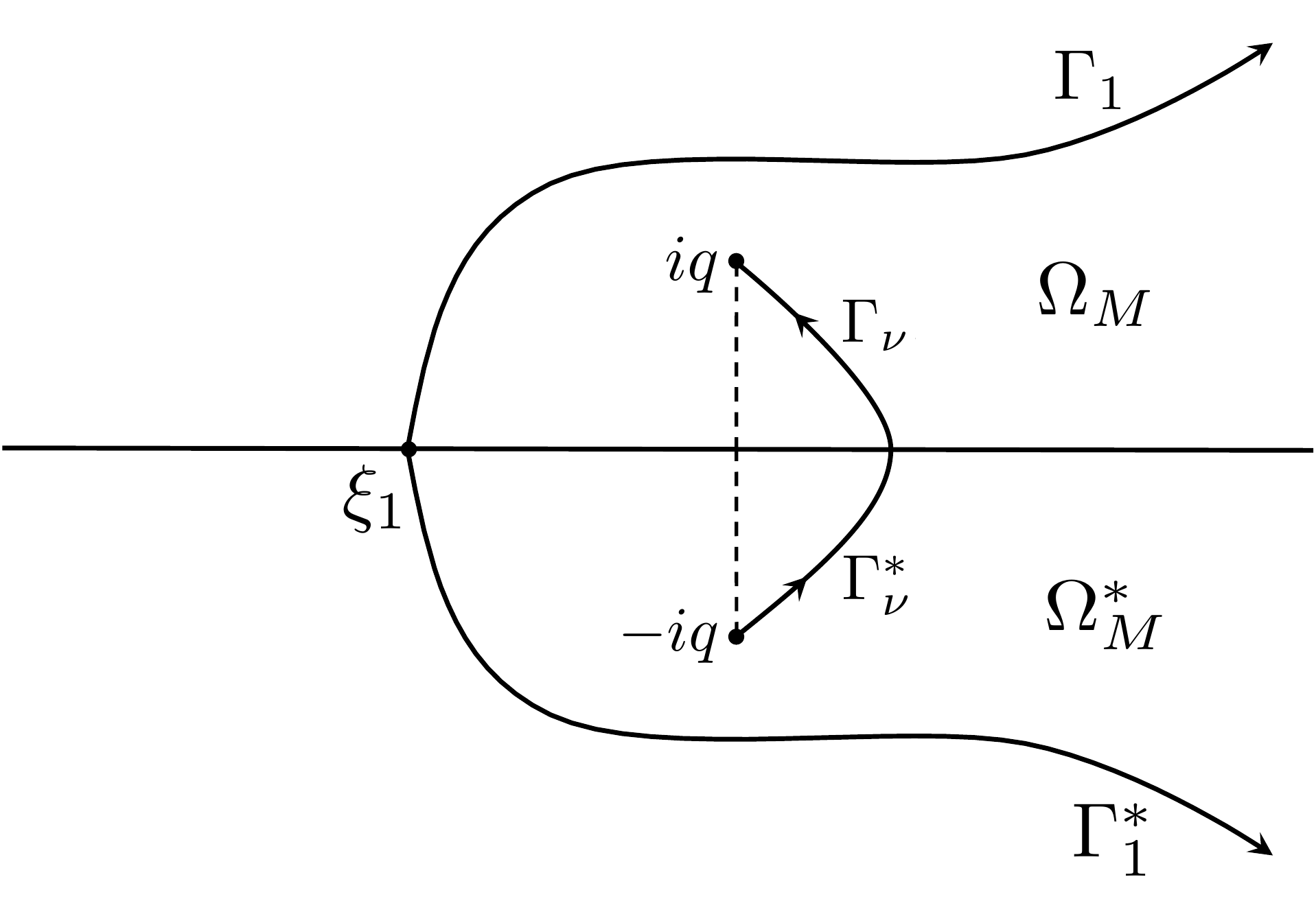}
		\caption{The contours $\Gamma_1$, $\Gamma_1^*$ and sets $\Omega_M$ and $\Omega_M^*$ defining the pole removing factorization for $x \in [0,L)$. The exact shape of the contours $\Gamma_1$ and $\Gamma_1^*$, and thus the set $\Omega_M$, $\Omega_M^*$, and the point $\xi_1$ are for now undefined. All we require is that $\Omega_M \cup \Omega_M^*$ contain the locus of pole accumulation (dashed line) and the branch $\Gamma_\nu \cup \Gamma_\nu^*$ of $\nu = \sqrt{z^2+q^2}$. 
		\label{fig: M contour inside}      
		}
	\end{center}
\end{figure}
Define
\begin{equation}\label{M inside}
	M = \begin{cases}
		m R^{-1} R_0 & z \in \Omega_M \\
		m R^\dagger R_0^{- \dagger} & z \in \Omega_M^* \\
		m & \text{elsewhere}.
		\end{cases}
\end{equation}
The matrices $R$ and $R_0$ are those defined previously, \eqref{R} and \eqref{R_0} respectively. Clearly, the new unknown $M$ has no poles; the combination $m R^{-1}$ is analytic in $\C^+$--this was the basis of our pole removal scheme for $x$ outside the support--while the extra factor $R_0$ has no poles. Additionally, it follows from \eqref{r-r_0} that the nonzero off-diagonal entry of  $R R_0^{-1}$ is at leading order proportional to $e^{i\theta_1 / \eps}$ and is thus unaffected by the exponential growth of $e^{i\theta_0 / \eps}$. 

Though the new unknown $M$ has no poles it acquires an extra jump in addition to those on the lens boundaries. The matrix $R_0$ \eqref{R_0} involved in the pole removing factorization inherits a branch cut from $\nu = \sqrt{z^2+q^2}$ which we are free to choose; we take the cut to be a finite length contour $\Gamma_\nu \cup \Gamma_\nu^*$ oriented from $-iq$ to $iq$. Note that as sets $\Gamma_\nu$ and $\Gamma_\nu^*$ are symmetric, but as contours with orientation they are antisymmetric. We defer $\Gamma_\nu$'s exact definition and assume for now only that it lies completely within $\Omega_M \cup \Omega_M^*$. The jumps acquired by $M$ along the branch cut are 
\begin{align}
	  {R_0}^{-1} {R_0}_+  &= \tril{ w e^{i\theta_0/ \eps} } \quad &z &\in \Gamma_\nu, \\
	  {R_0^\dagger}_- {R_0^{-\dagger}}_+ &=  \triu{ - w^* e^{-i\theta_0/ \eps} }, &z &\in \Gamma_\nu^*.
\end{align}
Here we have defined the scalar function  
\begin{equation}\label{w}
	w:= {r_0}_+ - {r_0}_- = \frac{2\nu_+}{iq},
\end{equation}
which we note may be analytically continued away from $\Gamma_\nu \cap \Gamma_\nu^*$. The RHP for $M$ then follows directly from \eqref{M inside}.
%%%% RHP for M inside the support
\begin{rhp}[for $M$:]\label{rhp: M inside support} 
Let $\Gamma_M = \R \cup \Gamma_1 \cup \Gamma_1^* \cup \Gamma_\nu \cup \Gamma_\nu^*$. Find a matrix $M(z)$ such that 
\begin{enumerate}
	\item $M$ is analytic in $\C \backslash \Gamma_M$.
	\item As $z \rightarrow \infty$, $M(z) = I + \bigo{1/z}$. 
	\item $M$ assumes continuous boundary values $M_+$ and $M_-$ on $\Gamma_M$ satisfying $M_+ = M_- V_M$ where,
	\begin{equation}\label{M jump inside}
	V_M = \begin{cases}
		R^\dagger R & z \in (-\infty, \xi_1) \\
		R_0^\dagger R_0 & z \in (\xi_1, \infty) \\
		\tril{ w e^{i\theta_0/\eps} }_{\Bspace} & z \in \Gamma_\nu \\
		\triu{-w^* e^{-i\theta_0/ \eps} } & z \in \Gamma_\nu^* \\
		R_0^{-1} R & z \in \Gamma_1 \\
		R^\dagger R_0^{-\dagger} & z \in \Gamma_1^* .
		\end{cases}
	\end{equation}	
\end{enumerate}
\end{rhp}
%%%%%%%%%%%

\subsection{Introducing the g-function}\label{sec: one band g}
The pole removing factorization introduces jumps on $\Gamma_\nu \cup \Gamma_\nu^*$ which are ill-suited to semi-classical asymptotics. The off-diagonal entries of the jump $V_M$ along $\Gamma_\nu$ and  $\Gamma_\nu^*$  are proportional to $e^{i\theta_0/\eps}$ and  $e^{-i\theta_0/\eps}$ respectively, which increase exponentially as $\eps \to 0$. To account for these jumps we introduce a scalar $g$-function, by making the change of variables
\begin{equation}\label{g trans}
	N =  M e^{-i g(z) \sig / \eps}.
\end{equation}  
The new unknown $N$ should solve a RHP of the type we have consider thus far, placing the following restrictions on $g$:
\begin{enumerate}[i.]
 \item $g(z) = \bigo{1/z}$ as $z \rightarrow \infty$.
 \item There exist a symmetric contour $\gamma \cup \gamma^*$ such that $g$ is analytic for $z \in \C \backslash (\gamma \cup \gamma^*)$ with continuous boundary values $g_+$ and $g_-$ on $\gamma \cup \gamma^*$.
  \item $g(z)^* =  g(z^*)$ for all $z \in \C \backslash (\gamma \cup \gamma*)$.
 
\end{enumerate}

Given such a $g$-function, the jump matrices for $N$ take the following form: 
\begin{align*}
	V_N &=  \left\{ \begin{array}{ll}
		 \lp \begin{array}{ll}
			\# e^{-i(g_+ - g_-)/\eps} & \# e^{i( g_+ + g _-)/ \eps} \\
			\# e^{-i( g_+ + g_-)/ \eps} & \# e^{i(g_+ - g_-)/\eps} 
		\end{array} \rp_{\Bspace} & z \in \gamma \cup \gamma^* \\
		\lp \begin{array}{ll} 
			\#  & \qquad\ \# e^{2i g / \eps}  \\
			\# e^{-2i g / \eps} & \qquad\ \# 
		\end{array} \rp & z \notin \gamma \cup \gamma^* 
	\end{array} \right.
\end{align*}
where the $\#$'s denote the entries of $V_M$.  The $g$-function affects the jumps by modifying the exponential phases in the jump matrices; away from the branch cuts of $g$ the transformation replaces the phases $\theta_k$ in the expansion of the reflection coefficient with   
\begin{equation}\label{modified phases}
	\varphi_k = \theta_k - 2g.
\end{equation}
Our procedure is to now construct $g$ so that the resulting RHP for $N$ is asymptotically stable, i.e. g should remove the exponentially growing components of $V_M$ so that, after additional lens openings, the new jump matrix $V_N$ implied by \eqref{g trans} has simple semi-classical limits with controllable errors. To construct $g$, we decompose $\gamma$, and by symmetry $\gamma^*$, into two interlacing sets of contours: bands and gaps. Each band is a maximally connected component of $\gamma$ satisfying \eqref{band cond}, and each gap is a maximal complimentary open interval satisfying \eqref{gap cond}; the union of all bands (gaps) in $\C^+$ we label $\gamma_b$ ($\gamma_g$) and label the conjugate bands (gaps) $\gamma_b^*$ ($\gamma_g^*$).  
\begin{subequations} \label{band/gap cond}
\begin{align}
	\label{band cond} &\textsc{Bands:} \quad \  \ \imag \lp  g_+ - g_- \rp = 0.  \\
	 \label{gap cond}  &\textsc{Gaps:} \quad    \left\{ \begin{array}{l}  
		\imag \lp \theta_{k(j)} - g_+ - g_- \rp  > 0 \text{  for } z \in \gamma, \\
		\imag \lp \theta_{k(j)} - g_+ - g_- \rp  < 0 \text{  for } z \in \gamma^*.
		\end{array} \right.
\end{align}
\end{subequations}
Note that in \eqref{gap cond} we have to select a phase $\theta_{k(j)}$ which may change from one interval to the next. 

As usual, in the cases where we can successfully construct the $g$-function, it associates with the Riemann-Hilbert problem a rational function $\mathcal{R}(x,t) = \sqrt{ \prod_{n=1}^{2N} (z-\alpha_n) }$ and the associated Riemann surface of genus $G=N-1$. The eventual outer model problem, and thus the leading order asymptotics of the solution of NLS, can be described in terms of the Riemann theta functions on the Riemann surface $\mathcal{R}(x,t)$.    

\subsection{Genus zero ansatz for $x \in [0,L)$ and small times}
Having no a priori way to determine the number of bands and gaps needed to construct the $g$-function, we proceed instead by ansatz. 
We begin by considering the simplest nontrivial case, that the $g$-function is analytic for $z \in \C \backslash(\Gamma_\nu \cup \Gamma_\nu*)$ and the entire contour consists of a single band interval. In this case, we seek $g$, as the solutions of the following scalar RHPs:
% genus zero RHPs for g and g' % 
\begin{rhp}[genus zero $g$-function and its derivative:] \label{rhp: genus zero g}
Find a scalar function $g(z)$ such that 
\begin{equation*}
\begin{array}{rl@{\quad}|@{\quad}rl}
	1. & g \text{ analytic for } z \in \C \backslash (\Gamma_\nu \cup \Gamma_\nu^*).
		& 1. &  \rho \text{ analytic for } z \in \C \backslash (\Gamma_\nu \cup \Gamma_\nu^*)_{\Bspace}. \\
	2. & g(z) = \bigo{1/z} \text{ as } z \rightarrow \infty
		& 2. & \rho(z) = \bigo{1/z^2} \text{ as } z \rightarrow \infty_{\Bspace}. \\
	3. & g_+ +  g_- = \theta_0 + \eta, \quad z \in \Gamma_\nu \cup \Gamma_\nu^*.
		& 3. &\rho_+ +  \rho_- = \theta_0', \quad z \in \Gamma_\nu \cup \Gamma_\nu^*. \\
	4. & g =\bigo{ (z \pm iq)^{1/2}}  \text{ + locally} 
		& 4. &\rho =\bigo{ (z \pm iq)^{-1/2}}  \text{ + locally} \\
		& \text{ analytic function near } \pm iq. & & \text{ analytic function near } \pm iq.
\end{array}
\end{equation*}
\end{rhp}
\noindent
Here $\eta$ is a real constant to be determined. The RHP for the density $\rho$ follows from formally differentiating the RHP for $g$. We begin with the derivative $\rho$-problem because it does not depend on the unknown constant $\eta$. By integrating the solution of the RHP for $\rho$ we define a function $g$ which a posteriori solves the RHP for $g$ and justifies the formal differentiation.

\begin{rem}
In the typical construction of a $g$-function, at each band endpoint, $\alpha_j$, the local behavior is take to be $g = \bigo{(z-\alpha_j)^{3/2}}$. However, for square barrier initial data the asymptotic density of eigenvalues in the scattering problem \eqref{asymp density} is singular: $\rho_0(z) = \bigo{ (z \pm iq)^{-1/2} }$ unlike the usual case where eigenvalues vanish as a square root. The loss of regularity in the density motivates the $\bigo{z \pm iq)^{1/2}}$ behavior at the end points.   
\end{rem}

\begin{rem}
The single band ansatz is motivated by the observation that the initial data $\psi_0(x)$ is locally analytic for $x\in [0,L)$. As such the genus zero Whitham equations \eqref{modulation equations} are locally well-posed and so its reasonable to assume that for sufficiently small times the NLS solution, $\psi(x,t)$, is well approximated inside the support by a slowly modulating plane wave. In principle we could take the band to be a subset of $\Gamma_\nu \cup \Gamma_\nu^*$ with endpoints $\alpha(x,t)$ and $\alpha^*(x,t)$. However, in this case one can show that the resulting gap condition $\imag ( \theta_0 - g_+ - g_- ) > 0$ for $z\in \Gamma_\nu \backslash \gamma_b$ cannot be satisfied which forces $\alpha(x,t) = iq$.
\end{rem}

RHP \ref{rhp: genus zero g} for $\rho$ is easily solved using the Plamelj formula:
\begin{equation}\label{one band g'}
	\begin{split}
	\rho(z) &= \frac{1}{2\pi i \nu(z) } \int_{\Gamma_\nu \cup \Gamma_\nu^*} \theta_0'(\lambda) \nu_+(\lambda) \frac{ \dd \lambda}{\lambda - z} 
		= \frac{1}{2} \theta_0'(z) - \frac{2t z^2 + (x-L)z + t q^2 }{\nu(z)}  \\
		&= \deriv{}{z} \left\{ \frac{1}{2}\theta_0(z) - \nu(z) \left[ t z + (x-L)  \right] \right\}.
	\end{split}
\end{equation}  
Any choice of antiderivative will satisfy conditions $1, 3,$ and $4$ of RHP \ref{rhp: genus zero g} for the $g$-function; in order for $g$ to tend to zero at infinity, we define 
\begin{equation}\label{one band g}
	\begin{split}
	g(z) := \int_\infty^z \rho(\lambda) \dd \lambda  =  \frac{1}{2}\theta_0(z) - \nu(z) \left[ t z + (x-L) \right] + \frac{1}{2}t q^2,
	\end{split}
\end{equation}
where to be concrete the path of integration is taken such that it avoids the contour $\Gamma_\nu \cup \Gamma_\nu^*$ on which $\rho$ is branched. Following \eqref{modified phases}, $g$ has the effect of replacing the unmodified phases $\theta_0$ and $\theta_1$ with 
\begin{equation}\label{one cut phases}
	\varphi_0 = 2 \nu \left[  t z + (x-L) \right] - t q^2
	\quad \text{and} \quad
	\varphi_1 = 2 \nu \left[  t z + (x+L) \right] - t q^2, 
\end{equation}	
 
\subsubsection{Satisfying the band and gap conditions: contour selection}
The construction of our $g$-function is not complete; we have not yet defined the contour $\Gamma_{\nu} \cup \Gamma_{\nu}^*$ on which it is branched. This contour is selected by showing that the appropriate band and gap conditions can be satisfied. For the single band ansatz we have two such conditions:
\begin{subequations}\label{one cut band/gap cond}
\begin{align}
	\label{one cut band cond} 
		\text{\textsc{Band:}} \quad &\imag ( g_+ - g_- ) = \imag( \varphi_{0-}  ) = 0, \quad z \in \Gamma_\nu \\
	\label{one cut gap cond}
		\text{\textsc{Gap:}}  \quad  &\imag ( \theta_1 - g_+ - g_- ) = \imag (\varphi_1) >0, \quad z \in \Gamma_1
\end{align}
 \end{subequations}
We list only the conditions for $z \in \C^+$ as the reflection symmetry implies the correct condition in $\C^-$. The inequality in \eqref{one cut gap cond} is enforced so that the jumps introduced by \eqref{M inside} along $\Gamma_1$ and its conjugate are near identity away from the real axis. It is not a typical gap condition since $\Gamma_1$ is not the complement of a band, but the inequality is of the gap form and, as usual for gap inequalities, its failure can be a mechanism for the introduction of a new band in the $g$-function. 

The band and gap conditions in \eqref{one cut band/gap cond} amount to understanding the topology of the zero level curves of $\imag \varphi_k,\ k=0,1$, and the corresponding open regions of growth and decay of the associated exponentials $e^{i\varphi_k/ \eps}$. Note that the zero level sets of $\imag \varphi_k$ are independent of how we choose the branch cut of $\nu$, so using them to determine the branch is not circular. The following lemma characterizes the time evolving topology of the zero level sets of each modified phase. 
%%% Zero level topology Lemma 
\begin{lem}\label{lem: one band zero level}
Let $L(t;b)$ denote the zero level curve of the function 
\begin{equation*}
	\imag \varphi_b := \imag \lp 2\nu \left[ t z + b \right] -t q^2 \rp
\end{equation*}
for $b \in \R \backslash \{ 0 \}$ and $\nu$ defined by \eqref{nu}. Additionally, define the breaking time
\begin{equation*}
	T_c(b) = \frac{|b|}{2\sqrt{2} q}.
\end{equation*}
Then the zero level $L(t;b)$ evolves as follows: 
\begin{enumerate}[i.]
	\item For $t=0$, $L(0;b) = \R \cup [-iq,\, iq]$.
	\item For $0 < t \leq T_c (b)$, $L(t;b)$ consists of the real axis and two simple non-intersecting arcs: a simple finite arc connecting $iq$ to $-iq$, and an infinite arc which asymptotically approaches the line $\re z = -b/2t$ for $z$ large. The finite and infinite arcs lie completely in the half-plane $ \re  bz  < 0 $ and cross the real axis at points $z_0$ and $z_1$ respectively such that $|z_0|  < |z_1|$.
	\item For $t  > T_c(b)$, $L(t;b)$ consist of the real axis and two semi-infinite contours each connecting one of the branch points $\pm iq$ to complex infinity in the same half-plane. In particular, $L(t;b)$ does not contain a finite path connecting $iq$ to $-iq$     
\end{enumerate}
\end{lem}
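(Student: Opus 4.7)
The plan is to identify the critical points of the holomorphic function $\varphi_b(z) = 2\nu(tz+b) - tq^2$ off its branch cut, then combine that data with the local behavior of $\varphi_b$ at the branch points $\pm iq$ and at infinity, and finally use that $\imag \varphi_b$ is harmonic off the cut to pin down the global topology of the level set via a maximum-principle argument.

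Differentiating gives $\varphi_b'(z) = 2(2tz^2 + bz + tq^2)/\nu(z)$, so the finite critical points solve $2tz^2 + bz + tq^2 = 0$, whose discriminant is $\Delta(t) = b^2 - 8t^2 q^2$. The asserted time $T_c(b) = |b|/(2\sqrt{2}q)$ is precisely the unique positive root of $\Delta$; for $0 < t < T_c$ the critical points are two distinct reals $z_0, z_1$ of common sign $-\sgn(b)$; they coalesce at $z_c = -\sgn(b)\,\sqrt{2}\,q/2$ at $t = T_c$ and then split into a complex-conjugate pair for $t > T_c$. Vieta's formulas $z_0 z_1 = q^2/2$ and $z_0 + z_1 = -b/(2t)$ immediately give the sign claim and $|z_0| < \sqrt{q^2/2} < |z_1|$. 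The expansion $\varphi_b(z) = 2tz^2 + 2bz + \bigo{1}$ as $z \to \infty$ shows the only asymptotic directions for $L(t;b)$ are $\{y=0\}$ and the vertical line $\{x = -b/(2t)\}$, while the local form $\varphi_b(z) = c_\pm (z \mp iq)^{1/2} + \ldots$ with $c_\pm \neq 0$ forces exactly two level arcs to emanate from each of $\pm iq$.

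Part (i) is a direct computation: at $t=0$, $\imag \varphi_b = 2b\,\imag \nu$, and writing $\nu = u + iv$ the constraints $u^2 - v^2 = x^2 - y^2 + q^2$, $uv = xy$ force $v = 0$ precisely on $\R \cup [-iq, iq]$. For part (ii), both real critical points lie on $L(t;b)$ (since $\varphi_b$ is real on $\R$) and are nondegenerate saddles of the harmonic function $\imag \varphi_b$, at which exactly two level arcs cross transversally, one along $\R$ and one transverse to it. Because $\imag \varphi_b$ is nonconstant and harmonic off the branch cut, no closed bounded component of $\{\imag \varphi_b = 0\}$ can lie in the domain of harmonicity (otherwise $\imag \varphi_b$ would vanish on the boundary of the enclosed region and hence identically there), so every arc of the level set must terminate at a branch point, at a real crossing, or at infinity. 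A short calculation shows $\imag \varphi_b > 0$ on the positive imaginary axis off the cut for $b > 0$, ruling out arcs entering $\{\re z > 0\}$ in $\C^+$; symmetric reasoning handles $b < 0$. Matching the remaining arcs in $\C^+$ to the allowed endpoints — two emerging from each of $\pm iq$, two real crossings at $z_0$ and $z_1$, and one asymptotic ray up the vertical line $\{x = -b/(2t)\}$ — forces exactly the claimed configuration: a finite arc through the inner critical point joining $iq$ to $-iq$, and an infinite arc through the outer critical point.

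For part (iii), at $t = T_c$ the coalescence of the real critical points gives $\varphi_b(z) \sim \varphi_b(z_c) + C(z - z_c)^3$ locally, producing three zero-level rays transverse to $\R$ which reconnect the finite and infinite arcs of (ii) at $z_c$. For $t > T_c$, a short calculation gives $\imag \varphi_b(z_c^\pm) \neq 0$, so the now-complex critical points are off the level set and $L(t;b)$ is smooth off $\R$; in particular no new real crossings can appear. Continuity in $t$ through $T_c$, together with the same endpoint counts at $\pm iq$ and at infinity, leaves a unique compatible topology: each branch point is joined to infinity by a single simple semi-infinite arc in its own half-plane. The main obstacle in this argument is the global topological step — excluding extraneous closed components of $L(t;b)$ — which is what the maximum-principle observation in part (ii) is designed to handle cleanly, together with carefully tracking the saddle-point bifurcation at $t = T_c$ to justify the reconnection underlying part (iii).
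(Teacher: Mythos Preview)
Your approach is precisely the paper's: locate the zeros of $\varphi_b'$, read off the local structure at $\pm iq$ and at infinity, exclude closed loops via the maximum principle, and then match termini. Two of your local counts are off, however. First, the square-root behavior $\varphi_b(z)\sim -tq^2 + c_\pm(z\mp iq)^{1/2}$ forces exactly \emph{one} arc of $\{\imag\varphi_b=0\}$ to leave each of $\pm iq$, not two: the range of the square root is a half-plane, and the line $\{w:c_\pm w\in\R\}$ meets an open half-plane in a single ray. Second, at $t=T_c$ the cubic local form $\varphi_b(z)\sim\varphi_b(z_c)+C(z-z_c)^3$ with $C\in\R$ produces six rays through $z_c$, two of which lie along $\R$; that leaves \emph{two} rays in $\C^+$, not three. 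With the corrected counts your matching in $\C^+$ for $0<t<T_c(b)$ reads: one arc from $iq$, one transverse crossing at each of $z_0,z_1$, and one direction to $\infty$ --- four ends, paired as $iq\leftrightarrow z_0$ and $z_1\leftrightarrow\infty$ once one checks that no branch crosses the imaginary axis (the paper does this by computing $\imag\varphi_b(iy)$ explicitly for $0<y\le q$ and for $y>q$).

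For part (iii) you do not need the calculation $\imag\varphi_b(z_c^\pm)\neq 0$, and it is not obviously short. The paper's route is cleaner and purely topological: for $t>T_c(b)$ there are no real critical points, hence no branch of $L(t;b)$ in $\C^+$ terminates on $\R$, leaving only $iq$ and $\infty$ as available termini. Were the level set to pass through the complex saddle $z_c^+$, the four arcs emanating there could not all be paired to those two termini without producing a closed loop in the domain of harmonicity --- already excluded. Hence $z_c^+\notin L(t;b)$ and the single arc $iq\to\infty$ is forced, without ever evaluating $\varphi_b$ at $z_c^\pm$.
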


\begin{proof}
For $t = 0$, $\imag \varphi_b = \imag \nu$ since $b$ is real and nonzero, the topology of $L(0;b)$ follows immediately. For $t > 0$ the topology is determined from the following observations:
\begin{itemize}
	\item The conjugation symmetry $\varphi_b^*(z) = \varphi_b(z^*)$ guarantees that $\R \subset L(t;b)$ and allows us to consider only $z \in \C^+$. Any branch of $L(t;b)$ in $\C^+$ can terminate only at a critical point of $\varphi_b$ on the real line, at the branch point $iq$, or at $\infty$. 
	\item The square root singularity of $\varphi_b$ at $iq$ guarantees that exactly one branch of $L(t;b)$ terminates at $iq$.
	\item  The large $z$ expansion $\varphi_b = t z^2 + bz  + t q^2/2 +\bigo{1/z}$ shows that $L(t;b)$ has exactly one branch terminating at $\infty$ in $\C^+$ and the branch asymptotically approaches the line $\re z = -b/2t$ for large $z$.
	\item For $t>0$ no branch of $L(t;b)$ can cross the imaginary axis, since 
	\begin{equation*}
	\left| \imag \lp \varphi_b(iy) \rp \right| = \begin{cases}
		ty\sqrt{q^2-y^2} & 0 < y \leq q \\
		b\sqrt{y^2-q^2} & y > q.
	\end{cases} 		 
	\end{equation*}
	\item  $L(t;b)$ cannot contain a closed bounded loop if $\imag \varphi_b$ is harmonic inside the loop, since the maximum modulus principal would imply that $\varphi_b$ is then identically zero.
\end{itemize}
To complete the proof we need to determine the number of real critical points of $\varphi_b$. A simple calculation shows that $\varphi_b$ has exactly two critical points given by 
\begin{align*}
	z_0 = -\frac{b}{4t} \left[ 1 - \sqrt{1 - \frac{8t^2q^2}{b^2}} \right] \qquad
	z_1 = -\frac{b}{4t} \left[ 1 + \sqrt{1 - \frac{8t^2q^2}{b^2}} \right].
\end{align*}
For $0< t < T_c(b)$ these are two real points satisfying the given properties in part \emph{ii.} above. As $t$ increases and surpasses the breaking time $T_c(b)$ the two real critical points $z_0$ and $z_1$ coalesce and spit into complex conjugates. Thus, for $t > T_c(b)$ the branches of $L(t;b)$ in $\C^+$ do not have a real terminus. Moreover, for $t> T_c(b)$ the branches of $L(t;b)$ no longer pass through the critical points since this would necessarily imply the existence of a closed loop in $L(t;b)$ violating the last bulleted conditions above. In each case we now connect the termini of the branches of $L(t;b)$ in $\C^+$ without violating the above observations. The results for $t > 0$ follow immediately.
\end{proof} 

Clearly, the zero level sets of $\imag \varphi_0$ and $\imag \varphi_1$ are those described above with $b= x-L$ and $b=x+L$ respectively. To satisfy the band condition \eqref{one cut band cond} we define $\Gamma_{\nu} \cup \Gamma_{\nu}^*$ to be the branch of the zero level of $\imag \varphi_0$ connecting $-iq$ to $iq$. Lemma \ref{lem: one band zero level} guarantees this contour exists for $(x,t) \in \mathcal{S}_1$, where
\begin{equation}\label{S1}
	\begin{split}
		\mathcal{S}_1 := \left\{ (x,t)\, : \, 0 \leq \right. & x \left. <L,  \leq t < T_1(x) \right\}, \\
		T_1(x) &:= \frac{L - |x| }{ 2\sqrt{2} q } .
	\end{split}
\end{equation} 
For $(x,t) \notin \mathcal{S}_1$ the band condition \eqref{one cut band cond} cannot be satisfied. The upper limit $T_1(x)$ we call the \emph{first breaking time}, and in the remainder of this section we will always assume that $(x,t) \in \mathcal{S}_1$; we will address what happens when one moves beyond this first breaking time in Section \ref{sec: two band}. 

The last condition to be satisfied is the gap condition \eqref{one cut gap cond}. For $(x,t) \in \mathcal{S}_1$ the topology of the level curve $\imag \varphi_1=0$ falls always into case \emph{ii.} of Lemma \ref{lem: one band zero level}. The regions of growth and decay of the exponentials $e^{i\varphi_0 / \eps}$ and $e^{i \varphi_1 / \eps}$ follow from continuation of their large $z$ asymptotic behavior; plots of these regions for generic values of $(x,t) \in \mathcal{S}_1$  are given in Figure \ref{fig: one band harmonic growth/decay}. As the figure illustrates, the finite and infinite complex branches of $\imag \varphi_1=0$ do not intersect so the subset of $\C^+$ such that   $\imag \varphi_1 >0$ consist of the entire region to the right of the infinite complex branch of  the zero level set except for the closed set bounded by the finite branch of $\imag \varphi_1$ and the branch cut of $\nu$ along $\Gamma_\nu \cup \Gamma_\nu^*$. To satisfy \eqref{one cut gap cond} we define $\xi_1$ to be the real critical point of $\varphi_1$ through which the infinite branch of $\imag \varphi_1 = 0$ passes:
\begin{equation}\label{one band xi_1}
	\xi_1 = -\frac{x+L}{4t} \left[ 1 + \sqrt{1 - \frac{8t^2q^2}{(x+L)^2} }\, \right]
 \end{equation}
and take $\Gamma_1$ to be any contour in $\C^+$ terminating at $\xi_1$ laying completely in the set $\imag \varphi_1 > 0$, see Figure \ref{fig: one band Q contours}. 

\subsection{Removing the remaining oscialltions: $N \mapsto Q$}
%%%
\begin{figure}[htb]
\begin{center}
	\includegraphics[width = .3\textwidth]{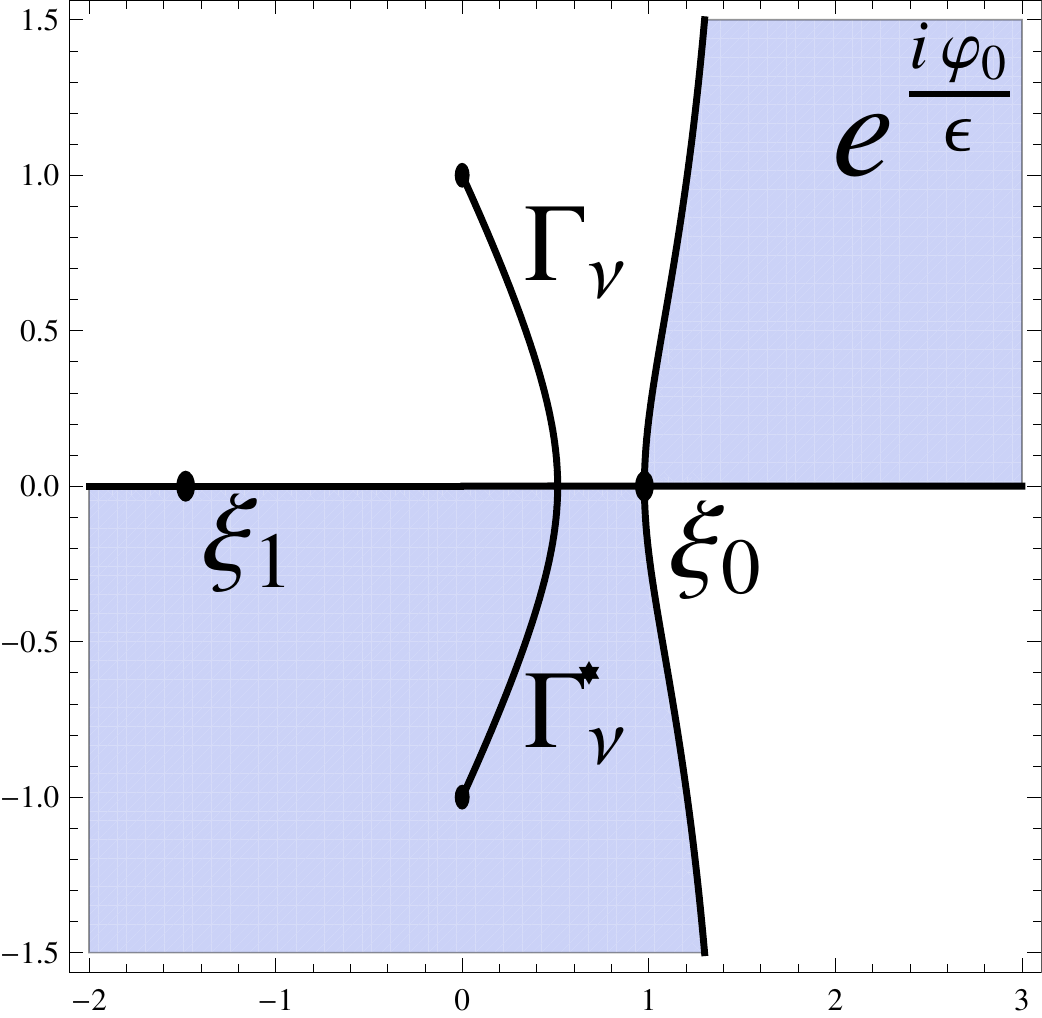}
	\qquad \qquad
	\includegraphics[width = .3\textwidth]{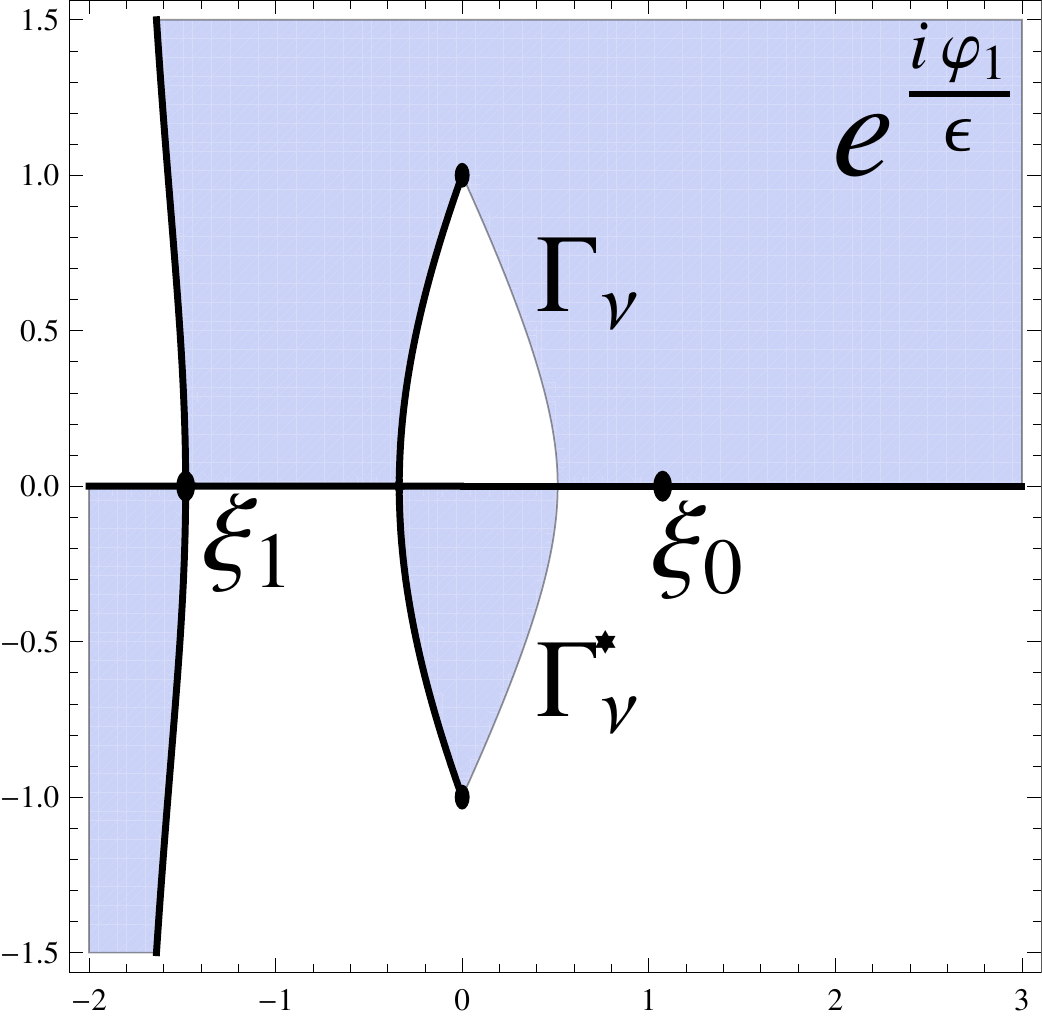}
	\caption{The modified harmonics $e^{i\varphi_k /\eps},\ k=0,1,$ each separate the plane into regions of growth (white) and decay (grey). The level set $\imag \varphi_0 = 0$ (bold lines) separating the regions of growth and decay of $e^{i \varphi_0/\eps}$ consists of three elements: the real line, a finite arc connecting $\pm iq$ and an unbounded arc which approaches the line $\imag z = -\frac{(x-L)}{2t}$ for large $z$. We take the branch of $\nu =\sqrt{z^2+q^2}$ along the finite branch of $\imag \varphi_0 = 0$ which we label $\Gamma_\nu$ in $\C^+$ and $\Gamma_\nu^*$ in $\C^-$ respectively. The level set $\imag \varphi_1=0$ consists of similar arcs with the unbounded arc now approaching $\imag z = -\frac{(x+L)}{2t}$ for large $z$. These three arcs and the branch cut $\Gamma_\nu \cup \Gamma_\nu^*$ separate the regions of growth and decay of $e^{i \varphi_1/\eps}$. Finally, the points $\xi_k,\, k=1,2$ defined by \eqref{one band xi_0} and \eqref{one band xi_1} are the unique points at which the unbounded arcs of $\imag\varphi_k =0$ cross the real line and from which contours are opened.	
	\label{fig: one band harmonic growth/decay}
	}
\end{center}
\end{figure}
Now that the construction of the one band $g$-function is complete the RHP for $N$ follows immediately from RHP \ref{rhp: M inside support} and \eqref{g trans}. 
\begin{rhp}[for $N(z)$:]\label{rhp: one band N} Find a $2 \times 2$ matrix $N$ such that
\begin{enumerate}[1.]
	\item $N$ is analytic in $\C \backslash \Gamma_M$. 
	\item As $z\rightarrow \infty,\ N(z)=I + \bigo{1/z}$. 
	\item $N$ assumes continuous boundary values $N_+$ and $N_-$ on $\Gamma_M$ satisfying $N_+ = N_-  V_N$ where,
	\begin{equation}\label{one band N jumps}
		V_N = \begin{cases}
			e^{ig \ad \sig/ \eps} \lp R^\dagger R \rp & z \in (-\infty, \xi_1) \\
			e^{ig \ad \sig/ \eps}( R_0^\dagger R_0) & z \in (\xi_1, \infty) \\
			\begin{pmatrix}
				{e^{-i(\varphi_{0-}+ t q^2)/\eps} }	&	{0} \\
				{we^{-itq^2/ \eps} }	&	{e^{-i(\varphi_{0+} + tq^2) / \eps} }
			\end{pmatrix}_{\Bspace} & z \in \Gamma_\nu \\
			\begin{pmatrix}
				{e^{-i(\varphi_{0-}+ t q^2)/\eps} }	&	{-w^*e^{itq^2/ \eps} } \\
				{0}	&	 {e^{-i(\varphi_{0+} + tq^2) / \eps} }
			\end{pmatrix}_{\Bspace} & z\in \Gamma_\nu^* \\
			e^{ig \ad \sig/ \eps} \lp R_0^{-1} R \rp & z \in \Gamma_1\\
			e^{ig \ad \sig/ \eps} \lp R^\dagger R_0^{-\dagger} \rp & z \in \Gamma_1^*
		\end{cases}
	\end{equation}
\end{enumerate}
\end{rhp}
The $g$-function transformation has removed the exponentially large jumps along $\Gamma_\nu \cup \Gamma_\nu^*$. However, the RHP for $N$ still has rapidly oscillatory jumps along the real axis which must be factored and moved onto appropriate complex contours. To open steepest descent lenses off the real axis we need to understand the regions of growth and decay of the, now modified,  first harmonics $e^{i\varphi_0 / \eps}$ and $e^{i\varphi_1/ \eps}$. The topology of these regions follows directly from Lemma \ref{lem: one band zero level}. The level sets $\imag \varphi_k=0,\  k=0.1$ each have one infinite, asymptotically vertical, branch which cross the real axis at points which we label $\xi_0$ and $\xi_1$ respectively; $\xi_1$ was given above by \eqref{one band xi_1}, following the proof of Lemma \ref{lem: one band zero level} a similar formula can be given for $\xi_0$:
\begin{equation}\label{one band xi_0} 
	\xi_0 = -\frac{x-L}{4t} \left[ 1 + \sqrt{1 - \frac{8t^2q^2}{(x-L)^2} } \right].
\end{equation}
These are stationary phase points of the associated harmonics $e^{i\varphi_k/ \eps}$. We denote them in this way in analogy with the stationary phase points of the same name for the unmodified phases $\theta_k,\ k=0,1$ which were described in Section \ref{sec: harmonic expansion} for $x$ outside the support of the initial data. Just as before, these points separate connected unbounded components of the complex plane in which the associated harmonic is everywhere either decaying or growing. These regions are plotted numerically in Figure \ref{fig: one band harmonic growth/decay}.

The oscillatory jumps on the real axis are essentially identical to those in RHP \ref{rhp: M outside} up to the conjugation by the $g$-function factor. To deform the oscillations in the current problem onto contours on which they are exponentially small we use same factorizations as before (cf. \eqref{left factorization}-\eqref{intermediate factorization}). Without repeating those details we make the following transformation:
\begin{equation}\label{one band Q}
\begin{split}
	Q(z) &= N e^{ig \ad \sig/ \eps} \, L_Q \\
	L_Q &= \begin{cases} 
		R_0^{-1} & z \in \Omega_0 \\
		\widehat{R}^{-\dagger} \lp a / a_0 \rp^{\sig} & z \in \Omega_2 \\
		\widehat{R}_0^{-\dagger} & z \in \Omega_3 \\
		\widehat{R}_0 & z \in \Omega_3^* \\
		\widehat{R}  \lp a^* / a^*_0 \rp^{-\sig}  & z \in \Omega_2^* \\
		R_0^\dagger & z \in \Omega_0^* \\
		I & \text{elsewhere}.
	\end{cases}	
\end{split}
\end{equation}
The regions $\Omega_k$ and the contours $\Gamma_k$ bounding them are defined as follows: we take $\Gamma_0$ to be a contour in $\C^+$ lying completely in the region $\imag \varphi_0 > 0$ and meeting the real axis at $\xi_0$; $\Gamma_2$ lies completely in the region $\imag \varphi_1 < 0$ and meets the real axis at $\xi_1$; and finally $\Gamma_3$ connects the stationary phase points $\xi_1$ and $\xi_0$ and encloses $\Gamma_\nu$ always staying in the region $\imag  \varphi_0 < 0$. The corresponding sets $\Omega_k,\ k=1,2,3$ are those enclosed by $\Gamma_k$ and the real axis, see Figure \ref{fig: one band Q contours}. 
%%%% Contour Figure for Q %%%%%%
\begin{figure}[htb]
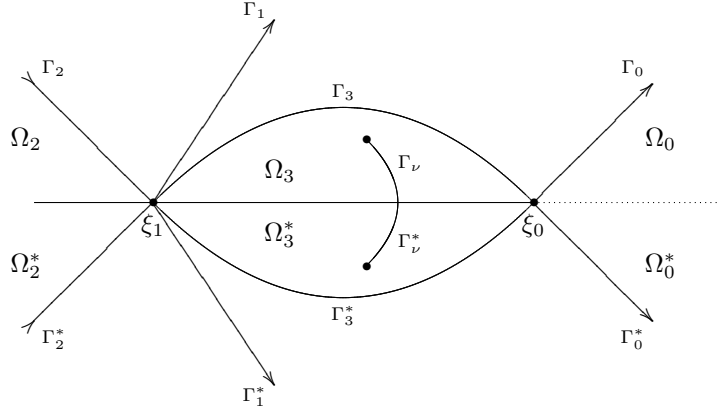

\centerline{
\begin{xymatrix}@!0{
& & & & \ar@{<-}[3,-2]+0_<{\Gamma_1}& & & & & & & \\
\ar@{>-}[2,2]+0^<{\Gamma_2} & & & & &  & & & & &\ar@{<-}[2,-2]+0_<{\Gamma_0} & \\
\Omega_2 & & & & \ar@{}[1,0]|{\displaystyle\Omega_3} & \ar@{{}{-}{*}}@/^1pc/@<2ex>[2,0]+0^(.25){\Gamma_\nu}  & & & &  &  \Omega_0 \\
%middle line
\ar@{-}[0,8]+0  \ar@{{}{}{*}}[0,2]+0 \ar@{}@<-2ex> [0,4]|{\displaystyle \xi_1} & & \ar@{-}@/^3pc/[0,6]+0^(.5){\Gamma_3}     \ar@{-}@/_3pc/[0,6]+0_(.5){\Gamma_3^*} & & & & & & \ar@{-}@/^3pc/[0,-6]+0 \ar@{-}@/_3pc/[0,-6]+0 & & & \ar@{{}{.}{*}}[0,-3]+0 \ar@{}@<2ex>[0,-6]|{\displaystyle \xi_0} \\
%middle line
\Omega_2^* & & & & \ar@{}[-1,0]|{\displaystyle\Omega_3^*} &  \ar@{{}{-}{*}}@/_1pc/@<-2ex>[-2,0]+0_(.25){\Gamma^*_\nu}  & & & & & \Omega_0^* \\
\ar@{>-}[-2,2]+0_<{\Gamma_2^*} & & & & & & & & & &\ar@{<-}[-2,-2]+0^<{\Gamma_0^*} &\\
& & & &\ar@{<-}[-3,-2]+0^<{\Gamma_1^*}& & & & & & &\\
}
\end{xymatrix} } 
\caption[Contours diagram for $Q(z)$]{ Schematic diagram of the contours $\Gamma_Q$ and regions $\Omega_k$ used to define the transformation $N \mapsto Q$. 
\label{fig: one band Q contours}
}
\end{figure}

To calculate the resulting jumps of $Q$ we need the factorizations \eqref{left factorization}, \eqref{between factorization}, \eqref{intermediate factorization}, and the following factorization. For $z$ along $\Gamma_\nu$, the jump of $V_N$ can be factored as follows:
\begin{multline*}
	\begin{pmatrix}  
		{e^{-\frac{i}{\eps}(\varphi_{0-}+ t q^2)}  }	&	{0} \\
		{we^{-\frac{i}{\eps} t q^2} }	&	{e^{-\frac{i}{\eps}(\varphi_{0+} + tq^2)}  }
	\end{pmatrix}	 \\  
	= \triu{w^{-1} e^{-\frac{i}{\eps} \varphi_{0-}}  } \offdiag{-w^{-1} e^{\frac{i}{\eps} t q^2} }{w e^{ -\frac{i}{\eps} t q^2} } 
	\triu{w^{-1} e^{-\frac{i}{\eps} \varphi_{0+}} }.
\end{multline*}	
Next, recalling the definition of $r_0(z)$, \eqref{r_0}, and $w$, \eqref{w}, we observe the following identity
\begin{equation}\label{R hat sing}
	\lp \frac{r_0^*}{1+r_0 r_0^*} \rp_+ = -\lp \frac{r_0^*}{1+r_0 r_0^*} \rp_-    
	= \frac{iq}{2\nu_+} = w^{-1}, 
\end{equation}	
which allows one to express the above factorization in terms of the boundary values of $\widehat{R}^\dagger$:
\begin{equation}\label{band factorization}
	\begin{split}
	   \begin{pmatrix}
		{e^{-\frac{i}{\eps}(\varphi_{0-}+ t q^2)}  }	&	{0} \\
		{we^{-\frac{i}{\eps} t q^2} }	&	{e^{-\frac{i}{\eps}(\varphi_{0+} + tq^2)}  }
	\end{pmatrix} & \\	 
	=  \lp e^{\frac{i}{\eps} g \ad \sig} \widehat{R}_0^{- \dagger} \rp_-  & \offdiag{-w^{-1} e^{\frac{i}{\eps} t q^2} }{w e^{ -\frac{i}{\eps} t q^2} } 
	\lp e^{\frac{i}{\eps} g \ad \sig} \widehat{R}_0^\dagger \rp_+.
\end{split}
\end{equation} 	
This allows us to open a single lens $\Gamma_3$ enclosing the band $\Gamma_\nu$ of the $g$-function instead of opening separate lenses off the real axis and again off $\Gamma_\nu$ which significantly simplifies the construction of the ensuing global parametrix. Let $\Gamma_Q^0 = \bigcup_{k=0}^3 \lp \Gamma_k \cup \gamma_k^* \rp$ and $\Gamma_Q = \Gamma_Q^0 \cup \Gamma_\nu \cup \Gamma_\nu^* \cup (-\infty, \xi_0]$. Using this factorization we arrive at: 
%%%%% RHP for Q %%%%%%%%%%
\begin{rhp}[ for $Q(z)$:]\label{rhp: one band Q}
Find a $2 \times 2$ matrix valued function $Q$ of $z$ such that:
\begin{enumerate}[1.]
	\item $Q$ is analytic for $z \in \C \backslash\Gamma_Q$. 
	\item $Q(z) = I + \bigo{1/z}$ as $z \rightarrow \infty$.
	\item $Q$ takes continuous boundary values $Q_+$ and $Q_-$ on $\Gamma_Q$ satisfying the jump relation $Q_+ = Q_-  V_Q$ where, 
	\begin{equation}\label{one band Q jumps}
		V_Q = \begin{cases}
		 	e^{i g \sig /\eps}\, V_Q^{(0)}\, e^{-i g \sig / \eps} & z \in \Gamma_Q^0 \\
			 \offdiag{-w^{-1} e^{\frac{i}{\eps} t q^2} }{w e^{ -\frac{i}{\eps} t q^2} }_{\Bspace} & z \in \Gamma_\nu \\
			 \offdiag{-w^* e^{\frac{i}{\eps} t q^2} }{{w^*}^{-1} e^{ -\frac{i}{\eps} t q^2} } & z \in \Gamma_\nu^* \\
			 (1+ | r_0 |^2)^{2\sig} & z \in (-\infty, \xi_1) \\
			 (1+| r_0 |^2)^\sig & z \in (\xi_1, \xi_0) 
		\end{cases}, \\
	\end{equation}	
	and
	\begin{equation}\label{one band V_Q^0 jumps} 
		V_Q^{(0)} = \begin{cases}
			R_0 & z\in \Gamma_0 \\
			R_0^{-1} R & z\in \Gamma_1 \\
			(a/a_0)^{-\sig}\widehat{R}^\dagger &  z\in \Gamma_2 \\
			\widehat{R}_0^\dagger & z \in \Gamma_3 \\
		\end{cases}
		\qquad
		V_Q^{(0)} = \begin{cases}
			R_0^\dagger & z  \in \Gamma_0^* \\
			R^\dagger R_0^{-\dagger} & z \in \Gamma_1^* \\
			\widehat{R}(a^*/a_0^*)^{-\sig} & z\in \Gamma_2^* \\
			\widehat{R}_0  & z \in \Gamma_3^* 
		\end{cases}
	\end{equation}
	\item $Q$ admits at worst square root singularities at $z = \pm iq$ satisfying the following bounds
	\begin{equation}\label{one band Q sing}
	\begin{split}
		Q(z) &= \bigo{ \begin{array}{cc} 1 & |z - iq|^{-1/2} \\  1 & |z - iq|^{-1/2} \end{array} }, \quad z \rightarrow iq,  \\
		Q(z) &= \bigo{ \begin{array}{cc} |z + iq|^{-1/2} & 1 \\  |z + iq|^{-1/2} & 1 \end{array} }, \quad z \rightarrow -iq.
	\end{split}
	\end{equation}
\end{enumerate}
\end{rhp}

\begin{rem}
The new singularity condition in RHP \ref{rhp: one band Q} is caused by a singularity in \eqref{one band Q} defining the transformation $N \mapsto Q$.   
In $\Omega_3$, which contains $z=iq$, $Q = N e^{i g \ad \sig / \eps} \widehat{R}_0^{-\dagger}$ and observing \eqref{R hat sing} the $(2,1)$-entry of $\widehat{R}_0^{-\dagger}$ has a square root singularity at $iq$.  We must admit the possibility that $Q$ inherits this singularity in its second column. As we encounter such transformations in the rest of this paper we will record the resulting singularity conditions without comment.
\end{rem}  

\subsection{Constructing a  global Parametrix: one band case}
The result of the many factorizations defining $Q$ is a RHP which is well conditioned for asymptotic approximation. Away from the points $\xi_0$ and $\xi_1$ where they return to the real axis the jumps \eqref{one band V_Q^0 jumps} along $\Gamma_Q^0$ are all near identity, both for large $z$ and fixed $z$ as $\eps \downarrow 0$. The remaining jumps defining $V_Q$ behave simply in the semiclassical limit. The jumps on the real axis are $\eps$-independent and the twist-like jumps on $\Gamma_\nu \cup \Gamma_\nu^*$ depend on $\eps$ through an oscillatory constant which we can deal with explicitly. We proceed, as before, to construct a global parametrix $P$ such that the ratio $QP^{-1}$ is uniformly near identity. Since the jumps along $\Gamma_Q^0$ are not uniformly close to identity near the stationary phase points $\xi_0$ and $\xi_1$, we seek our global parametrix in the form
\begin{equation}\label{one band parametrix}
	P(z) = \begin{cases}
		A_0(z) & z \in \U_0 \\
		A_1(z) & z \in \U_1 \\
		O(z) & \text{elsewhere}
	\end{cases}
\end{equation}
where each $\U_k$ is a fixed size neighborhood of $\xi_k$, $k=1,2,$ which we will determined in the construction of the approximation $A_k$. 
%% one band outer model
\subsubsection{The Outer Mode, $O(z)$.l}     
At any fixed distance from the stationary phase points the jumps defined by $V_Q^{(0)}$ in RHP \ref{rhp: one band Q} are exponentially small perturbations of identity so we ignore them. Doing so, we arrive at the following problem for the outer model. 
\begin{rhp}[ for $O(z)$:]\label{rhp: one band O}
Find a $2 \times 2$ matrix valued function $O$ satisfying the following properties:
\begin{enumerate}[1.]
	\item $O$ is an analytic function for $z \in \C \backslash \Gamma_O, \quad \Gamma_O =  \Gamma_\nu \cup \Gamma_\nu^* \cup (-\infty, \xi_0].$
	\item $O(z) = I + \bigo{1/z}$ as $z \rightarrow \infty$
	\item $O$ takes continuous boundary values $O_+$ and $O_-$ on $\Gamma_O$ away from the points $z= \pm iq, \ \xi_0$, and $\xi_1$. The boundary values satisfy the jump relation $O_+ = O_- V_O$ where
	\begin{equation}\label{O jumps}
		V_O =  \begin{cases}
		 	\offdiag{-w^{-1} e^{\frac{i}{\eps} t q^2} }{w e^{ -\frac{i}{\eps} t q^2} }_{\Bspace} & z \in \Gamma_\nu \\
			\offdiag{-w^* e^{\frac{i}{\eps} t q^2} }{{w^*}^{-1} e^{ -\frac{i}{\eps} t q^2} } & z \in \Gamma_\nu^* \\
			(1+ | r_0 |^2)^{2\sig} & z \in (-\infty, \xi_1) \\
			(1+| r_0 |^2)^\sig & z \in (\xi_1, \xi_0). 
		\end{cases}
	\end{equation}
	\item $O$ admits at worst square root singularities at $z = \pm iq$ satisfying the following bounds
	\begin{equation}\label{one band O sing}
	\begin{split}
		O(z) &= \bigo{ \begin{array}{cc} 1 & |z - iq|^{-1/2} \\  1 & |z - iq|^{-1/2} \end{array} }, \quad z \rightarrow iq,  \\
		O(z) &= \bigo{ \begin{array}{cc} |z + iq|^{-1/2} & 1 \\  |z + iq|^{-1/2} & 1 \end{array} }, \quad z \rightarrow -iq.
	\end{split}
	\end{equation}
\end{enumerate}
\end{rhp}
We construct a solution of this outer model by first introducing two scalar functions which reduces the problem to the class of constant jumps Riemann-Hilbert problems. The first of these functions: 
\begin{equation*}
	\delta(z) = \exp \lp \frac{1}{2\pi i} \int_{-\infty}^{\xi_1} \frac{\log \lp 1 + |r_0(s)|^2 \rp }{s-z} \dd s 
	+  \frac{1}{2\pi i} \int_{-\infty}^{\xi_0} \frac{\log \lp 1 + |r_0(s)|^2 \rp }{s-z} \dd s \rp,
\end{equation*}	
we have already encountered; it was introduced in RHP \ref{rhp: O outside} to remove the jumps on the real axis from the outer model problem. The second function we need is new, and is the solution of the following problem: 
\begin{rhp}[ for $s(z)$:]\label{rhp: one band s} 
	Find a scalar function $s$ such that 
		\begin{itemize}
			\item $s$ is analytic in $\C \backslash \lp \Gamma_\nu \cup \Gamma_\nu^* \rp$.
			\item $s(z) = s_\infty + \bigo{1/z}$ as $z \rightarrow \infty$, where $s_\infty$ is a constant, bounded for all $\eps$.				\item For $z \in \Gamma_\nu \cup \Gamma_\nu^*$ the boundary values $s_+$ and $s_-$ satisfy the relationship
				\begin{equation}\label{one band s jumps}
					s_+(z) s_-(z) = \begin{cases}
						w(z) \delta^{-2}(z) e^{-i t q^2/ \eps} & z  \in \Gamma_\nu \\
						w^*(z)^{-1} \delta^{-2}(z) e^{-i t q^2 / \eps} & z  \in \Gamma_\nu^*.
					\end{cases}
				\end{equation}
			\item At the endpoints of $\Gamma_\nu \cup \Gamma_\nu^*$:
				\begin{enumerate}[i.]
					\item $s(z) = \bigo{ (z-iq)^{1/4} }$ as $z \rightarrow iq$,
					\item $s(z) = \bigo{ (z+iq)^{-1/4} }$ as $z \rightarrow -iq$.
				\end{enumerate}
		\end{itemize}
\end{rhp}

Let us defer, temporarily, the construction of the solutions to RHP \ref{rhp: one band s}. If such a function $s$ can be found, we make the following transformation 
\begin{equation}\label{one band constant jump prob}
	O(z) = s_\infty^{-\sig} O^{(1)}(z) s(z)^\sig \delta(z)^\sig. 	
\end{equation}
The new unknown $O^{(1)}$ then satisfies the following simple, constant jump, Riemann-Hilbert problem:
\begin{rhp}[ for $O^{(1)}(z)$:]\label{rhp: one band constant jump}
Find a $2 \times 2$ matrix valued function $O^{(1)}(z)$ such that 
\begin{enumerate}[1.]
	\item $O^{(1)}$ is analytic in $\C \backslash \lp \Gamma_\nu \cup \Gamma_\nu^* \rp$. 
	\item $O^{(1)}(z) = I + \bigo{1/z}$ as $z \rightarrow \infty$.					
	\item Away from its endpoints $O^{(1)}$ takes continuous boundary values $O^{(1)}_+$ and $O^{(1)}_-$ on $\Gamma_\nu \cup \Gamma_\nu^*$ satisfying the jump realation
	\begin{equation*}
		O^{(1)}_+ (z) = O^{(1)}_- (z) \offdiag{-1}{1}
	\end{equation*}
	\item $O^{(1)}$ admits at worst $1/4$-root singularities at the endpoints $z = \pm iq$.
\end{enumerate}					
\end{rhp}						
This is a standard model problem which often appears in problems related to integrable PDE and random matrix theory. One builds the solution by diagonalizing the jump matrix and solving the resulting simple scalar problem. A moments work reveals 
\begin{equation*}
	O^{(1)}(z) = \begin{pmatrix}
		\frac{ a(z) + a(z)^{-1} }{2} &  - \frac{ a(z) - a(z)^{-1} }{2i} \\
		\frac{ a(z) - a(z)^{-1} }{2i} & \frac{ a(z) + a(z)^{-1} }{2}
		\end{pmatrix},
\end{equation*}
where $a$ is defined by,
\begin{equation}\label{a}
	a(z) = \lp \frac{z-iq}{z+iq} \rp^{1/4}
\end{equation}
and the root is understood to be branched along $\Gamma_\nu \cup \Gamma_\nu^*$ and normalized to approach unity at $\infty$. 

To complete the description of the outer model problem we need to build the solution of RHP \ref{rhp: one band s} which is the subject of the following proposition.
%% existence of s(z) prop %%%
\begin{prop}\label{prop: one band s}
Riemann-Hilbert problem \ref{rhp: one band s} is solved by 
\begin{equation}\label{one band s}
	s(z) = a(z) e^{-\frac{i}{2}t q^2/\eps} \exp \lp \frac{\nu(z) } {2\pi i} \int_{\Gamma_\nu \cup \Gamma_\nu^*} \frac{ j(\lambda) }{ \nu_+(\lambda) } \frac{\dd \lambda}{\lambda - z} \rp.
\end{equation}
Here $a$ is as defined in \eqref{a}, and using \eqref{kappa} we define
\begin{equation*}
	j(z) = \begin{cases}
		\log \lp \frac{2(z+iq)}{q} \rp - 2( \chi(z,\xi_1) + \chi(z,\xi_0) )  & z \in \Gamma_\nu \\
		\log \lp \frac{q}{2(z-iq)} \rp -  2( \chi(z,\xi_1) + \chi(z,\xi_0) )   & z \in \Gamma_\nu^*.
		\end{cases}
\end{equation*}
where the logarithms are understood to be principally branched. As $z \rightarrow \infty$, $s(z) = s_\infty + \bigo{1/z}$ where 
the constant $s_\infty$ satisfies
\begin{equation}\label{one band s asymp}
	\begin{split}
		s_\infty^{-2} &= \exp \left[ \frac{i}{\eps} t q^2 - \frac{i\pi}{2} + i \omega(x,t) \right]
		\qquad \text{where,} \\
		\omega(x,t) &= -\frac{1}{ \pi} \lp \int_{-\infty}^{\xi_1} - \int_{\xi_0}^\infty \rp \frac{ \log(1+ |r_0(\lambda)|^2) }{ \nu(\lambda) } \dd \lambda.
\end{split}
\end{equation}
\end{prop}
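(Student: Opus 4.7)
My plan is to verify that the stated formula solves RHP \ref{rhp: one band s} by two successive reductions, the bulk of the work being the explicit evaluation of $s_\infty$ in the claimed form.

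First, I would factor out the endpoint singularities by writing $s(z) = a(z)\, e^{-itq^2/(2\eps)}\, e^{\phi(z)}$, with $a$ the fourth-root introduced in \eqref{a}. A short computation, using that $a^2 = (z-iq)/\nu$ (the ratio has the correct branch structure and normalization at infinity), yields $a_+ a_- = \mp i(z-iq)/\nu_+$ on $\Gamma_\nu$. Combined with $w = 2\nu_+/(iq)$ and the representation $\delta^{-2} = \exp\!\bigl(-2\chi(\cdot,\xi_0) - 2\chi(\cdot,\xi_1)\bigr)$, which is valid in an $\eps$-independent neighborhood of $\Sigma := \Gamma_\nu \cup \Gamma_\nu^*$ by Prop.~\ref{prop: delta expansions}, the multiplicative jump in RHP~\ref{rhp: one band s} collapses to the additive scalar problem $\phi_+ + \phi_- = j(z)$ on $\Sigma$ with $j$ as defined in the statement; the analogous calculation on $\Gamma_\nu^*$ supplies the second line in the definition of $j$.

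Next I would solve the additive problem by exploiting the antisymmetry $\nu_+ = -\nu_-$ on $\Sigma$. Setting $\psi = \phi/\nu$ converts $\phi_+ + \phi_- = j$ into the standard Plemelj jump $\psi_+ - \psi_- = j/\nu_+$, whose decaying solution is $\psi(z) = \frac{1}{2\pi i}\int_\Sigma \frac{j(\lambda)/\nu_+(\lambda)}{\lambda - z}\,d\lambda$. Multiplying back by $\nu(z)$ recovers the exponent in \eqref{one band s}. Analyticity off $\Sigma$ is then automatic, and I would check the endpoint behaviour locally: near $z = iq$, the $(\lambda - iq)^{-1/2}$ integrable singularity of $1/\nu_+(\lambda)$ in the integrand produces a $(z-iq)^{-1/2}$ singularity in $\psi$ which is exactly absorbed by the $\nu(z) \sim (z-iq)^{1/2}$ prefactor, leaving $\phi$ bounded and nonvanishing; hence $s \sim a \sim (z-iq)^{1/4}$, and the analogous estimate at $-iq$ follows by symmetry.

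The technical heart of the proof is the explicit computation of $s_\infty$. Expanding at infinity, $\phi(\infty) = -\frac{1}{2\pi i}\int_\Sigma j(\lambda)/\nu_+(\lambda)\,d\lambda$, so the proposition reduces to evaluating this scalar integral. I would split $j = j_{\log} - 2\bigl(\chi(\cdot,\xi_0) + \chi(\cdot,\xi_1)\bigr)$ and treat each piece separately. For the $\chi$ terms, a Fubini exchange together with the identity $\int_\Sigma d\lambda/(\nu_+(\lambda)(\lambda - s)) = \pi i/\nu(s)$ for real $s$ bounded away from $\Sigma$ (obtained by recognizing that $1/\nu$ has jump $2/\nu_+$ across $\Sigma$ and applying Plemelj to the explicit representation $1/\nu$) reduces the double integral to real-line integrals of $\log(1+|r_0|^2)/\nu(s)$. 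For the $\log$ term, deforming $\Gamma_\nu$ and $\Gamma_\nu^*$ onto the straight segment $[-iq,iq]$ via Schwarz reflection extracts both the constant $-i\pi/2$ and a second real-line integral that cancels the symmetric $\int_{-\infty}^\infty$ portion left over from the $\chi$ computation, rearranging the surviving integrals into the asymmetric combination $\bigl(\int_{-\infty}^{\xi_1} - \int_{\xi_0}^\infty\bigr) \log(1+|r_0|^2)/\nu(\lambda)\,d\lambda$. The main obstacle is precisely this final cancellation: the branch bookkeeping in the principal logarithm of $j$ and in the fourth-root $a$ must be reconciled so that the assembled constants combine exactly to $-i\pi/2$ rather than to a spurious multiple of $\pi i$, and so that the signs on the two half-lines in the definition of $\omega(x,t)$ emerge correctly.
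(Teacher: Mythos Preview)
Your proposal is correct and follows essentially the same route as the paper: Plemelj verification of the jump and analyticity, classical singular-integral endpoint estimates balanced by the $\nu(z)$ prefactor, and a large-$z$ expansion yielding $s_\infty = e^{-itq^2/(2\eps)}\exp\bigl(-\tfrac{1}{2\pi i}\int_\Sigma j/\nu_+\,d\lambda\bigr)$ followed by contour deformation and symmetry arguments to reduce this integral to the stated form for $\omega(x,t)$. The paper's proof is considerably terser on the final step---it invokes only ``contour deformation arguments and exploiting the odd symmetry of $\nu(z)$ for $\{z:|z|>\xi_0,\ z\in\R\}$ and even symmetry for $\{z:|z|<\xi_0,\ z\in\R\}$'' without writing out the split of $j$ into its logarithmic and $\chi$ pieces or the Fubini step---so your more explicit decomposition is a welcome elaboration rather than a genuinely different method.
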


\begin{rem}
The function $\omega$ defined by \eqref{one band s asymp} is real and thus represents a slow correction to the fast phase. The integrals can be evaluated in terms of special functions to give
\begin{equation*}
	\omega(x,t) = -\frac{1}{2 \pi} \left[ \Li_2 \lp r_0(\xi_0)^2 \rp + \Li_2 \lp r_0(\xi_1)^2 \rp   \right], 
\end{equation*}
where $\Li_2(z)$ is the dilogarithm and the $(x,t)$ dependence is contained in the location of the stationary phase points $\xi_0$ and $\xi_1$.
\end{rem}

% proof 
\begin{proof}
The Plamelj formulae imply that the given function \eqref{one band s} satisfies the analyticity and jump conditions of RHP \ref{rhp: one band s}. Using classical results on the behavior of singular integrals along the line of integration one can show that the Cauchy integral and its boundary values are bounded except at the endpoints where at worst it behaves like an inverse square root \cite{Musk46}. However, this behavior is balanced by the presence of the factor $\nu(z)$ multiplying the Cauchy integral. Thus, \eqref{one band s} is everywhere bounded except at $z= \pm iq$ where its local behavior is identical to that of $a(z)$. Finally, by expanding \eqref{one band s} for large $z$ we get an asymptotic expansion for $s$ at infinity in powers of $z^{-1}$; the leading order term is constant: 
\begin{equation*}
	s_\infty = e^{-\frac{i}{2} t q^2/ \eps} \exp \lp -\frac{1}{2\pi i} \int_{\Gamma_\nu \cup \Gamma_\nu^*} \frac{ j(\lambda) }{ \nu_+(\lambda) }\dd \lambda \rp.
\end{equation*}
By using contour deformation arguments and exploiting the odd symmetry of $\nu(z)$ for $\{z: |z|>\xi_0, \ z\in \R \}$ and even symmetry for $\{z: |z|<\xi_0, \ z\in \R \}$ one shows that $s_\infty$ satisfies \eqref{one band s asymp}. 
\end{proof}

Note that $O(z)$ only depends on $\eps$ through the phase factors $e^{ i t q^2 / 2 \eps}$ appearing in $s(z)$ and $s_\infty$ and is thus bounded independent of $\eps$ at each point $z \in \C \backslash \{iq,\, -iq\}$. Near the endpoints $z = \pm iq$, $O(z)$ has square root singularities coming from the product $O^{(1)}(z) s(z)^\sig$ as described by \eqref{one band O sing}. As is expected, and will be proven in Section \ref{sec: one band error}, the outer model contributes the leading order behavior of the solution of NLS in the semiclassical limit; expanding $O(z)$ for large $z$ and taking the $(1,2)$-entry of the first moment we have
\begin{equation}\label{one band O asymp}
	2i \lim_{z \rightarrow \infty} O_{12}(z;\, x,t) = i q s_\infty^{-2}    = q e^{ \frac{i}{\eps} t q^2 + i \omega(x,t) }.
\end{equation}

\subsubsection{The local model problems}
The approximation of $Q(z)$ by the outer model is not uniform near $z = \xi_1$ and $z= \xi_0$ where the jump contours $\Gamma_Q^0$ (cf. \eqref{one band Q jumps}) return to the real axis. To construct a uniformly accurate parametrix we must introduce local models $A_0(z)$ and $A_1(z)$ defined on neighborhoods $\U_0$ and $\U_1$ of the points $\xi_0$ and $\xi_1$ respectively which account for the local structure of the jump matrices and which asymptotically match the outer solution on the boundaries $\partial \U_0$ and $\partial \U_1$. 

Comparing \eqref{one band Q jumps} to \eqref{Q jumps}, the local structure of $V_Q$ near $\xi_0$ and $\xi_1$ when $(x,t) \in \mathcal{S}_1$, the set in which the single band g-function stabilizes the problem, and the local structure of $V_Q$ for $x$ outside the initial support are essentially identical. In moving from outside the initial support into $\mathcal{S}_1$ the introduction of the $g$-function has replaced the harmonic phases $\theta_k$ with their modulated counterparts $\varphi_k = \theta_k -2g$ and the stationary phase points $\xi_0$ and $\xi_1$ have been suitably redefined in terms of the $\varphi_k$, everything else is identical. Thus, we can essentially construct the new local models at $\xi_0$ and $\xi_1$, mutatis mutandis, from the old models. Without repeating the details of their construction we define the new local models below and refer the reader to Sections \ref{sec: outside xi_0} and \ref{sec: outside xi_1} for the additional details.  
 
First, define the locally analytic and invertible change of variables $\zeta = \zeta_k(z)$, where, 
\begin{equation}\label{one band zeta_k}
	\frac{1}{2} \zeta_k^2 := \frac{ \varphi_k(z) - \varphi(\xi_k) } { \eps} = \frac{ \varphi_k''(\xi_k) }{2\eps} (z-\xi_k)^2 + \bigo{ (z-\xi_k)^3 }, \quad k=0,1.
\end{equation}  
Each $\zeta_k$ introduces a rescaled local coordinate at $\xi_k$ and we choose suitably small fixed size neighborhoods $\U_k$ of $\xi_k$ such that the $\zeta_k$ are analytic inside $\U_k$ and the images $\zeta = \zeta_k(\U_k)$ are disks in the $\zeta$-plane. Next, using Prop.~\ref{prop: delta expansions} and \eqref{delta expansions} we define the nonzero scaling functions:
\begin{equation}\label{one band h's}
	\begin{split}
	h_0 &= \left[ \lp \frac{\eps}{\varphi_0''(\xi_0)} \rp^{i\kappa(\xi_0)} \lp \delta_0^{\text{hol}}(z) \rp^2 e^{-i\varphi_0(\xi_0) / \eps} \right]^{1/2}, \\
	h_1 &= \left[ \lp \frac{\eps}{\varphi_1''(\xi_1)} \rp^{i\kappa(\xi_1)} \delta_{1+}^{\text{hol}}(z)  \delta_{1-}^{\text{hol}}(z) \ e^{-i\varphi_1(\xi_1) / \eps} \right]^{1/2}. 
	\end{split}
\end{equation}

With the above definitions in hand we define our local models as follows:
\begin{align}
	A_k(z) = \left[ s_\infty^{-\sig}\, O^{(1)}(z)\,  s(z)^\sig \right] \hat{A}_k(z)\ \delta(z)^\sig, \quad k=0,1,\label{one band local form} 
\end{align}
where the functions $\hat{A}_k$ are built from the solutions $\Psi_{PC}$ of the parabolic cylinder local model, RHP \ref{rhp: PC}: 
\begin{subequations}\label{one band local models}
	\begin{align}
	\hat{A}_0(z) &= h_0^\sig \Psi_{PC} ( \zeta_0(z),\, r_0(\xi_0) ) h_0^{-\sig}, \label{ one band xi_0 model}\\
	\hat{A}_1(z) &= h_1^\sig \Psi_{PC} ( \zeta_1(z),\, -r_0(\xi_1) ) h_1^{-\sig} \times 
		U^{-1} e^{ig(z)\sig / \eps} F^{-1} e^{-ig(z) \sig/ \eps}. \label{ one band xi_1 model} \end{align}	
\end{subequations}	
		
 \begin{rem} 
In addition to the modified phase functions $\varphi_k$ and the resulting redefinition of the stationary phase points $\xi_k$, the pre-factors right  multiplying each $\hat{A}_k$ in \eqref{one band local form} are a new element of the local models when compared to the local models defined in Sections \ref{sec: outside xi_0} and \ref{sec: outside xi_1}. It consists of the factors in the outer model \eqref{one band constant jump prob} which are locally analytic near $\xi_0$ and $\xi_1$; by including them as pre-factors, our estimates of the asymptotic error on the boundaries $\partial \U_k$ are identical to the previous estimates for $x$ outside the support of the initial data. 
\end{rem}

\subsection{Proof of Theorem \ref{thm: main},  part two}\label{sec: one band error}
We now consider the error matrix $E(z)$ defined as the ratio:
\begin{equation}\label{one band error def}
	E(z) = Q(z) P^{-1}(z).
\end{equation} 
As both $Q(z)$ and $P(z)$ are piecewise analytic functions whose components take continuous boundary values on the boundaries of their respective domains of definition, $E$ satisfies its own Riemann-Hilbert problem, which we give below.
\begin{rhp}[ for the error matrix, E(z).]\label{rhp: one band error}
Find a $2 \times 2$ matrix $E(z)$ such that
\begin{enumerate}[1.]
	\item $E(z)$ is bounded and analytic for $z \in \C \backslash \Gamma_E$, where $\Gamma_E$ is the system of contours depicted in Figure \ref{fig: error contours outside}.
	\item As $z \rightarrow \infty, \ E(z) = I + \bigo{1/z}$.
	\item $E$ takes continuous boundary values $E_+(z)$ and $E_-(z)$ for $z \in \Gamma_E$ satisfying the jump relation $E_+ = E_- V_E$ where
	\begin{equation}\label{one band E jumps}
		 V_E = P_- \lp V_Q V_P^{-1} \rp P_-^{-1}.
	\end{equation}
\end{enumerate}
\end{rhp}
All of the properties listed in the RHP for $E(z)$ follow immediately from \eqref{one band error def} except for the boundedness of $E(z)$ near the endpoints $z = \pm iq$. To show this, first observe that the jump matrices of the parametrix $P(z)$ exactly match those of $Q(z)$ along $\Gamma_{\nu} \cup \Gamma_\nu^*$ so that $V_E$, as defined by \eqref{one band E jumps}, are identity along $\Gamma_{\nu} \cup \Gamma_\nu^*$. At worst $E(z)$ has isolated singularities at the endpoints. However, the local growth restrictions \eqref{one band Q sing} and \eqref{one band O sing} dictate that at worst $E_{jk}(z) = \bigo{|z\pm iq|^{-1/2}}$ implying that the singularities are removable. Thus $E(z)$ is locally bounded near each endpoint. Globally boundedness then follows directly from conditions $2$ and $3$ defining RHP \ref{rhp: one band error}. 

The key estimate which allows us to prove the second part of our main result, Theorem \ref{thm: main}, is recorded in the following lemma which establishes $E$ as the solution of a small-norm Riemann-Hilbert problem.
%%%% one band error estimate
\begin{lem}\label{lem: one band error} For $(x,t) \in K \subset \mathcal{S}_1$ compact and $\ell \in \N_0$ the jump matrix $V_E$ defined by \eqref{one band E jumps} satisfies,
\begin{equation}\label{one band error bound}
	\| z^\ell(V_E - I) \|_{L^p(\Gamma_E)} = \bigo{ \eps^{1/2} \log \eps},
\end{equation}
for each sufficiently small $\eps$ and $p=1,2, \text{ or }\infty$.
\end{lem}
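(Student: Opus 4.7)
The plan is to mirror the proof of Lemma \ref{lem: outside error}, since the structural setup is essentially identical after substituting the modified phases $\varphi_k = \theta_k - 2g$ for the unmodified $\theta_k$. First I would verify that for $(x,t) \in K \subset \mathcal{S}_1$ compact the parametrix $P(z)$ is uniformly bounded on $\Gamma_E$ in $\eps$. This requires observing that the outer-model prefactor $s_\infty^{-\sig} O^{(1)}(z) s(z)^\sig \delta(z)^\sig$ is bounded on a neighborhood of $\Gamma_E$: by construction the branch points $\pm iq$ lie strictly inside the lens region enclosed by $\Gamma_3 \cup \Gamma_3^*$ and are therefore separated from $\Gamma_E$, while $\xi_0$ and $\xi_1$ remain bounded away from one another on $K$ and away from $\pm iq$. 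Once uniform boundedness of $P$ is secured, it suffices to estimate $\|z^\ell(V_Q V_P^{-1} - I)\|_{L^p(\Gamma_E)}$ directly.

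Next, I would decompose $\Gamma_E$ into the three natural pieces used in the previous lemma: the lens segments $\Gamma_E^{out}$ outside the fixed disks $\U_0$ and $\U_1$; the bounded segments $\Gamma_E^{in}$ of each $\Gamma_k$ inside $\U_0$ or $\U_1 \setminus \DD$ together with the real contour $\Sigma_4$; and the disk boundaries $\Gamma_E^{circ} = \partial \U_0 \cup \partial \U_1 \cup \partial \DD$. On $\Gamma_E^{out}$ the parametrix reduces to the outer model $O$, which is analytic there, so $V_P = I$ and one needs only the uniform exponential decay $\imag \varphi_k \geq c > 0$ along the chosen lens contours (guaranteed by Lemma \ref{lem: one band zero level} and the selection of $\Gamma_k$), giving an $\mathcal{O}(e^{-c/\eps})$ bound. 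On $\Gamma_E^{in}$ the algebraic identities used in Sections \ref{sec: outside xi_0} and \ref{sec: outside xi_1} apply verbatim with $\theta_k$ replaced by $\varphi_k$, since the outer-model prefactor above is analytic and nonzero on each disk and so effectively cancels out of the local estimates; the dominant $\mathcal{O}(\sqrt{\eps}\log \eps)$ contribution on $\Gamma_E^{in}$ comes from approximating $(z - \xi_1)^{-2i\kappa(z)}$ by $(\sqrt{\eps/\varphi_1''(\xi_1)}\,\zeta_1)^{-2i\kappa(\xi_1)}$ in the local model at $\xi_1$, exactly as in the outside case.

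The main technical obstacle, as before, is the shrinking disk $\partial \DD \subset \Gamma_E^{circ}$ near $\xi_1$. On the two fixed disk boundaries $\partial \U_k$ the matching is controlled by the large-$\zeta$ asymptotics of $\Psi_{PC}$ (Proposition \ref{prop: PCRHP solution}) together with the $\eps^{-1/2}$ scaling in \eqref{one band zeta_k}, giving $\|V_P^{-1} - I\|_{L^\infty(\partial \U_k)} = \mathcal{O}(\sqrt{\eps})$, while on $\partial \DD$ the chain of folding and unfolding factorizations summarized in \eqref{xi_1 error sample} transfers directly to produce an $\mathcal{O}(\sqrt{\eps}\log \eps)$ bound. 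The $L^p$ estimates and the moment bounds for $z^\ell(V_E - I)$ then follow because $\Gamma_E^{in} \cup \Gamma_E^{circ}$ is contained in a fixed compact set with uniformly bounded arclength (in particular, $\|z^\ell\|_{L^\infty}$ is controlled), while $\Gamma_E^{out}$ contributes only terms that are exponentially small beyond all orders. No essentially new estimates are required beyond those already carried out in the proof of Lemma \ref{lem: outside error}.
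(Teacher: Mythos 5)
Your proposal is correct and follows essentially the same route as the paper: the paper's proof of this lemma simply observes that the estimates of Lemma \ref{lem: outside error} carry over verbatim once the phases $\theta_k$ are replaced by the modified phases $\varphi_k = \theta_k - 2g$, which is exactly the substitution you make, including the same decomposition of $\Gamma_E$ and the same identification of the dominant $\bigo{\sqrt{\eps}\log\eps}$ contribution from the local model at $\xi_1$. Your added observations (uniform boundedness of the outer prefactor because $\pm iq$ stay inside the lens where $V_P = V_Q$, and the cancellation of the locally analytic prefactor $s_\infty^{-\sig}O^{(1)}s^\sig$ in the local estimates) are precisely the points the paper leaves implicit.
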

\begin{proof} The estimates necessary to establish this lemma are identical to those used to prove Lemma \ref{lem: outside error} after introducing the modified phases \eqref{one cut phases} which replace the $\theta_k$ upon the introduction of the one band $g$-function \eqref{one band g} which remains valid for any $(x,t) \in \mathcal{S}_1$. The reader is referred to Lemma \ref{lem: outside error} for the details of the estimates.
\end{proof}  
%%%%%%
The small norm estimate in Lemma \ref{lem: one band error} admits the representation of $E(z)$ as 
\begin{equation*}
	E(z) = I + \frac{1}{2\pi i} \int_{\Gamma_E} \frac{ \mu(s) (V_E(s) - I) }{s-z} \dd s
\end{equation*}
where $\mu(s)$ is the unique solution of $(1 -  C_{V_E}) \mu = I$. Here $C_{V_E} f = C_- [ f (V_E - I)]$ where $C_-$ denotes the Cauchy projection operator. In particular, $E(z)$ has the large $z$ expansion  
\begin{equation}\label{E expansion one band}
	E(z) = I + \frac{E^{(1)} (x,t)}{z} + \ldots, \quad \text{where,} \quad \left| E^{(1)} (x,t)\right| = \bigo{\sqrt{\eps} \log \eps}.
\end{equation} 

Unfolding the sequence of transformations: $m \mapsto M \mapsto N \mapsto Q \mapsto E$, we have the semi-classical asymptotic expansion of $m$ and, using \eqref{nls recovery}, we find the leading order behavior of the solution $q(x,t)$ of \eqref{nls} with initial data \eqref{square barrier} for each $(x,t) \in \mathcal{S}_1$.  Using \eqref{M inside}, \eqref{g trans}, \eqref{one band Q}, \eqref{one band parametrix}, and \eqref{one band O asymp} we find
\begin{equation*}
	\psi(x,t)  = q e^{\frac{i}{\eps} \left( q^2 t + \eps \omega(x,t) \right) } + \bigo{\sqrt{\eps} \log \eps }.
\end{equation*}

\subsection{Correction to the phase, evidence of a singular perturbation \label{sec: omega correction}} 
We take a moment here to remark on the phase correction $\omega(x,t)$ defined by \eqref{one band s asymp} that appears in 
\begin{equation*}
	\psi_{asy}(x,t) = q e^{\frac{i}{\eps} \lp q^2 t + \eps \omega(x,t) \rp },
\end{equation*}
the leading order asymptotic behavior of the solution for $(x,t) \in \mathcal{S}_1$. We can think of this phase in several ways. Thinking of the asymptotic solution as a slowly modulating plane wave, $\omega$ constitutes a slow correction to the singular phase. With respect to the fast evolution this is a constant phase which the Whitham theory cannot recover. Though the phase is slow it still affects $\bigo{1}$ change in the solution, which the Riemann-Hilbert machinery naturally recovers from the scattering data. This is a nice result, but can we understand this phase in a more direct way? Altering our perspective, we can treat $\omega$ as the phase correction in going from the geometric to physical optics approximation of the solution in the WKB theory. This leads to an interesting observation. Writing the solution of \eqref{nls} in the form $\psi(x,t) = A(x,t) e^{i S(x,t)/\eps}$ we get the following system of equations equivalent to focusing NLS
\begin{equation}\label{WKB system}
	\begin{split}
	\rho_t + \lp \rho S_x \rp_x &= 0, \\
	S_t + \frac{1}{2} S_x^2 - \rho &= \frac{\eps^2}{2} \left[ \frac{\rho_{xx}}{2\rho} - \lp \frac{\rho_x}{2\rho} \rp^2 \right],
	\end{split}
\end{equation}
 where $\rho(x,t) := A(x,t)^2$. The asymptotic solution $\psi_{asy}$ is a correction to the simple plane wave solution $\rho = q^2$ and $S = q^2 t$. Inserting a regular expansion ansatz: $\rho = q^2 + \eps \eta + \bigo{\eps^{2}}$ and $S = q^2 t + \eps \omega + \bigo{\eps^2}$, into \eqref{WKB system} and differentiating in $t$ to eliminate $\eta$ we are led to 
\begin{equation*}
 	\omega_{tt} + q^2 \omega_{xx} = \bigo{\eps}.
\end{equation*}
Assuming bounded derivatives in the expansion, $\omega$ should solve this rescaled Laplace equation, but, as defined by \eqref{one band s asymp}, $\omega$ does not satisfy the Laplace equation. To see this, we observe that due to the self-similar dependence of $\xi_1$ and $\xi_0$ on $(x,t)$ as defined by \eqref{one band xi_1} and \eqref{one band xi_0} we can write $\omega$ in the form $\omega(x,t) = F \lp \frac{x+L}{t} \rp + F \lp \frac{x-L}{t} \rp - \omega_0$ where
\begin{equation*}
	\begin{split}
	F(\zeta) &= -\frac{1}{\pi} \int_{-\infty}^{\lambda(\zeta)} \frac{\log(1+ |r_0(\lambda)|^2)}{\nu (\lambda)} \dd \lambda, 
	\qquad \lambda(\zeta) = -\frac{\zeta}{4}\left[1+\sqrt{1- 8q^2 \zeta^{-2}} \right] \\
	\omega_0 &=  -\frac{1}{\pi} \int_{-\infty}^\infty \frac{\log(1+ |r_0(\lambda)|^2)}{\nu (\lambda)} \dd \lambda.
	\end{split}
\end{equation*}
Letting $\zeta_\pm = (x \pm L)/t$ and seeking a solution of this general form, we get
\begin{equation*}
	 \omega_{tt}+ q^2 \omega_{xx} = 
	 \frac{1}{t^2} \left. \frac{d}{d\zeta} \left[ (\zeta^2+q^2) F' \right] \right|_{\zeta= \zeta_+} 
	 +  \frac{1}{t^2} \left. \frac{d}{d\zeta} \left[ (\zeta^2+q^2) F' \right] \right|_{\zeta= \zeta_-} = 0
\end{equation*}
Since $\zeta_+$ and $\zeta_-$ are independent this is only possible if $\frac{d}{d\zeta} \left[ (\zeta^2+q^2) F' \right] \equiv 0$
or $F(\zeta) = c_1 + c_2 \arctan\lp \zeta/q \rp$. As $\omega$ is clearly not of this form, this suggest that the initial assumption that the solution admits regular phase and amplitude expansions with bounded derivatives is false. From the results of \cite{DiFM05, Adrian06} we know that the discontinuities exhibit Gibbs phenomena in the small-time limit; these arbitrarily small wavelength oscillations are dispersed but not destroyed by the evolution. Further, though we know from the semiclassical Riemann-Hilbert analysis that the amplitude correction, $\eta$, is small, $\eta = \bigo{\sqrt{\eps} \log \eps}$, it is quite possible that $\eta$ exhibits rapid oscillations with significant derivatives. All of this suggest that the discontinuities generate interesting corrections to the phase and amplitude. In particular the phase correction, $\omega$, contributes at first order and is detected by the inverse-scattering machinery. We believe that both the phase and amplitude corrections for discontinuous initial data merit further study. Our focus here is to describe the regularization of the discontinuities by the semiclassical evolution, so we set these questions aside for now, but plan to address them in future work.

%%%%%%%%%%%%%%%%%%%%%%%%%%%%%%%%
%												  %
%    Section 3: Inverse Analysis in the two band case                   %
% 												  %
%%%%%%%%%%%%%%%%%%%%%%%%%%%%%%%%

\section{Inverse analysis beyond the first breaking time \label{sec: two band} }
In the previous section we showed that for each $x \in [0,L)$ the inverse analysis was controlled before the breaking time $T_1(x)$, defined by \eqref{S1}, by the introduction of a $g$-function which a priori we assumed to contain a single band interval. The one band ansatz then implied the system of band and gap inequalities \eqref{one cut band/gap cond}. In this section we now consider the inverse problem for times $t > T_1(x)$. The breaking time $T_1(x)$ is characterized by the failure of the band condition \eqref{one cut band cond}, as implied by Lemma \ref{lem: one band zero level}; the lemma also guarantees that the gap condition \eqref{one cut gap cond} continues to be satisfied for times beyond the the first breaking time. The failure of the band condition for a generic choice of $x \in [0,L)$ as $t$ increases beyond the first breaking time $T_1(x)$ is depicted in Figure \ref{fig: one band levels beyond breaking}. 
%%% ONE BAND LEVEL BEYOND BREAKING %%%
\begin{figure}[thb]
	\begin{center}
		\includegraphics[width = .3\textwidth]{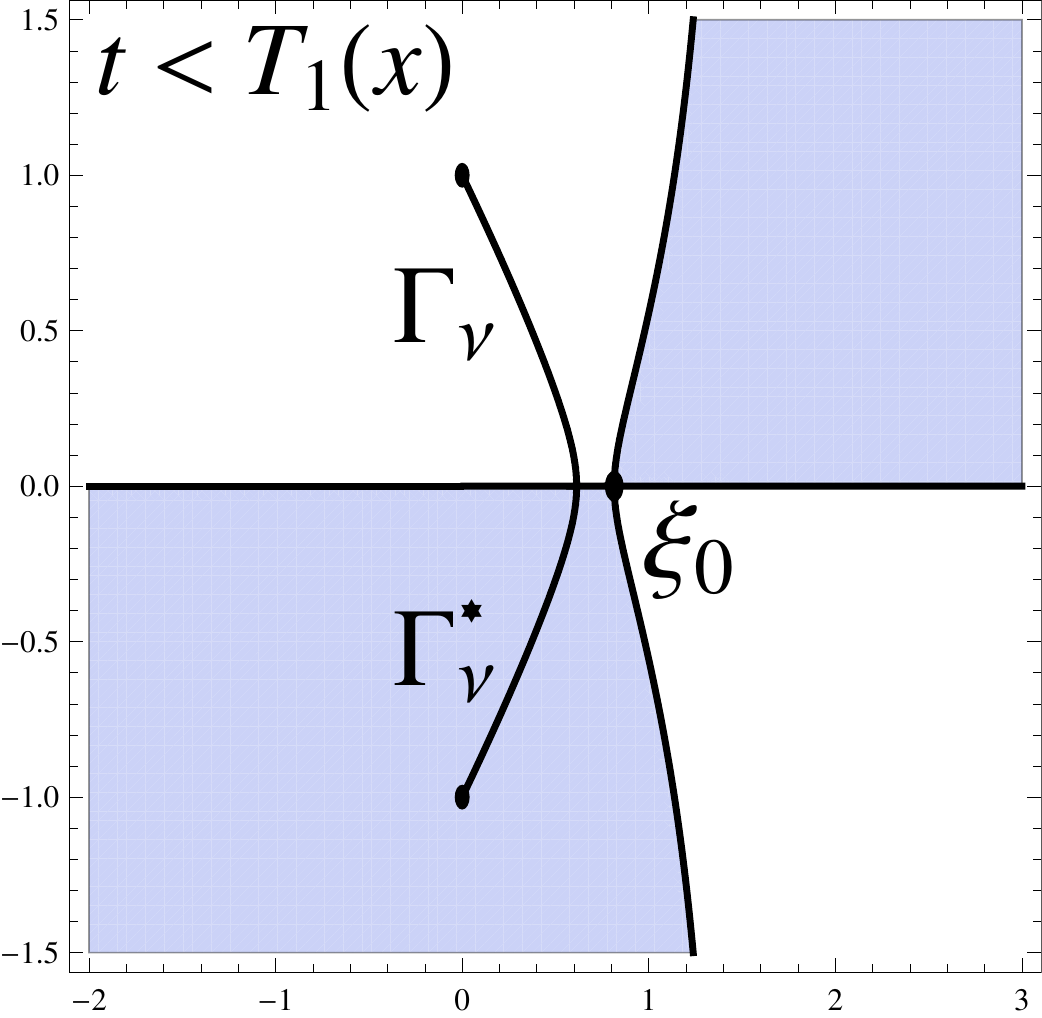}
		\quad
		\includegraphics[width = .3\textwidth]{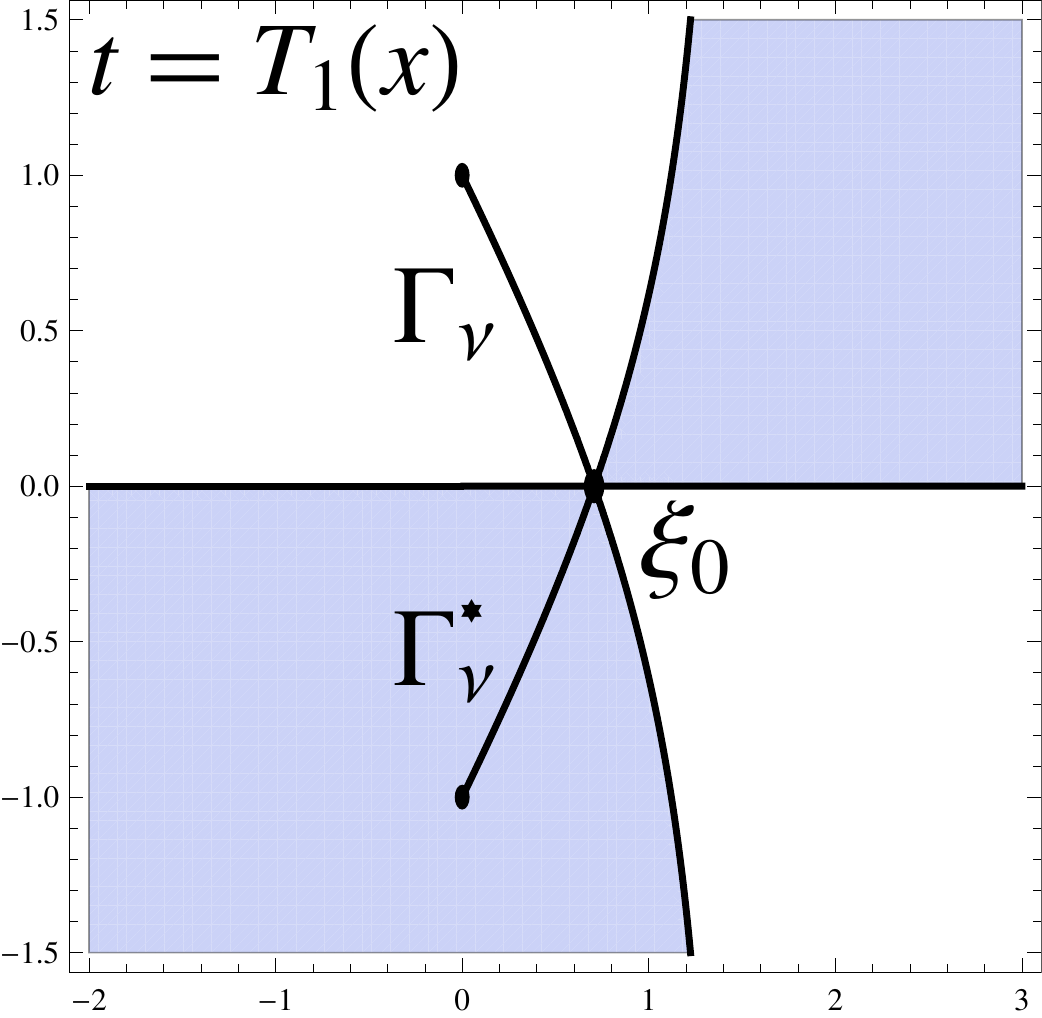}
		\quad
		\includegraphics[width = .3\textwidth]{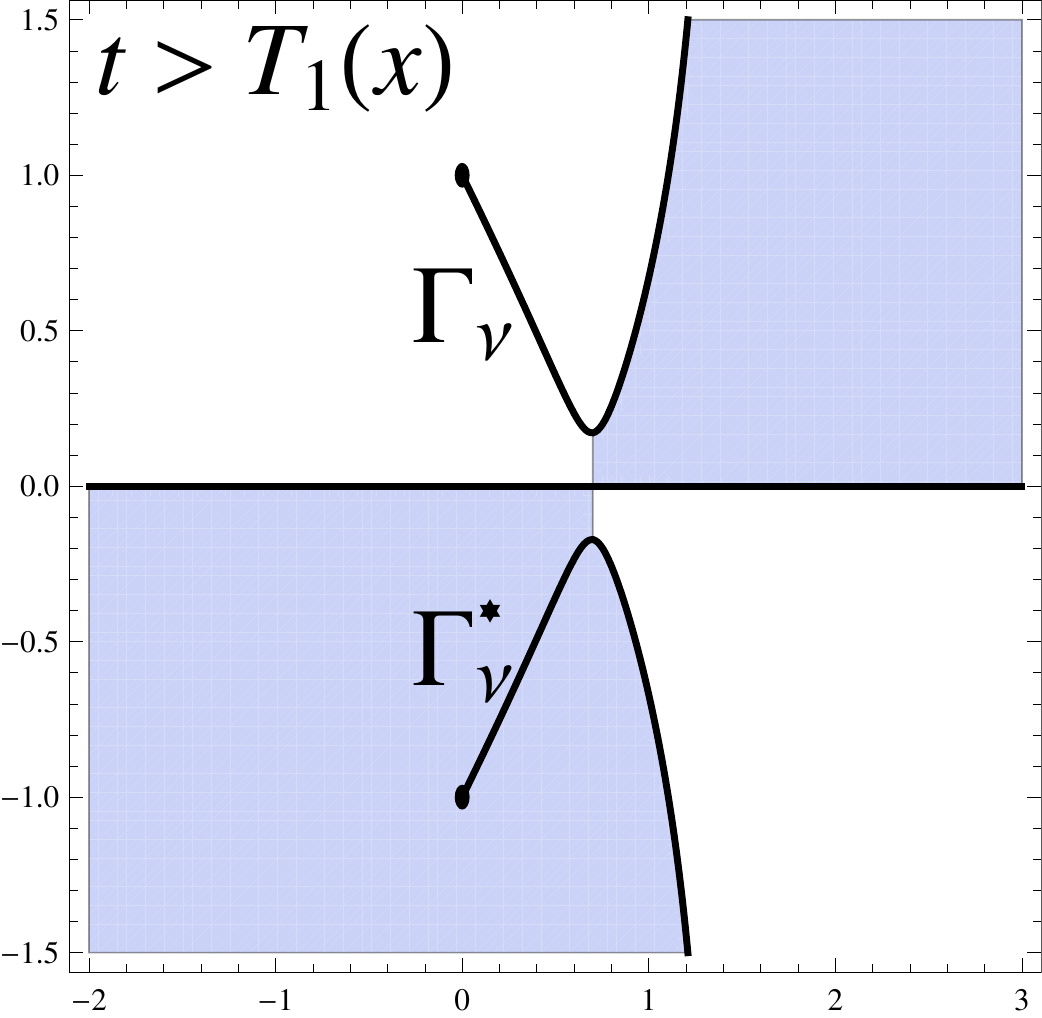}
		\caption{The zero level and sign structure of the modified phase $\imag \varphi_0$ introduced by the one band $g$-function as $t$ increases beyond the first breaking time, $T_1(x)$. Shaded regions represent $\imag \varphi_0>0$ and white regions $\imag \varphi_0 < 0$.   
		\label{fig: one band levels beyond breaking}
		}
	\end{center}
\end{figure}
%%%%%%%
In particular, note how the band condition \eqref{one cut band cond} fails for $t > T_1(x)$: a new gap opens across the real axis. To move the analysis of the inverse problem beyond the first breaking time we return to RHP \ref{rhp: M inside support} for the piecewise holomorphic unknown $M$. In the course of the analysis we will redefine the stationary points $\xi_k$, contours $\Gamma_k$, and introduce a new $g$-function which accounts for the newly opening gap.

\subsection{Introducing a new $g$-function, the genus one ansatz} 
As before we introduce our $g$-function by making the change of variables
\begin{equation}\label{two band N}
	N(z) = M(z) e^{-i g(z) \sig / \eps}.
\end{equation}
as always we demand that $g$ satisfy the generic decay and symmetry properties listed below \eqref{g trans}. Motivated by the appearance of a new gap as discussed above, we seek a new $g$-function supported on two symmetric bands $\gamma_b$ and $\gamma_b^*$ separated by a central gap contour $\gamma_g \cup \gamma_g^*$ such that $\gamma_b \cup \gamma_g = \Gamma_\nu$. We further suppose that the outer endpoints of the bands are fixed at $z = \pm iq$ but allow the inner endpoints, labelled $\alpha$ and $\alpha^*$, to evolve. Using this ansatz, we seek $g$ as the solution of the following scalar Riemann-Hilbert problem:
%%%% RHP for genus two g-function %%%%%%
\begin{rhp}[for the genus one $g$-function]  \label{rhp: two band g}
Find a scalar function $g(z)$ such that
\begin{enumerate}[1.]
	\item $g$ is an analytic function of $z$ for $z \in \C \backslash (\Gamma_\nu \cup \Gamma_\nu^*)$.
	
	\item $g(z) = \bigo{1/z}$ as $z \rightarrow \infty$.
	
	\item $g$ takes continuous boundary values $g_+$ and $g_-$ on $\Gamma_\nu \cup \Gamma_\nu^*$ away from the endpoints which satisfy the jump relations:
	\begin{equation}\label{two band g jumps}
		\left\{ \begin{array} {l@{\quad}l}
			g_+(z) + g_-(z) = \theta_0 + \eta & z \in \gamma_b \cup \gamma_b^* \\
			g_+(z)  - g_-(z) = \Omega & z \in \gamma_g \cup \gamma_g^*
		\end{array}\right.
	\end{equation}
	where $\eta$ and $\Omega$ are real constants.
	
	\item As $z \rightarrow p$, where $p$ is a endpoint of the bands and gaps, $g(z)$ behaves as follows:
		\begin{itemize}
			\item $g(z) = \bigo{ (z - p)^{1/2}}$ for $p = iq$ or $p = -iq$. 
			\item $g(z) = \bigo{ (z-p)^{3/2}}$ for $p = \alpha$ or $p=\alpha^*$.
		\end{itemize}
\end{enumerate}
\end{rhp}
If we can construct such a function we define the modified phases as before
\begin{equation}\label{two band varphi def}
	\varphi_k(z) = \theta_k(z) - 2g(z),
\end{equation}
but now in terms of the solution $g$ of RHP \ref{rhp: two band g}. As discussed in Section \ref{sec: one band g}, in order for the $g$-function to lead to a semiclassically stable limiting problem, we require that it satisfy a system of band and gap relations of the form \eqref{band/gap cond}. It follows from transformation \eqref{two band N} and RHP \ref{rhp: two band g} that the necessary band and gap conditions take the form:
 \begin{subequations}\label{two band band/gap cond}
 \begin{align}
	\label{two cut band cond}  &\textsc{Bands:} \quad \ \ \imag \lp  { \varphi_0 }_- \rp = 0 \text{ for } z \in \gamma_b,  \\
	\label{two cut gap cond 1} &\textsc{Gap 1:}  \quad \ \ \imag \lp \varphi_0 \rp  > 0 \text{ for } z \in \gamma_g, \\
	\label{two cut gap cond 2} &\textsc{Gap 2:}  \quad \ \ \imag \lp \varphi_1 \rp > 0 \text{ for } z \in \Gamma_1.
\end{align}
\end{subequations}

\begin{rem} The above conditions imply, using the reflection symmetry of the $\varphi_k$, the corresponding relations in the lower half-plane. 
\end{rem}

Conditions \eqref{two cut band cond} and \eqref{two cut gap cond 1} ensure that the $g$-function does not introduce large jumps along $\Gamma_\nu \cup \Gamma_\nu^*$ while the last condition \eqref{two cut gap cond 2} ensures that the jumps along the contours $\Gamma_1 \cup \Gamma_1^*$ introduced by the pole removing transformation \eqref{M inside} remain small after introducing $g$.

\subsubsection{Constructing the genus one $g$-function} 
We begin by seeking a $g$-function of the form 
\begin{equation}\label{two band rho def}
	g(z) = \int_{\infty}^z \rho(\lambda) \dd \lambda
\end{equation}
for an unknown density $\rho$ where the path of integration is any simple contour which does not intersect $\Gamma_\nu \cup \Gamma_\nu^*$. The density $\rho$ must necessarily be symmetric: $\rho(\lambda^*)^* = \rho(\lambda)$; and solve the problem resulting from formally differentiating RHP \ref{rhp: two band g} .
\begin{rhp}[for the genus two density $\rho(z)$:]\label{rhp: two band density}
Find a scalar function $\rho$ such that:
\begin{enumerate}[1.]
	\item $\rho$ is an analytic function of $z$ for $z \in \C \backslash (\gamma_b \cup \gamma_b^*)$.
	\item $\rho(z) = \bigo{1/z^2}$ as $z \rightarrow \infty$.
	\item On $\gamma_b \cup \gamma_b^*$$\rho$ takes continuous boundary values $\rho_+$ and $\rho_-$ which satisfy the jump condition
	\begin{equation}\label{two band rho jump}
		\rho_+(z) + \rho_-(z) = \theta'_0(z), \quad z \in \gamma_b \cup \gamma_b^*
	\end{equation}
	\item As $z \rightarrow p$, where $p$ is an endpoint of a band or gap interval,  $\rho(z)$ behaves as follows:
	\begin{equation}\label{two band rho endpoints}
		\begin{array}{@{\bullet\ }l@{\ \ \text{for }}l}
			\rho(z) = \bigo{ (z - p)^{-1/2}} & p = \pm iq, \\
			\rho(z) = \bigo{ (z-p)^{1/2}} & p = \alpha,\ \alpha^*.
		\end{array}
	\end{equation}
\end{enumerate}
\end{rhp}

The normalization condition and path restriction guarantee that $g$ is a well defined function and that $g(z) = \bigo{1/z}$ as $z \to \infty$. The advantage of this representation of $g$ is that the unknown constants $\eta$ and $\Omega$ are expressible as particular integrals of the density $\rho$. Suppose $\rho$ is a solution of RHP \ref{rhp: two band density}, then for any $z \in \gamma_g$ and allowable paths for evaluating $g_+(z)$ and $g_-(z)$ it follows by contour deformation that
\begin{equation}\label{two band jump gaps}
	g_+(z) - g_-(z) = \int_{C_b} \rho(\lambda) \dd \lambda = \Omega, \qquad z \in \gamma_g \cup \gamma_g^*, 
\end{equation}
where $C_b$ is any positively oriented simple closed contour in $\C^+$ which encloses $\gamma_b$. Alternatively, the allowable paths could be deformed to the conjugate loop, $C_b^*$ (enclosing $\gamma_b^*$ with negative orientation), averaging gives the more symmetric formula:
\begin{equation}\label{omega def}
	\Omega = \frac{1}{2}\lp  \int_{C_b}\rho(\lambda) \dd \lambda + \int_{C_b^*}\rho(\lambda) \dd \lambda \rp,
\end{equation}
which implies, using the symmetry of $\rho$, that $\Omega$ is a real constant. 
\begin{rem}
Here and elsewhere we make use of the following elementary fact: if $f(z)$ satisfies the symmetry $f(\lambda^*)^* = f(\lambda)$ and $C$ is an oriented contour such that $C^* = C$ then
\begin{align*}
\lp  \int_C f(\lambda) \dd \lambda \rp^* =  \int_C f(\lambda)^* \dd \lambda^* =  \int_C^* f(\lambda^*)^* \dd \lambda  =  \int_C f(\lambda) \dd \lambda.
\end{align*}
Similarly, if instead $C^* = -C$ then:
\begin{align*}
\lp  \int_C f(\lambda) \dd \lambda \rp^* = - \int_C f(\lambda) \dd \lambda . 
\end{align*}
\end{rem}

Next we consider the jumps of $g$ along the bands and the corresponding constant $\eta$. We have, 
\begin{align}\label{two band jump bands}
	g_+(z) + g_-(z) = \theta_0(z) + \eta, \qquad z \in \gamma_b
\end{align}
where
\begin{align}\label{two band eta}	
	\eta = - \theta_0(iq) + 2 \int_{\infty}^{iq} \rho(\lambda) \dd \lambda.
\end{align}
Repeating the computation for $z \in \gamma_b^*$, we have $g_+(z) + g_-(z) = \theta_0(z) + \hat \eta$, where,
\begin{align*}
	\hat \eta = - \theta_0(-iq) + 2 \int_\infty^{-iq} \rho(\lambda) \dd \lambda.
\end{align*}

Observing symmetries, $\eta$ and $\hat \eta$ are complex conjugate: $\hat \eta = \eta^*$. However, a necessary component of RHP \ref{rhp: two band g} requires that $\eta$ be a real constant. By contour deformation, we can express the difference as a single integral over the gap:
\begin{equation*}
	\eta^*- \eta = \int_{\gamma_g}  \theta_0'(\lambda) - 2 \rho(\lambda)  \dd \lambda
\end{equation*}	
For generic $\alpha$ this integral will be nonzero. The vanishing of this integral constitutes a single real condition---symmetry implies the integral is imaginary---on the moving endpoint $\alpha$. This condition has another important implication: in terms of the modified phase $\varphi_0$, the vanishing of the above condition becomes
\begin{equation}\label{endpoint gap cond}
	\int_{\gamma_g}  \theta_0'(\lambda) - 2 \rho(\lambda)  \dd \lambda = \int_{\gamma_g} \dd \varphi_0 = 0.
\end{equation}
This implies that the endpoints $\alpha$ and $\alpha^*$ lie on the same level set of $\imag \varphi_0$, a necessity in order to satisfy the band condition \eqref{two cut band cond}.

We now turn to directly constructing the density $\rho$. Motivated by the endpoint behavior and the additive jump condition we introduce
\begin{equation*}
	S(z) = \lp \frac{ z - \alpha } { z - iq } \rp^{1/2} \lp \frac{ z - \alpha^* } { z + iq } \rp^{1/2}
\end{equation*}
understood to be branched along the bands $\gamma_b \cup \gamma_b^*$ and normalized to approach unity as $z \rightarrow \infty$. Using $S(z)$ and the Plemelj-Sokhotsky formula, it immediately follows that: 
\begin{align*}\label{two band rho}
	\rho(z) = \frac{S(z)}{2 \pi i} \int_{\gamma_b^{} \cup \gamma_b^*} \frac{ \theta_0'(\lambda)}{S_+(\lambda) } \frac{ \dd \lambda }{ \lambda - z},
\end{align*} 
which can be evaluated by residues yielding the explicit formula
\begin{equation}\label{two band rho explicit}
	\rho(z) = \frac{1}{2} \theta_0'(z) - S(z) \left[ t( 2z+ \alpha + \alpha^* ) + (x-L) \right].
\end{equation} 
Clearly, $\rho$ satisfies the analyticity and endpoint conditions of RHP \ref{rhp: two band density} for any generic choice of $\alpha$. However, for generic $\alpha$, $\rho(z) = \bigo{1/z}$ as $z \to \infty$. The quadratic decay at infinity is necessary for $g$ to be analytic at infinity and is essential to deriving formulas \eqref{omega def} and \eqref{two band eta} for the jump constants. In order for $\rho = \bigo{1/z^2}$ for  large $z$, $\alpha$ must satisfy the following moment condition: 
 \begin{align}\label{two band moment cond}
 	-\frac{1}{2\pi i} \int\limits_{\gamma_b^{} \cup \gamma_b^*} \frac{ \theta_0'(\lambda)}{S_+(\lambda) } \dd \lambda  =
	t \left[ q^2 + \frac{1}{4} \lp 3\alpha^2 + 2\alpha \alpha^* + 3{\alpha^*}^2 \right) \right] +  \frac{(x-L)}{2} (\alpha + \alpha^*) = 0,
\end{align}
where the first equality follows from evaluating the integral by residues. 

\subsubsection{Determining the motion of the $\alpha(x,t)$: self-similar solution of the Whitham system} The moment and gap conditions, \eqref{two band moment cond} and \eqref{endpoint gap cond}  respectively, constitute two real conditions on the moving endpoints $\alpha$ and $\alpha^*$ for each $(x,t)$ in the genus one region. For each $t > 0$ these conditions are equivalent to the intersection of the zero level sets of the functions
\begin{subequations}\label{two band endpoint functions}
\begin{align}
		F_M(\alpha, x, t) &:= \frac{t}{4} \lp 3\alpha^2 + 2\alpha \alpha^* + 3{\alpha^*}^2 + 4 q^2 \rp + \frac{(x-L)}{2} (\alpha + \alpha^*), \label{two band moment function} \\
		F_G(\alpha, x, t) &:= \int_{\alpha^*}^\alpha S(\lambda) \left[ t(2\lambda +\alpha + \alpha^*) + (x - L) \right] \dd \lambda.  \label{two band gap function}
\end{align}
\end{subequations}
Observing that these functions are linear---as opposed to affine---functions of $x-L$ and $t$, it follows that for all $t>0$ their zero level sets depend only on the self-similar variable
\begin{equation}\label{mu def}
 	\mu:= -\frac{x-L}{2t}
\end{equation}
 which encodes all of the explicit $(x,t)$ dependence of each function. Geometrically, $\mu$ is an angular variable parameterizing characteristic lines in $(x,t)$-space passing through $x=L$. The exterior region, $\mathcal{S}_0$, with no  $g$-function,  and the genus zero region, $\mathcal{S}_1$, are naturally described in terms of $\mu$ as $\mu < 0$ and $\mu > \sqrt{2}q$ respectively. The remainder of this section is devoted to the proof of the following proposition, which states roughly that for each $\mu$ in the remaining interval, $\mu \in (0, \sqrt{2}q)$, the moment and gap functions \eqref{two band endpoint functions} uniquely determine $\alpha(x,t) := \mathcal{A}(\mu)$, such that the limiting values $\alpha(0)$ and $\alpha( \sqrt{2} q )$ ``match" the exterior and genus one cases.
%%% Prop: Solution of endpoint equations in the two band case
\begin{prop}\label{prop: two band endpoints} 
There exist a function $\mathcal{A}: (0, \sqrt{2}q) \rightarrow \C^+$ such that by taking $\alpha = \mathcal{A}(\mu)$, where $\mu$ is the self-similar variable \eqref{mu def}, the moment and gap equations $F_M( \mathcal{A}(\mu'), x', t')=0$ and $F_G(\mathcal{A}(\mu'), x', t') = 0$, with $F_M$ and $F_G$ given by \eqref{two band endpoint functions}, are satisfied for each (x',t') such that $t'>0$ and $\mu' \in (0,\sqrt{2} q)$. Additionally, $\alpha = \mathcal{A}(\mu)$ assumes the following limiting values at the boundaries of its definition. 
\begin{equation*}
	\lim_{\mu \rightarrow 0} \mathcal{A} =  iq \quad \text{and} \quad \lim_{\mu \rightarrow \sqrt{2}q} \mathcal{A} = \frac{q}{\sqrt{2}}
\end{equation*}   
\end{prop}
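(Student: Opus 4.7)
\textbf{The plan} is to exploit the self-similar structure of the moment and gap equations to reduce them to a system of two real equations in two real unknowns depending on the single real parameter $\mu$, and then produce the branch $\mathcal{A}(\mu)$ via the implicit function theorem. Writing $\alpha = \beta + i\gamma$ with $\beta,\gamma\in\R$, one checks that $F_M/t$ and $F_G/t$ depend on $(x,t)$ only through $\mu$, and the system $F_M = F_G = 0$ is equivalent to
\begin{align*}
f_M(\beta,\gamma;\mu) &:= 2\beta^2 - \gamma^2 + q^2 - 2\mu\beta = 0, \\
f_G(\beta,\gamma;\mu) &:= \imag \int_{\alpha^*}^{\alpha} S(\lambda)\,(\lambda + \beta - \mu)\,\dd\lambda = 0.
\end{align*}
The second reduction uses that $F_G/t$ is purely imaginary, a fact established from the symmetry $S(z^*) = S(z)^*$ of the branched square root together with the palindromic structure of the integration contour under $\lambda \mapsto \lambda^*$.

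Solving $f_M = 0$ explicitly yields $\gamma^2 = 2(\beta - \mu/2)^2 + (q^2 - \mu^2/2)$, a strictly positive expression for every $\beta\in\R$ whenever $|\mu|<\sqrt{2}q$. Thus the moment condition selects a smooth curve $\gamma = \gamma_*(\beta,\mu) > 0$ of candidate endpoints in $\C^+$, and substituting into $f_G$ reduces the problem to the single real equation $\widetilde F(\beta,\mu) := f_G(\beta,\gamma_*(\beta,\mu);\mu) = 0$ to be solved for $\beta = \beta(\mu)$.

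The limiting configurations can be written down by inspection. At $\mu = \sqrt{2}q$ the real choice $\beta = q/\sqrt{2}$, $\gamma = 0$ satisfies $f_M = 0$ by direct substitution and makes $f_G$ vanish trivially since $\alpha = \alpha^*$ collapses the contour; this is the coalescence where $\alpha$ and $\alpha^*$ merge on the real axis, and it recovers exactly the stationary point $\xi_0(x,T_1(x)) = q/\sqrt{2}$ computed from \eqref{one band xi_0} at the first breaking time. At $\mu = 0$ the choice $\beta = 0$, $\gamma = q$ satisfies $f_M = 0$; here $\alpha$ collides with the outer band endpoint $iq$, the gap $\gamma_g$ collapses, and the genus-one surface degenerates to the genus-zero contour $\Gamma_\nu = [-iq,iq]$ used in Section \ref{sec: one band}. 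With both limiting solutions in hand, a direct computation of $\partial_\beta \widetilde F$ at these configurations (using local expansions of $S$ near its branch-point coalescences) exhibits non-vanishing, so the implicit function theorem produces a smooth branch $\mathcal{A}(\mu)$ extending inward from each endpoint.

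\textbf{The main obstacle} will be to confirm that $\partial_\beta \widetilde F$ does not vanish throughout the full open interval $(0,\sqrt{2}q)$, so that the two locally defined branches extend and join into a single smooth function $\mathcal{A}$ without a turning point. The cleanest route is to recognize $(F_M,F_G) = (0,0)$ as the self-similar reduction of the genus-one Whitham modulation equations \eqref{one phase Whitham evolution}: strict hyperbolicity of that system on the genus-one domain, restored precisely by the opening of the second gap (recall the genus-zero system \eqref{modulation equations} is elliptic), forces monotonicity of the Riemann invariants in the self-similar variable $\mu$. A more hands-on alternative is to track the saddle structure of the harmonic function $\imag \varphi_0$ off the bands directly, in the spirit of Lemma \ref{lem: one band zero level}, applying the maximum principle to rule out pinching-off of the gap contour for any $\mu \in (0,\sqrt{2}q)$.
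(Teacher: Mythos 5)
Your reduction to two real equations is sound: the computation $\tfrac14(3\alpha^2+2\alpha\alpha^*+3\alpha^{*2}) = 2\beta^2-\gamma^2$ is correct, the observation that $F_G$ is purely imaginary follows from the symmetry $S(\lambda^*)^*=S(\lambda)$ together with $(\int_{\alpha^*}^{\alpha})^* = -\int_{\alpha^*}^{\alpha}$, and both limiting configurations $(\beta,\gamma)=(q/\sqrt2,0)$ at $\mu=\sqrt2 q$ and $(0,q)$ at $\mu=0$ are verified correctly (including the identification of $q/\sqrt2$ with $\xi_0$ at the first breaking time). However, what you yourself flag as ``the main obstacle'' --- non-vanishing of $\partial_\beta\widetilde F$ on all of $(0,\sqrt2 q)$ so that the two local branches join --- is precisely the content of the proposition, and neither of your proposed routes closes it. The appeal to strict hyperbolicity of the genus-one Whitham system is an appeal to an unproven (and for focusing NLS genuinely delicate) fact, and extracting monotonicity in $\mu$ from it would anyway require essentially the Jacobian computation you are trying to avoid. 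The alternative route via the level sets of $\imag\varphi_0$ is circular: $\varphi_0=\theta_0-2g$ is defined through the genus-one $g$-function, whose construction presupposes that $\alpha(\mu)$ already exists.

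The paper closes this gap by a direct, global computation rather than a continuation argument. Treating $\alpha$ and $\alpha^*$ as the two unknowns, it shows $\partial F_G/\partial\alpha = -\partial F_M/\partial\alpha\int_{\alpha^*}^{\alpha}\frac{\lambda-\alpha^*}{\mathcal R(\lambda)}\,\dd\lambda$ (and its conjugate analogue), whence the Jacobian collapses to $(\alpha-\alpha^*)\,\partial_\alpha F_M\,\partial_{\alpha^*}F_M\int_{\alpha^*}^{\alpha}\mathcal R(\lambda)^{-1}\dd\lambda = \tfrac{2i(\alpha-\alpha^*)}{|\alpha+iq|}\,\partial_\alpha F_M\,\partial_{\alpha^*}F_M\,K(m)$ with $m=1-|\alpha-iq|^2/|\alpha+iq|^2$; since $m\in[0,1)$ for $\alpha\in\C^+\setminus\{iq\}$ and $\imag\,\partial_\alpha F_M = t\gamma>0$, the Jacobian is nonzero everywhere off the degenerate endpoints. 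Better still, rewriting $F_M=F_G=0$ in terms of $K(m)$ and $E(m)$ (after a Landen transformation) yields the closed-form parametrization $\alpha = q\bigl(\sqrt{4A(m)-(1+mA(m))^2}+imA(m)\bigr)$ with $A(m)=\frac{(2-m)E(m)-2(1-m)K(m)}{m^2E(m)}$, which is globally defined for $m\in[0,1]$; the endpoint limits then follow from the standard expansions of $K$ and $E$ at $m=0$ and $m=1$, and evaluating $F_M$ and $F_G$ at the degenerate values pins $m=0$ to $\mu=\sqrt2 q$ and $m=1$ to $\mu=0$. If you want to complete your argument without the elliptic-function machinery, you must at minimum prove the global non-vanishing of your $\partial_\beta\widetilde F$ directly; as written, the existence of a single branch $\mathcal A$ over the whole interval is asserted rather than proved.
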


\begin{proof}
Both $F_M$ and $F_G$ are analytic functions of $\alpha$ and $\alpha^*$ so the Jacobian necessary to invoke the implicit function theorem to find $\alpha$ and $\alpha^*$ as functions of $x$ and $t$ is
\begin{equation}
	\mathcal{J}(\alpha, \alpha^*) := \det \left[ \begin{array}{cc} 
		\pd{F_M}{\alpha} & \pd{F_M}{\alpha^*}_\Bspace \\
	 	\pd{F_G}{\alpha} & \pd{F_G}{\alpha^*}
	\end{array} \right].
\end{equation}
A straightforward computation using \eqref{two band endpoint functions} shows that 
\begin{equation}\label{gap derivatives}
\begin{split}
	\pd{F_G}{\alpha} &= - \pd{F_M}{\alpha} \int_{\alpha^*}^\alpha \frac{\lambda - \alpha^*}{\mathcal{R}(\lambda)} \dd \lambda \\
	\pd{F_G}{\alpha^*} &= - \pd{F_M}{\alpha^*} \int_{\alpha^*}^\alpha \frac{\lambda - \alpha}{\mathcal{R}(\lambda)} \dd \lambda 
\end{split}
\end{equation}
where 
\begin{equation*}
\mathcal{R}(\lambda) := \sqrt{ (\lambda - iq) (\lambda - \alpha) (\lambda + iq) ( \lambda - \alpha^*)},
\end{equation*}
is cut along the band intervals $\gamma_b$ and $\gamma_b^*$ and normalized such that $\mathcal{R} \sim \lambda^2$ for $\lambda \rightarrow \infty$. From this it follows directly that
\begin{equation}\label{alpha Jacobian}
	\mathcal{J}(\alpha, \alpha^*) = (\alpha - \alpha^*) \pd{F_M}{\alpha} \pd{F_M}{\alpha^*} \int_{\alpha^*}^\alpha \frac{ \dd \lambda}{\mathcal{R}(\lambda)} = 2i \frac{(\alpha - \alpha^*)}{| \alpha + iq |} \pd{F_M}{\alpha} \pd{F_M}{\alpha^*} K(m),
\end{equation} 
where $K(m)$ is the complete elliptic integral of the first kind with parameter
\begin{equation}\label{elliptic parameter}
	m = 1 - \frac{|\alpha - iq|^2}{|\alpha+iq|^2}.
\end{equation}
Clearly, $m \in [0,1]$ for $\alpha \in \overline{\C^+}$, vanishes only for $\alpha \in \R$ and achieves unity only for $\alpha = iq$; thus the Jacobian is finite and nonzero for each $\alpha \in \C^+ \backslash \{ iq \}$. Just like the Jacobian,  the gap equation, $F_G$, can be expressed in terms of complete elliptic integrals (cf. chpater 22 of \cite{WW62}). Following this procedure, and employing a Landen transformation \cite{Byrd54} to simplify the form, the gap and moment equations $F_M = 0$ and $F_G = 0$ are equivalent to the pair of equations 
\begin{equation}\label{alpha elliptic equations}
	\begin{split}
		\frac{1}{4} \lp 3\alpha^2 + 2\alpha\alpha^* + 3{\alpha^*}^2 \rp + q^2 - \mu (\alpha + \alpha^*) &= 0, \\
		|\alpha - iq|^2 K(m) -\re\left[(\alpha-iq)(\alpha+iq)\right] E(m) &= 0.
	\end{split}
\end{equation}	
where $K$ and $E$ are the complete elliptic integrals of the first and second kind respectively. These equations together with \eqref{elliptic parameter} implicitly define $\alpha$ as a function of $\mu$ for each $\alpha \in \C^+ \backslash \{ i q\}$. Using these equations we can express $\alpha$ in the convenient form 
\begin{equation}\label{alpha implicit}
	\alpha = q \left( \sqrt{4A(m) - (1+mA(m))^2} + imA(m)  \right),
\end{equation}
where we have defined
\begin{equation*}
	A(m) = \frac{(2-m)E(m) - 2(1-m)K(m)}{m^2E(m)}.
\end{equation*}
 For each $m \in [0,1]$, \eqref{alpha implicit} is well defined and has the following limiting behavior 
 \begin{align*}
 	\alpha &= \frac{q}{\sqrt{2}} + \frac{3iq}{8}m + \bigo{m^2}  &m \rightarrow 0^+, \\
	\alpha &= iq - 2\sqrt{1-m} +  \bigo{(1-m)\log(1-m)} \quad &m \rightarrow 1^-.
\end{align*}
At these limiting values, we have $F_M(q/\sqrt{2},x,t) =  2tq ( q- \mu/ \sqrt{2})$ so $\mu = \sqrt{2}q$ at $m=0$, and $F_G(iq, x, t) = -4iqt \mu$, so $\mu = 0$ at $m = 1$. This completes the proof.
\end{proof}

Before moving to other considerations, we will show that the endpoints $\alpha$ and $iq$ of the $g$-function describe a special solution for the Riemann invariants of a slowly modulating one phase wave solution of NLS. The level curves $F_M = 0$ and $F_G=0$ define $\alpha$ as a smooth function of both $x$ and $t$ which allows one to define the characteristic speed     
\begin{equation}\label{speed def}
	c(\alpha, iq) := -\pd{\alpha}{t} \Bigg{/} \pd{\alpha}{x}.
\end{equation}
Implicitly differentiating \eqref{gap derivatives} it follows that 
\begin{equation*}
	c(\alpha, iq) = - \frac{ \pd{F_M}{t} \int_{\alpha^*}^\alpha \frac{\lambda - \alpha}{\mathcal{R}(\lambda)} d \lambda + \pd{F_G}{t} }{ \pd{F_M}{x}\int_{\alpha^*}^\alpha \frac{\lambda - \alpha}{\mathcal{R}(\lambda)} d \lambda + \pd{F_G}{x} }.
\end{equation*}
The speed $c$ depends on $x$ and $t$ only through the endpoints $iq$ and $\alpha$. Using the same procedure used to simplify the integrals in the gap equation in the above proof, we can express the characteristic speed as
\begin{equation}\label{Riemann invariant speed}
	c(\alpha, iq) = -\frac{1}{2}(\alpha + \alpha^*) - \frac{(\alpha - \alpha^*)(\alpha - iq)K(m) }{(\alpha -iq) K(m) + (iq- \alpha^*)E(m)},
\end{equation}
where $m$ is the elliptic parameter defined by \eqref{elliptic parameter}.	 Using \eqref{speed def} and \eqref{Riemann invariant speed} we have
\begin{equation}\label{one phase Whitham evolution}
	\pd{\alpha}{t} + c(\alpha, iq) \pd{\alpha}{x} = 0,
\end{equation}
which is precisely the Whitham equation for the evolution of one Riemann invariant of a slowly modulating one phase wave solution of NLS \cite{AI94}. The generic one phase modulation is described by two conjugate pairs of Riemann invariants labelled $\lambda_i$; each satisfying an evolution equation $\partial_t \lambda_i + c_i(\vec{\lambda}) \partial_x \lambda_i = 0,\ i=1,\ldots,4$. Motivated by the self-similar solution for $\alpha$ described by Prop. \ref{prop: two band endpoints}, if we seek solutions of the Whitham system in the form $\lambda_i(x,t) := \lambda_i(\mu)$, with $\mu$ as defined by \eqref{mu def}, the evolution becomes $\partial_\mu \lambda_i \lp c_i(\vec{\lambda}) - \mu \rp = 0$.  So each invariant $\lambda_i$ is either constant, just as the endpoint $z = iq$ in our construction, or they satisfy $c_i(\vec{\lambda}) = \mu$. Self-similar solutions of this type were studied in \cite{Kam97}; there the author shows that setting $c_i(\vec{\lambda}) = \mu$ with $c_i$ given by \eqref{Riemann invariant speed} one arrives at the implicit system of equations \eqref{alpha elliptic equations} which describe the evolution of the endpoint $z = \alpha$. Thus the evolution $\lambda_1 = iq$, $\lambda_2 = \alpha = \mathcal{A}(\mu)$, is preciesely a self-similar solution of the one-phase Whitham system.	
	
\subsubsection{Topological structure of the zero level sets}
With $\alpha$ selected to satisfy the endpoint equations \eqref{two band endpoint functions}, the construction of the $g$-function will be complete if we can select the band and gap contours to satisfy the conditions set out in \eqref{two band band/gap cond}. Those $(x,t)$ for which these conditions can be satisfied comprise the region of validity of the genus one $g$-function ansatz. Satisfying the band and gap conditions amounts to understanding the topology of the zero level sets of $\imag \varphi_0$ and $\imag \varphi_1$. Our procedure to determine the structure of the zero level sets mimics the proof of Lemma \ref{lem: one band zero level}. Let $L_k(x,t) := \{ z \, : \, \imag \varphi_k(z) = 0 \}$, the essential facts needed to determine the topology of these sets are contained in the following observations
\begin{itemize}
	\item The conjugation symmetry $\varphi_k^*(z) = \varphi_k(z^*)$ guarantees that $\R \subset L_k(x,t)$ and allows us to consider only $z \in \C^+$.
	\item $L_k(x,t)$ cannot contain a closed bounded loop if $\imag \varphi_k$ is harmonic inside the loop as this would imply $\varphi_k$ is identically zero inside the loop.
	\item Any branch of $L_k(x,t)$ in $\C^+$ can terminate only at a critical point of $\varphi_k$: the branch points $iq$ and $\alpha$, $\infty$, or any other zero of the corresponding density in $\overline{\C^+}$. 
\end{itemize}	
	
We begin with $L_0(x,t)$, using \eqref{two band varphi def}, \eqref{two band rho def}, and \eqref{two band rho explicit}, we have the convenient representation
\begin{align*}
	\varphi_0 &=  \varphi_0(p) + \int_{p}^z 4t S(\lambda) (\lambda - \xi_0) \dd \lambda  \\
\end{align*}
where $p$ is any fixed point in the plane and the path of integration is still understood to avoid $\Gamma_\nu \cup \Gamma_\nu^*$. Additionally, we have defined the $\mu$-dependent real value
\begin{equation}\label{two band xi_0}
	\xi_0 = \mu - \frac{1}{2}(\alpha+\alpha^*). 
\end{equation}
Simple estimates using \eqref{alpha elliptic equations} and \eqref{alpha implicit} imply that $\xi_0$ is always positive.  Two facts are immediately obvious from this representation. First, for $t>0$, $L_0$ depends only on the self-similar parameter $\mu$, and second, $z = \xi_0$ is the only critical point of $\varphi_0$ not listed above. Using \eqref{two band jump gaps} and \eqref{two band jump bands}, $\varphi_0$ has the following local expansions near each critical point, where $c$ is a nonzero constant which depends on the expansion point:
\begin{itemize}
	\item As $z \rightarrow iq, \ \varphi_0(z) \sim -\eta + c(z-iq)^{1/2}$, so one branch of $L_0$ terminates at $iq$.
	\item As $z \rightarrow \alpha, \ (\varphi_0)_\pm (z) \sim -\eta \mp \Omega + c(z-\alpha)^{3/2}$, so three branches of $L_0$ emerge from $\alpha$ separated by angles of $2\pi/3$.
	\item As $z \rightarrow \xi_0, \ \varphi_0(z) \sim \varphi_0(\xi_0) + c(z-\xi_0)^2$, so one branch of $L_0$ in $\C^+$ terminates at $\xi_0$.
	\item As $z \rightarrow \infty, \ \varphi_0(z) \sim -\theta_0(z)$, so in addition to the real line, one branch of $L_0$ leaves $\C^+$ by approaching $\infty$ along a trajectory asymptotically approaching $\re z  = \mu$.
\end{itemize}
Since the constant terms in each expansion are real, each critical point lies in $L_0$, and since no trajectory leaving $\alpha$ may form a homoclinic orbit the only possibility is that the three trajectories leaving $\alpha$ terminate at $iq$, $\infty$, and $\xi_0$ respectively. It only remains to determine the topology of those connections. Consider the degenerate case $\mu = \mu_c = \sqrt{2}q$; here the branch point $\alpha$ is real and the two bands degenerately form a single continuous path connecting $\pm iq$. Recalling that $\mu_c$ parameterizes the first breaking time $T_1(x)$ separating the genus zero and genus one regions, it is not surprising that, comparing \eqref{one band g'} and  
%%%%%%%%%%%%%%
\begin{figure}[htb]
\begin{center}
		\includegraphics[width=.4\textwidth]{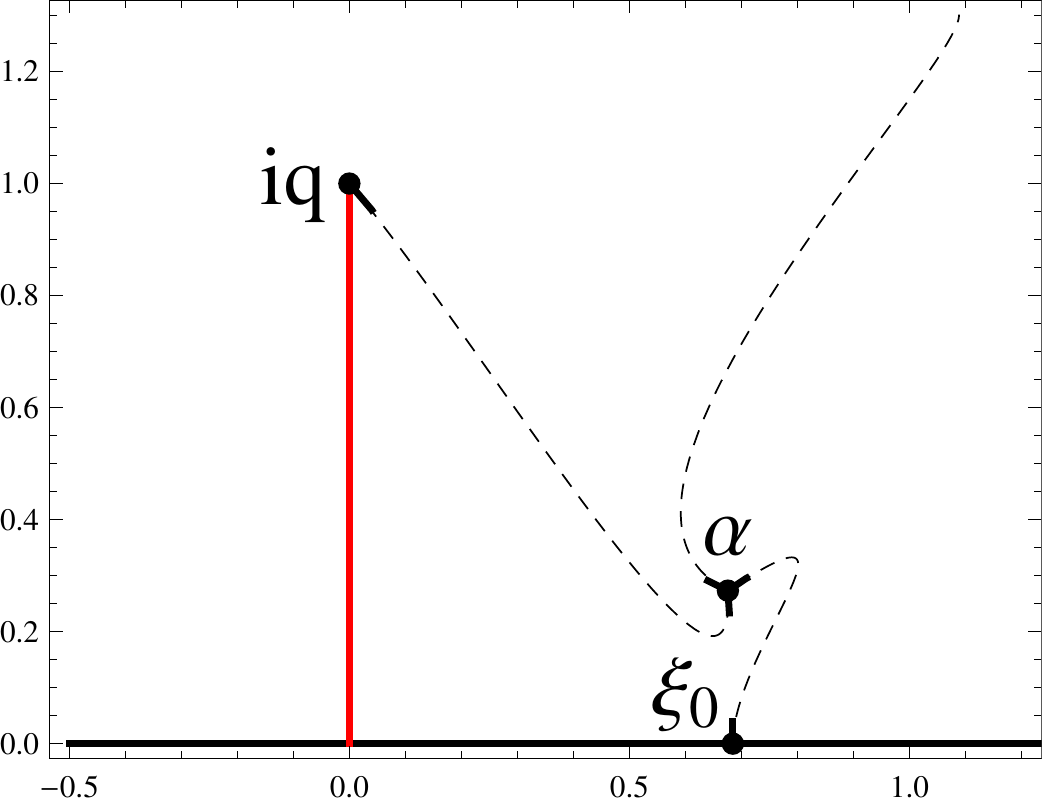} 
		\qquad
		\includegraphics[width=.4\textwidth]{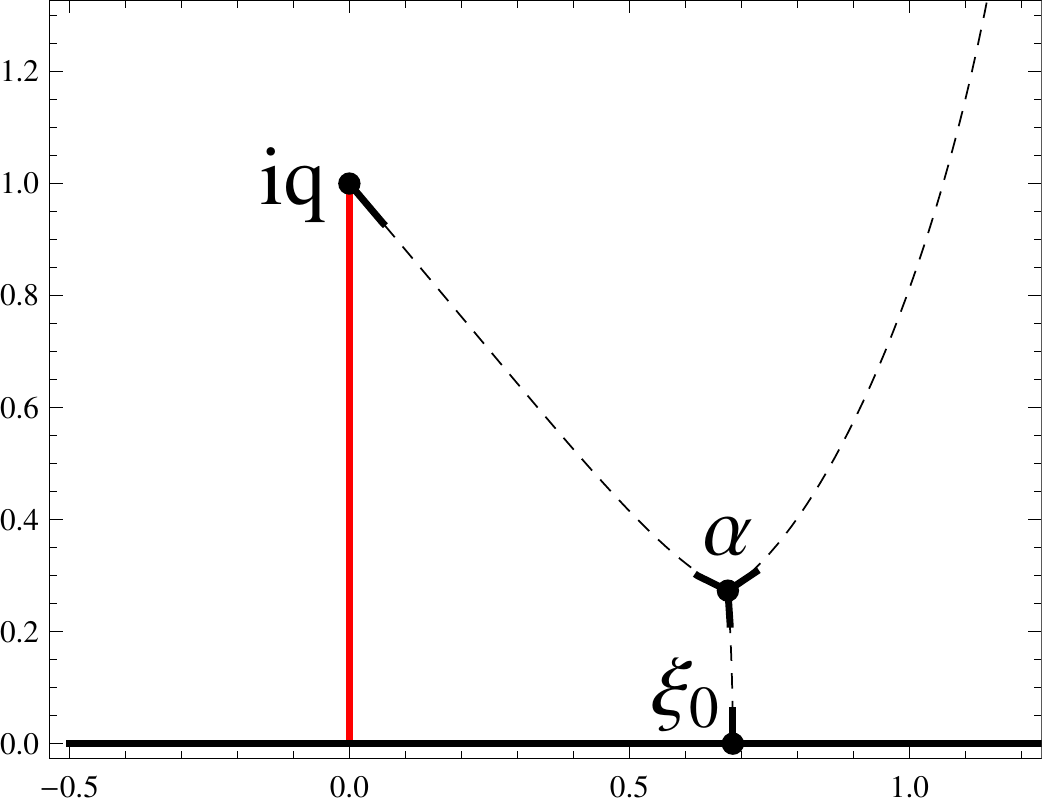}
\caption{The connection problem for the level set $\imag \varphi_0$. The left figure shows an allowable "twisted" configuration of the branches (dashed lines) connecting each of the critical points in $\C^+$, while the right figure shows the numerical computed branches for the same choice of $\alpha$. Each possible configuration is topologically equivalent and separates $\C^+$ into two connected components on which $\imag \varphi_0$ is single signed.
\label{fig: two band connection problem}
}    
\end{center}
\end{figure}	
%%%%%%%%%%%%%%%
\eqref{two band rho explicit}, the degenerate genus one phase $\varphi_0$ is identically equal to its genus zero counterpart along the caustic $\mu_c$. Just beyond the caustic, that is for $\mu = \mu_c - \hbar, \ \hbar \ll 1$, we can view, for any fixed $z$ bounded away from $\alpha(\mu_c)$, the genus one phase $\varphi_0(z)$ as a small, continuous, perturbation of the genus zero phase. In the genus zero case, we showed in the course of proving Lemma \ref{lem: one band zero level} that $\imag \varphi_0(i y)$ is bounded away from zero for $y$ in any closed subset of $(0,q)$. This is robust to small perturbations, and it follows that no trajectory of $L_0$ passes through the open imaginary interval between the real axis and $iq$ for $\mu$ sufficiently close to $\mu_c$. As the trajectories leaving $\alpha$ cannot intersect each other, this completely determines the topology of $L_0(x, t)$ for $\mu$ near $\mu_c$: the trajectories leaving $\alpha$ connect, in counterclockwise order, to $iq$, $\infty$, and $\xi_0$, see Figure \ref{fig: two band connection problem}. As $\mu$ decreases to zero, $L_0(x,t)$ deforms continuously and thus the topology of the level set is preserved. The level set $L_0(t)$ separates $\C^+$ into two connected regions, in each region the sign of $\imag \varphi_0$ is determined by continuation from its limiting behavior for large $z$. The band condition \eqref{two cut band cond} and the first gap condition \eqref{two cut gap cond 1} are satisfied by defining $\gamma_b$ to be the unique branch of the level set $\imag\varphi_0 = 0$ connecting $\alpha$ to $iq$ and taking $\gamma_g$ to be any simple contour connecting $\alpha$ to $\xi_0$ such that it lies everywhere to the right of $L_0(t)$, that is, in the region of $\C^+$  where $\imag \varphi_0 >0$. For convenience we choose $\gamma_g$ such that it approaches $\alpha$ and $\xi_0$ along steepest descent paths of $\varphi_0$, such a choice is depicted in Figure \ref{fig: two band varphi0}. 
\begin{figure}[htb]
\begin{center}
	\includegraphics[width = .4\textwidth]{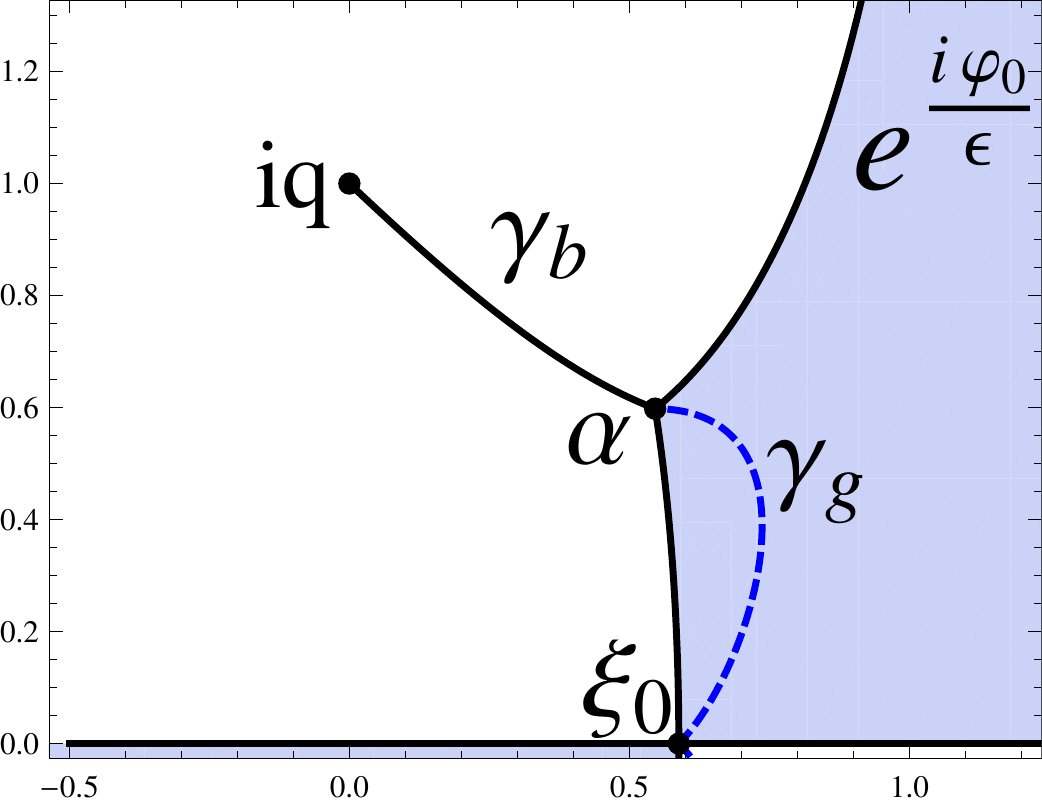}
\caption{Numerical computation of $\imag \varphi_0$ in $\C^+$ for a generic choice of $\mu \in (0,\sqrt{2}q)$. Solid lines represent the zero level set while shaded and unshaded region where $\imag \varphi_0$ is positive and negative respectively. The dashed line represents an arbitrary choice of the gap contour $\gamma_g$ which we must choose to lie everywhere in the region $\imag \varphi_0 > 0$. 
\label{fig: two band varphi0}
}
\end{center}
\end{figure}
	
It remains to show that condition \eqref{two cut gap cond 2} can be satisfied which turns our attention to $\varphi_1$ and its associated imaginary zero level set $L_1(x,t)$. By definition, $\varphi_1 = \varphi_0 + 4L\nu(z)$ but it also has the convenient integral representation
\begin{equation}\label{phase 1 integral rep}
	\begin{split}
		\varphi_1(z) &= \varphi_1(p) + \int_p^z \rho_1(\lambda) \dd{\lambda} \\
		\rho_1(\lambda) &= 4t S(\lambda)(\lambda - \xi_0) + 4L\frac{\lambda}{\nu(\lambda)}
	\end{split}
\end{equation}
for any fixed base point $p$ in $\C$; as always the path of integration is understood to avoid $\Gamma_\nu \cup \Gamma_{\nu}^*$ along which $\nu$ is branched. The two representations lead to the following simple observations:
\begin{itemize}
	\item For each $z \in \C^+\backslash \{ iq \}$ such that $\imag (\varphi_0) \geq 0,\ \imag(\varphi_1) \geq 4L\imag \nu(z) >0$. So $L_1$ is bounded away from $L_0$ except at $z=iq$. In particular, no branch of $L_1$ connects to $\alpha$.	
	\item As $z \rightarrow iq,\  \varphi_1(z) \sim -\eta + c(z-iq)^{1/2}$, so one branch of $L_1$ terminates at $iq$.
	\item As $z \rightarrow \infty,\ \varphi(z) = 2tz^2+2(x+L)z + \bigo{1/z}$, so in addition to the real line, one trajectory of $L_1$ leaves $\C^+$ by approaching infinity along a curve asymptotic to the line $\re z = - (x+L)/2t$ 
	\item Near any zero, $p_k$ of $\rho_1$, $\varphi_1 \sim \varphi_1(p_k) + c(z-p_k)^2$. If $\varphi_1(p_k)$ is real, four trajectories of $L_1$ leave $p_k$ separated by angles of $\pi/2$.
\end{itemize}   
The last points merits further investigation. Clearly, $ \rho_1(0^-) < 0$ and $\lim_{\lambda \rightarrow -\infty} \rho_1(\lambda) = -\infty$. However, for any fixed $\lambda_0< 0$, $\lim_{t \rightarrow 0} \rho_1(\lambda_0) = 4L\lambda_0/\nu(\lambda_0) > 0$. Thus, for each sufficiently smalll $t$, $\rho_1$ has at  least two real, negative zeros. In fact, these two are the only zeros of $\rho_1$ in the complex plane; estimates of the quartic equation underlying $\rho_1(\lambda)= 0$ show that the quartic always has two positive real zeros which lie on the other sheet of the Riemann surface associated to $\rho_1$. Now fix $\mu \in [0,\sqrt{2}q]$, thus fixing the values of $\alpha$ and $\xi_0$. For $t$ sufficiently small such that the real zeros exist label them $\xi_1$ and $z_1$, ordered $\xi_1 < z_1 < 0$. As $t$ increases the two zeros monotonically approach each other eventually reaching a breaking time $T_2$ at which the two zeros coalesce. For times $t > T_2$ the zeros become complex and $\varphi_1$ has no real critical points. Note that because we solve for $T_2$ along lines $\mu= \text{const.}$, $T_2 = T_2(x) > T_1(x)$ for each $x \in [0,L)$ (cf. \eqref{S1}). This breaking time $T_2(x)$ is uniquely characterized as follows:
\begin{equation}\label{second breaking}
	T_2(x) \text{ is the unique time such that } \left\{ \begin {array}{r} \rho_1 = 0_{\Bspace} \\ \deriv{\rho_1}{z} = 0 \end{array} \right. \text{ has a simultaneous solution.}
\end{equation} 
\begin{figure}[htb]
\begin{center}
	\includegraphics[width=.3\textwidth]{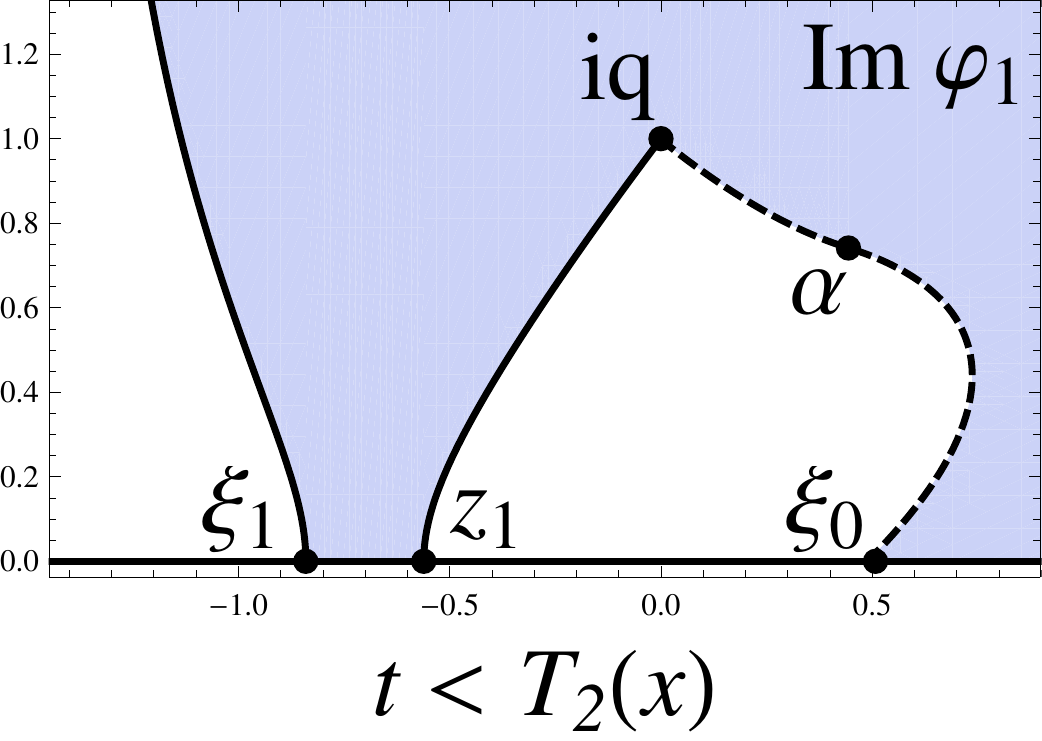}
	\quad 
	\includegraphics[width=.3\textwidth]{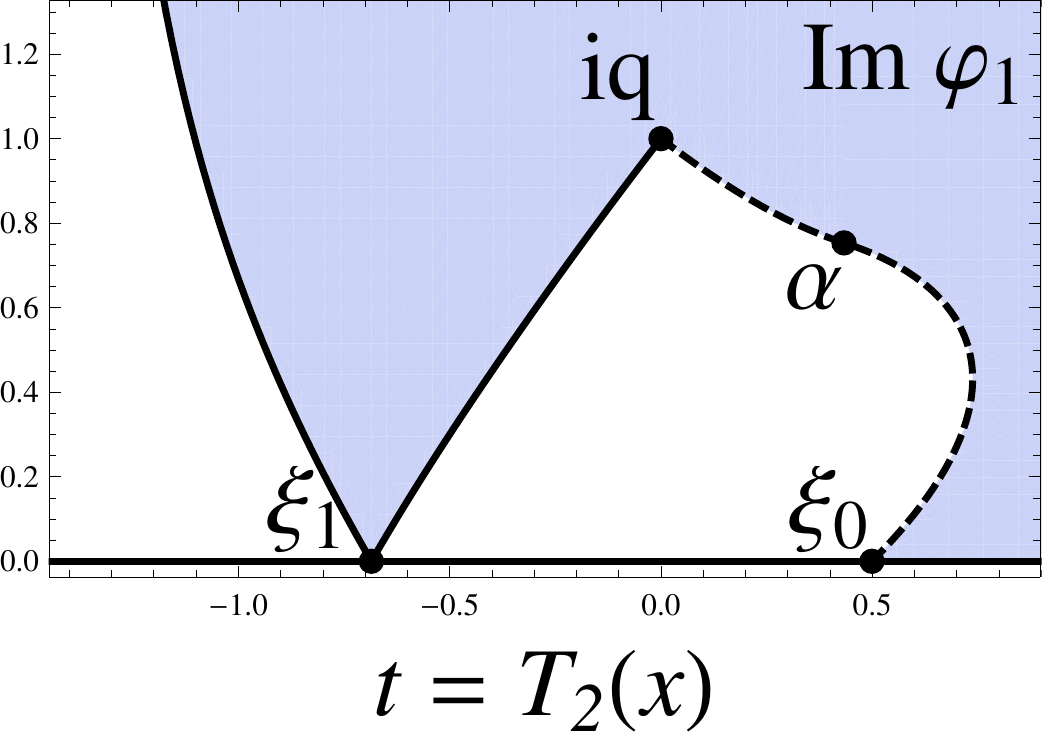}
	\quad 
	\includegraphics[width=.3\textwidth]{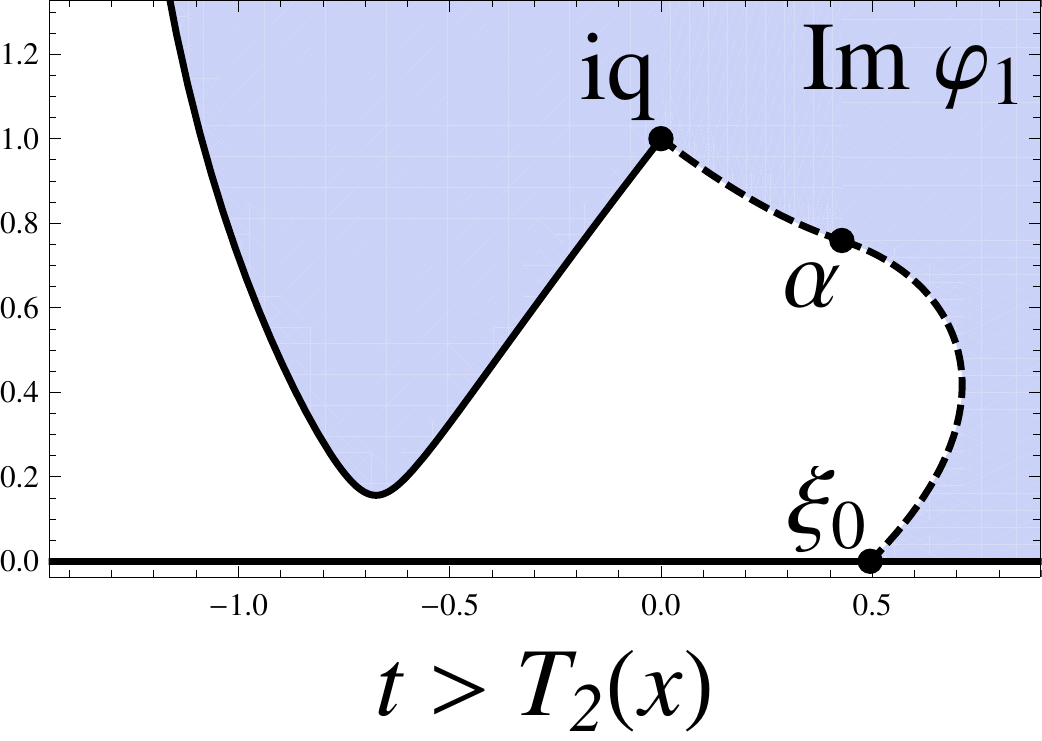}
\caption{Numerical calculation of $\imag \varphi_1$ as $t$ increases to and beyond $T_2(x)$. Solid lines indicate where $\imag \varphi_1=0$ and dashed lines the branch cut of $\varphi_1$ Note that once $t > T_2(x)$, the region $\imag \varphi_1 >0$ (shaded region) no longer reaches the negative real axis making it impossible to satisfy \eqref{two cut gap cond 2}.
\label{fig: two band varphi1}
}
\end{center}
\end{figure}	 
For each $t>T_1(x)$, the connection problem is completely determined by the local structure at each critical point of $\varphi_1$. For $T_1(x) < t < T_2(x)$ the level set $L_1(x,t)$ in $\overline{\C^+}$ consists of the real line, a branch connecting $\xi_1$ to infinity and a second branch connecting $z_1$ to $iq$. As $t$ increases beyond the second breaking time $T_2(x)$ the two branches of $L_1$ in $\C^+$ meet on the real line and then move into the complex plane so that, for $t>T_2(x)$, $L_1$ has a single branch in $\C^+$ connecting $iq$ to infinity, see Figure \ref{fig: two band varphi1}.  

The second breaking time $t=T_2(x)$ limits the validity of the genus-one ansatz for the $g$-function. To satisfy the final condition \eqref{two cut band cond} the contour $\Gamma_1$ must be chosen so that it meets the real axis at a point $p$ such that the region enclosed between $\Gamma_1$ and the real interval $(p,\infty)$ completely encloses the locus of pole accumulation $i(0,q)$, and must lie everywhere in the region $\imag \varphi_1 >0$. For $T_1(x)< t <T_2(x)$, the condition is satisified by taking $\Gamma_1$ such that it leaves the real axis at $\xi_1$, the leftmost real critical point of $\varphi_1$ lying everywhere in the region $\imag \varphi_1 >0$. However, for $t>T_2(x)$ the set $\imag \varphi_0 > 0$ pulls away from the negative real axis, so that given any choice of $\Gamma_1$ a new gap interval emerges across which condition \eqref{two cut gap cond 2} fails. For $t > T_2(x)$ the situation should be rectified by replacing the genus-one ansatz with a genus-two $g$-function with a new band of support across this newly opening gap. However, our focus in this paper is the semiclassical regularization of the square barrier so we leave the higher genus transitions as a line of investigation to be pursued later. For now we state the following proposition which summarizes the above discussion.
\begin{prop}\label{prop: genus one region}
Let  
\begin{equation}\label{S2}
	\mathcal{S}_2 = \left\{ (x,t)\, :\, x \in (0,L), \  t \in \lp T_1(x),\, T_2(x) \rp \right\}.
\end{equation} 
The genus-one ansatz, consisting of the $g$-function defined by \eqref{two band rho def} and \eqref{two band rho explicit}, the band contour $\gamma_b$ connecting $iq$ and $\alpha$, and the gap contours $\gamma_g$ and $\Gamma_1$ exist such that the system of band and gap inequalities \eqref{two band band/gap cond} are satisfied for each $(x,t) \in \mathcal{S}_2$.
\end{prop}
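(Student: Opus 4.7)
The plan is to verify the proposition by assembling the ingredients established earlier in the section: the existence of the endpoint function $\mathcal{A}(\mu)$ from Proposition \ref{prop: two band endpoints}, and the topological analysis of the zero level sets $L_0(x,t)$ and $L_1(x,t)$ of $\imag\varphi_0$ and $\imag\varphi_1$. Fix $(x,t) \in \mathcal{S}_2$ and let $\mu = -(x-L)/(2t)$; since $T_1(x) < t < T_2(x)$ and $x \in (0,L)$, one checks that $\mu \in (0,\sqrt{2}q)$, so Proposition \ref{prop: two band endpoints} produces a unique $\alpha = \mathcal{A}(\mu) \in \C^+$ for which the moment and gap equations \eqref{two band endpoint functions} both vanish. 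This simultaneously guarantees that the density $\rho$ in \eqref{two band rho explicit} decays like $1/z^2$ at infinity (so $g$ from \eqref{two band rho def} is analytic at $\infty$ with $g = \bigo{1/z}$) and that the constant $\eta$ in \eqref{two band eta} is real.

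With $\alpha$ determined I would then invoke the topological analysis of $L_0(x,t)$ already carried out in the paragraphs preceding the proposition. The local expansions of $\varphi_0$ at $iq$, $\alpha$, $\xi_0$, and $\infty$ force exactly one trajectory of $L_0$ to emerge from $iq$, three to emerge from $\alpha$ separated by angles $2\pi/3$, one to return to $\xi_0$ from $\C^+$, and one to escape asymptotic to the line $\re z = \mu$. The impossibility of a homoclinic loop (by harmonicity and the maximum principle) together with continuity in $\mu$ from the degenerate case $\mu = \sqrt{2}q$—where $\varphi_0$ reduces to the genus zero modified phase analyzed in Lemma \ref{lem: one band zero level}—fixes the connection pattern: the three trajectories from $\alpha$ terminate at $iq$, $\xi_0$, and $\infty$ respectively, and $\C^+\setminus L_0$ splits into two components on which $\imag \varphi_0$ has constant sign. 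I would then \emph{define} $\gamma_b$ as the unique branch of $L_0$ joining $\alpha$ to $iq$, which immediately gives \eqref{two cut band cond}, and choose $\gamma_g$ to be any simple arc from $\alpha$ to $\xi_0$ remaining in the component where $\imag\varphi_0 > 0$ (for convenience, along the steepest descent path), which gives \eqref{two cut gap cond 1}.

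For \eqref{two cut gap cond 2} I would use the representation $\varphi_1 = \varphi_0 + 4L\nu$ together with the fact that $\imag\nu > 0$ on $\C^+\setminus\gamma_b$ (with the branch chosen to lie along $\gamma_b \cup \gamma_b^*$), which implies $L_1$ is disjoint from $L_0$ except at $iq$ and that no branch of $L_1$ reaches $\alpha$. The critical-point count for $\rho_1$ in \eqref{phase 1 integral rep} shows that for $t < T_2(x)$ the density $\rho_1$ has exactly two real negative zeros $\xi_1 < z_1 < 0$; the local behavior of $\varphi_1$ forces a branch of $L_1$ to go from $iq$ to $z_1$ and a second to go from $\xi_1$ to $\infty$ (asymptotic to $\re z = -(x+L)/2t$), so the component of $\C^+\setminus L_1$ containing a neighborhood of $-\infty$ on the real axis is precisely $\{\imag\varphi_1 > 0\}$ to the left of the $\xi_1$--$\infty$ branch. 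I would then take $\Gamma_1$ to be any simple curve in $\C^+$ from $\xi_1$ to $\infty$ lying strictly inside this component and enclosing, together with the real interval $[\xi_1,\infty)$, the pole locus $i(0,q]$; such a curve exists because the pole locus sits in the adjacent component on the side of $iq$ and the branches of $L_1$ do not obstruct the pass between them for $t < T_2(x)$.

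The main obstacle is the topological rigidity argument: verifying that the connection pattern of $L_0$ described above persists for all $(x,t) \in \mathcal{S}_2$ and not only near the caustic $t = T_1(x)$. I would handle this by a continuity/monodromy argument: the endpoints $\alpha(x,t)$, $\xi_0(x,t)$, and the coefficients of $\rho$ depend smoothly on $(x,t)$ throughout $\mathcal{S}_2$, and a change in the connection topology of $L_0$ could only occur through a saddle connection, i.e.\ a coincidence of critical values of $\varphi_0$ on two distinct critical points. Since $\alpha$ is a $3/2$-type critical point while $\xi_0$ is quadratic and $iq$ is square root, and since the associated critical values satisfy $\varphi_0(iq) = -\eta$, $\varphi_{0\pm}(\alpha) = -\eta \mp \Omega$, $\varphi_0(\xi_0) \in \R$, a saddle connection would force an algebraic relation in $\mu$; I would verify by direct calculation that this relation fails throughout $(0,\sqrt{2}q)$, thereby propagating the topology from $\mu = \sqrt{2}q^-$ to all of $\mathcal{S}_2$. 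The same continuity principle, combined with the explicit characterization of $T_2$ in \eqref{second breaking} as the coalescence of $\xi_1$ and $z_1$, then confirms that the $\Gamma_1$ construction also persists throughout $\mathcal{S}_2$ and fails precisely at $t = T_2(x)$.
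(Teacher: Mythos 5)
Your proposal reproduces the paper's argument essentially step for step: $\alpha$ is supplied by Proposition \ref{prop: two band endpoints} once one checks $\mu\in(0,\sqrt{2}q)$; the topology of $L_0$ is pinned down by the local expansions at $iq$, $\alpha$, $\xi_0$, $\infty$ together with the no-homoclinic-loop principle; $\gamma_b$ and $\gamma_g$ are then defined exactly as in the paper; and $\Gamma_1$ is obtained from the inequality $\imag\varphi_1\geq 4L\,\imag\nu$ on $\{\imag\varphi_0\geq 0\}$ plus the two real negative zeros $\xi_1<z_1<0$ of $\rho_1$ in \eqref{phase 1 integral rep}, which persist until they coalesce at $T_2(x)$ as in \eqref{second breaking}.

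The one place you diverge is the persistence of the $L_0$ topology throughout $\mathcal{S}_2$. The paper pins down the embedding near the caustic $\mu=\sqrt{2}q$ by viewing $\varphi_0$ as a small perturbation of the genus-zero phase of Lemma \ref{lem: one band zero level}, and then appeals to continuous deformation, noting that all admissible embeddings are in any case topologically equivalent and split $\C^+$ into two single-signed components. Your proposed saddle-connection exclusion (``an algebraic relation in $\mu$ that I would verify fails by direct calculation'') is left unexecuted, but it is also unnecessary: the gap equation \eqref{endpoint gap cond} forces every critical value of $\varphi_0$ to be real, so all critical points lie on $L_0$ for every $\mu$, and the abstract connection pattern is then already forced by the degree count you state ($iq$, $\xi_0$, $\infty$ each emit one trajectory, $\alpha$ emits three, and any assignment other than ``each joined to $\alpha$'' creates a forbidden loop). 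Replacing the promissory note with that counting argument closes the proof. One small orientation slip to fix: the component of $\{\imag\varphi_1>0\}$ that reaches the negative real axis for $T_1(x)<t<T_2(x)$ lies to the \emph{right} of the infinite branch emanating from $\xi_1$ (between $\xi_1$ and $z_1$), not to its left; this is the gate through which $\Gamma_1$ must pass and which closes at $t=T_2(x)$.
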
   

\subsection{Removing the remaining oscillations: $N \mapsto Q$.}  
The completed definition of the $g$-function results in the following Riemann-Hilbert problem for $N(z)$ defined by \eqref{two band N}:
\begin{rhp}[ for N(z).]\label{rhp: two band N}
Find a $2 \times 2$ matrix $N(z)$ such that
\begin{enumerate}[1.]
	\item $N(z)$ is analytic for $z \in \C \backslash \Gamma_M$.
	\item $N(z) = I + \bigo{1/z}$ as $z \rightarrow \infty$.
	\item For $z \in \Gamma_M$ $N$ assumes continuous boundary values, $N_+$ and $N_-$, for $z \in \Gamma_M$ which satisfy $N_+ = N_- V_N$ where,
	\begin{equation}\label{two band N jumps}
	V_N = \begin{cases}
		e^{i g \ad \sig/\eps} \lp R^\dagger R 	\rp & z \in (-\infty, \xi_1) \\
		e^{i g \ad \sig/\eps} \lp R_0^\dagger R_0 	\rp & z \in (\xi_1, \infty) \\
	 	e^{i g \ad \sig/\eps} \lp R_0^{-1}R \rp & z \in \Gamma_1 \\
		e^{i g \ad \sig/\eps} \lp R^\dagger R_0^{-\dagger} \rp & z \in \Gamma_1^* \\
	         \tbyt{ e^{-i(\varphi_{0-} + \eta)/\eps} }{0}
	         {w e^{-i\eta/\eps}}{e^{-i(\varphi_{0-} + \eta)/\eps} }_{\Bspace}	& z \in \gamma_b \\
		\tbyt{ e^{-i(\varphi_{0-} + \eta)/\eps} } {-w^* e^{i\eta/\eps}}
		{0}{e^{-i(\varphi_{0-} + \eta)/\eps} }_{\Bspace}	& z \in \gamma_b^* \\
		\tbyt{ e^{-i\Omega/\eps}}{0} {w e^{i (\varphi_{0+} + \Omega)/\eps}}
		{e^{i\Omega/\eps}}_{\Bspace}	& z \in \gamma_g \\
		\tbyt{ e^{-i\Omega/\eps}}  {-w^* e^{-i (\varphi_{0+} + \Omega)/\eps}}
		{0} {e^{i\Omega/\eps}}	& z \in \gamma_g^*.
	\end{cases}
	\end{equation}
\end{enumerate}
\end{rhp}
\noindent
The jumps along $\R,\ \Gamma_1,$ and $\Gamma_1^*$ follow directly from definition \eqref{two band N}, while the jumps along the bands and gaps follow from \eqref{two band g jumps} and \eqref{two band varphi def}. The important observation to make is that, as the constants $\eta$ and $\Omega$ are real, the introduction of the genus-one $g$-function has removed the exponential growth from the jumps along $\Gamma_\nu \cup \Gamma_\nu^*$ introduced by the pole removing factorization, replacing it with oscillations in the bands and exponential decay in the gaps. 

To arrive at a RHP which is asymptotically stable in the semiclassical limit we have to now introduce factorizations which deform the remaining oscillatory jumps onto contours on which they decay exponentially to identity. These jumps, which lie on the real axis and the bands $\gamma_b$ and $\gamma_b^*$, are of the same character as those in RHP \ref{rhp: M outside} and RHP \ref{rhp: one band N} considered previously in the exterior and genus zero cases, respectively. Without repeating the details (c.f. sections \ref{sec: left factorization}-\ref{sec: middle factorization}), the necessary factorizations are identical to those introduced in the preceding cases. To define our transformation we introduce contours $\Gamma_k$ and regions $\Omega_k, \ k=0,2,$ or $3$ in $\C^+$ as follows. Take $\Gamma_0$ to be a semi-infinite ray leaving $\gamma_g$ at a fixed point bounded away from $\alpha$ and $\xi_0$ lying everywhere in the region $\varphi_0 >0$ and oriented toward infinity; $\Gamma_2$ is a semi-infinite ray lying everywhere in the region $\imag \varphi_1 < 0$ oriented toward $\xi_1$ where it meets the real axis; $\Gamma_3$ is a finite contour  consisting of two pieces, the first oriented from $\xi_1$ to $\alpha$ passing over the band $\gamma_b$, and the second oriented from $\alpha$ to $\xi_0$ passing under $\gamma_b$: all of $\Gamma_3$ lying in the region $\imag \varphi_0 < 0$. We note as well that the contour $\Gamma_0$ originates at a point $p$ along the gap contour $\gamma_g$ and naturally splits $\gamma_g$ into two pieces: $\gamma_g^{\text{up}}$ oriented from $p$ to $iq$ and $\gamma_g^\text{down}$ oriented from $\xi_0$ to $p$. As usual we denote the conjugate contours as ${\gamma_g^\text{up}}^*$ and  $\gamma_g^{\text{down}*}$.   The corresponding sets $\Omega_k$ are those enclosed by their respective $\Gamma_k$ and the real axis, see Figure \ref{fig: two band Q contours}. With these definitions in hand, the following transformation defines a new unknown $Q(z)$ whose jumps are either near identity or otherwise well approximated by explicit factors for which a parametrix can be constructed. 

Define 
\begin{equation}\label{two band Q}
\begin{split}
	Q(z) &= N e^{ig \ad \sig/ \eps} \, L_Q \\
	L_Q &= \begin{cases} 
		R_0^{-1} & z \in \Omega_0 \\
		\widehat{R}^{-\dagger} \lp a / a_0 \rp^{\sig} & z \in \Omega_2 \\
		\widehat{R}_0^{-\dagger} & z \in \Omega_3 \\
		\widehat{R}_0 & z \in \Omega_3^* \\
		\widehat{R}  \lp a^* / a^*_0 \rp^{-\sig}  & z \in \Omega_2^* \\
		R_0^\dagger & z \in \Omega_0^* \\
		I & \text{elsewhere}.
	\end{cases}	
\end{split}
\end{equation}
Let $\Gamma^0_Q = \bigcup_{k=0}^3 \lp \Gamma_k \cup \Gamma_k^* \rp$ and $\Gamma_Q = (-\infty, \xi_0) \cup \Gamma_\nu \cup \Gamma_\nu^* \cup \Gamma_Q^0$. RHP \ref{rhp: two band N}, the definition of $Q$, and the factorization formulas: \eqref{two factor}, \eqref{left factorization}, \eqref{intermediate factorization}, and \eqref{band factorization} result in the following RHP for the new unknown.

%%%%two band Q RHP %%%
\begin{rhp}[for $Q(z)$:]\label{rhp: two band Q}
Find a $2 \times 2$ matrix $Q(z)$ such that:
\begin{enumerate}[1.]
	\item $Q$ is analytic for $z \in \C \backslash\Gamma_Q$. 
	\item $Q(z) = I + \bigo{1/z}$ as $z \rightarrow \infty$.
	\item $Q$ takes continuous boundary values $Q_+$ and $Q_-$ on $\Gamma_Q$ satisfying the jump relation $Q_+ = Q_-  V_Q$ where $V_Q =$ 
	\begin{equation}\label{two band Q jumps}
		 \begin{cases}
		 	e^{i g \sig /\eps}\, V_Q^{(0)}\, e^{-i g \sig / \eps}_{\Bspace} & z \in \Gamma^0_Q \\
			\offdiag{-w^{-1} e^{i\eta/\eps} }{w e^{ -i\eta /\eps} }_{\Bspace} & z \in \gamma_b \\
			\offdiag{-w^* e^{i\eta/ \eps} }{ {w^*}^{-1} e^{ -i \eta / \eps} }& z \in \gamma_b^* \\
			(1+ | r_0 |^2)^{2\sig} & z \in (-\infty, \xi_1) \\
			 (1+| r_0 |^2)^\sig & z \in (\xi_1, \xi_0) 
		\end{cases}
		\quad
		\begin{cases}
			\tbyt{e^{-i\Omega/\eps}}{0}{w e^{i(\varphi_{0+}+\Omega)/\eps}}{e^{i\Omega/\eps}}_{\Bspace} & z \in  \gamma_g^{\text{up}} \\
			\tbyt{e^{-i\Omega/\eps}}{0}{r_{0_+} e^{i(\varphi_{0+}+\Omega)/\eps}}{e^{i\Omega/\eps}}_{\Bspace} & z \in  \gamma_g^{\text{down}} \\
			\tbyt{e^{-i\Omega/\eps}} {-w^* e^{-i(\varphi_{0+}+\Omega)/\eps}} {0} {e^{i\Omega/\eps}}_{\Bspace} & z \in  \gamma_g^{\text{up}*} \\			
			\tbyt{e^{-i\Omega/\eps}}{-r^*_{0_+} e^{-i(\varphi_{0+}+\Omega)/\eps}} {0} {e^{i\Omega/\eps}}_{\Bspace} & z \in  \gamma_g^{\text{down}*} \\			\end{cases},
	\end{equation}
	and $V_Q^{(0)}$ was previously given by  \eqref{V_Q^0 jumps}.
	\item $Q$ admits at worst square root singularities at $z = \pm iq$ satisfying the following bounds
	\begin{equation}\label{two band Q sing}
	\begin{split}
		Q(z) &= \bigo{ \begin{array}{cc} 1 & |z - iq|^{-1/2} \\  1 & |z - iq|^{-1/2} \end{array} }, \quad z \rightarrow iq,  \\
		Q(z) &= \bigo{ \begin{array}{cc} |z + iq|^{-1/2} & 1 \\  |z + iq|^{-1/2} & 1 \end{array} }, \quad z \rightarrow -iq.
	\end{split}
	\end{equation}
\end{enumerate}
\end{rhp}
%%%% Q contour figure %%%%
\begin{figure}[thb]
\begin{center}
	\includegraphics[width=.8\linewidth]{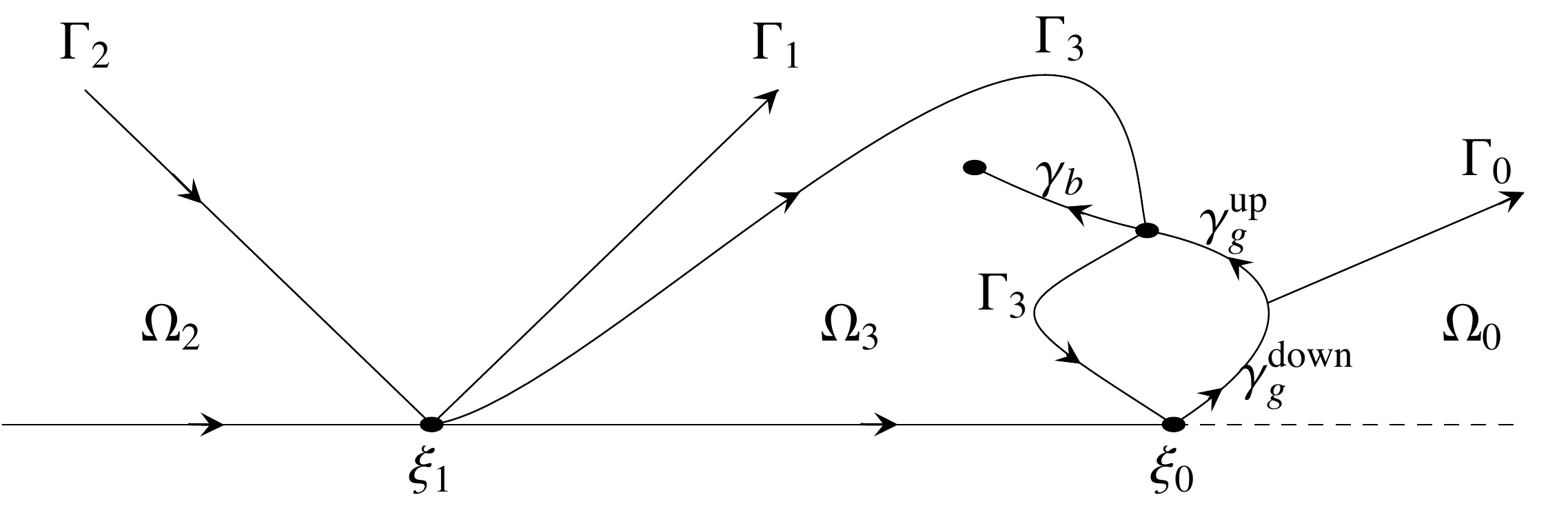}
	\caption{Schematic diagram of the contours $\Gamma_k$ and regions $\Omega_k$ in $\C^+$ used to define the transformation $N \mapsto Q$. Their counterparts $\Gamma_k^*$ and $\Omega_k^*$  in $\C^-$ are defined by conjugation symmetry.
	\label{fig: two band Q contours}
	}
\end{center}
\end{figure} 
%%%%%%%%

\subsection{Constructing a global parametrix in the genus one case} 
The RHP for $Q(z)$ that results from the several factorizations above, while defined on an elaborate set of contours, in now well conditioned to semi-classical approximations. The jumps along the lens opening contours $\Gamma_Q^0$ all decay exponentially to identity both as $z \rightarrow \infty$ and as $\eps \rightarrow 0^+$ for each fixed $z \in \Gamma_Q^0$ away from the stationary phase points $\xi_0$ and $\xi_1$ and the branch points $\alpha$ and $\alpha^*$. The remaining jumps along the real axis and the band and gap contours all have simple asymptotic behaviors in the semiclassical limit. Along the real axis the remaining jumps are independent of $\eps$ and in the bands and gaps, the terms in each jump which are not exponentially vanishing are comprised of products of $\eps$-independent factors with constant, $\eps$-dependent, complex phases. 

We now begin the construction of a global parametrix $P$ with the goal that the error resulting from this approximation $E = QP^{-1}$ is uniformly near identity. The parametrix is necessarily piecewise constructed due to the non-uniformity of the outer approximation near the branch and stationary phase points. We seek $P$ in the form:
\begin{equation}\label{two band parametrix}
	P = \begin{cases}
		A_0	& z \in \U_0 \\
		A_1  & z \in \U_1 \\
		A_\alpha & z \in \U_\alpha \\
		A_{\alpha^*} & z \in \U_{\alpha^*} \\
		O & z\ \text{elsewhere}
	\end{cases}
\end{equation}
where $\U_0$, $\U_1$, $\U_\alpha$ and $\U_{\alpha^*}$ are sufficiently small, fixed sized, neighborhoods of $\xi_0$,  $\xi_1$, $\alpha$, and $\alpha^*$ respectively. The local problems are all essentially solved: as in the previous cases, the local models near $\xi_0$ and $\xi_1$ are described by parabolic cylinder functions while the new models introduced near the interior endpoints $\alpha$ and $\alpha^*$ are described, as we will show, by the standard Airy parametrix. The new outer model in the genus one case is completely different than the outer model constructed in the previous section for the genus zero problem. 

\subsubsection{The Genus One Outer Model, $O(z)$} We arrive at the outer model problem by simply dropping those terms of $V_Q$ which are exponentially vanishing for each fixed $z$ as $\eps \rightarrow 0^+$. This results in replacing all of the jumps along $\Gamma_Q^0$ with identity and dropping the off-diagonal entries of $V_Q$ along the gap contours $\gamma_g \cup \gamma_g^*$. The resulting outer model problem is given by:
%%% Outer Model RHP %%%%%
\begin{rhp}[for the genus one outer model, $O(z)$:]\label{rhp: two band outer} 
Find a $2 \times 2$ matrix valued function $O(z)$ satisfying the following properties:
\begin{enumerate}[1.]
	\item $O$ is analytic for $z \in \C \backslash \Gamma_O, \quad \Gamma_O:= \Gamma_\nu \cup \Gamma_\nu^* \cup (-\infty, \xi_0]$.
	 \item $O(z) = I + \bigo{1/z}$ as $z \rightarrow \infty$.
	 \item $O$ takes continuous boundary values $O_+$ and $O_-$ on $\Gamma_O$ away from its endpoints. The boundary values satisfy the jump relation $O_+ = O_- V_O$ where
	 \begin{equation}\label{two band O jumps}
	 	V_O(z) = \begin{cases}
			\offdiag{-w(z)^{-1}e^{i\eta/\eps} }{w(z) e^{-i \eta/ \eps} }_{\Bspace} & z \in \gamma_b \\
			\offdiag{-w^*(z)e^{i\eta/\eps} }{{w^*(z)}^{-1}e^{-i\eta/\eps} }_{\Bspace} & z \in \gamma_b^* \\
			\diag{e^{-i\Omega/\eps}}{e^{i\Omega/\eps}}_{\Bspace} & z \in \gamma_g \cup \gamma_g^* \\
			(1+|r_0(z)|^2 ) ^{2\sig} & z \in (-\infty, \xi_1) \\
			(1+|r_0(z)|^2 ) ^{\sig} & z \in (\xi_1, \xi_0).
			\end{cases}
	\end{equation}
	\item $O$ is everywhere bounded except near the endpoints  $z = \pm iq,\ \alpha$, and $\alpha^*$ where it satisfies the following bounds
	\begin{equation}\label{two band O sing}
	\begin{array}{rll}
		O(z) &= \bigo{ \begin{array}{cc} 1 & |z - iq|^{-1/2} \\  1 & |z - iq|^{-1/2} \end{array} }, &  z \rightarrow iq,  \\
		O(z) &= \bigo{ \begin{array}{cc} |z + iq|^{-1/2} & 1 \\  |z + iq|^{-1/2} & 1 \end{array} }, & z \rightarrow -iq, \\
		O(z) &= \bigo{ \begin{array}{cc} |z-p|^{-1/4} & |z-p|^{-1/4} \\  |z-p|^{-1/4} & |z-p|^{-1/4} \end{array} }, & z \rightarrow p, \ p =\alpha \text{ or } \alpha^*.
	\end{array}
	\end{equation}
\end{enumerate}
\end{rhp}
%%%%%%%%%%%
\begin{rem} The replacement of the jumps in $\Gamma_Q^0$ with identity results in the outer solution, $O(z)$, being necessarily singular near each of the endpoints $\alpha,\ \alpha^*,\ \xi_0,$ and $\xi_1$ (in addition to the given singularities at $\pm iq$). The choice of quarter root singularities at the gap endpoints and bounded singularities at the stationary phase points is a choice we make to match the local Airy and parabolic cylinder local models which we will introduce in neighborhoods of each pair of points respectively.  
\end{rem}	 

The solution of this problem is given in terms of elliptic theta functions associated with the Riemann surface naturally associated to the rational function 
\begin{equation}\label{two band R}
	\mathcal{R}(z) = \sqrt{(z-iq)(z-\alpha)(z-\alpha^*)(z+iq)}.
\end{equation}
To be concrete, we will always understand $\mathcal{R}$ to be branched along $\gamma_b \cup \gamma_b^*$ and normalized such that $\mathcal{R}(z) \sim z^2$ for large $z$. However, to arrive at this solution we must first reduce the outer model to the canonical constant jump form by introducing two scalar functions which remove the $z$-dependence from the jump matrices. The first of these is the function $\delta (z)$ defined by \eqref{delta}. This function, as in the previous sections, we use to remove the jumps of $O(z)$ along the real axis. To remove the $z$-dependence in the bands we introduce the following scalar Riemann Hilbert problem:  
\begin{rhp}[ for $s(z)$:] \label{rhp: two band s}
Find a scalar function $s(z)$ with the following properties:
\begin{enumerate}[1.]
\item $s$ is analytic for $z \in \C \backslash (\Gamma_\nu \cup \Gamma_\nu^*)$.
\item $s(z)$ takes continuous boundary values, $s_+$ and $s_-$, on $\Gamma_\nu \cup \Gamma_\nu^*$ satisfying
\begin{equation}\label{two band s jumps}
\begin{array}{r@{\ = \ }l@{\quad:\quad}l}
	s_+(z) s_-(z) &  i w(z) \delta(z)^{-2} e^{-i \eta / \eps} & z \in \gamma_{b \Bspace}, \\
	s_+(z) s_-(z) &  i w^*(z)^{-1} \delta(z)^{-2} e^{-i \eta / \eps} & z \in \gamma_{b \Bspace}^*, \\
	s_+(z) \big{/}s_-(z) & e^{-i\Omega/\eps} & z \in \gamma_g \cup \gamma_g^*. 
\end{array}  
\end{equation}
\item As $z \rightarrow \infty$, $s(z) =  e^{ip(z)}  \lp 1 + \bigo{1/z} \rp$, for some linear function $p(z)$.
\item $s(z)$ is bounded and nonzero on any compact set not containing the endpoints $z=\pm iq$ where 
\begin{align*}
	s(z) &= \bigo{ (z-iq)^{1/4} }, \quad \text{ as } z \rightarrow iq, \\
	s(z) &= \bigo{ (z+iq)^{-1/4} }, \quad \text{as } z \rightarrow -iq.
\end{align*}
\end{enumerate}	
\end{rhp}

This problem is very closely related to RHP \ref{rhp: one band s} which was part of the constant jump reduction in the genus zero outer model.
\begin{prop}\label{prop: two band s}
The solution of RHP \ref{rhp: two band s} is given by $s(z) = s_0(z) s_1(z)$ where
\begin{equation}\label{two band s}
	\begin{split}
	s_0(z) &= a(z) \exp \lp \frac{i\pi}{4}  + \frac{\mathcal{R}(z)}{2\pi i} \int\limits_{\Gamma_\nu \cup \Gamma_\nu^*} \frac{ j(\lambda) }{\mathcal{R}_+(\lambda)} \frac{\dd \lambda}{\lambda - z} \rp, \\
	s_1(z) &= \exp \lp - \frac{i\eta}{2\eps} +\frac{\mathcal{R}(z)}{2\pi i \eps} \int\limits_{\gamma_g \cup \gamma_g^*} \frac{-i\Omega }{\mathcal{R}(\lambda)} \frac{\dd \lambda}{\lambda - z}  \rp.
\end{split}
\end{equation}
Here $a(z)$ is given by \eqref{a} and
\begin{equation*}
	j(\lambda) = \begin{cases}
		\log \lp \frac{2(z+iq)}{q} \rp  - 2\lp \chi(z,\xi_1) + \chi(z,\xi_0) \rp & z \in \gamma_b \\ 
		 -\frac{i\pi}{2}   & z \in \gamma_g \cup \gamma_g^* \\
		\log \lp \frac{q}{2(z-iq)} \rp  - 2\lp \chi(z,\xi_1) + \chi(z,\xi_0) \rp & z \in \gamma_b
	\end{cases}
\end{equation*}		
where $\chi(z,a)$ is defined by \eqref{kappa}. Moreover, the polynomial $p(z)$ characterizing the essential singularity of $s$ at infinity is given by
\begin{equation}\label{p poly}
	p(z)= p_0(z) + \frac{1}{\eps}p_1(z) \\ 
\end{equation}
where
\begin{equation*}
	\begin{split}
		p_0(z) &= \frac{1}{2\pi} \int\limits_{\Gamma_\nu \cup \Gamma_\nu^*} (z-\lambda-\re \alpha) \frac{ j(\lambda) }{\mathcal{R}_+(\lambda)}  \dd{\lambda} +   \frac{\pi}{4}, \\
		p_1(z) &=\frac{1}{2\pi }  \int\limits_{\gamma_g \cup \gamma_g^*} (z-\lambda-\re \alpha) \frac{ \Omega }{\mathcal{R}(\lambda)}  \dd{\lambda} - \frac{\eta}{2}.
	\end{split}
\end{equation*}
\end{prop}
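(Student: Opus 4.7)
My plan is to verify the stated formulas satisfy RHP~\ref{rhp: two band s} by reducing the multiplicative jump problem to two additive scalar Cauchy problems solvable by Plemelj, then reading off $p(z)$ from the large-$z$ expansion. The factorization $s=s_0 s_1$ is natural: $s_1$ will carry the $\eps$-dependent oscillatory constants $e^{-i\eta/\eps}$ and $e^{-i\Omega/\eps}$, while $s_0$ carries the $\eps$-independent data $iw\delta^{-2}$ on the bands together with whatever gap-jump is needed for $s=s_0 s_1$ to satisfy the full gap ratio.

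The central device is to use $\mathcal{R}(z)$ from \eqref{two band R} as a twist. Taking logarithms and writing $T:=(\log s - c)/\mathcal{R}$ for a suitable $z$-independent constant $c$, the identities $\mathcal{R}_+=-\mathcal{R}_-$ on $\gamma_b\cup\gamma_b^*$ and $\mathcal{R}_+=\mathcal{R}_-$ on $\gamma_g\cup\gamma_g^*$ convert a band product $s_+s_-=g$ into the additive jump $T_+-T_-=(\log g-2c)/\mathcal{R}_+$ on the bands and a gap ratio $s_+/s_-=h$ into $T_+-T_-=\log h/\mathcal{R}$ on the gaps. Plemelj then gives $T$ as a Cauchy integral and the candidate $s=\exp(c+\mathcal{R}T)$. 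For $s_1$, choosing $c=-i\eta/(2\eps)$ kills the band contribution and leaves precisely the gap Cauchy integral in the stated formula. For $s_0$, I first factor out $a(z)$ from \eqref{a}: it supplies the required $(z\mp iq)^{\pm 1/4}$ endpoint behavior at $\pm iq$ and absorbs the algebraic root part of the band data, so that the residual additive problem has density $j(\lambda)/\mathcal{R}_+(\lambda)$. The identification of $j$ on $\gamma_b$ uses Proposition~\ref{prop: delta expansions} to write $\log\delta|_{\gamma_b}=\chi(z,\xi_0)+\chi(z,\xi_1)$ on the principal branch together with $w_+=2\nu_+/(iq)$; Schwarz reflection gives $j$ on $\gamma_b^*$, and the constant $-i\pi/2$ on the gap is whatever is needed so that the net gap ratio of $s=s_0 s_1$ matches the prescribed $e^{-i\Omega/\eps}$.

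Routine items then complete the verification: endpoint behavior at $\pm iq$ follows from $a(z)$ together with the $\mathcal{R}$-prefactor vanishing like $(z\mp iq)^{1/2}$, which absorbs the integrable singularities of the Cauchy integrand there; H\"older boundedness away from $\pm iq$ uses standard Cauchy-integral estimates, noting that by Proposition~\ref{prop: delta expansions} (ii) the $\chi$-contributions to $j$ extend analytically across $\xi_0,\xi_1$ so the log-singularities there are harmless; uniqueness is a Liouville argument on the scalar ratio of any two solutions. The polynomial $p(z)$ is extracted by expanding
\begin{equation*}
\frac{\mathcal{R}(z)}{\lambda-z}=-z-\lambda+\re\alpha+O(1/z)
\end{equation*}
inside each Cauchy integral and collecting coefficients of $z^1$ and $z^0$; the $-z$-piece generates the degree-one polynomial in $z$, while the remaining constant contributions combine with the additive constants $i\pi/4$ and $-i\eta/(2\eps)$ to yield the stated constant terms of $p_0$ and $\eps^{-1}p_1$.

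The main obstacle is the endpoint/regularity analysis: the density $j/\mathcal{R}_+$ has logarithmic singularities at $\pm iq$ (from the explicit logs in $j$), at $\xi_0,\xi_1$ (inherited from $\chi$), and inverse half-root singularities at $\alpha,\alpha^*$ (from $\mathcal{R}_+^{-1}$). Showing that, after multiplication by $\mathcal{R}(z)$, the constructed $s$ has precisely the local behavior required by RHP~\ref{rhp: two band s}---bounded except for the quarter-root singularities at $\pm iq$ and matching the stated $e^{ip(z)}$ normalization at infinity---requires careful local analysis combining Proposition~\ref{prop: delta expansions} (i) near $\xi_0,\xi_1$ with standard asymptotic expansions for Cauchy integrals near algebraic endpoints $\alpha,\alpha^*$.
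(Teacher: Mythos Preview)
Your approach is correct and is essentially the same as the paper's: both rely on the Plemelj formula applied to the $\mathcal{R}$-twisted logarithm, the factor $a(z)$ to carry the $(z\mp iq)^{\pm 1/4}$ endpoint behavior, classical singular-integral estimates (the paper cites Muskhelishvili) to show that $\mathcal{R}(z)$ times the Cauchy integral is bounded at the band/gap endpoints, and a large-$z$ expansion of $\mathcal{R}(z)/(\lambda-z)$ to read off $p(z)$. The paper's proof is a terse verification of the stated formula rather than a derivation, and it additionally notes the reality of $p_0,p_1$ via the symmetries $j(\lambda^*)^*=-j(\lambda)$ and $(\Gamma_\nu\cup\Gamma_\nu^*)^*=(\Gamma_\nu\cup\Gamma_\nu^*)^{-1}$, which you do not mention explicitly but which is implicit in your Schwarz-reflection remark.
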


\begin{proof} That \eqref{two band s} satisfies the jump condition \eqref{two band s jumps} follows from the Plemelj formulae and the boundary behavior of $\mathcal{R}(z)$ and $a(z)$ along their respective branch cuts. The asymptotic behavior at infinity and the reality of $p_0$ and $p_1$ follow from expanding the Cauchy integral defining $s(z)$ and the symmetries $j(\lambda^*)^* = - j(\lambda)$ and $(\Gamma_\nu \cup \Gamma_\nu^*)^* =  (\Gamma_\nu \cup \Gamma_\nu^*)^{-1}$ (with respect to orientation of the path). Finally, classical estimates from the theory of singular integrals guarantee that the Cauchy integrals defining $s(z)$ grow at worst as an inverse square root at each endpoint \cite{Musk46}, behavior exactly balanced by the rational pre-factor $\mathcal{R}(z)$. Thus $s(z)$ is bounded and nonzero at each finite $z$ except the points $\pm iq$ where it inherits the singular behavior of $a(z)$.
\end{proof} 

Using the functions $s(z)$ and $\delta(z)$ we seek the outer model in the form
\begin{equation}\label{elliptic model def}
	O(z) = \beta(z) \,  O^{(1)}(z) s(z)^\sig \delta(z)^\sig,
\end{equation}
where $\beta$ is the scalar function 
\begin{equation}\label{beta}
	\beta(z) = \lp \frac{z-iq}{z-\alpha} \rp^{1/4} \lp \frac{ z- \alpha^*}{z+iq} \rp^{1/4}
\end{equation}
cut along the bands $\gamma_b \cup \gamma_b^*$ and normalized to approach unity for large $z$. The resulting RHP for the new unknown $O^{(1)}(z)$ is analytic away from the bands where it has constant column permutation jumps and square root singularities at half of the branch points. 
\begin{rhp}[ for $O^{(1)}(z)$:]\label{rhp: O1}
Find a $2 \times 2$ matrix valued function $O^{(1)}(z)$ with the following properties:
\begin{enumerate}[1.]
	\item $O^{(1)}$ is analytic for $z \in \C \backslash (\gamma_b \cup \gamma_b^*)$.
	\item As $z \rightarrow \infty$, $O^{(1)}(z) = \left[ I + \bigo{1/z} \right] e^{-i p(z) \sig}$ where $p(z)$ is the linear function \eqref{p poly}.
	\item $O^{(1)}(z)$ assumes continuous boundary values, $O^{(1)}_+(z)$ and $O^{(1)}_-(z)$, on $\gamma_b$ and $\gamma_b^*$ which satisfy the jump relation
	\begin{equation}\label{O1 jumps}
		O^{(1)}_+(z) = O^{(1)}_-(z) \offdiag{1}{1}, \qquad z \in \gamma_b \cup \gamma_b^*.
	\end{equation}
	\item $O^{(1)}(z)$ is bounded on any compact set not containing the points $z=iq$ or $z= \alpha^*$ where it admits at worst square-root singularities.
\end{enumerate}
\end{rhp}

Riemann-Hilbert problems like $O^{(1)}$ are well known in the literature of integrable systems and random matrices \cite{DIZ97, DKMVZ99a, KMM03, TVZ04}. We provide the details of its construction here for completeness. The construction begins by `lifting' it onto the genus one Riemann surface associated with $\mathcal{R}(z) = \sqrt{ (z-iq)(z-\alpha)(z-\bar\alpha)(z+iq)}$. We denote this surface by $\Sigma$ and label its two sheets $\Sigma_1$ and $\Sigma_2$ arbitrarily. For any non-branch point $z \in \C$ we let $P^k(z),\, k=1,2,$ denote its pre-image on the corresponding sheet. The problem is then lifted onto $\Sigma$ by seeking $O^{(1)}(z)$ in the form  
\begin{equation}\label{lifting}
	O^{(1)}(z) = \begin{bmatrix} \  \vec{v}\,(P^1(z))\  ,\  \vec{v}\,(P^2(z))\  \end{bmatrix},
\end{equation}
where $\vec{v}(P)$ is a single vector-valued function defined for $P \in \Sigma$. The jump relation \eqref{O1 jumps} implies that the new unknown $\vec{v}(P)$ is holomorphic away from the branch points and the two pre-images of infinity: $\infty_1$ and $\infty_2$. Near the branch points $z=\alpha$ and $z=-iq$ the bounded behavior of $O^{(1)}(z)$ implies that any singularity of $\vec{v}(P)$ is removable, while the square-root singularities of $O^{(1)}(z)$ at $z=iq$ and $z=\alpha^*$, due to the double-ramification of the branch points, become poles of the function $\vec{v}(P)$. These properties together with the singular behavior at each infinity completely specify the function $\vec{v}(P)$:

\textbf{Problem for $\vec{v}(P)$:} Find a vector-valued function $v:\Sigma \rightarrow \C^2$  satisfying the following properties:
\begin{itemize}

\item $\vec{v}$ is meromorphic on $\Sigma \backslash \{\infty_1, \infty_2 \}$, and if $(v_k)$ denotes the divisor of the component $v_k,\ k=1,2$, over $\Sigma \backslash \{\infty_1, \infty_2 \}$ then,
\begin{equation}
	(v_k) + P_{\alpha^*} + P_{iq} \geq 0.
\end{equation}

\item $\vec{v}$ is essentially singular at each infinity and
\begin{align}
	\label{vsing}
	\begin{split}
	\vec{v}(P) e^{ip(z)}  &= 1 + \bigo{1/z}, \quad z \rightarrow \infty_1, \\
	\vec{v}(P) e^{-ip(z)}  &= 1 + \bigo{1/z}, \quad z \rightarrow \infty_2. 
	\end{split}
\end{align}
\end{itemize}

Finding the function $\vec{v}(P)$ necessary to solve the RHP \ref{rhp: O1} is a classical problem in the study of Riemann surfaces whose solution can be constucted from Baker-Akhiezer functions. These functions are represented here in terms of ratios of the $\Theta$-functions corresponding to the Riemann surface $\Sigma$. The reader is referred to \cite{FK80} for a general review of the theory of Riemann surfaces and our construction of the Baker-Akhiezer functions follows \cite{Dub81}. As a first step towards finding $\vec{v}$, we introduce a function $\vec{f}$ with the following properties:
\begin{itemize}
	\item The component functions  $f_1(P)$ and $f_2(P)$ are meromorphic functions on $\Sigma$ such that 
	\begin{align*}
		(f_1)  + P_{iq} + P_{\alpha^*} - \infty_2 \geq 0, \\
		(f_2)  + P_{iq} + P_{\alpha^*} - \infty_1 \geq 0. 
	\end{align*}
	\item Each component $f_k$ is normalized such that $f_k(\infty_k) = 1,\ k=1,2$.
\end{itemize}
The existence and uniqueness of such a function can be proved abstractly, but by direct inspection, the functions 
\begin{align}\label{Felliptic}
	f_{1,2}(P) = \frac{1}{2} \left[ 1 \pm \frac{(z(P)-\alpha)(z(P)+iq)}{\mathcal{R}_\Sigma(P)} \right]
\end{align}
satisfy the above properties. Here $z(P)$ is the projection of $P \in \Sigma$ onto $\C$ and $\mathcal{R}_\Sigma$ is the lifting of \eqref{two band R} to the Riemann surface such that $\mathcal{R}_\Sigma(P) \sim z^2$ as $P \rightarrow \infty_1$. Clearly, each component $f_k$ vanishes at the appropriate infinity and has the correct singularities at $P_{\alpha^*}$ and $P_{iq}$. Each component $f_k$ necessarily vanishes at precisely one additional (finite) point in $\Sigma$ which we label $P_k$. Thus, the complete divisor of each component is given by
\begin{align}\label{Fdivisor}
	\begin{split}
	(f_1) &= P_1 + \infty_2 - P_{iq} - P_{\alpha^*}, \\
	(f_2) &= P_2 + \infty_2 - P_{iq} - P_{\alpha^*}.
	\end{split}
\end{align}

We now consider the ratio $\vec{\zeta}$ defined by the component-wise product 
\begin{align} \label{zeta def}
	\vec{v} = \vec{f} \cdot \vec{\zeta}.
\end{align}
The resulting unknown $\vec{\zeta}$ is the aforementioned Baker-Akhiezer function. That is, the components of $\vec{\zeta}$ have the following properties:
\begin{itemize}
	\item Each component $\zeta_k$ is meromorphic on $\Sigma \backslash\{\infty_1, \infty_2\}$ admitting at most a single simple pole at the point $P_k$.

	\item The local behavior of $\vec{\zeta}$ in a neighborhood of each infinity is given by
	\begin{align} \label{zeta sing}
		\begin{split}
		&\vec{\zeta}(P) e^{ip(z)} \rightarrow \vect{1}{c_2} \qquad P \rightarrow \infty_1, \\ 
		&\vec{\zeta}(P) e^{-ip(z)} \rightarrow \vect{c_1}{1} \qquad P \rightarrow \infty_2.
		\end{split}
	\end{align}
	where $p(z)$ is the linear function given by \eqref{p poly} and $c_k,\  k=1,2$ are unknown constants. 
\end{itemize}

To construct $\zeta$ we introduce several standard devices on the Riemann surface $\Sigma$. First, we fix a homology basis. As $\Sigma$ is genus one, the basis consists of only two elements, $\{ a,\, b \}$.  We take the $a$-loop to lie on both sheets, oriented over the first sheet from $(\gamma_b)_-$ to $(\gamma_b^*)_-$ and the $b$-loop we take completely on the first sheet as a clockwise loop enclosing $\gamma_b$ without intersecting or enclosing $\gamma_b^*$,  see Figure \ref{Fig: homology}. 
\begin{figure}[htb]
	\begin{center}
		\includegraphics[width=.2\textwidth]{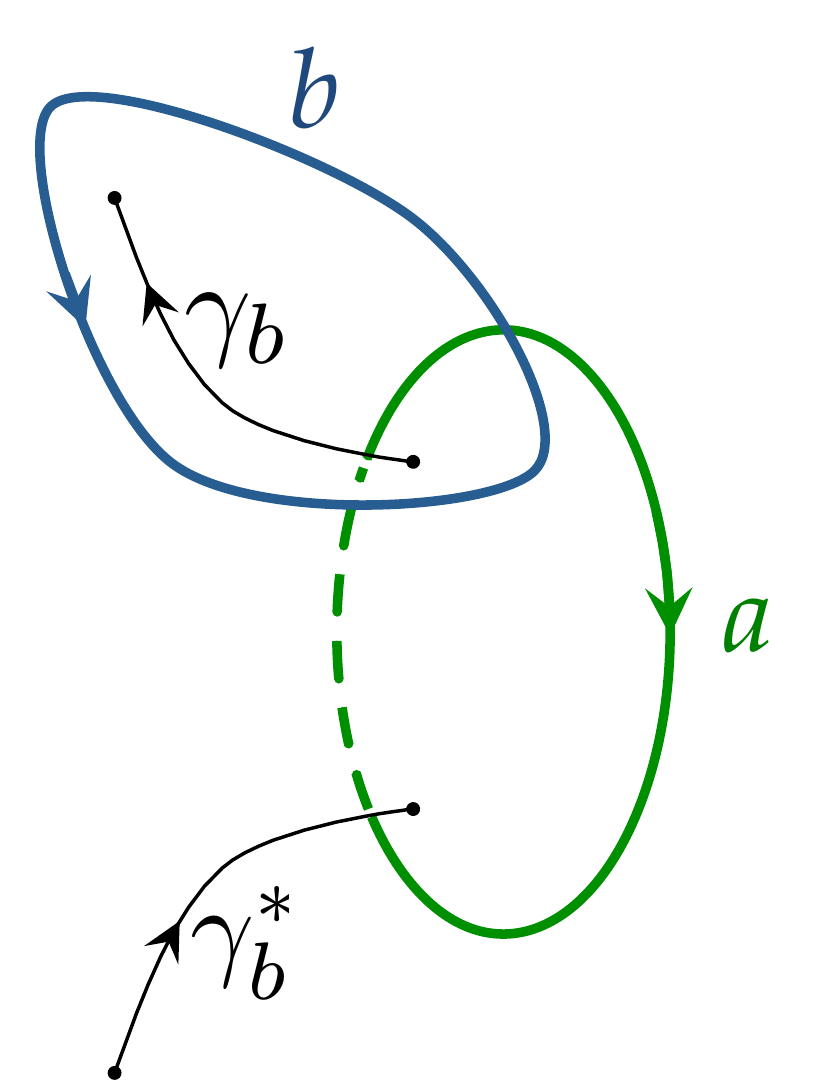}
		\caption{Choice of our homology basis $\{ a,\, b \}$ on $\Sigma$. Solid and dotted lines indicate the contour lines on the first or second sheet respectively. 
		\label{Fig: homology}}
	\end{center}
\end{figure}

Our choice of homology fixes the basis of holomorphic differentials which consists of exactly one element,
\begin{equation}
	\nu = c_{\nu} \frac{dz(P)}{\mathcal{R}_\Sigma(P)},
\end{equation}
where $c_\nu$ is a normalization constant chosen so that 
\begin{equation*}
	\oint_a \nu = 2\pi i.
\end{equation*}
We will also need the $b$-period of our basis differential, let
\begin{align}\label{b-periods}
	H = \oint_b \nu = 2\pi i  \oint_b \frac{dz(P)}{\mathcal{R}_\Sigma(P)}\, \Bigg/ \oint_a \frac{dz(P)}			{\mathcal{R}_\Sigma(P)} .
\end{align}
A standard result in the theory of Riemann surfaces is that $\re(H)$ is strictly negative \cite{FK80}. In this case we can say more; the symmetries $\nu(P^*)^* = \nu(P)$ and $b^* = b$  imply that $H$ is always a purely real (negative) number. $H$ allows us to define two important objects. The Riemann constant $K$, we take to be
\begin{equation} \label{Riemann Constant}
	K = i \pi +\frac{1}{2}H,
\end{equation}
and we take 
\begin{equation}\label{Theta}
	\Theta(w ; H) = \sum_{n \in \Z} \exp \lp \frac{1}{2}n^2H-n w \rp.
\end{equation}
as the definition of our $\Theta$ functions. $\Theta$ is an entire and even function of $w$ which satisfies the automorphic relations
\begin{align}\label{Theta periods}
	\Theta(w + 2\pi i ; H) = \Theta(w ; H), \quad \text{and} \quad
	\Theta(w + H ; H) = e^{-\frac{1}{2}H-w}  \Theta(w ; H ).
\end{align}
The zeros of $\Theta$ lie on the lattice $w = K + 2m \pi i + nH,\ n,m \in \Z$. We choose as the base point on $\Sigma$ the branch point $P = iq$, and define an Abel map $A:\Sigma \rightarrow \Jac(\Sigma)$ by
\begin{align*}\label{Abel}
	A(P) = \int_{iq}^P \nu.
\end{align*}
Finally, let $\tau$ be the abelian differential of the second kind with double poles at each pre-image of infinity, $\tau = \tau_0 + \eps^{-1} \tau_1$:
\begin{equation}
	\tau_k = \frac{z(P)^2 - \re(\alpha)z(P) +c_\tau}{\mathcal{R}_\Sigma(P)}  dp_k(z(P)), \quad k=0,1,
\end{equation}
where $p(z) = p_0(z) + \eps^{-1} p_1(z)$ is real linear polynomial \eqref{p poly}. 
The constant $c_\tau$ (independent of $k$) is chosen so that 
\begin{equation*}\label{tau norm}
		\oint_a \tau = 0.
\end{equation*} 
Note that at each infinity
\begin{align}\label{tau sing}
	\tau = dp + \bigo{ \frac{dz}{z^2}} , \ P \rightarrow \infty_1, \quad \text{and} \quad
	\tau = -dp + \bigo{ \frac{dz}{z^2}} , \ P \rightarrow \infty_2,
\end{align}
 
Let $T = T_0 + \eps^{-1} T_1$ represent the $b$-period of $\tau$: 
\begin{equation}\label{T}
	T_k = \oint_b \tau_k, \quad k = 1,2, 
\end{equation} 
and observe that the symmetries $\tau(P^*)^* = \tau(P)$ and $b^* \equiv b$ imply that $T$ is necessarily real. 

\begin{lem}\label{lem: tau} 
The differential $\tau_1$ satisfies the relations, $\oint _b \tau_1 = -\Omega$ and $s_1(z)\exp\lp -\frac{i}{\eps} \int_{-iq}^z \tau_1 \rp = 1$, where the path of integration in the exponential is restricted to the first sheet, $\Sigma_1$, and does not intersect $\Gamma_\nu \cup \Gamma_\nu^*$. 
\end{lem}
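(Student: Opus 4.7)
My plan is to establish both relations by viewing the function
$$\sigma_1(z) \;:=\; \exp\lp\tfrac{i}{\eps}\textstyle\int_{-iq}^{z}\tau_1\rp,$$
with the prescribed path on $\Sigma_1$, as a candidate solution of the same scalar Riemann-Hilbert problem that characterizes $s_1$ (the $\Omega$-dependent factor of Prop.~\ref{prop: two band s}). The period identity of Part~1 is precisely the input needed to secure the correct gap-jump of $\sigma_1$, so I would prove it first.

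For Part~1, I would apply Riemann's bilinear relation to the pair $(\tau_1,\nu)$. Since $\tau_1$ is a second-kind differential with double poles at $\infty_1,\infty_2$, no residues, principal parts $\pm\,dp_1$, and normalization $\oint_a\tau_1 = 0$, while $\nu$ is holomorphic with $\oint_a\nu = 2\pi i$, the relation
\begin{equation*}
    \oint_a\tau_1\oint_b\nu \;-\; \oint_b\tau_1\oint_a\nu \;=\; 2\pi i \sum_{P\in\{\infty_1,\infty_2\}}\res_{P}(F\,\nu),
    \qquad F(P):=\int^{P}\tau_1,
\end{equation*}
collapses to $\oint_b\tau_1 = -\sum_{P}\res_{P}(F\nu)$. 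The local expansions $\tau_1\sim\pm C_1\,dz$ (with $C_1 = p_1'(z) = \frac{\Omega}{2\pi}\int_{\gamma_g\cup\gamma_g^*}\!d\lambda/\mathcal{R}(\lambda)$) and $\nu\sim\mp c_\nu\,dw$ in the local parameter $w=1/z$ at each infinity produce $\sum\res(F\nu)=2c_\nu C_1$. A contour deformation of the $a$-cycle onto the sheet-$1$ arc through $\gamma_g\cup\gamma_g^*$ (doubled by the sheet-$2$ return, on which $\mathcal{R}_\Sigma$ has opposite sign) together with $\oint_a\nu=2\pi i$ pins down $c_\nu$ in terms of $\int_{\gamma_g\cup\gamma_g^*}d\lambda/\mathcal{R}(\lambda)$; substituting yields $2c_\nu C_1 = \Omega$ and hence $\oint_b\tau_1 = -\Omega$.

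For Part~2, I would show that $\sigma_1$ solves the same RHP as $s_1$, so that uniqueness of that scalar problem gives the stated identity. Since $\Gamma_\nu\cup\Gamma_\nu^*$ is a single simple arc from $-iq$ to $iq$, the slit domain $\C\setminus(\Gamma_\nu\cup\Gamma_\nu^*)$ is simply connected and the path restriction makes $\sigma_1$ single-valued. Across $\gamma_g$, the two boundary values of $\sigma_1$ come from integration paths differing by a loop homologous to the $b$-cycle, whence $\sigma_{1+}/\sigma_{1-} = \exp\lp(i/\eps)\oint_b\tau_1\rp = e^{-i\Omega/\eps}$ by Part~1, matching the prescribed jump of $s_1$ on the gap. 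Across $\gamma_b$, the sign change $\mathcal{R}_{\Sigma,+}=-\mathcal{R}_{\Sigma,-}$ gives $\tau_{1+}=-\tau_{1-}$; combined with $\oint_a\tau_1=0$, this produces $\sigma_{1+}\sigma_{1-} = e^{-i\eta/\eps}$, again matching. Finally the expansion $\tau_1 = dp_1 + O(dz/z^2)$ near $\infty_1$ yields $\sigma_1(z)\sim e^{ip_1(z)/\eps}$ up to a multiplicative constant, matching the essential singularity of $s_1$. Since the scalar RHP is uniquely solvable and $\sigma_1,\,s_1$ share the same normalization at the marked point $z=-iq$ (by the choice of $c_\tau$), we conclude $\sigma_1\equiv s_1$.

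The main obstacle is the careful sign and orientation bookkeeping in the bilinear step: reconciling the paper's orientation of the $a$-cycle (on both sheets, from $(\gamma_b)_-$ to $(\gamma_b^*)_-$), the branch choice of $\mathcal{R}_\Sigma$ on the two sheets, and the orientation of $\gamma_g\cup\gamma_g^*$ used in the definition of $p_1$, in order to produce the correct sign $-\Omega$ rather than $+\Omega$. Once these conventions are locked in, both the residue computation and the RHP-uniqueness comparison are routine.
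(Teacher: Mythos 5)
Your overall architecture is close to the paper's: both arguments ultimately rest on identifying $\frac{i}{\eps}\tau_1$ with $d\log s_1$ as differentials on $\Sigma$. The paper does this by observing that $d\log s_1$ extends to a meromorphic differential on $\Sigma$ with the same poles, principal parts, and vanishing $a$-period as $\frac{i}{\eps}\tau_1$, so the difference is a holomorphic differential with zero $a$-period, hence zero; the value $-\Omega$ then falls out of the Plemelj jump of $\log s_1$ across $\gamma_g$. Your bilinear-relation computation of $\oint_b\tau_1$ is a legitimate alternative for Part 1, but it trades the paper's one-line deduction for the unproved identity $2c_\nu\,p_1'=-\Omega$, which requires relating $\oint_a dz/\mathcal{R}_\Sigma$ to $\int_{\gamma_g\cup\gamma_g^*}d\lambda/\mathcal{R}$ with the correct constant and sign --- precisely the bookkeeping you defer.

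The genuine gap is in Part 2, at the band jump. You assert that $\tau_{1+}=-\tau_{1-}$ on the bands together with $\oint_a\tau_1=0$ yields $\sigma_{1+}\sigma_{1-}=e^{-i\eta/\eps}$. This cannot be correct: $\eta$ enters $p_1$ only through the additive constant $-\eta/2$, which is annihilated by $dp_1$, so $\tau_1$ --- and hence any integral of it --- carries no information about $\eta$ at all. What the antisymmetry actually gives is that $G_++G_-$, with $G(z)=\int_{-iq}^z\tau_1$, is \emph{constant} on each band; on the band $\gamma_b^*$ containing the base point $-iq$ that constant is $2G(-iq)=0$, so $\sigma_{1+}\sigma_{1-}=1$ there rather than $e^{-i\eta/\eps}$, and on $\gamma_b$ it equals $\exp\lp\tfrac{2i}{\eps}\int_{-iq}^{\alpha}\tau_1\rp$. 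Consequently $\sigma_1$ and $s_1$ do not solve the same scalar RHP, and the uniqueness step does not close: the two functions differ by the nonzero constant $e^{-i\eta/(2\eps)}$, which also appears as a mismatch in the essential singularities at $\infty_{1,2}$. (Your appeal to ``the same normalization at $z=-iq$ by the choice of $c_\tau$'' does not help: $c_\tau$ normalizes the $a$-period of $\tau$, not the value of an antiderivative, and RHP \ref{rhp: two band s} is normalized at infinity, not at $-iq$.) The statement your argument can deliver --- and, modulo its silence about the integration constant, what the paper's argument delivers --- is that $s_1(z)\exp\lp-\frac{i}{\eps}\int\tau_1\rp$ is \emph{constant}; to finish you must either compute and carry that constant or choose the additive normalization of $\int\tau_1$ so as to absorb $-\eta/2$.
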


\begin{proof}
From \eqref{two band s}, $s_1(z)$ takes the form $s_1(z) = \exp ( \eps^{-1} \rho(z) )$, where 
\begin{equation*}
	\rho(z) = -i\eta/2 + \frac{ \mathcal{R} (z) } { 2\pi i } \int_{ \gamma_g \cup \gamma_g^* } 
	\frac{-i\Omega}{\mathcal{R}(\lambda)} \frac{ d \lambda}{\lambda -z} . 
\end{equation*}
Using this notation, the second half of the lemma is proved if we can show that $\rho(z) - i \int_{iq}^z \tau_1 =0$. To this end, consider the differential $d \rho$.  Clearly, $d\rho$ is holomorphic on $\C \backslash (\gamma\cup\gamma^*)$; differentiating the jump relations for $\rho$ we see that for $z \in \gamma_b \cup \gamma_b^*$, $d\rho_+  + d\rho_- = 0$  and for $z \in \gamma_g \cup \gamma_g^*$, $d\rho_+ - d\rho_- = 0$. Together these facts imply that $d\rho$ extends naturally to a meromorphic differential on the Riemann surface $\Sigma$ with singularities only at the two infinities. The difference $d\rho - i \tau$, from \eqref{tau sing}, is then a holomorphic differential. Integrating over an $a$-cycle, we have for any $\hat z \in \gamma_b$, $\oint_a d\rho - i \tau_1 = \rho_+(\hat z^*) + \rho_-(\hat z^*) - \lp \rho_+(\hat z) + \rho_-(\hat z) \rp= 0$. As no nontrivial holomorphic differential can have all its $a$-cycles vanish, the difference must vanish identically. It follows immediately, letting $\tilde z$ represent an arbitrary point in $\gamma_g$, that $T_1 = \oint_b \tau_1 = -i\oint_b d \rho = -i \lp \rho_+(\tilde z) - \rho_-(\tilde z) \rp = -\Omega$. 
\end{proof}

We now combine these many devices to give Kriechever's formula for the Baker-Akhiezer function. Define
\begin{align}\label{zeta}
\zeta_k(P) = N_k \frac{\Theta(A(P) - A(P_k) - K - iT)}{\Theta(A(P) - A(P_k) - K)} \exp\lp -i\int_{iq}^P \tau \rp 
\end{align} 
where the path of integration in the exponential factor is the same as the path in the Abel map $A(P)$ and the constant $N_k$ is uniquely chosen to satisfy the normalization condition \eqref{zeta sing}.
 
\begin{lem}\label{lem: baker}  
The function $\vec{\zeta}(P)$ defined by \eqref{zeta} is a well defined function from $\Sigma \rightarrow \C^2$ for every $\eps >0$ and solves the Baker-Akhiezer problem defined by \eqref{zeta def} with the parameters $N_k$ given by: 
\begin{equation}\label{N_k}
	N_k = \frac{ \Theta(0) }{\Theta(iT_0 - i\eps^{-1}\Omega)} \exp\lp iT_0+i(-1)^k Y_0 \rp,
\end{equation}
where
\begin{equation}\label{Y}
	Y_0 = \lim_{P \rightarrow \infty_1} \lp p_0(z(P)) - \int_{iq}^P \tau_0 \rp.
\end{equation}
\end{lem}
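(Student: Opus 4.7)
\bigskip

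\noindent\textbf{Proof proposal.}  The plan is to verify the three defining properties of the Baker--Akhiezer function in turn, each following from standard facts about theta functions together with the specific period data recorded in \eqref{b-periods}, \eqref{T}, and Lemma \ref{lem: tau}.

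First I would establish that the formula \eqref{zeta} really produces a single-valued function on $\Sigma$, despite the fact that both the Abel map $A(P)$ and the integral $\int_{iq}^P \tau$ are path-dependent. Changing the path by an $a$-cycle shifts $A(P)$ by $2\pi i$ and leaves $\int_{iq}^P \tau$ unchanged (since $\oint_a \tau = 0$ by the normalization of $\tau$); by the first relation in \eqref{Theta periods} the theta ratio is unchanged. Changing the path by a $b$-cycle shifts $A(P)$ by $H$ and $\int_{iq}^P \tau$ by $T$; the second relation in \eqref{Theta periods} multiplies the numerator by $e^{-H/2-(A(P)-A(P_k)-K-iT)}$ and the denominator by $e^{-H/2-(A(P)-A(P_k)-K)}$, so the theta ratio picks up a factor $e^{iT}$, which is exactly cancelled by the factor $e^{-iT}$ produced by the exponential $\exp(-i\int_{iq}^P\tau)$.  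Thus $\zeta_k$ is well-defined on $\Sigma$.

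Next I would check the divisor of each $\zeta_k$. Since $\Sigma$ has genus one, Riemann's vanishing theorem implies $P \mapsto \Theta(A(P) - e)$ has exactly one zero on $\Sigma$ for any $e \notin (\text{lattice}+K)$. Applied to the denominator in \eqref{zeta}, the zero occurs precisely where $A(P) - A(P_k) - K$ lies in the theta lattice, which by evenness of $\Theta$ and $\Theta(K)=0$ happens at $P = P_k$. The numerator is generically nonzero there, so $\zeta_k$ is holomorphic on $\Sigma\setminus\{\infty_1,\infty_2,P_k\}$ with a single simple pole at $P_k$. The essential singularities at the two infinities come entirely from the exponential $\exp(-i\int_{iq}^P \tau)$, which via \eqref{tau sing} expands as $e^{\mp i p(z)}$ times a holomorphic factor near $\infty_{1,2}$. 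This confirms the meromorphic structure and the essential singularity type required by \eqref{zeta sing}.

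Finally, the most delicate step is extracting the explicit value of $N_k$. I would expand each $\zeta_k$ near the appropriate infinity and impose the normalization \eqref{zeta sing}. At $\infty_1$, using \eqref{tau sing} with the definition \eqref{Y},
\begin{equation*}
    \exp\!\lp -i\int_{iq}^P \tau \rp = \exp\!\lp -ip(z(P)) + iY_{\infty_1} + O(1/z) \rp,
\end{equation*}
where $Y_{\infty_1} = Y_0 + \eps^{-1}\lim_{P\to\infty_1}(p_1(z)-\int_{iq}^P\tau_1)$.  A similar computation holds at $\infty_2$ with opposite sign of $p$ and a constant $Y_{\infty_2}$.  Combined with the theta-ratio values at $\infty_1$ and $\infty_2$, the normalization equations for $\zeta_1$ at $\infty_1$ and $\zeta_2$ at $\infty_2$ become
\begin{equation*}
    N_k = \exp(-iY_{\infty_k}) \, \frac{\Theta(A(\infty_k) - A(P_k) - K)}{\Theta(A(\infty_k) - A(P_k) - K - iT)} .
\end{equation*}
The main obstacle, and where most of the work lies, is simplifying this expression to \eqref{N_k}.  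For this I would use (i) the explicit identification of $P_k$ from the divisor relations \eqref{Fdivisor} together with Abel's theorem to show $A(\infty_k)-A(P_k)\equiv K \pmod{\text{lattice}}$, so that the theta in the numerator reduces to $\Theta(0)$; (ii) Lemma \ref{lem: tau}, which gives $T_1 = -\Omega$ and therefore $iT = iT_0 - i\eps^{-1}\Omega$, and which further forces the $\eps^{-1}$-singular contribution to $Y_{\infty_k}$ to vanish (this is the content of the identity $s_1(z)\exp(-i\eps^{-1}\int_{-iq}^{z}\tau_1) \equiv 1$ translated to the first-sheet expansion at infinity), leaving only the $\eps$-independent constant $(-1)^k Y_0$ in the exponent; and (iii) the relation $T_0 = \oint_b \tau_0$ which supplies the remaining $e^{iT_0}$ factor through a careful bookkeeping of constants between $\infty_1$ and $\infty_2$ relative to the common base point $iq$ of the Abel map.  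Once these three identifications are made, the formula \eqref{N_k} follows, completing the verification that $\vec\zeta(P)$ is the required Baker--Akhiezer function.
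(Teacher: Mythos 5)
Your proposal is correct and follows essentially the same route as the paper's proof: well-definedness via the automorphic relations \eqref{Theta periods} and the periods of $\tau$, the pole at $P_k$ from the zero of the denominator theta function, the preliminary normalization formula for $N_k$, and the final simplification via Abel's theorem applied to $(f_k)$, the special values $A(iq)=0$, $A(\alpha^*)=K$, $A(\infty_1)+A(\infty_2)\equiv 0$, and Lemma \ref{lem: tau}. You merely flesh out the monodromy check and the Riemann vanishing argument that the paper asserts without detail.
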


\begin{proof}
That $\vec \zeta(P)$ is well defined follows immediately from the automorphic relations \eqref{Theta periods} for $\Theta(P)$ and the periods of $\tau$. Let us now show that $\vec{\zeta}$ is the required Baker-Akhiezer function. Clearly, for every choice $N_k \neq 0$, $\zeta_k$ is meromorphic and its single pole is the unique zero of the $\Theta$-function in the denominator. By construction this zero is located at $P_k$. One needs only to check that $N_k$ can by chosen such that the normalization condition \eqref{zeta sing} is satisfied. Clearly, one must set
\begin{equation}\label{Nk prelim}
	\frac{1}{N_k} = \frac{ \Theta(A(\infty_k ) - A(P_k) - K - iT) }
	{\Theta(A(\infty_k) - A(P_k) - K)} 
	\lim_{P \rightarrow \infty_k} \exp \lp i \left[ p(z(P)) - \int_{iq}^P \tau \right] \rp 
\end{equation}
which defines $N_k$ provided the right hand side is not identically zero. Recall that
\begin{equation*}
	(f_k) = P_k + \infty_{k'} - P_{iq} - P_{\alpha^*}, \qquad k,k' \in {1,2},\ k \neq k'.
\end{equation*}
By Abel's Theorem $A( (f) ) = 0$ so $A(P_k) = A(iq) + A(\alpha^*) - A(\infty_{k'})$. Now $A(\infty_1) + A(\infty_2) \equiv   0$, and the evaluation of the Abel map at the branching points $iq$ and $\alpha^*$ can be exactly evaluated for any concrete choice of paths. Choosing to integrate, for convenience, on the first sheet along the + sides of the band and gap contours we have $A(iq) = 0$ and $A(\alpha^*) = i\pi+H/2 = K$. Inserting these values into \eqref{Nk prelim} and using Lemma \ref{lem: tau} and the periodicity relations \eqref{Theta periods} we have
\begin{equation*}
	\frac{1}{N_k} = \frac{\Theta(iT_0 - i\eps^{-1} \Omega)}{\Theta(0)} \exp \lp -iT_0 + i (-1)^{k+1} Y_0 \rp.
\end{equation*}
Which is always well defined as $T _0$ and $\Omega$ are always real and thus avoids the zeros of the $\Theta$-function. 
\end{proof}

We now remove the Riemann surface from the picture to write our outer solution $O(z)$ in terms of integrals lying completely on the complex plane. We do so by exploiting the antisymmetry of the differentials: both differentials $\eta =\tau$ and $\eta =\nu$ share the property that
\begin{equation*}
	\int_{iq}^{P^1(z)} \eta \equiv - \lp \int_{iq}^{P^2(z)} \eta \rp.
\end{equation*}      
To descend the Abel map onto a function from $\C$ to $\C$ we make the following restrictions. By $A(z)$ we mean an integral from $iq$ to $z$ lying completely on the first sheet such that the path does not intersect the band or gap contours $\Gamma_\nu \cup \Gamma_\nu^*$. These restriction on the path eliminate the addition of $a$ or $b$-cycles to the value of the Abel map making $A$ well defined on $\C$. 

Recalling the definitions \eqref{elliptic model def} of $O^{(1)}(z)$ and \eqref{lifting} of $\vec{v}(P)$ we have the following formula for the solution of the outer model problem RHP \ref{rhp: two band outer}:
\begin{equation}\label{OuterSolution}
O(z) = \frac{\Theta(0)} {\Theta(iT)}  \exp \lp -i Y_0 \rp^\sig \, \mathcal{T}(z) 
\left[ s_0(z)\exp\lp -i \int_{iq}^z \tau_0 \rp \delta(z) \right]^\sig,
\end{equation}
where $\mathcal{T}(z)$ is the matrix
\begin{equation*}
\mathcal{T}(z) = \begin{pmatrix}
\frac{ \beta(z)+\beta(z)^{-1} }{2_\Bspace} \frac{\Theta(A(z) - A(\infty) - i T )}{\Theta(A(z) - A(\infty) ) } &
\frac{ \beta(z)-\beta(z)^{-1} }{2} \frac{\Theta(A(z) + A(\infty) + i T )}{\Theta(A(z) + A(\infty) ) } \\
\frac{ \beta(z)-\beta(z)^{-1\Tspace} }{2} \frac{\Theta(A(z) + A(\infty) - i T )}{\Theta(A(z) + A(\infty) ) } &
\frac{ \beta(z)+\beta(z)^{-1} }{2} \frac{\Theta(A(z) - A(\infty) + i T )}{\Theta(A(z) - A(\infty) ) } 
\end{pmatrix} .
\end{equation*}
That $O(z)$ as defined by \eqref{OuterSolution} is a solution of the outer model problem follows directly from Prop. \ref{prop: two band s}, Lemma \ref{lem: baker}, and the observation that $\frac{1}{2}(\beta(z) +(-1)^{j+k} \beta(z)^{-1})$ are the projections of $\beta(z)f_j(P^k(z)),$ $j,k\in\{1,2\},$ onto $\C$. To control the error problem later it is necessary to know that the outer model remains bounded as $\eps \rightarrow 0$. As a consequence of Lemma \ref{lem: tau}, formula \eqref{OuterSolution} for $O(z)$ is $\eps$-dependent only through the parameter $iT = iT_0(x,t) - i\eps^{-1}\Omega(x,t)$ appearing in the arguments of the $\Theta$ functions and as $\Omega(x,t)$ is always real valued, and $\Theta$ is $2\pi i$ periodic it follows that the outer solution is uniformly bounded as $\eps \rightarrow 0$. 

As one expects the outer solution encodes the leading order asymptotic behavior of the solution $\psi(x,t; \eps)$ of the NLS equation \eqref{nls} for each $(x,t) \in \mathcal{S}_2$ where the genus-one ansatz successfully controls the Riemann-Hilbert problem. We record here, for out later use, that leading order behavior:
\begin{equation}\label{two band leading order}
	\begin{split}
	2i \lim_{z \rightarrow \infty} z O_{12}(z; x,t) &=  \\
	(q - \imag \alpha)) &\frac{\Theta(0)}{\Theta(2A(\infty))}\frac{\Theta(2A(\infty) + i T_0 - i\eps^{-1}\Omega)}{\Theta(iT_0 - i\eps^{-1} \Omega) } e^{-2iY_0 }.
\end{split}
\end{equation}

\subsubsection{Construction of the local models}  		

We now describe the construction of the local models near the band-gap endpoints $\alpha$, $\alpha^*$ and the stationary points $\xi_0$ and $\xi_1$. At each of these points the jump matrices $V_Q^0$ are no longer uniformly near identity which in turn implies that the outer model $O(z)$ is no longer a uniform approximation of the solution $Q(z)$ to RHP \ref{rhp: two band Q} in any neighborhood of these points. The local models near $\xi_0$ and $\xi_1$ are again constructed from the solution $\Psi_{PC}$ of the parabolic cylinder problem, RHP {\ref{rhp: PC}. The new feature in the genus one case are the local models near the band-gap endpoints $\alpha$ and $\alpha^*$.  The construction of the local model near such points is nearly canonical. The local error near these points in controlled by installing a local model constructed from Airy functions.

Let us consider first the real stationary phase points $\xi_0$ and $\xi_1$. The jump matrix $V_Q$, given by \eqref{two band Q jumps}, near these points is essentially the same jump matrix appearing in both the genus zero case and the quiescent case for $x$ outside the support (cf. \eqref{one band Q jumps} and \eqref{Q jumps}).  The local models at each of these points are constructed from parabolic cylinder functions; the reader is referred to Sections \ref{sec: outside xi_0} and \ref{sec: outside xi_1} for each model's motivation and additional details.   

Define the pair of locally analytic and invertible functions $\zeta = \zeta_k(z)$:
\begin{equation}\label{two band zeta_k}
	\begin{split}
	\frac{1}{2} \zeta_0^2 &:= \frac{ \varphi_{0+}(z) - \varphi_{0+}(\xi_0) } { \eps} = \frac{1}{\eps}\int_{\xi_0}^z 4tS(\lambda) (\lambda -\xi_0) \dd \lambda \\
	\frac{1}{2} \zeta_1^2 &:= \frac{ (\varphi_1(z) - \varphi(\xi_1) } { \eps} =  \frac{1}{\eps}\int_{\xi_1}^z 4t S(\lambda) (\lambda -\xi_0) +4L\frac{\lambda}{\nu(\lambda)} \dd \lambda 
	\end{split}
\end{equation}  
Each $\zeta_k$ introduces a rescaled local coordinate at $\xi_k$ and we choose suitably small, fixed size, neighborhoods $\U_k$ of $\xi_k$ such that the $\zeta_k$ are analytic inside $\U_k$ and the images $\zeta = \zeta_k(\U_k)$ are disks in the $\zeta$-plane. Next, using Prop. \ref{prop: delta expansions} and \eqref{delta expansions} we define the nonzero and locally holomorphic scaling functions:
\begin{equation}\label{two band h's}
	\begin{split}
	h_0 &= \left[ \lp \frac{\eps}{\varphi_0''(\xi_0)} \rp^{i\kappa(\xi_0)} \lp \delta_0^{\text{hol}}(z) s_+(z)\rp^2 e^{-i\varphi_0(\xi_0) / \eps} \right]^{1/2}, \\
	h_1 &= \left[ \lp \frac{\eps}{\varphi_1''(\xi_1)} \rp^{i\kappa(\xi_1)} \delta_{1+}^{\text{hol}}(z)  \delta_{1-}^{\text{hol}}(z) \ e^{-i\varphi_1(\xi_1) / \eps} \right]^{1/2}. 
	\end{split}
\end{equation}

With the above definitions in hand we define our local models as follows:
\begin{equation}\label{two band local form} 
	\begin{split}
	A_0(z) &= \beta(z) O^{(1)}(z) \, \hat{A}_0(z) \, s(z)^\sig \delta(z)^\sig, \\
	A_1(z) &= \beta(z) O^{(1)}(z)  s(z)^{\sig} \, \hat{A}_1(z) \, \delta(z)^\sig
	\end{split}
\end{equation}
where the functions $\hat{A}_k$ are built from the solution $\Psi_{PC}(\zeta,a)$ of the parabolic cylinder local model, RHP \ref{rhp: PC}: 
\begin{subequations}\label{two band local models}
	\begin{align}
	\hat{A}_0(z) &= h_0^\sig \Psi_{PC} ( \zeta_0(z),\, r_{0+}(\xi_0) ) h_0^{-\sig}, \label{ two band xi_0 model}\\
	\hat{A}_1(z) &= h_1^\sig \Psi_{PC} ( \zeta_1(z),\, -r_0(\xi_1) ) h_1^{-\sig} \times 
		U^{-1}(z) e^{ig(z)\sig / \eps} F^{-1}(z) e^{-ig(z) \sig/ \eps}. \label{ two band xi_1 model} \end{align}	
\end{subequations}
Here, we recall that $F(z)$ and $U(z)$ are the folding and unfolding factorizations defined by \eqref{folding factorization} and \eqref{A3 def} respectively. The definitions \eqref{two band local form}, \eqref{two band zeta_k}, and the large $\zeta$ expansion $\Psi_{PC}(\zeta) = I + \bigo{\zeta^{-1}}$ imply that on the boundaries $\U_k$ we have 
\begin{equation}\label{two band PC matching error}
	O^{-1}(z) A_k(z) = I + \bigo{\sqrt{\eps}}.
\end{equation}

Let us now consider the situation near the band-gap endpoints $\alpha$ and $\alpha^*$. The jump matrices, up to orientation of the contours, satisfy the symmetry $V(z) = V(z^*)^\dagger$ so it is sufficient to consider only the problem at $z=\alpha$. The local jump matrices of $V_Q$ near $z=\alpha$ are depicted in Figure \ref{fig: alpha exact jumps}. Here we have used the relation \eqref{R hat sing} to write the reflection terms in the jump matrices in terms of the single locally analytic and nonzero function $w(z)$. We make no approximations of the local jump matrices, the only error introduced by the local model will be on the boundary where it meets the outer model. 
\begin{figure}
	\begin{center}
		\includegraphics[width=.6\textwidth]{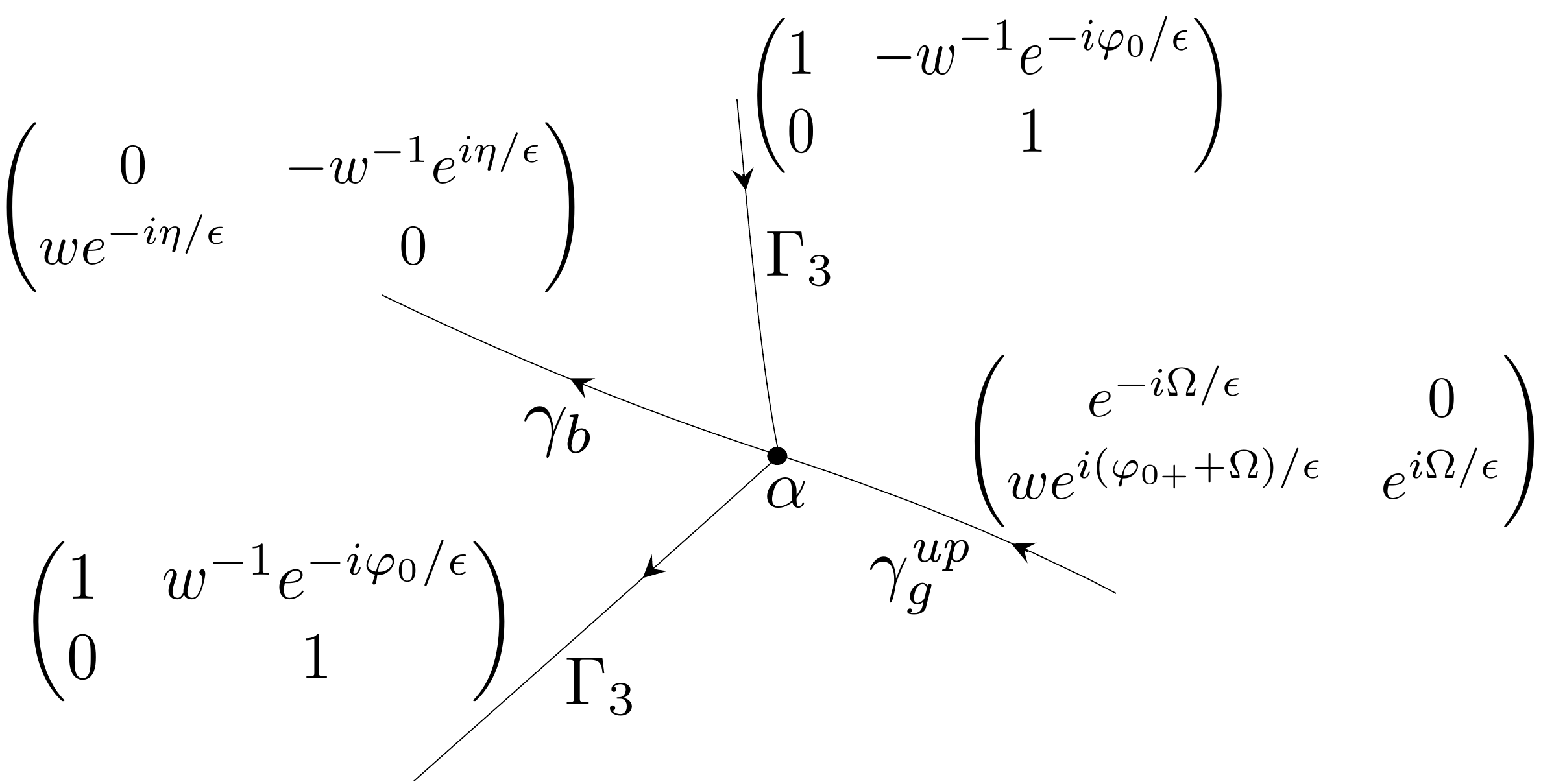}
		\caption{The jump matrices of $V_Q$ near the point $z = \alpha$.
		\label{fig: alpha exact jumps}
	}
	\end{center}
\end{figure}
To introduce our local model, let $\U_\alpha$ be a neighborhood of $\alpha$, small enough for $\Gamma_\nu$ to disconnects $\U_\alpha$ into two components; let $\U_{\alpha +}$ $\lp \U_{\alpha -} \rp$ denote the component to the left (right) of $\Gamma_\nu$ with respect to orientation. Define the pair of functions on $\U_\alpha$:
\begin{equation*}
	f(z) := \begin{cases} 
		\varphi_0 +\eta + \Omega & z \in \U_{\alpha +} \\
		\varphi_0 +\eta - \Omega & z \in \U_{\alpha -} 
		\end{cases} 
		\quad \text{and} \quad 
	\sgn_{\Gamma_\nu}(z) := \begin{cases}
		1 & z \in  \U_{\alpha +} \\
		-1 & z \in  \U_{\alpha +} \\	
		\end{cases}
\end{equation*}
The continuity of $\varphi_0$ and jump relations \eqref{two band g jumps} imply that $f(z)$ is analytic in $\U_\alpha \backslash \gamma_b$. The function $f(z)$ motivates the change of variables
\begin{equation}\label{zeta alpha}
	\zeta_\alpha(z) := \lp -\frac{3i}{4\eps} f(z) \rp^{2/3} = \lp -\frac{ 3it}{\eps} \int_\alpha^z S(\lambda)(\lambda - \xi_0) \dd \lambda \rp^{2/3} 
\end{equation}
which is a locally analytic and invertible and maps $\U_\alpha$ to a neighborhood of the origin in the $\zeta$-plane. We choose the branch such that $\zeta_\alpha(\gamma_b \cap \U_\alpha)$ lies along the negative real axis and use our freedom to deform $\Gamma_3$ and $\gamma_g$ such that $|\arg(\zeta_\alpha(\Gamma_3\cap \U_\alpha)) | = 2\pi/3$ and $\arg(\zeta_\alpha(\gamma_g \cap \U_\alpha))=0$. We seek our outer model in the form 
\begin{equation}\label{alpha model}
	A_\alpha(z) = K_\alpha(z) \Psi_{\Ai}(\zeta_\alpha(z))~\sigma_2 \left[ w(z) e^{- \frac{i}{\eps}(\eta+\Omega \sgn_{\Gamma_\nu}(z) )} \right]^{\sig/2}	
\end{equation}
where $K_\alpha(z)$ is an analytic pre-factor which we will determine later. The matrix $\Psi_{\Ai}(\zeta)$ then solves the following well known Airy Riemann-Hilbert problem in the $\zeta$-plane.
%%% AIRY RHP %%%
\begin{rhp}[ for $\Psi_{\Ai}$ (The Airy RHP):]\label{rhp: Airy}
Find a $2 \times 2$ matrix valued function $\Psi_{\Ai} (\zeta)$ such that
\begin{enumerate}[1.]
	\item $\Psi_{\Ai}$ is analytic for $\zeta \in \C \backslash\, \{ \zeta\, : \, |\arg(\zeta)| = 0, 2\pi/3, \pi \}$.

	\item As $\zeta \rightarrow \infty$,
	\begin{equation}\label{Airy asymp}
		\Psi_{\Ai}(\zeta) = \frac{e^{\frac{i\pi}{12}}}{2\sqrt{\pi}}
        		\zeta^{-\sig/4} \begin{pmatrix} 1 & 1 \\ -1 & 1
        		\end{pmatrix} e^{-\frac{i\pi}{4}\sig} \left[ I + \bigo{\zeta^{-3/2}} \right]
    	\end{equation}
 
 	\item $\Psi_{\Ai}$ assumes continuous boundary values $(\Psi_{\Ai})_+$ and $(\Psi_{\Ai})_-$ on each jump contour satisfying the relation $(\Psi_{\Ai})_+(\zeta) = (\Psi_{\Ai})_-(\zeta) V_{\Psi_{\Ai}}(\zeta)$ where
\begin{equation*}\label{Airy jumps}
	V_{\Psi_{\Ai}}(\zeta) = \begin{cases}
		\offdiag{1}{-1}_{\Bspace} & \arg(\zeta) = \pi \\
		\triu{e^{\frac{4}{3} \zeta^{3/2}}}_{\Bspace} & |\arg(\zeta)| = 2\pi / 3 \\
		\tril{e^{-\frac{4}{3} \zeta^{3/2}}} & \arg(\zeta) = 0
	\end{cases} 
\end{equation*}	 	
\end{enumerate}
\end{rhp}	
%%%%%% 
This problem, like the parabolic cylinder problem, RHP \ref{rhp: PC}, is one of the standard local models that emerge in the literature of both inverse scattering and random matrix theory, see \cite{DKMVZ99a} or \cite{DKMVZ99b}. 
\begin{prop}
	The solution of RHP \ref{rhp: Airy} is given by 
	\begin{equation}\label{Airy solution}
	\Psi_{\Ai}(\zeta) = \left\{ \begin{array}{l@{\quad}c@{\quad}l}
    		\widetilde{\Psi}(\zeta) e^{(\frac{2}{3}\zeta^{3/2}-\frac{\pi
    		i}{6})\sig} &:& \arg(\zeta) \in (0, \frac{2\pi}{3}) \\
     		& & \\
  		\widetilde{\Psi}(\zeta) e^{(\frac{2}{3}\zeta^{3/2}-\frac{\pi
  		i}{6})\sig} \tril{-e^{\frac{4}{3}\zeta^{3/2}}} &:& \arg(\zeta) \in (\frac{2\pi}{3}, \pi) \\
     		& & \\
    		\widetilde{\Psi}(\zeta) e^{(\frac{2}{3}\zeta^{3/2}-\frac{\pi
   		 i}{6})\sig} \tril{e^{\frac{4}{3}\zeta^{3/2}}} &:& \arg(\zeta) \in (-\pi, -\frac{2\pi}{3}) \\
     		& & \\
    		\widetilde{\Psi}(\zeta) e^{(\frac{2}{3}\zeta^{3/2}-\frac{\pi
    		i}{6})\sig} &:& \arg(\zeta) \in (-\frac{2\pi}{3}, 0)
    		\end{array} \right. ,
	\end{equation}
	where $\omega := e^{2\pi i/ 3}$ and   
\begin{equation*}
	   \widetilde{\Psi}(\zeta) = \left\{ \begin{array}{l@{\quad}c@{\quad}l}
        \begin{pmatrix} \Ai(\zeta) & \Ai(\omega^2\zeta) \\ \Ai'(\zeta) &
        \omega^2 \Ai'(\omega^2\zeta) \end{pmatrix}_{\Bspace} &:& \zeta \in
        (0,\pi) \\
        \begin{pmatrix} \Ai(\zeta) & -\omega^2\Ai(\omega\zeta) \\ \Ai'(\zeta) &
        -\Ai'(\omega\zeta) \end{pmatrix} &:& \zeta \in
        (-\pi,0)
    \end{array}\right. .
\end{equation*}
\end{prop}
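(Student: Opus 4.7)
\emph{Proof proposal.} The plan is to verify the three defining properties of the Airy RHP (sectional analyticity, the jump relations on each ray, and the asymptotic expansion at infinity) directly from the classical properties of the Airy function, exactly as in the construction used in \cite{DKMVZ99a, DKMVZ99b}. The starting observation is that since $\Ai(\zeta)$ is entire, the matrix $\widetilde{\Psi}(\zeta)$ is analytic in each of the half-planes $\imag \zeta > 0$ and $\imag \zeta < 0$, and right-multiplication by the piecewise analytic factors $e^{(\frac{2}{3}\zeta^{3/2}-\frac{\pi i}{6})\sig}$ and by the lower triangular shears in \eqref{Airy solution} does not introduce new discontinuities within each of the four open sectors in the definition. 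Thus $\Psi_{\Ai}$ is automatically sectionally holomorphic, and it remains to verify the jumps and the asymptotics.

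For the jumps, I would treat each of the four rays separately. On $\arg \zeta = \pi$, the jump follows from the identity $\widetilde{\Psi}_+(\zeta) = \widetilde{\Psi}_-(\zeta) \sigma_2$-type relation built out of $\Ai(\omega \zeta)$ versus $\Ai(\omega^2 \zeta)$ together with the opposing lower triangular shears on the two sides and the sign $e^{-\pi i \sig/6}$, which combine to produce precisely the off-diagonal $\left(\begin{smallmatrix} 0 & 1 \\ -1 & 0 \end{smallmatrix}\right)$ required. On $\arg \zeta = 0$ the jump $\tril{e^{-\frac{4}{3}\zeta^{3/2}}}$ is produced explicitly by the two different forms of the shear factor in the upper and lower adjacent sectors. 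The two jumps on $|\arg \zeta| = 2\pi/3$ are the more delicate step: they require the linear dependence relation $\Ai(\zeta) + \omega \Ai(\omega \zeta) + \omega^2 \Ai(\omega^2 \zeta) = 0$ (and its derivative) to translate $\widetilde{\Psi}_-$ on one side into $\widetilde{\Psi}_+$ on the other. Carrying out the column combinations and absorbing the resulting diagonal factors into the prefactor $e^{(\frac{2}{3}\zeta^{3/2}-\frac{\pi i}{6})\sig}$ produces the required upper triangular jump $\triu{e^{\frac{4}{3}\zeta^{3/2}}}$.

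For the large $\zeta$ asymptotics, the plan is to insert the standard WKB expansions
\[
\Ai(\zeta) \sim \tfrac{1}{2\sqrt{\pi}} \zeta^{-1/4} e^{-\frac{2}{3}\zeta^{3/2}}, \qquad \Ai'(\zeta) \sim -\tfrac{1}{2\sqrt{\pi}} \zeta^{1/4} e^{-\frac{2}{3}\zeta^{3/2}},
\]
together with the analogous expansions for $\Ai(\omega \zeta)$ and $\Ai(\omega^2 \zeta)$ in their respective sectors of validity. In the sector $\arg \zeta \in (0, 2\pi/3)$, combining these expansions in $\widetilde{\Psi}(\zeta)$, factoring $\zeta^{-\sig/4}$ on the left and $e^{-\frac{2}{3}\zeta^{3/2} \sig}$ on the right, and then multiplying on the right by $e^{(\frac{2}{3}\zeta^{3/2} - \pi i/6)\sig}$ as in \eqref{Airy solution} cancels the exponential exactly and yields precisely \eqref{Airy asymp}. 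In the sectors containing the additional shear factor $\tril{\pm e^{\frac{4}{3}\zeta^{3/2}}}$, the potentially growing off-diagonal entry is absorbed by using the subdominant Airy asymptotics on the corresponding complex ray, so the same prefactor emerges at leading order.

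The main obstacle will be bookkeeping on the rays $|\arg \zeta| = 2\pi/3$: one must carefully track the branches of $\zeta^{3/2}$ and $\zeta^{-1/4}$ across these Stokes rays, apply the connection formula in the correct direction, and check that the three-way cancellation produced by $1 + \omega + \omega^2 = 0$ lines up exactly with the shear factors placed by hand in \eqref{Airy solution}. Once the jump on $|\arg \zeta| = 2\pi/3$ is verified, the remaining jumps and the asymptotic normalization are essentially algebraic. Uniqueness of the solution then follows from a standard Liouville argument: any other solution $\Psi'$ makes $\Psi'\Psi_{\Ai}^{-1}$ entire and bounded, hence identically $I$.
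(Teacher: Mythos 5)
Your overall strategy — direct verification of sectional analyticity, the four jump relations, and the asymptotics from the classical Airy identities, followed by a Liouville argument for uniqueness — is exactly the route the paper takes (the paper compresses it to a citation of the Wronskian relations, the connection formula $\Ai(\zeta)+\omega\Ai(\omega\zeta)+\omega^2\Ai(\omega^2\zeta)=0$, and the asymptotics in \cite{AS64}). However, you have swapped the roles of the two key ingredients, and as written the verification on two of the four rays would not go through.

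The matrix $\widetilde{\Psi}$ changes its definition only across the real axis: it is a single analytic function throughout $\C^+$ and a single analytic function throughout $\C^-$. Consequently, on the rays $|\arg\zeta|=2\pi/3$ there is no "$\widetilde{\Psi}_-$ to translate into $\widetilde{\Psi}_+$" — the same $\widetilde{\Psi}$ appears on both sides, and the jump there is a purely algebraic ratio of the explicit factors $e^{(\frac{2}{3}\zeta^{3/2}-\frac{\pi i}{6})\sig}$ and $\tril{\mp e^{\frac{4}{3}\zeta^{3/2}}}$ appearing in \eqref{Airy solution}; no Airy identity is needed at all. Conversely, the two sectors abutting $\arg\zeta=0$ carry \emph{no} shear factors, so the jump $\tril{e^{-\frac{4}{3}\zeta^{3/2}}}$ there cannot be "produced explicitly by the two different forms of the shear factor"; it must come entirely from comparing the upper and lower branches of $\widetilde{\Psi}$, which is precisely where the connection formula (to re-express $\Ai(\omega^2\zeta)$ in terms of $\Ai(\zeta)$ and $\Ai(\omega\zeta)$) and the Wronskian relations (to compute $\det\widetilde{\Psi}$ and hence $\widetilde{\Psi}_-^{-1}$) enter. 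The same two identities, together with the branch jump of $\zeta^{3/2}$ and the opposing shears, are what produce $\offdiag{1}{-1}$ on $\arg\zeta=\pi$. If you carry out the computation with the roles allocated as in your proposal you will find the rays $|\arg\zeta|=2\pi/3$ trivial and the ray $\arg\zeta=0$ impossible; reassigning the connection formula and Wronskians to the real axis repairs the argument and reproduces the paper's (sketched) proof. Your treatment of the asymptotics and the uniqueness argument are fine.
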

The solution clearly satisfies the jump conditions along the rays $|\arg(\zeta)| = 2\pi/3$. The normalization conditions follows immediately from the asymptotic behavior of the Airy functions \cite[p.~448]{AS64} and to verify the jump conditons on $\R$ one needs the Wronskian relations between the various Airy functions \cite[p.~446]{AS64} and the fundamental identity $\Ai(\zeta) + \omega\Ai(\omega \zeta) + \omega^2 \Ai (\omega^2 \zeta) = 0$. As the solution is standard in the literature, we leave the verification of these facts to the interested reader.

The last step in the construction of the local model inside $\U_\alpha$ is to select the analytic pre-factor $K_\alpha(z)$ in order to match the outer model along the boundary $\partial \U_\alpha$. From the asymptotic behavior \eqref{Airy asymp} if we choose
\begin{equation*}
	K_\alpha(z) = O(z) \left[ w(z) e^{-\frac{i}{\eps}(\eta + \Omega \sgn_{\Gamma_\nu}(z) ) } \right]^{-\sig/2} \sigma_2
	 \left[  \frac{e^{\frac{i\pi}{12}} }{2\sqrt{\pi}} \zeta_\alpha (z)^{-\sig/4} 
	 \begin{pmatrix} 1 & 1 \\ -1 & 1\end{pmatrix} e^{-\frac{i\pi}{4}\sig} \right]^{-1} 
\end{equation*}	
then in light of the $\eps$ scaling in the definition \eqref{zeta alpha} of $\zeta_{\alpha}(z)$ it follows that 
\begin{equation}\label{Airy error}
	O^{-1}(z) A_{\alpha}(z) = I + \bigo{\eps}, \quad z \in \partial \U_\alpha.
\end{equation}
It remains to show that $K_\alpha$ is analytic in $\U_\alpha$. From the definition it is clear that $K_\alpha$ is analytic in $\U_\alpha \backslash (\gamma_b \cup \gamma_g)$, and by explicitly calculating the jumps along these contours using \eqref{two band O jumps} one sees that $K_\alpha$ has identity jumps along these contours. It follows that at worst $K_\alpha$ has an isolated singularity at $z=\alpha$. However, the growth condition \eqref{two band O sing} together with the factor $\zeta_\alpha(z)^{-\sig/4}$ imply that at most $K_\alpha$ grows as an inverse square root and thus the singularity is removable. This completes the construction of the local model at $z=\alpha$. 

Building the model at $z=\alpha^*$ is completely analogous to the construction of the model at $z=\alpha$. We summarize the calculations here. We take as our neighborhood of $\alpha^*$ the conjugate neighborhood of $\alpha$: $\U_{\alpha^*} = \lp \U_\alpha \rp^*$. Then the model inside $\U_{\alpha^*}$ is given by
\begin{equation}\label{alpha* model}
	A_{\alpha^*}(z) = K_{\alpha^*}(z) \Psi_{\Ai} (\zeta_{\alpha^*}(z) ) \left[-w^*(z)^{-1} e^{-\frac{i}{\eps} ( \eta + \Omega \sgn_{\Gamma_\nu^*}(z) )} \right]^{\sig/2},
\end{equation}
where 
\begin{equation*}
	\zeta_{\alpha^*}(z) = 
		\lp \frac{4i}{3\eps} \lp \varphi_0 + \eta + \Omega \sgn_{\Gamma_\nu^*}(z) \rp \rp^{2/3}
\end{equation*}
and 
\begin{equation*}
	K_{\alpha^*}(z) = O(z) \left[-w^*(z)^{-1} e^{-\frac{i}{\eps} ( \eta + \Omega \sgn_{\Gamma_\nu^*}(z) )} \right]^{\sig/2}
	 \left[  \frac{e^{\frac{i\pi}{12}} }{2\sqrt{\pi}} \zeta_{\alpha^*} (z)^{-\sig/4} 
	 \begin{pmatrix} 1 & 1 \\ -1 & 1\end{pmatrix} e^{-\frac{i\pi}{4}\sig} \right]^{-1} .
\end{equation*}	
Essentially repeating the calculations at $z=\alpha$, this model exactly removes the jumps of $V_Q$ inside $\U_{\alpha^*}$ while on the boundary we get the same error estimate as in \eqref{Airy error}:
\begin{equation}\label{Airy error 2}
	O^{-1}(z) A_{\alpha^*}(z) = I + \bigo{\eps}, \quad z \in \partial \U_{\alpha^*}.
\end{equation}

\subsection{The Error RHP, proof of Theorem \ref{thm: main} part three.}
We now show that the parametrix defined by \cref{two band parametrix,OuterSolution,two band local form,alpha model,alpha* model} is a uniformly accurate approximation of the solution $Q(z)$ to RHP \ref{rhp: two band Q}. We do this by proving that the error matrix
\begin{equation}\label{two band error}
	E(z) := Q(z) P^{-1}(z)
\end{equation} 
satisfies a small-norm Riemann-Hilbert problem. We can then prove the existence of and derive an asymptotic expansion for $E(z)$. By unravelling the series of explicit transformations this yields an asymptotic expansion for $m(z)$ the solution of the inverse scattering problem, RHP \ref{rhp: square barrier} which in turn gives an asymptotic expansion for the solution $q(x,t)$ of \eqref{nls} for each $(x,t) \in \mathcal{S}_2$ where the genus-one $g$-function successfully stabilizes the inverse analysis. 

As both $Q$ and $P$ are piecewise analytic whose components take continuous boundary values on their respective domains of definition, their ratio, $E(z)$ is also piecewise analytic and satisfies its own Riemann-Hilbert problem.
\begin{figure}[htb]
\begin{center}
	\includegraphics[width=.6\linewidth]{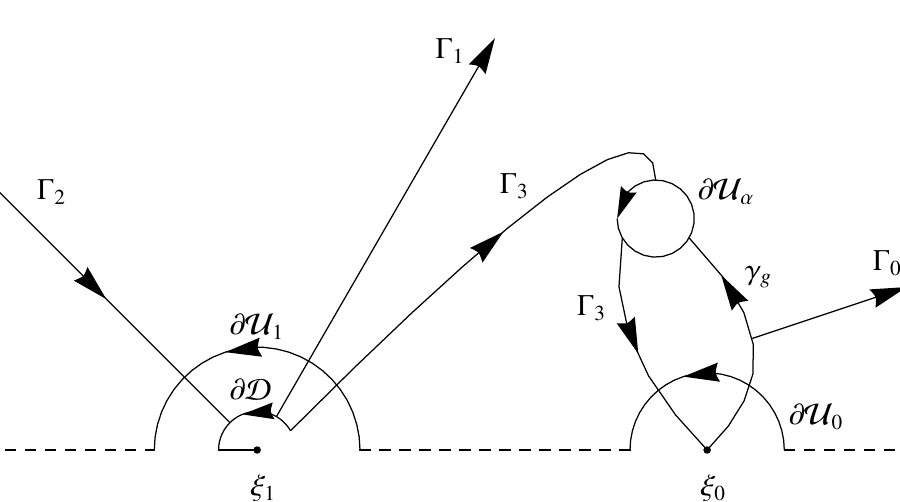}   
	\caption{The jump contour for the error matrix $E$ in the upper half-plane. The boundaries $\partial \U_1, \ \partial \U_2,$ and $ \partial \U_\alpha$ are all fixed sized while the inner loop $\partial \mathcal{D}$ scales like $\eps^{1/2}$. The jumps in the lower half-plane follow from Schwartz reflection symmetry. 
	\label{fig: two band error contours}
	}
\end{center}
\end{figure}
\begin{rhp}[for the error matrix, $E(z)$.]\label{rhp: two band error}
Find a $2\times 2$ matrix-valued function $E(z)$ such that 
\begin{enumerate}[1.]
	\item $E$ is bounded and analytic for $z \in \C \backslash \Gamma_E$, where $\Gamma_E$ is the collection of contours depicted in Figure \ref{fig: two band error contours}.
	\item $E(z) \to I + \bigo{1/z}$ as $z \to \infty$.
	 \item $E(z)$ assumes continuous boundary values $E_+(z)$ and $E_-(z)$ for $z \in\Gamma_E$ satisfying the jump relation $E_+(z) = E_-(z) V_E(z)$ where 
	 \begin{equation}\label{two band E jump}
	V_E(z) = P_-(z) V_Q(z) V_P(z)^{-1} P_-(z)^{-1}.
	\end{equation}
	and both $V_Q$ and $V_P$ are understood to equal identity where $Q$ or $P$ respectively is analytic. 
\end{enumerate}
\end{rhp}
All of the properties of $E(z)$ above are a straightforward consequence of \eqref{two band error} except the boundedness of $E(z)$ near the endpoints $\pm iq$. Since the parametrix $P(z)$ has jumps exactly matching those of $Q(z)$ along $\gamma_b \cup \gamma_b^*$, $E(z)$ has at worst isolated singularities at $\pm iq$. The local growth conditions \eqref{two band Q sing} and \eqref{two band O sing} imply that, at worst, $E_{jk}(z) = \bigo{|z\pm iq|^{-1/2}  }$. Therefore the singularities are removable and $E$ is bounded.

 The following lemma establishes that $V_E$ is uniformly near identity and decays sufficiently fast at infinity to admit an asymptotic expansion for large $z$. 
\begin{lem}\label{lem: two band small norm}  
For any $(x,t) \in K \subset \mathcal{S}_2$ compact and $\ell \in \N_0$ the jump matrix $V_E$ defined by \eqref{two band E jump} satisfies
\begin{equation}
	\| z^\ell ( V_E - I) \|_{L^p(\Sigma_E)} = \bigo{ \eps^{1/2} \log \eps}
\end{equation}
for every sufficiently small $\eps$ and $p=1,2,$ or $\infty$.
\end{lem}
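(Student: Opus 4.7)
The plan is to follow the template of Lemma \ref{lem: outside error} mutatis mutandis, accounting for the two new features of the genus-one setting: the replacement of the bare phases $\theta_k$ by the modulated phases $\varphi_k = \theta_k - 2g$, and the appearance of two additional local parametrices, $A_\alpha$ and $A_{\alpha^*}$, built from the Airy model. Throughout, I would fix a compact $K \subset \mathcal{S}_2$. By Proposition \ref{prop: genus one region} the genus-one ansatz holds on $K$, and on $K$ the band endpoints $\alpha(x,t),\alpha^*(x,t)$ and the stationary phase points $\xi_0(x,t),\xi_1(x,t)$ stay uniformly separated and bounded away from $\pm iq$; consequently the disks $\U_0,\U_1,\U_\alpha,\U_{\alpha^*}$ may be chosen of a uniform positive radius, mutually disjoint, and all the strict inequalities in the system \eqref{two band band/gap cond} are uniform.

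I would first reduce the estimate to one on $V_Q V_P^{-1} - I$, by showing that the conjugating factor $P_-$ is uniformly bounded on $\Gamma_E$. Off the endpoints this is immediate from the uniform boundedness of the outer model $O(z)$, which holds because, as observed after \eqref{OuterSolution}, the $\eps$-dependence of $O$ enters only through the combination $iT_0 - i\eps^{-1}\Omega$ appearing inside the $2\pi i$-periodic $\Theta$-function (with $\Omega \in \R$). Near $\alpha, \alpha^*$ the $\zeta^{-\sig/4}$ factor inside the Airy parametrix exactly cancels the quarter-root singularities of $O(z)$ recorded in \eqref{two band O sing}, so $A_\alpha$ and $A_{\alpha^*}$ are themselves bounded on their respective disks; near $\pm iq$ the parametrix matches $V_Q$ exactly on $\gamma_b \cup \gamma_b^*$, so $V_E\equiv I$ there and no issue arises.

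Next I would decouple $\Gamma_E$ into four classes and estimate each in $L^\infty$: (a) the unbounded portions of the lens contours $\Gamma_k \cup \Gamma_k^*$ lying outside every local disk; (b) the bounded lens arcs inside $\U_0$ and $\U_1 \setminus \DD$, together with the segment $\Sigma_4$; (c) the fixed-size circles $\partial \U_0,\partial \U_1,\partial \U_\alpha,\partial \U_{\alpha^*}$; and (d) the shrinking circle $\partial \DD$. On (a), $V_Q-I$ carries a factor $e^{i\varphi_k/\eps}$ whose imaginary part is bounded away from zero by the band/gap inequalities, so all polynomial moments $z^\ell(V_Q-I)$ are uniformly $\bigo{e^{-c/\eps}}$. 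On (b), the estimate is the one performed in detail in the proof of Lemma \ref{lem: outside error}, with $\theta_k$ replaced by $\varphi_k$; the dominant contribution comes from approximating $(z-\xi_k)^{i\kappa}$ by $(\sqrt{\eps/\varphi_k''(\xi_k)}\,\zeta_k)^{i\kappa(\xi_k)}$, producing the $\bigo{\eps^{1/2}\log \eps}$ term of \eqref{xi_1 error sample}. On (c), the two PC-disks each contribute $\bigo{\eps^{1/2}}$ via the large-$\zeta$ expansion of $\Psi_{PC}$ and the $\eps^{-1/2}$-scaling in \eqref{two band zeta_k} (cf.\ \eqref{two band PC matching error}), while the two Airy disks contribute only $\bigo{\eps}$ by \eqref{Airy error} and \eqref{Airy error 2}, which exploit the $\eps^{-2/3}$-scaling of $\zeta_\alpha$ and the expansion \eqref{Airy asymp}. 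On (d), the estimate is literally the calculation \eqref{xi_1 error sample}, yielding $\bigo{\eps^{1/2}\log\eps}$.

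Putting the pieces together yields the uniform $L^\infty$ bound; since every contour in $\Gamma_E$ has either uniformly bounded length or supports an exponentially small integrand, the $L^1$ and $L^2$ bounds, as well as all polynomial moments, follow directly. The main technical obstacle I anticipate is not in any individual estimate but in verifying that all the cancellations work uniformly in $(x,t) \in K$: one has to check that $K_\alpha$ (and its conjugate) is indeed analytic and uniformly bounded on $\U_\alpha$ (as established immediately following \eqref{Airy error}), and that the scalar conjugator $s(z)$ in \eqref{elliptic model def} does not introduce spurious $\eps$-dependence beyond the controlled phase $s_1$ isolated in Lemma \ref{lem: tau}. Once these structural facts are in hand the proof reduces to the bookkeeping sketched above.
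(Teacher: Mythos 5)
Your proposal is correct and follows essentially the same route as the paper: the paper's proof simply observes that, after replacing $\theta_k$ by $\varphi_k$, all estimates from Lemma \ref{lem: outside error} go through with only cosmetic changes, and that the only genuinely new contours are $\partial\U_\alpha$ and $\partial\U_{\alpha^*}$, where \eqref{Airy error} and \eqref{Airy error 2} give a subdominant $\bigo{\eps}$ contribution. Your write-up is in fact more explicit than the paper's (which defers almost everything to the earlier lemma), and the structural checks you flag — boundedness of $O$ via the $\Theta$-function periodicity, analyticity of $K_\alpha$, exact matching of $V_P$ and $V_Q$ on the bands — are precisely the points the paper relies on.
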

\begin{proof} 
The jump matrix $V_E$ up to the replacement of the phases $\theta_k$ with their $g$-function conjugated counterparts $\varphi_k$ (and the resulting deformation of the contours and stationary phase points) are nearly identical to those in Lemma \ref{lem: outside error}. For $(x,t)$ in any compact $K \subset \mathcal{S}_2$ the endpoints $\alpha$ and $\alpha^*$ lie an order one distance away from $\pm iq$ and each other, and the real stationary phase points $\xi_0$ and $\xi_1$ exist and are well separated so we can take the four disks $\U_0,\ \U_1,\ \U_{\alpha}$, and $\U_{\alpha^*}$ to have fixed radii depending only on $K$. Furthermore, the genus one $g$-function and the phases $\varphi_k$ satisfy the necessary inequalities for the estimates in Lemma \ref{lem: outside error} to go through with only cosmetic changes away from the disks $\U_\alpha$ and $\U_{\alpha^*}$. The Airy models used to define $P$ inside each disks exactly match the jumps of $Q$, so the only new errors are on the disk boundaries. However, \eqref{Airy error} and \eqref{Airy error 2} imply that $\| V_E - I \|_{\infty} = \bigo{\eps}$ for $z \in \partial \U_{\alpha} \cup \partial \U_{\alpha^*}$ which is subdominant to the $\bigo{\eps^{1/2} \log \eps}$ error terms produced in $\U_0$ and $\U_1$.  
\end{proof}

The small norm estimate in Lemma \ref{lem: two band small norm} allows us to express the solution $E(z)$ of RHP \ref{rhp: two band error} explicitly 
\begin{equation*}
	E(z) = \frac{1}{2\pi i} \int_{\Gamma_E} \frac{ \mu(s) (V_E(s) - I) }{ s-z} \dd s
\end{equation*}
where $\mu(s)$ is the unique solution of $(1 - C_{V_E})\mu = I$ and $C_{V_E} f :=C_- [ f (V_E-I)]$ here $C_-$ denotes the Cauchy projection operator. In particular, the moment bounds in Lemma \ref{lem: two band small norm}, justify the large $z$ expansion 
\begin{equation}
	 E(z) = I + \frac{E^{(1)}(x,t)}{z} + \ldots, \qquad \text{where,} \qquad \left| E^{(1)}(x,t)\right|= \bigo{\eps^{1/2} \log \eps}  
\end{equation}

By unravelling the sequence of explicit transformations $m \mapsto M \mapsto N \mapsto Q \mapsto E$, the asymptotic bounds on $E(z)$ give us an asymptotic expansion for $m$.  Taking $z$ to infinity along the positive imaginary axis (the direction does not affect the answer) we have $m(z) =  E(z) O(z) e^{i g(z) \sig/ \eps}$ and it follows from \eqref{nls recovery} and \eqref{two band leading order} that 
\begin{equation}
	\psi(x,t) = (q - \imag \alpha)) \frac{\Theta(0)}{\Theta(2A(\infty))}\frac{\Theta(2A(\infty) + i T_0 - i\eps^{-1}\Omega)}{\Theta(iT_0 - i\eps^{-1} \Omega) } e^{-2iY_0 } + \bigo{\eps^{1/2} \log \eps}
\end{equation}	 	
gives the leading order term and error bound of the solution of \eqref{nls} with initial data \eqref{square barrier} for each $(x,t) \in \mathcal{S}_2$ in the semi-classical limit.

\section*{Acknowledgements}
We wish to thank Peter Miller for his comments which vastly improved the final draft of this manuscript. The authors are supported in part by the National Science Foundation under grants  DMS-0200749, and DMS-0800979.
 
\bibliographystyle{alpha}
\bibliography{Square_Barrier.bib}

\newcommand{\etalchar}[1]{$^{#1}$}
\def\cprime{$'$} \def\cydot{\leavevmode\raise.4ex\hbox{.}}
\begin{thebibliography}{DKM{\etalchar{+}}99b}

\bibitem[AAD{\etalchar{+}}94]{AI94}
W.~F. Ames, R.~L. Anderson, V.~A. Dorodnitsyn, E.~V. Ferapontov, R.~K. Gazizov,
  N.~H. Ibragimov, and S.~R. Svirshchevski{\u\i}.
\newblock {\em C{RC} handbook of {L}ie group analysis of differential
  equations. {V}ol. 1}.
\newblock CRC Press, Boca Raton, FL, 1994.
\newblock Symmetries, exact solutions and conservation laws.

\bibitem[AS64]{AS64}
Milton Abramowitz and Irene~A. Stegun.
\newblock {\em Handbook of mathematical functions with formulas, graphs, and
  mathematical tables}, volume~55 of {\em National Bureau of Standards Applied
  Mathematics Series}.
\newblock For sale by the Superintendent of Documents, U.S. Government Printing
  Office, Washington, D.C., 1964.

\bibitem[BC84]{BC84}
R.~Beals and R.~R. Coifman.
\newblock Scattering and inverse scattering for first order systems.
\newblock {\em Comm. Pure Appl. Math.}, 37(1):39--90, 1984.

\bibitem[BDT88]{BDT88}
Richard Beals, Percy Deift, and Carlos Tomei.
\newblock {\em Direct and inverse scattering on the line}, volume~28 of {\em
  Mathematical Surveys and Monographs}.
\newblock American Mathematical Society, Providence, RI, 1988.

\bibitem[BF54]{Byrd54}
Paul~F. Byrd and Morris~D. Friedman.
\newblock {\em Handbook of elliptic integrals for engineers and physicists}.
\newblock Die Grundlehren der mathematischen Wissenschaften in
  Einzeldarstellungen mit besonderer Ber\"ucksichtigung der Anwendungsgebiete.
  Bd LXVII. Springer-Verlag, Berlin, 1954.

\bibitem[BK99]{BK99}
Jared~C. Bronski and J.~Nathan Kutz.
\newblock Numerical simulation of the semi-classical limit of the focusing
  nonlinear schršdinger equation.
\newblock {\em Physics Letters A}, 254(6):325 -- 336, 1999.

\bibitem[BN67]{Benney}
D.~J. Benney and A.~C. Newell.
\newblock The propagation of nonlinear wave envelopes.
\newblock {\em J. Math. and Phys.}, 46:133--139, 1967.

\bibitem[Bro96]{Bronski96}
Jared~C. Bronski.
\newblock Semiclassical eigenvalue distribution of the {Z}akharov-{S}habat
  eigenvalue problem.
\newblock {\em Phys. D}, 97(4):376--397, 1996.

\bibitem[Bro01]{Bronski01}
J.~C. Bronski.
\newblock Spectral instability of the semiclassical {Z}akharov-{S}habat
  eigenvalue problem.
\newblock {\em Phys. D}, 152/153:163--170, 2001.
\newblock Advances in nonlinear mathematics and science.

\bibitem[DIZ97]{DIZ97}
Percy~A. Deift, Alexander~R. Its, and Xin Zhou.
\newblock A {R}iemann-{H}ilbert approach to asymptotic problems arising in the
  theory of random matrix models, and also in the theory of integrable
  statistical mechanics.
\newblock {\em Ann. of Math. (2)}, 146(1):149--235, 1997.

\bibitem[DKM{\etalchar{+}}99a]{DKMVZ99b}
P.~Deift, T.~Kriecherbauer, K.~T-R McLaughlin, S.~Venakides, and X.~Zhou.
\newblock Strong asymptotics of orthogonal polynomials with respect to
  exponential weights.
\newblock {\em Comm. Pure Appl. Math.}, 52(12):1491--1552, 1999.

\bibitem[DKM{\etalchar{+}}99b]{DKMVZ99a}
P.~Deift, T.~Kriecherbauer, K.~T.-R. McLaughlin, S.~Venakides, and X.~Zhou.
\newblock Uniform asymptotics for polynomials orthogonal with respect to
  varying exponential weights and applications to universality questions in
  random matrix theory.
\newblock {\em Comm. Pure Appl. Math.}, 52(11):1335--1425, 1999.

\bibitem[DM05]{DiFM05}
Jeffery~C. DiFranco and Kenneth T.-R. McLaughlin.
\newblock A nonlinear {G}ibbs-type phenomenon for the defocusing nonlinear
  {S}chr\"odinger equation.
\newblock {\em IMRP Int. Math. Res. Pap.}, (8):403--459, 2005.

\bibitem[DM08]{DM08}
M.~{Dieng} and {K.~D.~T.~-.} {McLaughlin}.
\newblock {Long-time Asymptotics for the NLS equation via dbar methods}.
\newblock {\em ArXiv e-prints}, May 2008.

\bibitem[Dub81]{Dub81}
B.~A. Dubrovin.
\newblock Theta-functions and nonlinear equations.
\newblock {\em Uspekhi Mat. Nauk}, 36(2(218)):11--80, 1981.
\newblock With an appendix by I. M. Krichever.

\bibitem[DVZ94]{DVZ94}
P.~Deift, S.~Venakides, and X.~Zhou.
\newblock The collisionless shock region for the long-time behavior of
  solutions of the {K}d{V} equation.
\newblock {\em Comm. Pure Appl. Math.}, 47(2):199--206, 1994.

\bibitem[DZ93]{DZ93}
P.~Deift and X.~Zhou.
\newblock A steepest descent method for oscillatory {R}iemann-{H}ilbert
  problems. {A}symptotics for the {MK}d{V} equation.
\newblock {\em Ann. of Math. (2)}, 137(2):295--368, 1993.

\bibitem[DZ94]{DZ94}
Percy Deift and Xin Zhou.
\newblock Long-time behavior of the non-focusing nonlinear schr\"odinger
  equation: A case study.
\newblock {\em New Series: Lectures in Mathematical Sciences}, (5), 1994.

\bibitem[ER06]{Adrian06}
Adrian Espinola-Rocha.
\newblock {\em Short time asymptotics of the Manakov System}.
\newblock {PhD} in {A}pplied {M}athematics, University of Arizona, 2006.

\bibitem[FK80]{FK80}
Hershel~M. Farkas and Irwin Kra.
\newblock {\em Riemann surfaces}, volume~71 of {\em Graduate Texts in
  Mathematics}.
\newblock Springer-Verlag, New York, 1980.

\bibitem[HT73]{hasegawa}
Akira Hasegawa and Frederick Tappert.
\newblock Transmission of stationary nonlinear optical pulses in dispersive
  dielectric fibers. i. anomalous dispersion.
\newblock {\em Applied Physics Letters}, 23(3):142--144, 1973.

\bibitem[Kam97]{Kam97}
A.~M. Kamchatnov.
\newblock New approach to periodic solutions of integrable equations and
  nonlinear theory of modulational instability.
\newblock {\em Phys. Rep.}, 286(4):199--270, 1997.

\bibitem[KMM03]{KMM03}
Spyridon Kamvissis, Kenneth D. T.-R. McLaughlin, and Peter~D. Miller.
\newblock {\em Semiclassical soliton ensembles for the focusing nonlinear
  {S}chr\"odinger equation}, volume 154 of {\em Annals of Mathematics Studies}.
\newblock Princeton University Press, Princeton, NJ, 2003.

\bibitem[KS02]{KS02}
M.~Klaus and J.~K. Shaw.
\newblock Purely imaginary eigenvalues of {Z}akharov-{S}habat systems.
\newblock {\em Phys. Rev. E (3)}, 65(3):036607, 5, 2002.

\bibitem[KS03]{KS03}
M.~Klaus and J.~K. Shaw.
\newblock On the eigenvalues of {Z}akharov-{S}habat systems.
\newblock {\em SIAM J. Math. Anal.}, 34(4):759--773 (electronic), 2003.

\bibitem[Lax68]{Lax68}
Peter~D. Lax.
\newblock Integrals of nonlinear equations of evolution and solitary waves.
\newblock {\em Comm. Pure Appl. Math.}, 21:467--490, 1968.

\bibitem[LM07]{LM07}
Gregory~D. Lyng and Peter~D. Miller.
\newblock The {$N$}-soliton of the focusing nonlinear {S}chr\"odinger equation
  for {$N$} large.
\newblock {\em Comm. Pure Appl. Math.}, 60(7):951--1026, 2007.

\bibitem[Mil01]{Miller01}
Peter~D. Miller.
\newblock Some remarks on a {WKB} method for the nonselfadjoint
  {Z}akharov-{S}habat eigenvalue problem with analytic potentials and fast
  phase.
\newblock {\em Phys. D}, 152/153:145--162, 2001.
\newblock Advances in nonlinear mathematics and science.

\bibitem[MK98]{MK98}
Peter~D. Miller and Spyridon Kamvissis.
\newblock On the semiclassical limit of the focusing nonlinear {S}chr\"odinger
  equation.
\newblock {\em Phys. Lett. A}, 247(1-2):75--86, 1998.

\bibitem[Mus92]{Musk46}
N.~I. Muskhelishvili.
\newblock {\em Singular integral equations}.
\newblock Dover Publications Inc., New York, 1992.
\newblock Boundary problems of function theory and their application to
  mathematical physics, Translated from the second (1946) Russian edition and
  with a preface by J. R. M. Radok, Corrected reprint of the 1953 English
  translation.

\bibitem[SY74]{SY74}
Junkichi Satsuma and Nobuo Yajima.
\newblock Initial value problems of one-dimensional self-modulation of
  nonlinear waves in dispersive media.
\newblock {\em Progr. Theoret. Phys. Suppl. No. 55}, pages 284--306, 1974.

\bibitem[TV00]{TV00}
Alexander Tovbis and Stephanos Venakides.
\newblock The eigenvalue problem for the focusing nonlinear {S}chr\"odinger
  equation: new solvable cases.
\newblock {\em Phys. D}, 146(1-4):150--164, 2000.

\bibitem[TVZ04]{TVZ04}
Alexander Tovbis, Stephanos Venakides, and Xin Zhou.
\newblock On semiclassical (zero dispersion limit) solutions of the focusing
  nonlinear {S}chr\"odinger equation.
\newblock {\em Comm. Pure Appl. Math.}, 57(7):877--985, 2004.

\bibitem[TVZ06]{TVZ06}
Alexander Tovbis, Stephanos Venakides, and Xin Zhou.
\newblock On the long-time limit of semiclassical (zero dispersion limit)
  solutions of the focusing nonlinear {S}chr\"odinger equation: pure radiation
  case.
\newblock {\em Comm. Pure Appl. Math.}, 59(10):1379--1432, 2006.

\bibitem[Whi99]{Whitham99}
G.~B. Whitham.
\newblock {\em Linear and nonlinear waves}.
\newblock Pure and Applied Mathematics (New York). John Wiley \& Sons Inc., New
  York, 1999.
\newblock Reprint of the 1974 original, A Wiley-Interscience Publication.

\bibitem[WW96]{WW62}
E.~T. Whittaker and G.~N. Watson.
\newblock {\em A course of modern analysis}.
\newblock Cambridge Mathematical Library. Cambridge University Press,
  Cambridge, 1996.
\newblock An introduction to the general theory of infinite processes and of
  analytic functions; with an account of the principal transcendental
  functions, Reprint of the fourth (1927) edition.

\bibitem[{Zak}72]{Langmuir}
V.~E. {Zakharov}.
\newblock {Collapse of Langmuir Waves}.
\newblock {\em Soviet Journal of Experimental and Theoretical Physics},
  35:908--+, 1972.

\bibitem[ZS71]{ZS72}
V.~E. Zakharov and A.~B. Shabat.
\newblock Exact theory of two-dimensional self-focusing and one-dimensional
  self-modulation of waves in nonlinear media.
\newblock {\em \v Z. \`Eksper. Teoret. Fiz.}, 61(1):118--134, 1971.

\end{thebibliography}

\end{document}